\newcommand{\preprint}[1]{}
\newcommand{\hide}[1]{}
\numberwithin{equation}{section}
\theoremstyle{plain}
\newtheorem{thm}{Theorem}[section]
\newtheorem{prop}[thm]{Proposition}
\newtheorem{claim}[thm]{Claim}
\newtheorem{conj}[thm]{Conjecture}
\newtheorem{cor}[thm]{Corollary}
\newtheorem{lem}[thm]{Lemma}
\newtheorem{assumption}[thm]{Assumption}
\theoremstyle{definition}
\newtheorem{defi}[thm]{Definition}
\theoremstyle{remark}
\newtheorem{example}[thm]{Example}
\newtheorem{cons}[thm]{Construction}
\newtheorem{rem}[thm]{Remark}
\newcommand{\Def}{{\mathscr D}ef}
\newcommand{\fA}{\overline{\mathscr A}}
\newcommand{\fAz}{\overline{\Az}}
\newcommand{\A}{{\mathscr A}}
\newcommand{\Az}{\mathfrak{A}}
\newcommand{\fB}{\overline{\mathscr B}}
\newcommand{\B}{{\mathscr B}}
\newcommand{\catB}{{\mathcal B}}
\newcommand{\cov}{{\rm cov}}
\newcommand{\C}{{\mathscr C}}
\newcommand{\CC}{{\mathbb C}}
\newcommand{\catC}{{\mathcal C}}
\newcommand{\D}{{\mathscr D}}
\newcommand{\catD}{\mathcal D}
\newcommand{\dd}{{\delta}}
\newcommand{\gd}{{\Delta}}
\newcommand{\fE}{\overline{\mathscr E}}
\newcommand{\E}{{\mathscr E}}
\newcommand{\EE}{{\mathbb E}}
\newcommand{\ee}{{\epsilon}}
\newcommand{\eee}{{\varepsilon}}
\newcommand{\fF}{\overline{\mathscr F}}
\newcommand{\F}{{\mathscr F}}
\newcommand{\fG}{\overline{\mathscr G}}
\newcommand{\G}{{\mathscr G}}
\newcommand{\g}{{\gamma}}
\newcommand{\fH}{{\mathcal H}}
\renewcommand{\H}{{\mathscr H}}
\newcommand{\I}{{\mathscr I}}
\newcommand{\K}{{\mathscr K}}
\newcommand{\LL}{{\mathbb L}}
\newcommand{\LB}{{\mathscr L}}
\newcommand{\com}{{\mathbb{\Lambda}}}
\newcommand{\gL}{{\Lambda}}
\newcommand{\fM}{{\mathfrak M}}
\newcommand{\MM}{{\mathcal M}}
\newcommand{\M}{{\mathscr M}}
\newcommand{\N}{{\mathscr N}}
\newcommand{\ko}{{\mathscr O}}
\newcommand{\Go}{{\Omega}}
\newcommand{\go}{{\omega}}
\renewcommand{\P}{{\mathscr P}}
\newcommand{\PP}{{\mathbb P}}
\newcommand{\Q}{{\mathscr Q}}
\newcommand{\R}{{\mathscr R}}
\newcommand{\T}{{\mathscr T}}
\newcommand{\TT}{{\mathbb T}}
\newcommand{\fU}{\overline{\mathscr U}}
\newcommand{\U}{{\mathscr U}}
\newcommand{\fV}{\overline{\mathscr V}}
\newcommand{\V}{{\mathscr V}}
\newcommand{\W}{{\mathscr W}}
\newcommand{\X}{{\mathscr X}}
\newcommand{\Y}{{\mathscr Y}}
\newcommand{\YY}{{\mathbb Y}}
\newcommand{\Integers}{{\mathbb Z}}
\newcommand{\ZZ}{{\mathbb Z}}
\newcommand{\ComplexNumbers}{{\mathbb C}}
\newcommand{\RationalNumbers}{{\mathbb Q}}
\newcommand{\Sn}{{\mathfrak{S}}_n}
\newcommand{\SU}{D^b(X)_{\mathbb T}}
\newcommand{\Spi}{D^b(X)_{\widetilde{{\mathbb Y}}}}
\newcommand{\tr}{{\rm tr}}
\newcommand{\pt}{{\rm pt}}
\newcommand{\SHilb}{{\rm Hilb}^{\mathfrak{S}_n}}
\newcommand{\fgd}{\overline{\gd}}
\newcommand{\fdd}{\overline{\dd}}
\newcommand{\fee}{\overline{\ee}}
\newcommand{\fperp}{\overline{\LL}}
\newcommand{\dbp}[1]{D^b({#1})^\LL}
\newcommand{\fdbp}[1]{D^b({#1})^{\fperp}}
\newcommand{\ol}[1]{\overline{#1}}
\newcommand{\IsomRightArrow}{\widetilde{\to}}
\newcommand{\RightArrowOf}[1]{\stackrel{#1}{\rightarrow}}
\newcommand{\LongRightArrowOf}[1]{\stackrel{#1}{\longrightarrow}}
\newcommand{\StructureSheaf}[1]{{\mathscr O}_{#1}}
\newcommand{\rank}{{\rm rank}}
\newcommand{\Pic}{{\rm Pic}}
\newcommand{\Ext}{{\rm Ext}}
\newcommand{\Tor}{{\rm Tor}}
\newcommand{\Hom}{{\rm Hom}}
\newcommand{\End}{{\rm End}}
\newcommand{\Lotimes}{\stackrel{L}{\otimes}}
\newcommand{\SheafHom}{{\mathscr H}om}
\newcommand{\SheafEnd}{{\mathscr E}nd}
\newcommand{\SheafExt}{{\mathscr E}xt}
\newcommand{\SheafTor}{{\mathscr T}or}
\newcommand{\Ideal}[1]{{\mathscr I}_{#1}}
\newcommand{\id}{{\mathbb 1}}
\newcommand{\Wedge}[1]{\stackrel{#1}{\wedge}}
\newcommand{\DoubleTilde}[1]{\stackrel{\approx}{#1}}
\begin{document}
\title[Integral Transforms and Deformations]
{Integral Transforms and Deformations of K3 Surfaces}
\date{\today}
\author{E. Markman}
\address{Department of Mathematics and Statistics, 
University of Massachusetts, Amherst, MA 01003}
\email{markman@math.umass.edu}
\author{S. Mehrotra}
\address{Facultad de Matem\'aticas, PUC Chile, Av. Vicu\~na Mackenna 4860, Santiago, Chile; 
Chennai Mathematical Institute, H1, SIPCOT IT Park, Siruseri, Kelambakkam 603103, India}
\email{smehrotra@mat.uc.cl}

\begin{abstract}
Let $X$ be a $K3$ surface and $M$ a smooth and projective moduli space of 
stable sheaves on $X$ of Mukai vector $v$. A universal sheaf $\U$ over $X\times M$ induces an
integral transform $\Phi_\U:D^b(X)\rightarrow D^b(M)$ from the derived category of coherent sheaves on $X$
to that on $M$. 
\begin{enumerate}
\item
We prove that $\Phi_\U$ is faithful. $\Phi_\U$ is not full if the dimension of $M$ is $\geq 4$.
\item
We exhibit  the full subcategory of $D^b(M)$, consisting of objects in the image of $\Phi_\U$, 
as the quotient of a category, explicitly constructed from $D^b(X)$, 
by a natural congruence relation defined in terms of the Mukai vector $v$. 
\item
\label{abstract-item-deformability}
Let $\fM^0$ be a component of the moduli space of isomorphism classes of marked 
irreducible holomorphic symplectic manifolds deformation equivalent to the Hilbert scheme $X^{[n]}$ of $n$ points on a $K3$ surface $X$, $n\geq 2$. $\fM^0$ is $21$-dimensional, while the moduli of K\"{a}hler $K3$ surfaces is $20$-dimensional. 
We construct a geometric deformation of the derived categories of $K3$ surfaces over a Zariski dense open subset of
$\fM^0$, which coincides with $D^b(X)$ whenever the marked manifold 
is a moduli space of sheaves on $X$ satisfying a technical condition. 
\end{enumerate}
Statement (\ref{abstract-item-deformability}) assumes Conjecture \ref{vanishing}
asserting that the dimension, of the first sheaf cohomology of a reflexive hyperholomorphic sheaf with an isolated singularity, remains constant along twistor deformations. The conjecture is known to hold for hyperholomorphic vector \nolinebreak bundles.
\end{abstract}

\maketitle

\tableofcontents 
%
\section{Introduction}
\label{sec-introduction}
%
\subsection{A faithful functor}
\label{sec-faithful-functor}
Let $X$ be a projective $K3$ surface, $H$
an ample line bundle on $X$, and $M:=M_H(v)$ the
(coarse) moduli space of Gieseker-Simpson 
$H$-stable sheaves $\E$ on $X$ whose Chern classes are specified by the vector $v=v(\E):=ch(\E)\sqrt{td_X}$ in
the integral cohomology of $X$. 
$M$ is a smooth holomorphic-symplectic variety, by \cite{mukai-symplectic}. 
We shall assume here that there aren't any strict semi-stable sheaves 
of class $v$, so that $M$ is projective.  There is a numerical criterion
which guarantees this: it amounts to choosing $H$ to be $v${\em-generic}, that is,
in a region complementary to a countable union of suitable hyperplanes depending on $v$. 
While $X\times M$ need not carry a universal sheaf, there always exists a 
quasi-universal sheaf on it (see the appendix of \cite{mukai-tata}). In other words, there is a twisted sheaf
 $\U$ on $X\times M$ which is universal locally on $M$ in the etale or analytic topology
(see \cite{Cal-thesis}).
 
Write $\pi_{X}$ and $\pi_M$ for the two projections from $X\times M$, and let
$\Phi_\U: D^b(X)\to D^b(M,\theta)$ be the integral transform 
\begin{equation}
\label{eq-phi-U}
\Phi_\U(\_) \ \ \ := \ \ \ \pi_{M,*}(\pi_X^*(\_)\otimes \U). 
\end{equation}
Here $\theta$ is the class of $\U$ in 
the Brauer group of $M$, $D^b(M,\theta)$ stands for the bounded derived category 
of $\theta$-twisted coherent sheaves on $M$, and $\pi_{M,*}$ and $\pi_X^*$ are
derived functors appropriately defined in this context.
We refer the reader to \cite{Cal-thesis} for a careful discussion of this formalism.
Suffice it to say that most of the familiar properties of duality and the usual functorial calculus for 
derived categories of coherent sheaves continue to hold here. In particular, it follows by Serre
duality for twisted sheaves that $\Phi_\U$ has a right adjoint $\Psi_\U(\_):=
\pi_{X,*}(\pi_M^*(\_)\otimes \U^{\vee})[2]$.

The composition $\Psi_\U \circ \Phi_\U : D^b(X) \to D^b(X)$ is the integral transform
with kernel 
\begin{equation}
\label{eq-kernel-A}
\A:= {\pi_{13, *}}(\pi_1^*\omega_X\otimes \pi_{12}^*(\U)^\vee\otimes\pi_{23}^*(\U))[2] \in D^b(X\times X),
\end{equation}
where $\pi_{ij}$ is the projection from $X\times M \times X$ onto the product  
of the $i$-th and $j$-th factors.

The {\em Mukai lattice}  $\widetilde{H}(X,\Integers)$ of the $K3$ surface $X$ is the 
integral cohomology $H^*(X,\Integers)$ endowed with the {\em Mukai pairing}
$(u,v):=-\int_Su^\vee\cup v$, where the duality operator $u\mapsto u^\vee$ changes the sign of the direct summand in $H^2(X,\Integers)$.
If $u=ch(E)\sqrt{td_X}$ and $v=ch(F)\sqrt{td_X}$ are the Mukai vectors of two coherent sheaves $E$ and $F$ on $X$,
then $(u,v)=-\chi(E^*\otimes F)$, by Hirzebruch-Riemann-Roch, where the dual $E^*$ and the tensor product are 
taken in the derived category, and $\chi$ is the Euler characteristic. The dimension of the moduli space $M:=M_H(v)$
discussed above is $2+(v,v)$.
Denote by $\Delta:X\rightarrow X\times X$ the diagonal embedding.

\begin{thm}
\label{introduction-thm-A-is-direct-sum}
(Theorem \ref{thm-A-is-direct-sum}).
Let $v\in \widetilde{H}(X,\Integers)$ be a primitive class with $(v,v)=2n-2$,
$n\geq 2$, and  $H$ a $v$-generic polarization. One has a natural morphism 
\[
\alpha: \bigoplus_{i=0}^{n-1} \Delta_*\ko_X\otimes_\ComplexNumbers 
\Ext^{2i}\left(\StructureSheaf{M},\StructureSheaf{M}\right)[-2i] \rightarrow \A.
\]
\begin{enumerate}
\item 
When $M:=M_H(v)$ is the Hilbert scheme of $n$ points on $X$ and $\U$ is the universal ideal sheaf the morphism 
$\alpha$ is an isomorphism. 
In particular, a choice of a non-zero element of the one-dimensional vector space 
$\Ext^{2}\left(\StructureSheaf{M},\StructureSheaf{M}\right)$
yields an isomorphism $\A\cong \oplus_{i=0}^{n-1} \Delta_*\ko_X[-2i].$
\item 
In general, for $v$ arbitrary, the structure sheaf of the diagonal  $\Delta_*\ko_X$ is a 
direct summand of $\A$ in $D^b(X\times X)$. In particular, the integral transform 
$\Phi_\U : D^b(X) \to \nolinebreak D^b(M,\theta)$ is faithful.
\end{enumerate}
\end{thm}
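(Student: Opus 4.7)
A class $\xi \in \Ext^{2i}(\ko_M, \ko_M)$ is a natural transformation $\id_{D^b(M)} \to \id_{D^b(M)}[2i]$ via cup product. Whiskering on either side with $\Phi_\U$ and $\Psi_\U$, then pre-composing with the unit of adjunction $\id_{D^b(X)} \to \Psi_\U \Phi_\U$, produces a natural transformation $\id_{D^b(X)}[-2i] \to \Psi_\U \Phi_\U$. By Yoneda this corresponds to a kernel morphism $\Delta_*\ko_X[-2i] \to \A$. Summing over a basis of $\Ext^{2i}(\ko_M, \ko_M)$ and over $0 \le i \le n-1$ defines $\alpha$; the $i=0$, $\xi=\id$ piece is exactly the unit itself.

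\textbf{Part (1).} With $M = X^{[n]}$ and $\U = \I_\Z$, the plan is to compute $\A$ directly. Using the tautological sequence
\[
0 \to \I_\Z \to \StructureSheaf{X \times X^{[n]}} \to \StructureSheaf{\Z} \to 0,
\]
express $\I_\Z$ as a two-term complex, expand $\pi_{12}^*\I_\Z^\vee \otimes \pi_{23}^*\I_\Z$ into pieces involving $\StructureSheaf{\Z}$, and compute each $\pi_{13,*}$ by base change along the incidence diagram in $X \times X^{[n]} \times X$. A cleaner alternative is to transport the whole composition through the Bridgeland-King-Reid equivalence $D^b(X^{[n]}) \simeq D^b_{\Sn}(X^n)$, in which $\I_\Z$ has a tractable $\Sn$-equivariant description and the adjoint composition becomes an equivariant integration on $X^n$. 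Either route yields the $\PP^{n-1}$-functor identity $\A \cong \bigoplus_{i=0}^{n-1}\Delta_*\ko_X[-2i]$, as in Addington. That $\alpha$ realizes this isomorphism amounts to checking that the map attached to $\xi = \sigma^i$ (the $i$-th power of the holomorphic symplectic form, generating $H^{2i}(M,\ko_M)$) inserts into the $i$-th graded summand, which can be read off from the explicit form of the unit on generators.

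\textbf{Part (2).} For general $M_H(v)$ I would argue by deformation. Yoshioka's deformation-equivalence theorem produces a smooth connected family of triples $(\X/B, \M/B, \U_B)$, with one fiber $b_0 \in B$ recovering the Hilbert-scheme situation of Part (1) and another recovering $(X, M_H(v), \U)$. The relative kernel $\A_B$ and relative morphism $\alpha_B$ vary as perfect complexes over $B$, so by Part (1), $\alpha_{b_0}$ is an isomorphism and therefore $\alpha_b$ is an isomorphism on a Zariski open $U \ni b_0$, giving the full decomposition there. To propagate the weaker summand statement to every fiber, I would examine the obstruction to splitting the unit map $\Delta_*\ko_X \to \A$, which lives in $\Hom_{X \times X}(\C, \Delta_*\ko_X[1])$ where $\C$ is the cone. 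At $b_0$, $\C_{b_0}$ is a sum of shifted $\Delta_*\ko_X[-2i]$ for $i \ge 1$ and the obstruction reduces to $\bigoplus_{i \ge 1}\Ext^{2i+1}_{X\times X}(\Delta_*\ko_X, \Delta_*\ko_X)$; by the HKR decomposition $\bigoplus_{p+q=k} H^p(X, \Lambda^q T_X)$ and the K3 vanishings $H^{\text{odd}}(X, \ko_X) = 0$ and $H^{\text{even}}(X, T_X) = 0$, every such group in odd total degree vanishes. A flatness/semicontinuity argument over $B$ then propagates the vanishing of the obstruction section to the full base, splitting the unit on every fiber. Faithfulness of $\Phi_\U$ is immediate from the resulting split injection $\id_{D^b(X)} \hookrightarrow \Psi_\U\Phi_\U$.

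\textbf{Main obstacle.} The crux is Part (1): making the composition $\Psi_{\I_\Z} \circ \Phi_{\I_\Z}$ explicit enough to see the graded decomposition, despite the non-smoothness of $\Z \to X^{[n]}$, and matching the graded pieces with the natural maps built from $\sigma, \sigma^2, \ldots, \sigma^{n-1}$. Part (2) is then a deformation-theoretic amplification whose main delicacies are the flatness of the kernel family and the HKR-type cohomological vanishing that propagates the splitting.
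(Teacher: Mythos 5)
Your construction of $\alpha$ and your Part (1) follow the paper's route: the paper also transports the computation through the Bridgeland--King--Reid equivalence $D^b(X^{[n]})\cong D^b_{\Sn}(X^n)$, uses Haiman's result that $R\hat{p}_*\hat{q}^*\I_{U_n}\cong\I_{D_n}$ together with Scala's computation of the $\Sn$-invariant multi-Tors to get the abstract splitting $\A\cong\oplus_{i=0}^{n-1}\Delta_*\ko_X[-2i]$, and then (Steps 2--4 of the proof, Claims \ref{claim-exact-triangle-g-e-f} and \ref{claim-t-M-to-the-power-n-1-is-an-isomorphism}) checks that multiplication by $t_M^i$ on the relative extension sheaves is an isomorphism so that the specific morphism $\alpha$ realizes the splitting. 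You correctly identify this matching as the main obstacle; be aware it is a genuine argument (a diagram chase through the restriction-to-the-diagonal triangle), not a read-off.

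Your Part (2), however, has a genuine gap. You propose to deform $(X,M_H(v),\U)$ to the Hilbert-scheme case and propagate the splitting of the unit from the fiber $b_0$ to the whole base. The semicontinuity available here goes the wrong way: the locus where $\alpha_b$ is an isomorphism is Zariski \emph{open} (this is exactly Lemma \ref{lem-A-splits-over-an-open-subset} in the paper), and nothing forces it to be all of $B$ --- indeed the paper never claims total splitness for arbitrary $v$, only sufficient conditions for it (Lemmas \ref{lemma-a-sufficient-condition-to-be-totally-split} and \ref{lemma-a-sufficient-condition-to-belong-to-U}). Your obstruction computation is circular at the decisive point: the vanishing of $\Hom(\C_b,\Delta_*\ko_{X_b}[1])$ via HKR uses that the cone $\C_b$ is a sum of shifts of $\Delta_*\ko_{X_b}$, which is known only on the open locus where $\alpha_b$ is already an isomorphism; at a fiber outside that locus you have no control over $\C_b$, and a relative Hom class vanishing on a dense open set of fibers can still be nonzero on the closed complement (the relative Hom sheaves can jump there). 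You also do not address the case where only a twisted universal sheaf exists. The paper's actual argument for Part (2) is direct and works fiber by fiber: splitting the unit $\eta:\ko_{\gd_X}\to\A$ is equivalent to surjectivity of $\Hom(\A,\ko_{\gd_X})\to\Hom(\ko_{\gd_X},\ko_{\gd_X})$, which by Serre duality and the functoriality of Hochschild homology reduces to injectivity of $\Phi_{\U,*}:HH_{-2}(X)\to HH_{-2}(M)$; via the HKR compatibility (Theorem \ref{HH-H-commute}) this map is identified with $[\varphi_\U]_2=-\theta_v$ on $H^{0,2}(X)$, which is injective by the Mukai--O'Grady--Yoshioka Hodge isometry (Theorem \ref{Yoshioka-main}). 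The twisted case is then handled by passing to a projective bundle over a blow-up of $M_H(v)$ to untwist $\U$. You should replace your deformation argument for Part (2) with this (or supply the missing closedness argument, which we do not believe is available).
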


Part (\ref{thm-item-A-is-a-direct-sum-Hilbert-scheme-case})
of Theorem \ref{thm-A-is-direct-sum} was independently proven by Nick Addington
\cite{addington}. 
The isomorphism $\A\cong \oplus_{i=0}^{n-1} \Delta_*\ko_X[-2i]$ was established for some moduli spaces of torsion sheaves 
on $K3$ surfaces in \cite{adm}.  
Further cases where $\alpha$ is an isomorphism
are provided in Lemmas \ref{lemma-a-sufficient-condition-to-be-totally-split} and 
\ref{lemma-a-sufficient-condition-to-belong-to-U}.
The proof of the first part of Theorem \ref{introduction-thm-A-is-direct-sum} 
relies heavily on Haiman's work \cite{Haiman-1, Haiman-2} on the 
$n!$-Conjecture  on one hand, and the derived McKay
correspondence of Bridgeland-King-Reid on the other \cite{BKR}. 
The second part makes use  of results of Mukai, O'Grady, and Yoshioka, 
which show that there is an
isomorphism of Hodge structures between $H^2(M_H(v),\Integers)$ and 
the orthogonal complement $v^\perp$ of the Mukai vector $v$ in
the Mukai lattice of  $X$ (\cite{mukai-sugaku,OG,yoshioka-abelian-surface}, see also Theorem \ref{Yoshioka-main} below).

%
\subsection{The full subcategory of $D^b(M,\theta)$ with objects coming from the surface}
Denote
by $\SU$ the full subcategory of $D^b(M,\theta)$ with objects of the form $\Phi_\U(x)$, for some object $x$ in $D^b(X)$. 
We provide next an explicit computation of $\SU$.
We will use the following standard construction in category theory.

\begin{defi}
\label{def-congruence-relation}
\cite[Section II.8]{working}.
Let $\C$ be a category.
\begin{enumerate}
\item
A {\em congruence relation} ${\mathfrak R}$ on $\C$ consists of an equivalence relation 
${\mathfrak R}_{x_1,x_2}$ on $\Hom(x_1,x_2)$, for every pair of objects $x_1$, $x_2$ in $\C$, satisfying the following property.
Given morphisms $f_1, f_2 : x_1\rightarrow x_2$ related by ${\mathfrak R}_{x_1,x_2}$,
objects $x_0$, $x_3$ of $\C$, and 
morphisms $e:x_0\rightarrow x_1$ and $g:x_2\rightarrow x_3$, the morphisms
$gf_1e$ and $gf_2e$ are related by ${\mathfrak R}_{x_0,x_3}$ in $\Hom(x_0,x_3)$.
\item
Let ${\mathfrak R}$ be a congruence relation on $\C$. The {\em quotient category} $\C/{\mathfrak R}$ 
is the category whose objects are those of $\C$ and such that 
$\Hom_{\C/{\mathfrak R}}(x_1,x_2):=\Hom_\C(x_1,x_2)/{\mathfrak R}_{x_1,x_2}$.
The natural functor $Q:\C\rightarrow \C/{\mathfrak R}$ is called the {\em quotient functor}.
\end{enumerate}
\end{defi}

An explicit computation of $\SU$ as a quotient category requires an explicit category $\C$ 
and an explicit relation ${\mathfrak R}$.
We take $\C$ to be the full subcategory $\Spi$  of $D^b(X\times \nolinebreak M)$
with objects of the form $\pi_X^*(x)$, for some object $x$ in $D^b(X)$. 
Set $\Hom^\bullet_{\Spi}(x,y):=\oplus_{i\in\Integers}\Hom_{\Spi}(x,y[i])$. The category $\Spi$ is explicit, since
\[
\Hom^\bullet_{\Spi}(x,y):=\Hom^\bullet_{D^b(X\times M)}(\pi_X^*x,\pi_X^*y)\cong 
\Hom^\bullet_{D^b(X)}(x,y)\otimes \Hom^\bullet_{D^b(M)}(\StructureSheaf{M},\StructureSheaf{M}).
\]

We describe next a congruence relation on $\Spi$.
Set ${\rm pt}:={\rm Spec}(\ComplexNumbers)$ and let $c:M\rightarrow {\rm pt}$ be the constant morphism. We get the object
$Y(\StructureSheaf{M}):=Rc_*\StructureSheaf{M}$ in $D^b({\rm pt})$. 
As a graded vector space $Y(\StructureSheaf{M})$ is 
$\oplus_{i=0}^n H^{2i}(M,\StructureSheaf{M})[-2i]$, where $n=\dim_\ComplexNumbers(M)/2$. 
Given a graded vector space $V$, 
let $\id_{D^b(X)}\otimes_\ComplexNumbers V$
be the endofunctor of $D^b(X)$ sending an object $x$ to
$x\otimes_\ComplexNumbers V$.
Set
\begin{eqnarray*}
\Upsilon & := & \id_{D^b(X)}\otimes_\ComplexNumbers Y(\StructureSheaf{M}),
\\
R & := & \id_{D^b(X)}\otimes_\ComplexNumbers H^{2n}(M,\StructureSheaf{M})[-2n].
\end{eqnarray*}
These two endofunctors are integral transforms with kernels
$\Y:=\Delta_*\StructureSheaf{X}\otimes _\ComplexNumbers Y(\StructureSheaf{M})$ and 
$\R:=\Delta_*\StructureSheaf{X}\otimes _\ComplexNumbers H^{2n}(M,\StructureSheaf{M})[-2n]$ in $D^b(X\times X)$.
Let $\pi_X:X\times M\rightarrow X$ be the projection. Note that the endo-functor $\Upsilon$ is naturally isomorphic to
$R\pi_{X_*}\circ\pi_X^*$. We thus have the adjunction isomorphism
\begin{equation}
\label{eq-adjunction-isomorphism-for-Upsilon}
\Hom_{\Spi}(x,y):=\Hom_{D^b(X\times M)}(\pi_X^*x,\pi_X^*y)\cong \Hom_{D^b(X)}(x,\Upsilon(y)).
\end{equation}
The congruence relation ${\mathfrak R}$ is defined in terms of a natural transformation
\[
h \ : \ R \rightarrow \Upsilon,
\]
which we define next. The natural transformation $h$ is 
induced by a morphism $h:\R\rightarrow \Y$.
Write $h=\sum_{i=0}^nh_{2i}$ according to the direct sum decomposition of $Y(\StructureSheaf{M})$,
so that 
\[
h_{2i} \ : \ \Delta_*(\StructureSheaf{X})\otimes_\ComplexNumbers H^{2n}(M,\StructureSheaf{M})[-2n]
\ \ \ \rightarrow \ \ \ 
\Delta_*(\StructureSheaf{X})\otimes_\ComplexNumbers H^{2n-2i}(M,\StructureSheaf{M})[2i-2n].
\]
Mukai's Hodge isometry (\ref{mukai-hom}) provides a canonical 
identification of $H^2(X,\StructureSheaf{X})$ with $H^2(M,\StructureSheaf{M})$.
It follows that the morphism $h_{2i}$ is naturally a class in
$\Hom_{X\times X}(\Delta_{X,*}(\StructureSheaf{X}),\Delta_{X,*}(\omega_X^{\otimes i})[2i])$, as we
carefully check in Section \ref{sec-Yoneda-Algebra}.
In particular, $h_2$ is a class in the Hochschild homology $HH_0(X)$.
The Hochschild-Kostant-Rosenberg isomorphism (reviewed in Section \ref{HH-coho}) maps
the Mukai vector $v$ to a class in $HH_0(X)$. We set $h_2$ to be the image of $v$, 
$h_0:=-1$, and $h_4:=-(h_2)^2$.  We necessarily have $h_{2i}=0$, for $i>2$, for dimension reasons. 

Given an object $x$ in $D^b(X)$, let 
\[
h_x:x\otimes_\ComplexNumbers H^{2n}(M,\StructureSheaf{M})[-2n]\rightarrow 
x\otimes_\ComplexNumbers Y(\StructureSheaf{M})
\] 
be the morphism induced by the natural transformation $h$. 
Consider the relation  ${\mathfrak R}$ on $\Spi$ given as follows. The morphisms $f_1$, $f_2$ in
$\Hom_{\Spi}(x_1,x_2)$ are related by ${\mathfrak R}_{x_1,x_2}$, if and only if $f_1-f_2$ belongs to the image of
the homomorphism
\begin{equation}
\label{eq-push-forward-by-h-x-2}
(h_{x_2})_*:\Hom_{D^b(X)}(x_1,x_2)\otimes H^{2n}(M,\StructureSheaf{M})[-2n]\rightarrow 
\Hom_{\Spi}(x_1,x_2)
\end{equation}
induced by composition with $h_{x_2}$. 
It is easy to check that ${\mathfrak R}$ is a congruence relation, as we do in the proof of Theorem
\ref{thm-quotient-category}.

We describe next the functor inducing the equivalence between $\SU$ and the quotient category $\Spi/{\mathfrak R}$.
Let $\Xi_\U:D^b(X\times M)\rightarrow D^b(M,\theta)$ be the composition of tensorization by $\U$ followed
by $R\pi_{M_*}$. Then $\Phi_\U=\Xi_\U\circ\pi_X^*$. 
Let $Q:\Spi\rightarrow \SU$ be the restriction of the functor $\Xi_\U$.

\begin{thm}
\label{thm-introduction-computation-of-the-monad-A}
(Theorem \ref{thm-quotient-category})
Assume that the morphism $\alpha$ in Theorem \ref{introduction-thm-A-is-direct-sum} is an isomorphism.
\begin{enumerate}
\item
${\mathfrak R}$ is a congurence relation.
\item
\label{thm-item-identification}
The kernel of 
$
Q:\Hom_{\Spi}(x,y)\rightarrow \Hom_{D^b(M,\theta)}(\Phi_\U(x),\Phi_\U(y))
$
is identified with the image of $(h_y)_*:\Hom_{D^b(X)}(x,R(y))\rightarrow \Hom_{D^b(X)}(x,\Upsilon(y))$
via the adjunction isomorphism (\ref{eq-adjunction-isomorphism-for-Upsilon}).
\item
\label{thm-item-Q-is-full-and}
The functor $Q$ is full. 
Consequently, $Q$ factors as the composition of the quotient functor $\Spi\rightarrow \Spi/{\mathfrak R}$
and an equivalence functor $\Spi/{\mathfrak R}\cong \SU$.
\end{enumerate}
\end{thm}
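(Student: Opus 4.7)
The plan is to reduce the statement to a computation in $D^b(X\times X)$ via the adjunctions $\pi_X^*\dashv R\pi_{X_*}$ and $\Phi_\U\dashv\Psi_\U$, and then fit the natural map arising from $Q$ into an exact triangle whose fiber is $R$ with fiber inclusion $h$. Part (1) is formal: since $h$ is defined by a natural transformation of endofunctors of $D^b(X)$, the subset of morphisms in the image of $(h_{x_2})_*$ inside $\Hom_{\Spi}(x_1,x_2)$ is closed under pre- and post-composition by morphisms in $\Spi$, because naturality in the target gives $g\cdot(h_{x_2})_*(\psi)\cdot e=(h_{x_3})_*(g\psi e)$ for $e:x_0\to x_1$ and $g:x_2\to x_3$.

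For parts (\ref{thm-item-identification}) and (\ref{thm-item-Q-is-full-and}), the adjunction isomorphism (\ref{eq-adjunction-isomorphism-for-Upsilon}) together with $\Phi_\U\dashv\Psi_\U$ identifies $Q$ with post-composition by the $y$-component $\nu_y$ of a natural transformation $\nu:\Upsilon\to A$, where $A=\Psi_\U\circ\Phi_\U$. Concretely $\nu$ is obtained by whiskering the unit $\id\to\Xi_\U^R\Xi_\U$ between $\pi_X^*$ and $R\pi_{X_*}$; on integral-transform kernels in $D^b(X\times X)$ it becomes a morphism $\nu:\Y\to\A$.

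The central step is to produce an exact triangle
\[
\R\stackrel{h}{\longrightarrow}\Y\stackrel{\nu}{\longrightarrow}\A\stackrel{\partial}{\longrightarrow}\R[1]
\]
in $D^b(X\times X)$ and identify the fiber map with the Mukai-vector-defined $h$. Under the hypothesis that $\alpha$ is an isomorphism, together with the standard Hodge decomposition of an irreducible holomorphic symplectic manifold, both $\Y$ and $\A$ decompose as $\bigoplus_i\Delta_*\ko_X\otimes V_i[-2i]$ with $\dim V_i=1$, indexed by $i=0,\ldots,n$ for $\Y$ and $i=0,\ldots,n-1$ for $\A$. The canonical identifications $\Ext^{2i}(\ko_M,\ko_M)\cong H^{2i}(M,\ko_M)$ show that the cokernel of $\nu$ is concentrated in the top summand $i=n$, so the fiber is isomorphic to $\R=\Delta_*\ko_X\otimes H^{2n}(M,\ko_M)[-2n]$. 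What remains is to match the fiber inclusion with $h$. Writing it in components $h_{2i}\in\Hom_{X\times X}(\Delta_*\ko_X,\Delta_*\ko_X[2i])$ (using $\omega_X=\ko_X$ since $X$ is a K3) and computing each from the explicit unit $\id\to\Xi_\U^R\Xi_\U$, the class $h_2$ is pulled back from a pairing against the Mukai vector of $\U$; under HKR (Section \ref{HH-coho}) and Mukai's Hodge isomorphism (\ref{mukai-hom}) this is precisely the image of $v$ in $HH_0(X)$. Dimension reasons kill $h_{2i}$ for $i\geq 3$, and multiplicativity in the Yoneda algebra forces $h_0=-1$ and $h_4=-h_2^2$. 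This identification is the main technical obstacle, and is where the Yoneda-algebra machinery developed earlier is essential.

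With the triangle established, apply $\Hom_{D^b(X)}(x,-)$. Exactness at $\Hom(x,\Upsilon(y))$ gives $\ker Q=\mathrm{image}\,(h_y)_*$, proving part (\ref{thm-item-identification}). For fullness it suffices to show that $(\nu_y)_*$ is surjective, equivalently that the boundary $\Hom(x,A(y))\to\Hom(x,R(y)[1])$ vanishes. Using the decomposition of $A(y)$, each summand contributes a copy of $\Ext^{1-2n+2i}(y,y)$ for $i=0,\ldots,n-1$; since $1-2n+2i\leq -1<0$, all such groups vanish for any $y\in D^b(X)$, and the boundary is trivially zero. Hence $Q$ is full, and factors as the quotient functor $\Spi\to\Spi/{\mathfrak R}$ followed by an equivalence $\Spi/{\mathfrak R}\cong\SU$.
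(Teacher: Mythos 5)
Your overall strategy is the paper's: identify $Q$ with post-composition by the natural transformation $q\colon\Upsilon\to\Psi_\U\Phi_\U$ via the two adjunctions, complete $q$ to an exact triangle $\R\to\Y\to\A\to\R[1]$ of kernels in $D^b(X\times X)$, show the triangle splits, and read off parts (\ref{thm-item-identification}) and (\ref{thm-item-Q-is-full-and}) from the resulting long exact sequence. However, your fullness argument contains a step that fails as written. You claim the boundary map vanishes because its components land in $\Ext^{1-2n+2i}(y,y)$ for $0\leq i\leq n-1$ and ``all such groups vanish for any $y\in D^b(X)$'' since the degrees are negative. Negative self-extensions of an arbitrary object of a derived category do not vanish: for $y=\StructureSheaf{X}\oplus\StructureSheaf{X}[1]$ one has $\Hom(y,y[-1])\supseteq\Hom(\StructureSheaf{X},\StructureSheaf{X})\neq 0$. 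The correct argument (and the one the paper uses in Proposition \ref{thm-annihilator-of-A-in-Y}) works at the level of kernels: the connecting morphism $\partial\colon\A\to\R[1]$ lives in $\bigoplus_{i=0}^{n-1}\Hom_{X\times X}\left(\Delta_*\StructureSheaf{X}[-2i],\Delta_*\StructureSheaf{X}[1-2n]\right)=\bigoplus_{i=0}^{n-1}HH^{1+2i-2n}(X)$, which vanishes because the Hochschild cohomology of $X$ vanishes in negative (indeed in all odd) degrees. Hence $\partial=0$, the triangle splits in $D^b(X\times X)$, and $(q_y)_*$ is surjective for every $x,y$ simultaneously; this is what gives fullness.

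Two further points. First, the identification of the fiber map with the specific $h$ of the theorem, in particular that $h_2$ corresponds to the Mukai vector $v$ under HKR, is asserted rather than proved; in the paper this is Theorem \ref{thm-universal-class-h-in-Hochschild-cohomology-of-X}(\ref{thm-item-h-2}) and occupies the trace computations of Sections \ref{sec-traces}--\ref{subsection-structure-of-yoneda-algebra} together with a density argument for the transcendental part, so it cannot be dismissed as a pullback of known machinery. Without it you obtain \emph{some} split triangle with fiber $\R$, but not the statement that $\ker Q$ is the image of the particular $(h_y)_*$ defining $\mathfrak{R}$. Second, your part (1) argument via ``naturality in the target'' only applies to morphisms of $\Spi$ pulled back from $D^b(X)$; a general morphism of $\Spi$ involves the factor $Y(\StructureSheaf{M})$, and closure of the relation under composition with $1\otimes t_M$ requires the identity $\tau\circ h=-h\circ\tilde h_2$, which uses the explicit shape $h_0=-1$, $h_4=-(h_2)^2$, $h_{2i}=0$ for $i>2$ (this is how the paper proves Theorem \ref{thm-quotient-category}(\ref{thm-item-congruence-relation})). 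Alternatively, once part (\ref{thm-item-identification}) is established, part (1) is automatic since the kernel of a functor is always a congruence relation.
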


As a consequence we obtain the description of the Yoneda algebra of
$\Phi_\U(F)$, for a simple sheaf $F$ on $X$, as the quotient of 
the tensor product of the Yoneda algebras of $F$ and $\StructureSheaf{M}$ by 
the principal ideal generated by an explicit relation
(Theorem \ref{thm-the-yoneda-algebra-ofPhi-U-of-a-simple-sheaf}). 
Related results were obtained by A. Krug for the Hilbert scheme of points on a projective surface \cite{krug}.

Let us describe the key ingredient in the proof of parts 
(\ref{thm-item-identification}) and (\ref{thm-item-Q-is-full-and}) of Theorem 
\ref{thm-introduction-computation-of-the-monad-A}.
The equality $\Phi_\U=\Xi_\U\circ\pi_X^*$ gives rise to a natural transformation
$q:\Upsilon\rightarrow \Psi_\U\circ\Phi_\U$ given by $q:=R\pi_{X_*}\DoubleTilde{\eta}\pi_X^*$, where
$\DoubleTilde{\eta}:\id_{D^b(X\times M)}\rightarrow \Xi_\U^\dagger\circ \Xi_\U$ is the unit for the
adjunction $\Xi_\U \dashv \Xi_\U^\dagger$. The natural transformation $q$, in turn, is induced by a morphism 
of kernels $q:\Y\rightarrow \A$. We get an exact triangle in $D^b(X\times X)$, which admits a splitting
\begin{equation}
\label{eq-introduction-exact-triangle-of-h-q}
\R\RightArrowOf{h}\Y\RightArrowOf{q}\A \RightArrowOf{0} \R[1],
\end{equation}
by Proposition \ref{thm-annihilator-of-A-in-Y} and Theorem \ref{thm-universal-class-h-in-Hochschild-cohomology-of-X}. 
The  functor $Q$ of Theorem 
\ref{thm-introduction-computation-of-the-monad-A}
 is induced by the natural transformation $q:\Upsilon\rightarrow \Psi_\U\circ\Phi_\U$ in the sense
 that the following diagram commutes
 \[
 \xymatrix{
 \Hom_{D^b(X)}(x,\Upsilon(y)) \ar[r]^{q}  \ar[d]_{\cong} &
 \Hom_{D^b(X)}(x,\Psi_\U\Phi_\U(y)) \ar[d]^{\cong}
 \\
 \Hom_{D^b(X\times M)}(\pi_X^*x,\pi_X^*y) \ar[r]_{\Xi_\U} & 
 \Hom_{D^b(M,\theta)}(\Phi_\U(x),\Phi_\U(y)),
 }
 \]
where the vertical arrows are the adjunction isomorphisms (Theorem \ref{thm-determination-of-the-category-SU}).
Parts (\ref{thm-item-identification}) and (\ref{thm-item-Q-is-full-and}) of Theorem 
\ref{thm-introduction-computation-of-the-monad-A} follow immediately from the splitting of the exact 
triangle (\ref{eq-introduction-exact-triangle-of-h-q}). 

%
\subsection{A reconstruction of $D^b(X)$ as a category of comodules of a comonad}
\label{sec-introduction-reconstruction}
We shall be using the categorical device of (co)monads and (co)modules over them
in order to recover the category $D^b(X)$ in terms of data in $D^b(M\times M)$. 
The latter data will then be deformed with $M$ yielding non-commutative deformations of $D^b(X)$.
Let us briefly recall the necessary notions here. 
A detailed presentation can be found in Chapter VI of Mac Lane's text \cite{working}.
A {\em comonad} $\LL$ on a category $A$ is simply a comonoid
object in the functor category $End(A)$. Explicitly, $\LL$ is a triple 
$\langle L, \ee, \dd \rangle$ where 
$L: A \to A$ is an endofunctor, and the {\em counit} $\ee$, and {\em comultiplication} $\dd$ 
are natural transformations 
$$
\left.  \begin{array}{l} 
\epsilon: L \longrightarrow I,\\
\delta: L\longrightarrow L^2,
\end{array} \right.
$$
satisfying coassociativity:
\begin{equation}\label{comp-coass}
\xymatrix{
\ar[d]_{\delta}L \ar[r]^{\delta} & L^2 \ar[d]^{L\delta} \\
L^2 \ar[r]_{\delta L} & L^3}
\end{equation}
and the left and right counit laws:
\begin{equation}\label{comp-counit}
\xymatrix{
\ar@{=}[d] L \ar@{=}[r] & \ar[d]^{\delta} L \ar@{=}[r] & L \ar@{=}[d] \\
IL & \ar[l]^{\epsilon L} L^2  \ar[r]_{L \epsilon } & LI}
\end{equation}
Associated to any adjunction $F: X\to A$, $G:A\to X$, with $F\dashv G$, there is a natural
comonad with $L=FG: A \to A$, $\ee$ the counit of the adjunction, and 
$\delta=F\eta G: L\to L^2$, where $\eta$ is the unit of the adjunction.

A {\em comodule} for a comonad $\langle L, \eta, \delta \rangle$ is a
pair $(a, h)$ consisting of an object $a\in A$ and an arrow $h: a \to La$ such that
$\delta \circ h = Lh \circ h$ and $\epsilon \circ h = id$. A morphism $f: (a, h) \to (a', h')$
is an arrow $f\in \Hom_A(a,a')$ which renders commutative the diagram
$$\xymatrix{
\ar[d]_{f}a \ar[r]^{h} & La \ar[d]^{Lf} \\
a' \ar[r]_{h'} & La'.}
$$
The set of all $L$-comodules together with their morphisms form a category $A^L$. Finally,
{\em monads}, and {\em modules} over monads are simply the notions dual to those defined above.

Adjoint pairs of functor naturally give rise to (co)monads. In our situation,
let $L$ be the composition $\Phi_\U\circ \Psi_\U : D^b(M_H(v),\theta)\to D^b(M_H(v),\theta)$. 
The adjunction 
$\Phi_\U\dashv \nolinebreak \Psi_\U$ yields the unit $\eta:\Psi_\U\circ \Phi_\U {\rightarrow} id$, the 
counit $\ee: \Phi_\U\circ \Psi_\U {\rightarrow} id$, and the comultiplication
\begin{equation}
\label{eq-comultiplication}
\dd=\Phi_\U\eta\Psi_\U: L  {\rightarrow} L^2.
\end{equation}
These define a comonad
$\LL:=\langle L, \ee, \dd\rangle$ in $D^b(M_H(v),\theta)$. 
We get the category $\dbp{M_H(v),\theta}$ of comodules of $\LL$. 
Denote by $\overline{\Phi}_\U: \dbp{M_H(v),\theta} \to D^b(M_H(v),\theta)$ 
the forgetful functor $\overline{\Phi}_\U(\G,h)=\G$. For (co)monads given by
adjunctions, there is a natural {\em comparison functor} from the source category
to the category of (co)modules, which in our case will be
denoted as $\hat{\Phi}: D^b(X) {\longrightarrow} \dbp{M_H(v),\theta}$.
It takes $\H\in D^b(X)$ to the comodule
$(\Phi_\U(\H),\Phi_\U\eta_\H)$

The unit $ \eta: id {\rightarrow} \Psi_\U\circ\Phi_\U$ is split by 
Theorem \ref{introduction-thm-A-is-direct-sum}. Hence the triangulated version of the Barr-Beck Theorem 
\cite{elagin, bb} (see also \cite{bal2}) immediately yields:

\begin{prop} 
\label{prop-Phi-hat-is-an-equivalence}
The comparison functor 
$\hat{\Phi}: D^b(X)  \stackrel{\cong}{\longrightarrow} \dbp{M_H(v),\theta}$ 
is an equivalence of categories. 
In particular, it induces the structure of a triangulated
category on $\dbp{M_H(v),\theta}$ such that the forgetful functor  $\overline{\Phi}_\U$ is exact.
\end{prop}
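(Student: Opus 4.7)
The plan is to invoke the triangulated version of the Barr-Beck theorem, for which the only non-formal input is a splitting of the unit $\eta:\id_{D^b(X)}\to\Psi_\U\circ\Phi_\U$.

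First I would identify $\eta$ with a morphism of kernels. Since $\Psi_\U\circ\Phi_\U$ is the integral transform with kernel $\A$ of equation (\ref{eq-kernel-A}) and $\id_{D^b(X)}$ is the integral transform with kernel $\Delta_*\ko_X$, the unit $\eta$ is induced by a canonical morphism $\tilde\eta:\Delta_*\ko_X\to\A$ in $D^b(X\times X)$. Chasing through the construction of $\eta$ from the coevaluation pairing $\StructureSheaf{X\times M}\to\U\otimes\U^\vee$ together with Serre duality on $M$, one sees that $\tilde\eta$ is precisely the inclusion of $\Delta_*\ko_X$ as the canonical direct summand of $\A$ supplied by part (2) of Theorem \ref{introduction-thm-A-is-direct-sum}.

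Second, the projection onto this summand yields a morphism $\pi:\A\to\Delta_*\ko_X$ with $\pi\circ\tilde\eta=\id$. Passing back to natural transformations of integral transforms gives a retraction $r:\Psi_\U\circ\Phi_\U\to\id_{D^b(X)}$ with $r\circ\eta=\id$; that is, $\eta$ is a split monomorphism in the functor category. Both $\Phi_\U$ and $\Psi_\U$ are exact functors between the triangulated (twisted derived) categories, so all the hypotheses of the triangulated Barr-Beck theorem \cite{elagin, bb, bal2} are met. It immediately follows that $\hat\Phi:D^b(X)\to\dbp{M_H(v),\theta}$ is an equivalence of categories.

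Finally, the triangulated structure on the comodule category is the one transported from $D^b(X)$ along the equivalence $\hat\Phi$, and since $\overline{\Phi}_\U\circ\hat\Phi=\Phi_\U$ is exact, the forgetful functor $\overline{\Phi}_\U$ is automatically exact with respect to this structure. The main obstacle is not a hard computation but a compatibility check: one must verify that the natural transformation $\eta$ coming from the abstract adjunction $\Phi_\U\dashv\Psi_\U$ really does correspond, at the level of kernels, to the specific splitting of $\A$ that Theorem \ref{introduction-thm-A-is-direct-sum} produces. Once this matching is in place, the remainder of the argument is a formal invocation of Barr-Beck.
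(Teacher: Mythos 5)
Your proposal is correct and follows the paper's own (very brief) argument: the unit $\eta$ is split because $\Delta_*\ko_X$ is a direct summand of $\A$ by Theorem \ref{introduction-thm-A-is-direct-sum}(2), and the triangulated Barr--Beck theorem then gives the equivalence. The compatibility check you flag as the main obstacle is in fact automatic, since the paper's proof of that theorem establishes the direct-summand statement precisely by exhibiting a retraction of the unit morphism $\eta:\ko_{\gd_X}\to\A$ itself.
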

\noindent This captures $D^b(X)$ in terms of structures defined only on $M_H(v)$.

%
\subsection{A deformation of  $K3$-categories}
The deformations of $M_H(v)$ arising from those of the underlying K3 surface
$X$ form a 20-dimensional locus in the 21-dimensional Kuranishi deformation space of $M_H(v)$.
Thus the generic deformation of $M_H(v)$ is {\em not} a moduli space of sheaves.
The next theorem interprets these as arising from
``non-commutative'' perturbations of $X$. More precisely, we construct
deformations of $D^b(X)$ which correspond to those of $M_H(v)$
away from the moduli space locus.

Let $\F\in D^b(M_H(v)\times M_H(v),\pi_1^*\theta^{-1}\pi_2^*\theta)$ be the kernel of the endofunctor
$L$.
The object $\F$ plays a prominent role in the study of the geometry and cohomology
of the moduli spaces $M_H(v)$ (see \cite{mukai-tata, KLS, markman-diagonal}). 
Let $\pi_{ij}$ be the projection from $M_H(v)\times X\times M_H(v)$ onto the product of the $i$-th and $j$-th factors.
$\F$ is a complex with cohomology concentrated in degrees $-1$ and $0$:
\begin{equation}\label{coho-of-F}
\H^i(\F)= \H om_{\pi_{13}}(\pi_{12}^*\U, \pi_{23}^*\U[i])
              \cong\left\{  \begin{array}{ll} \ko_{\gd_{M_H(v)}} & \mbox{if} \; i=0,\\
               \E & \mbox {if} \; i=-1, 
               \end{array} \right.
\end{equation}
where $\E$ is a twisted reflexive sheaf  of rank $(\dim M_H(v) -2)$. 
\begin{defi}
\label{def-modular}
We shall refer to 
$\F$ as the {\em modular complex} of $M_H(v)$, 
and to $\E$ as the {\em modular sheaf} of $M_H(v)$ to emphasize
their origin.  The sheaf of Azumaya algebras $\E nd (\E)$ will be referred to as the 
{\em modular Azumaya algebra} of $M_H(v)$. 
\end{defi}

Let $X$ be a $K3$ surface with a trivial Picard group and 
let $\F\in D^b(X^{[n]}\times X^{[n]})$ be the kernel of the endofunctor
$L$, so that its square $L^2$ has kernel the convolution $\F \circ \F$. 
By Proposition 5.1 of \cite{cal-mukai1}, the counit $\ee$ and the comultiplication 
$\dd$ of $L$
correspond to morphisms of objects $\F \to \ko_\gd$ and $\F \to \F \circ \F$, respectively;
denote these by $\ee$ and $\dd$ also. Thus we have the comonad object 
\begin{equation}
\label{eq-the-comonad-object-that-deforms}
\langle\F,\ee,\dd\rangle
\end{equation}
in $D^b(X^{[n]}\times X^{[n]})$ representing the comonad 
$\LL$.

We recall here some of the results obtained in \cite{torelli} which are required for the statement of 
the deformability of the comonad object given  in Equation (\ref{eq-the-comonad-object-that-deforms})
(Theorem \ref{deformability}). 
A holomorphic symplectic compact K\"{a}hler manifold $M$ is said to be of {\em $K3^{[n]}$-type},
if it is deformation equivalent to the Hilbert scheme $X^{[n]}$ of length $n$ subschemes of a $K3$ surface $X$.
The second integral cohomology of $M$ is endowed with a non-degenerate integral symmetric bilinear pairing 
of signature $(3,20)$ called the {\em Beauville-Bogomolov-Fujiki pairing} \cite{beauville-varieties-with-zero-c-1}. 
Fix a lattice $\Lambda$ isometric to $H^2(X^{[n]},\Integers)$.
A {\em marking} for such $M$ is an isometry 
$\eta:  H^2(M,\ZZ)\to \Lambda$.
Let $\Az$ be a reflexive sheaf of Azumaya algebras on $M\times M$ that is infinitesimally rigid,
slope-stable with respect to  some K\"ahler class $\omega$ on M, and having the same numerical 
invariants as the modular  Azumaya algebra $\E nd(\E)$ above (see \cite[Section 1]{torelli}).
In particular, $c_2(\Az)$ is monodromy invariant, under the diagonal action of the monodromy group of $M$
on $H^4(M\times M,\Integers)$, and hence remains of Hodge type under every smooth K\"{a}hler deformation
of $M$.
Associated to a K\"{a}hler class $\omega$ on $M$ is a twistor deformation $\pi:\mathcal{X}\to \PP^1_\omega$, 
where $\PP^1_\omega$ is the smooth conic defined in the complex projective plane
$\PP(H^{2,0}(M)\oplus H^{0,2}(M)\oplus \ComplexNumbers\omega)$ via the Beauville-Bogomolov-Fujiki pairing.
The $\omega$-slope-stability of $\Az$ and the invariance of $c_2(\Az)$ imply that the sheaf $\Az$ 
is $\omega$-stable hyperholomorphic in the sense of Verbitsky 
\cite{kaledin-verbitski-book}, which means that it 
deforms to a reflexive sheaf of Azumaya algebras $\fAz$ over the fiber square
$\mathcal{X} \times_{\PP^1_\omega} \mathcal{X}$ of the twistor family. 
We shall be only interested in reflexive sheaves $\Az$ which 
additionally satisfy a technical condition  on their singularities spelled out 
in \cite[Condition 1.6]{torelli}, but which we do not state in full here.

\begin{conj}\cite[Conj. 1.12]{torelli}\label{vanishing}
Let $M$ be an irreducible homolorphic symplectic manifold, $\omega$ a K\"{a}hler class on $M$, and
$E$ a reflexive $\omega$-slope-stable hyperholomorphic sheaf on $M$ with an isolated singularity. 
Assume that $H^1(X,E)=0$. 
Denote by $(\mathcal{X}_t,E_t)$, $t\in\PP^1_\omega$, the twistor deformation of $(M,E)$. 
Then $H^1(\mathcal{X}_t,E_t)=0$, for all $t\in\PP^1_\omega$.
\end{conj}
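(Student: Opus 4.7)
My plan is to extend Verbitsky's twistor invariance theorem for hyperholomorphic vector bundles (\cite{kaledin-verbitski-book}) to the reflexive-with-isolated-singularity setting, by resolving the sheaf singularity and reducing to the known vector bundle case. The starting point is that, since the hyperholomorphic connection on $E$ is simultaneously holomorphic for every complex structure in the twistor sphere, the Fitting ideals defining the singular locus of $E$ are $SU(2)$-invariant, so the isolated singularity $p$ deforms to a holomorphic section $s:\PP^1_\omega\to\mathcal{X}$, and $\mathcal{E}$ restricts on each fiber to a reflexive sheaf $E_t$ with a unique singularity at $p_t:=s(t)$. Coherence of $R^1\pi_*\mathcal{E}$ on $\PP^1_\omega$ together with upper semicontinuity and the hypothesis $H^1(M,E)=0$ immediately give vanishing of $H^1(\mathcal{X}_t,E_t)$ on a Zariski open neighborhood $U\subset\PP^1_\omega$ of $0$; the essential task is to extend this vanishing across $\PP^1_\omega\setminus U$.

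The main step is to construct a resolution $\sigma:\tilde{\mathcal{X}}\to\mathcal{X}$, obtained by iteratively blowing up Fitting ideals along the section $s$, such that $\tilde{\mathcal{E}}:=\sigma^*\mathcal{E}/(\mathrm{torsion})$ is locally free and $\sigma$ is an isomorphism over $\mathcal{X}\setminus s(\PP^1_\omega)$. Because $\mathcal{E}$ is reflexive and $s$ has codimension $\geq 2$, the identity $\mathcal{E}=j_*j^*\mathcal{E}$ (for $j$ the complementary open immersion) combined with the corresponding statement on the resolution yields $\sigma_*\tilde{\mathcal{E}}=\mathcal{E}$; a careful Leray argument controlling the higher direct images $R^{>0}\sigma_*\tilde{\mathcal{E}}$ --- supported along $s$, and which I would expect to vanish in cohomological degree $\leq 1$ for dimension reasons --- should then give
\begin{equation*}
H^1(\mathcal{X}_t,E_t)\;\hookrightarrow\;H^1(\tilde{\mathcal{X}}_t,\tilde{E}_t)
\end{equation*}
for every $t\in\PP^1_\omega$. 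This reduces the problem to establishing the analogous vanishing for the locally free sheaf $\tilde{E}_t$ on $\tilde{\mathcal{X}}_t$.

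The main obstacle is that $\tilde{\mathcal{X}}$ is not the twistor space of any hyperk\"ahler manifold --- the blowup destroys the quaternionic structure --- so Verbitsky's theorem does not apply to $\tilde{\mathcal{E}}$ directly. One workaround is to prove that the family $H^1(\tilde{\mathcal{X}}_t,\tilde{E}_t)$ is governed by deformation-theoretic data pulled back from the hyperk\"ahler total space $\mathcal{X}$, where the $SU(2)$-action on harmonic representatives forces $t$-invariance of ranks. An alternative and, in my view, more natural approach avoids the resolution entirely and works with $L^2$ Dolbeault cohomology of $E$ on the smooth locus $M\setminus\{p\}$ equipped with its hyperk\"ahler metric: Verbitsky's $SU(2)$-action on $L^2$-harmonic forms then provides the desired twistor invariance, at the cost of needing to prove an analytic identification $H^1(\mathcal{X}_t,E_t)\cong H^1_{L^2}(\mathcal{X}_t\setminus\{p_t\},E_t)$ near the isolated singularity. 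This local analytic identification --- or an algebraic substitute via the resolution --- is where I expect the bulk of the difficulty to lie, and is what distinguishes the reflexive case from the bundle case where a standard compact Hodge-theoretic argument suffices.
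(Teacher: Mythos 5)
This statement is not proved in the paper: it is stated as Conjecture \ref{vanishing} (Conjecture 1.12 of \cite{torelli}), the paper's main deformation results (Theorems \ref{thm-torelli} and \ref{deformability}) are explicitly conditional on it, and the only case the authors cite as settled is Verbitsky's theorem for locally free $E$ \cite[Cor. 8.1]{verbitsky-1996}. So there is no proof in the paper to compare yours against, and what you have written is, by your own admission, a reduction strategy rather than a proof: you yourself flag that the decisive step "is where I expect the bulk of the difficulty to lie." That difficulty is precisely the content of the conjecture, so the proposal does not close the argument.

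Concretely, the gap is this. Your blow-up reduction at best yields an injection $H^1(\mathcal{X}_t,E_t)\hookrightarrow H^1(\tilde{\mathcal{X}}_t,\tilde{E}_t)$ (this follows from the five-term Leray sequence once one knows $\sigma_{t*}\tilde{E}_t\cong E_t$, so the appeal to vanishing of $R^{>0}\sigma_*$ "for dimension reasons" is both unjustified and unnecessary), but an injection into $H^1$ of a bundle on $\tilde{\mathcal{X}}_t$ proves nothing unless the target is shown to have $t$-independent dimension, and $\tilde{\mathcal{X}}_t$ carries no hyperk\"ahler metric, so the $SU(2)$-action on harmonic forms --- the entire engine of Verbitsky's proof in the locally free case --- is unavailable there; you offer no substitute. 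The $L^2$ alternative requires (a) an admissible Hermitian--Einstein metric on the reflexive sheaf whose singular behavior at $p_t$ is uniform over the twistor sphere, and (b) the identification $H^1(\mathcal{X}_t,E_t)\cong H^1_{L^2}(\mathcal{X}_t\setminus\{p_t\},E_t)$; neither is established, and (b) is exactly the local analytic input whose absence is why the statement remains a conjecture. Until one of these two routes is actually carried out, the argument establishes only what semicontinuity already gives, namely vanishing on a Zariski open (hence cofinite) subset of $\PP^1_\omega$.
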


The above conjecture  is a theorem of Verbitsky when $E$ is locally free \cite[Cor. 8.1]{verbitsky-1996}. 

\begin{thm}
\label{thm-torelli}
Assume that Conjecture \ref{vanishing} holds.
\begin{enumerate}
\item 
\cite[Theorem 1.8]{torelli} 
There exists a coarse moduli space $\widetilde{\fM}_\Lambda$ 
of triples $(M,\eta,\Az)$ as above which is a non-Huasdorff complex manifold. The period map 
$$\widetilde{P}: \widetilde{\fM}_\Lambda \to 
\{x\in \PP[\Lambda\otimes \CC] : (x,x)=0, (x,\overline{x})>0\}$$
given by $(X,\eta,\Az)\mapsto \eta(H^{2,0}(M))$ is a surjective local analytic isomorphism.
\item
\cite[Theorem 1.9]{torelli} The restriction of the period map to each connected component $\widetilde{\fM}_\Lambda^0$
of $\widetilde{\fM}_\Lambda$  is generically injective in the following sense.
When the Picard group of $M$ is trivial, or cyclic generated by a class of non-negative Beauville-Bogomolov-Fujiki degree, 
then a point $(M,\eta,\Az)$ of  $\widetilde{\fM}_\Lambda^0$ is
the unique point of $\widetilde{\fM}_\Lambda^0$ in the corresponding fiber of $\widetilde{P}$.
\end{enumerate}
\end{thm}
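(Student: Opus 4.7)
The plan is to mimic the existing Torelli-type framework for irreducible holomorphic symplectic manifolds of $K3^{[n]}$-type (Verbitsky--Huybrechts) but with the enriched data of a triple $(M,\eta,\Az)$, the additional datum $\Az$ being transported along deformations by hyperholomorphic deformation theory. Both parts of the theorem are quoted from the companion paper \cite{torelli}, so the role of the present sketch is to indicate how its proof assembles.

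For part (1), I would construct $\widetilde{\fM}_\Lambda$ by gluing local Kuranishi families. Given a marked pair $(M,\eta)$, infinitesimal rigidity of $\Az$ means $\mathrm{Ext}^1(\Az,\Az)^{\mathrm{tr}=0}$ vanishes, so deformations of $\Az$ are determined by those of $M\times M$. Therefore the local moduli of the triple has the same dimension $21$ as that of $(M,\eta)$, and $\widetilde{P}$ is at worst a local immersion between equidimensional analytic spaces. Local surjectivity of $\widetilde{P}$ comes from hyperholomorphic deformation: for each K\"ahler class $\omega$, the hyperholomorphic hypothesis produces a reflexive extension $\fAz$ over $\mathcal{X}\times_{\PP^1_\omega}\mathcal{X}$, hence a lift of the twistor line to $\widetilde{\fM}_\Lambda$; these lines cover an open subset of the period domain in every tangential direction, so $\widetilde{P}$ is a local analytic isomorphism. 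Global surjectivity is then deduced from surjectivity of the period map for marked $K3^{[n]}$-type manifolds combined with the fact that any period point is reached by a composition of twistor rotations along which $\Az$ deforms. The Hausdorff failure arises because distinct components of $\widetilde{\fM}_\Lambda$ can meet over inseparable period points, exactly as in the classical case.

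For part (2), generic injectivity on a component $\widetilde{\fM}_\Lambda^0$ is obtained from Verbitsky's global Torelli theorem together with the Picard-group hypothesis. When $\mathrm{Pic}(M)$ is trivial or cyclic with a generator of nonnegative Beauville--Bogomolov--Fujiki degree, the birational K\"ahler cone of $M$ coincides with its K\"ahler cone, so the usual inseparability phenomenon caused by birational models disappears and $(M,\eta)$ is determined up to isomorphism by its period. Rigidity and matching numerical invariants of $\Az$ then force the two lifts $\Az_1,\Az_2$ on the same underlying $M$ to be isomorphic compatibly with the markings, giving a single point of $\widetilde{\fM}_\Lambda^0$ over the given period.

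The principal obstacle, and the reason the conjectural Conjecture \ref{vanishing} enters, is the passage from local freeness to reflexivity for the modular sheaf $\E$ underlying $\Az$. For locally free hyperholomorphic sheaves, Verbitsky's theorem guarantees that the twistor deformation stays locally free and that all cohomology remains of $(p,p)$-type with constant ranks along $\PP^1_\omega$. For a reflexive sheaf with isolated singularity this is not automatic: one needs to rule out the appearance of $H^1$ in fibers where the singular locus could deform into higher codimensional strata, and this is precisely what Conjecture \ref{vanishing} asserts. Absent a general proof, one must check the conjecture at least along one representative twistor line per component of $\widetilde{\fM}_\Lambda$ and then propagate the conclusion; this is the technical core of \cite{torelli} and the reason the theorem carries the conditional clause.
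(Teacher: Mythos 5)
This theorem is not proved in the present paper: both parts are quoted verbatim from the companion paper \cite{torelli} (parts (1) and (2) cite Theorems 1.8 and 1.9 there, respectively), so there is no internal proof to compare your sketch against. Your outline is broadly consistent with the strategy the surrounding text attributes to \cite{torelli} --- local Kuranishi gluing, infinitesimal rigidity of $\Az$ to keep the moduli $21$-dimensional, lifting of twistor lines via hyperholomorphicity to get local and global surjectivity of $\widetilde{P}$, and the Hodge-theoretic global Torelli theorem under the Picard hypothesis (a non-negative BBF class cannot be a wall divisor, so the K\"ahler cone is the full positive cone and the marked pair is determined by its period within the component).

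Two points in your sketch need correction. First, in part (2) the uniqueness of $\Az$ over a fixed $(M,\eta)$ is not a consequence of infinitesimal rigidity together with matching numerical invariants; rigidity only controls local deformations and does not identify two a priori distinct objects. The mechanism actually used is slope-stability: an $\omega$-slope-stable Azumaya algebra with the prescribed invariants is unique up to isomorphism (this is \cite[Lemma 5.3]{torelli}, invoked in exactly this way in the proof of Lemma \ref{lemma-a-sufficient-condition-to-belong-to-U} of the present paper). Second, your final paragraph misstates the role of Conjecture \ref{vanishing}: it is a standing hypothesis of the theorem, assumed globally, not something one ``checks along one representative twistor line per component and then propagates.'' Its function is to supply, for reflexive hyperholomorphic sheaves with isolated singularities, the constancy of $h^1$ along twistor deformations that Verbitsky's theorem provides only in the locally free case; absent a proof of the conjecture, the theorem remains conditional, and no component-by-component verification is claimed in \cite{torelli}.
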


Fix a component $\widetilde{\fM}_\Lambda^0$ of $\widetilde{\fM}_\Lambda$ 
containing a point of the form $(X^{[n]}_0, \eta_0, \Az_0)$, where $X_0$ is a $K3$ surface with a trivial Picard group 
and $\Az_0$ is the modular Azumaya algebra of the Hilbert scheme $X^{[n]}_0$. The modular Azumaya algebra $\Az_0$
is $\omega$-slope-stable, with respect to every K\"{a}hler class $\omega$ on $X_0^{[n]}\times X_0^{[n]}$, by 
\cite{markman-stability}.
Denote by 
\begin{equation}
\label{dense-subset-of-modular-Azumaya-algebras}
{\mathcal Hilb} \ \ \subset \ \ \widetilde{\fM}_\Lambda^0
\end{equation}
the subset consisting of triples $(X^{[n]}, \eta, \Az)$, where $X$ is a $K3$ surface with a trivial 
Picard group, $X^{[n]}$ is its Hilbert scheme, and $\Az$ is the modular Azumaya algebra.
By a {\em Zariski open subset} of an analytic space we mean the complement of a closed analytic subset.
In view of Proposition \ref{prop-Phi-hat-is-an-equivalence}, 
the following theorem is the deformation result mentioned above. 

\begin{thm}
\label{deformability}
Assume that Conjecture \ref{vanishing} holds.
There exists a Zariski dense open subset $U\subset \widetilde{\fM}_\Lambda^0$, containing ${\mathcal Hilb}$,
a universal family $\pi:\MM\rightarrow U$ of irreducible holomorphic symplectic manifolds, and a Brauer class $\Theta$ 
of order $2n-2$ over the fiber square 
$\MM^2:=\MM\times_U \MM$ with the following properties.
\begin{enumerate}
\item \label{deformability-of-monad} 
The triple $\langle \F, \ee, \dd \rangle$, given in Equation
(\ref{eq-the-comonad-object-that-deforms}), 
deforms to a 
comonad object\footnote{
By a comonad object we mean that the convolutions $\fF\circ \fF \circ \cdots \circ \fF$ are well defined and are all objects of
$D^b(\MM^2,\Theta)$. Furthermore, the counit $\fee:\fF\rightarrow \StructureSheaf{\Delta_\MM}$ 
and comultiplication $\fdd:\fF\rightarrow \fF\circ\fF$ satisfy the axioms of a comonad. 
} 
$\overline{\LL}:=\langle \fF, \fee, \fdd \rangle$ in $D^b(\MM^2,\Theta)$. 
\item
\label{thm-item-restriction-of-Brauer-class-over-contractible-V}
Given an open subset $V$ of $U$, denote by $\MM^2_V$ the restriction of $\MM^2$ to $V$ and by $\Theta_V$  the restriction of $\Theta$ to $\MM^2_V$. Define $\MM_V$ and $\fF_V$ similarly.
When $V$ is contractible and Stein, there exists a Brauer class $\theta$ over $\MM_V$,
such that $\Theta_V=\pi_1^*\theta^{-1}\pi_2^*\theta$, so that $\fF_V$ induces an endo-functor of  
$D^b(\MM_V,\theta)$.
\item 
\label{thm-item-triangulated-structure}
The category of comodules $\fdbp{\MM_V,\theta}$
carries a 2-triangulated structure\footnote{A 2-triangulated category is an additive category satisfying
all the axioms of a Verdier triangulated category, except the octahedral axiom.
See \S\ref{subsec-the-triangulated-structure-on-the-category-of-comodules}.}
such that the forgetful functor 
$\fdbp{\MM_V,\theta} \to D^b(\MM_V,\theta)$ is 2-exact. The same holds for the category $D^b(\MM_u,\theta_u)^{\overline{\LL}_u}$
of comodules in $D^b(\MM_u,\theta_u)$ for the fiber $\MM_u$ of $\pi$ over each point $u$ in $U$. 
\item
\label{thm-item-K3-category}
$D^b(\MM_u,\theta_u)^{\overline{\LL}_u}$ is a $K3$-category in the sense that the shift $[2]$ is a Serre functor.
\item
\label{thm-item-monodromy-invariance}
The open set $U$ is large in the following sense. 
Let $M$ be an irreducible holomorphic symplectic manifold of $K3^{[n]}$-type, whose Picard group has rank $\leq 20$. 
Then there exists 
an Azumaya algebra $\Az$ over $M\times M$, such that the triple $(M,\eta,\Az)$ belongs to $U$.
\end{enumerate}
\end{thm}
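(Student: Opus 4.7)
The proof splits naturally into three tasks: (a) deforming the modular sheaf $\E$, and thence the complex $\F$, to a twisted reflexive hyperholomorphic object $\fF$ over $\MM^2$; (b) deforming the comonad structure morphisms $\ee$ and $\dd$ and checking that the coassociativity and counit diagrams persist; and (c) transferring the triangulated Barr--Beck machinery and the $K3$-category structure to the deformed setting and establishing the largeness claim.

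For task (a), the plan is to start at a point $(X_0^{[n]},\eta_0,\Az_0)\in{\mathcal Hilb}$, where $\Az_0$ is $\omega$-slope-stable hyperholomorphic by \cite{markman-stability}. Theorem \ref{thm-torelli}, together with Conjecture \ref{vanishing}, then provides the universal family $\pi:\MM\to\widetilde{\fM}_\Lambda^0$ and a global deformation $\fAz$ of $\Az_0$ over $\MM^2$; taking $\Theta$ to be its gerbe class, the twisted reflexive sheaf $\fE$ with $\SheafEnd(\fE)\cong\fAz$ exists, and $\fF$ is reconstructed from the natural exact triangle $\fE[1]\to\fF\to\StructureSheaf{\Delta_\MM}\to\fE[2]$. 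The extension class governing this triangle must also be shown to deform, which reduces to constancy of a specific $\Ext^2$ along twistor lines and follows from Conjecture \ref{vanishing} applied to the pertinent reflexive hyperholomorphic sheaves on $\MM^2$.

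For task (b), the morphisms $\fee\in\Hom(\fF,\StructureSheaf{\Delta_\MM})$ and $\fdd\in\Hom(\fF,\fF\circ\fF)$ are sections of Hom sheaves formed from reflexive hyperholomorphic sheaves on $\MM^2$ and $\MM^3$. Conjecture \ref{vanishing} guarantees that their fiber dimensions remain constant along twistor families; combined with semi-continuity this produces a Zariski dense open $U$ containing ${\mathcal Hilb}$ on which the relevant Hom sheaves are locally free. The basepoint morphisms $\ee_0,\dd_0$ then extend to global sections $\fee,\fdd$. The diagrams (\ref{comp-coass}) and (\ref{comp-counit}) are identities in Hom groups of constant rank that hold on the dense subset ${\mathcal Hilb}$, so they persist over $U$ by a rigidity argument: the difference of the two sides of each diagram is a section of a vector bundle vanishing on a dense subset, hence identically zero.

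For task (c), the splitting $\Theta_V=\pi_1^*\theta^{-1}\pi_2^*\theta$ over contractible Stein $V$ comes from vanishing of the relevant higher analytic cohomology of $\StructureSheaf{\MM_V}^*$ and a K\"unneth-type decomposition of relative Brauer classes. The 2-triangulated structure on $\fdbp{\MM_V,\theta}$ follows from the triangulated Barr--Beck theorem of \cite{elagin, bb} once $\StructureSheaf{\Delta_\MM}$ is known to be a direct summand of $\fF$, which is the deformation of the second part of Theorem \ref{introduction-thm-A-is-direct-sum} and holds because the splitting data live in Hom groups of constant dimension. The $K3$-category claim, that $[2]$ is a Serre functor on $\fdbp{\MM_u,\theta_u}$, holds at points of ${\mathcal Hilb}$ by Proposition \ref{prop-Phi-hat-is-an-equivalence}, and a dimension/continuity argument spreads it over $U$. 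Finally, part (\ref{thm-item-monodromy-invariance}) follows from surjectivity of $\widetilde{P}$ in Theorem \ref{thm-torelli}: any $M$ of $K3^{[n]}$-type has period matched by some $(M',\eta',\Az')\in\widetilde{\fM}_\Lambda^0$, and the Picard rank bound together with local injectivity of $\widetilde{P}$ forces $M\cong M'$. The main obstacle will be task (b): showing that Conjecture \ref{vanishing} controls not just the modular sheaf but the precise morphisms encoding the comonad, and that all three structural diagrams truly persist under deformation.
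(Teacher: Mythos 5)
Your overall strategy for part (\ref{deformability-of-monad}) --- deform $\E$ via the Torelli machinery for triples, rebuild $\fF$ from the extension triangle, and propagate the structure morphisms and their axioms from the dense locus ${\mathcal Hilb}$ using rigidity of the relevant Hom line bundles --- is essentially the paper's. Two caveats there: the paper controls the Hom sheaves of $\fF$ by ordinary semicontinuity together with the explicit computations at Hilbert-scheme points (Lemmas \ref{lemma-F-is-simple-and-rigid} and \ref{ext-line-bundle-2}), not by Conjecture \ref{vanishing}, whose role is confined to the input Theorem \ref{thm-torelli}; and you never address the cocycle identity $\Pi_{12}^*(\Theta)\Pi_{23}^*(\Theta)=\Pi_{13}^*(\Theta)$ of Equation (\ref{eq-Brauer-class-allows-convolution}), without which the convolutions $\fF\circ\fF\circ\cdots\circ\fF$ are not even objects of $D^b(\MM^2,\Theta)$; this requires the Leray spectral sequence argument of Step 5 of Proposition \ref{prop-construction-of-universal-object-F}, which also yields part (\ref{thm-item-restriction-of-Brauer-class-over-contractible-V}).

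The genuine gaps are in parts (\ref{thm-item-triangulated-structure}) and (\ref{thm-item-monodromy-invariance}). For (\ref{thm-item-triangulated-structure}) you invoke triangulated Barr--Beck, but that theorem transfers a triangulation from the \emph{source} of an adjunction realizing the comonad, and at a general point of $U$ there is no $K3$ surface and no adjoint pair realizing $\overline{\LL}$ --- producing one is exactly what the construction avoids. Moreover the splitting you cite, that $\StructureSheaf{\Delta_\MM}$ be a direct summand of $\fF$, is false: $\fF$ is by construction a \emph{non-split} extension of $\StructureSheaf{\Delta_\MM}$ by $\fE[1]$. The correct tool is Balmer's separability criterion (Theorem \ref{balmer-triangulated}): one must exhibit a natural retraction $\ol{\hat{\dd}}$ of the comultiplication $\fdd$ satisfying (\ref{equivalent-cond-sep}), which the paper builds from the splitting $\ol{\mathfrak{m}}:\fF\otimes\ol{\lambda}_n\IsomRightArrow\fF\circ\fF$ of Remark \ref{rem-F-convolved-with-F-splits} and then verifies by density. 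For (\ref{thm-item-monodromy-invariance}), surjectivity and generic injectivity of the period map only produce \emph{some} triple in $\widetilde{\fM}_\Lambda^0$ with the prescribed period; nothing forces that triple to lie in the open set $U$, and generic injectivity is anyway available only for trivial or special cyclic Picard groups, not under the hypothesis $\rank\Pic(M)\leq 20$. The missing ingredient is Verbitsky's ergodicity theorem: one first arranges $U$ to be invariant under the monodromy group $G$ (Theorem \ref{thm-monodromy-invariance-of-U}) and then uses density of the $G$-orbit of $(M,\eta)$ in $\fM_\Lambda^0$ to conclude that $(M,\eta)$ itself lies in $\phi(U)$. A similar concern applies to part (\ref{thm-item-K3-category}): Serre duality at a fiber outside ${\mathcal Hilb}$ cannot be ``spread by continuity'' from Proposition \ref{prop-Phi-hat-is-an-equivalence}; the paper instead deforms the kernel-level isomorphism $\tau^*\fF\cong\fF^\vee[2]$ to obtain the adjoint of $L$, establishes the Serre functor on free comodules categorically, and passes to the idempotent completion.
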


Part \ref{deformability-of-monad} of the theorem is proven in section \ref{subsec-deformability-of-the-monad},
part \ref{thm-item-restriction-of-Brauer-class-over-contractible-V} in Remark 
\ref{rem-restriction-of-Brauer-class-over-contractible-V},  
part \ref{thm-item-triangulated-structure} in section 
\ref{subsec-the-triangulated-structure-on-the-category-of-comodules},
part \ref{thm-item-K3-category} in section \ref{subsection-k3-category},
and part \ref{thm-item-monodromy-invariance} in section
\ref{subsec-monodromy-invariance}.

\hide{
\begin{rem}
\label{rem-M-rather-than-M-prime}
Part (\ref{thm-item-monodromy-invariance}) of the above theorem can probably be strengthened by stating that 
$\Az$ exists over $M\times M$, avoiding the passage to a bimeromorphic model $M'$. 
The proof of the stronger version follows from the density result \cite[Cor. 4.9]{verbitsky-ergodicity}  for monodromy orbits
in $\fM^0_\Lambda$, while the current weaker version uses a weaker density result
\cite[Theorem 4.7]{verbitsky-ergodicity} for monodromy orbits in the period domain. 
While we believe that  \cite[Cor. 4.9]{verbitsky-ergodicity} follows
from \cite[Theorem 4.7]{verbitsky-ergodicity}, we could not complete the argument missing 
in \cite{verbitsky-ergodicity}.
\end{rem}
}

Let $X$ be an algebraic $K3$-surface with Picard rank less than 20. In Section \ref{Toda-comp} we show that
the deformations of $X$ constructed in the above theorem via those of the Hilbert scheme $X^{[n]}$ 
may be interpreted infinitesimally as deformations of the category of coherent sheaves  $Coh(X)$ (see
\cite{toda-deformations}).
In fact, this family of deformations is 
the maximal family of generalized (non-commutative and gerby) deformations along which ideal sheaves of length $n$ subschemes
deform as objects of $Coh(X)$. Similar statements are true of deformations coming from those of 
$M_H(v)$ 
provided the triple $y=(M_H(v), \eta, \Az)$, with $\Az$ the modular Azumaya algebra of $M_H(v)$
(of Definition \ref{def-modular}), belongs to $\widetilde{\fM}_\Lambda^0$.

There are natural homomorphisms
$$
\xymatrix{
HH^2(X) \ar[r]^{\U\circ\_\hspace{1cm}} & \Hom_{X\times M}(\U,\U[2]) & 
\ar[l]_{\hspace{1cm}\_\circ \U} HH^2(M)
}
$$
from the Hochschild cohomologies of $X$ and $M$.
The left homomorphism is an injection, while the right is an isomorphism. Inverting
the right arrow and composing defines a homomorphism:
$$
\phi^{HH}: HH^2(X)\to HH^2(M).
$$
Set $HT^2(X):=H^0(\wedge^2TX)\oplus H^1(TX)\oplus H^2(\StructureSheaf{X})$ and similarly for $M$.
Conjugating $\phi^{HH}$ with the Hochschild-Kostant-Rosenberg isomorphisms yields a map
$\phi^T: HT^2(X) \to HT^2(M).$ 
For any class $t\in HT^2(X)$, let
$Coh(X,t)$ denote the first-order deformation of the category of coherent
sheaves of $X$ in the direction $t$ (see the general construction by Toda \cite{toda-deformations}). 
Given a tangent vector $\xi$ at a point $u$ in the open subset $U$ of Theorem \ref{deformability},
denote by $\MM_\xi$ the first order deformation of the fiber $\MM_u$ over the length $2$ subscheme of $U$ corresponding to 
$\xi$. Let $\overline{\LL}_\xi$ be the restriction of the comonad data $\overline{\LL}$ to 
$\MM_\xi$. Recall that Mukai vectors of objects of $D^b(X)$ are naturally elements of $HH_0(X)$ and that the
Hochschild homology $HH_*(X)$ is an $HH^*(X)$-module \cite{cal-mukai1}. 
Let $ann(v^\vee) \subset HT^2(X)$ be the image via the HKR-isomorphism of the subspace of $HH^2(X)$ annihilating  
the dual of the Mukai vector $v$. 

\begin{thm}\label{thm-comparison}
Keep the notation of Theorem \ref{introduction-thm-A-is-direct-sum} and 
assume that the morphism $\alpha$ in Theorem \ref{introduction-thm-A-is-direct-sum} is an isomorphism.
\begin{enumerate}
\item\label{directions} 
The image $\phi^{T}(ann(v^\vee))$  is the following subspace of $HT^2(M)$:
$$ \{(\xi,\theta) \ : \ \xi\in H^1(TM), \ \theta\in H^2(\StructureSheaf{M}), \ \mbox{and} \  \ 
\xi\cdot c_1(\alpha)+(2-2n)\theta=0\}.$$
\item\label{abelian-category}  
Let $\overline{\phi}^T : HT^2(X)\to H^1(TM)$ be the composition 
of $\phi^T$ with the projection $HT^2(M) \to  H^1(TM)$. Then $\overline{\phi}^T$ restricts to $ann(v^\vee)$
as an isomorphism onto $H^1(TM)$.
Fix a class $t \in ann(v^\vee)$ and set $\xi:=\overline{\phi}^T(t)$.
The comonad category 
$D^b(\MM_\xi)^{\overline{\LL}_{\xi}}$
of Theorem \ref{deformability} (\ref{thm-item-triangulated-structure}) 
is a triangulated category equivalent to the derived category $D^b(Coh(X,t))$. 
\end{enumerate}
\end{thm}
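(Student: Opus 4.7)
The plan is to compute $\phi^T$ explicitly in HKR coordinates using the Mukai vector of the quasi-universal sheaf $\U$, and then identify the resulting comonad deformation with Toda's deformation of $Coh(X)$ via a Kodaira--Spencer matching argument.

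For part (\ref{directions}), one unwinds $\phi^{HH}$ using Caldararu's description of the action of an integral kernel on Hochschild cohomology. The right arrow $\_\circ\U:HH^2(M)\to\Ext^2(\U,\U)$ is an isomorphism because $\U$ is quasi-universal on the smooth moduli space $M$, so that its first $\Ext$ measures deformations of $\U$, which by universality factor through deformations of $M$. Decomposing $\phi^{HH}$ componentwise under HKR reduces the computation to contracting each summand of $HT^2(X)=H^0(\wedge^2 TX)\oplus H^1(TX)\oplus H^2(\StructureSheaf{X})$ with the Atiyah class of $\U$. The annihilator condition $t\cdot v^\vee=0$ in $HH_0(X)$, via identifying $v$ with the Mukai vector of a fiber $\U_m$ together with a Hirzebruch--Riemann--Roch calculation, translates into vanishing of the obstruction class $\mathrm{at}(\U)\cup\phi^T(t)\in\Ext^2(\U,\U)$; writing this in HKR coordinates on $M$ produces the stated relation $\xi\cdot c_1(\alpha)+(2-2n)\theta=0$, the coefficient $2-2n=-(v,v)$ arising from $\chi(\U_m,\U_m)$.

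For the isomorphism assertion in (\ref{abelian-category}), dimensions match since $\dim_\ComplexNumbers ann(v^\vee)=\dim HT^2(X)-1=21=\dim H^1(TM)$. Injectivity of $\overline{\phi}^T|_{ann(v^\vee)}$ follows from part (\ref{directions}): any $t$ in the kernel has $\phi^T(t)$ supported in $H^0(\wedge^2 TM)\oplus H^2(\StructureSheaf{M})$; the relation from part (\ref{directions}) then forces $\theta=0$ (using $n\geq 2$), and the remaining Poisson component is pinned down by a parallel calculation using that $\U$ is an honest sheaf, whose Poisson deformations vanish unless the Poisson summand of $t$ itself does.

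For the categorical equivalence, both $D^b(\MM_\xi)^{\overline{\LL}_\xi}$ and $D^b(Coh(X,t))$ are first-order deformations of $D^b(X)$: the former via Proposition \ref{prop-Phi-hat-is-an-equivalence}, the latter by Toda's construction. The strategy is to match their Kodaira--Spencer classes. By definition of $\phi^T$, the KS class of $\overline{\LL}_\xi$ as a deformation of the comonad $\LL$ equals $\phi^T(t)\in HH^2(M)$, and transporting along $\hat{\Phi}$ yields the class $t\in HH^2(X)$. Since Toda's $Coh(X,t)$ is characterized by precisely this KS class, functoriality of Toda's construction produces the desired equivalence. The main obstacle is the dg/$A_\infty$-compatibility: Toda works with dg enhancements and Maurer--Cartan deformations while the comonad side is formulated triangulated-categorically. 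Ensuring that $\hat{\Phi}$ lifts to a dg equivalence along which $\overline{\LL}_\xi$ transports to a deformation with the predicted KS class, and verifying that the comonad deformation is $t$-exact so that one may pass to an abelian heart, constitute the most delicate technical steps.
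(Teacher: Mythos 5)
There is a genuine gap, concentrated in the categorical equivalence of part (\ref{abelian-category}); part (\ref{directions}) is closer to workable but also has a misidentified mechanism. On part (\ref{directions}): the vanishing of the obstruction $(-\check{u}\boxplus\phi^T(u))\cdot\exp a(\U)$ holds for \emph{every} $u\in HT^2(X)$, simply because $\phi^{HH}(I^*_X(u))\circ\U=\U\circ I^*_X(u)$ by the very definition of $\phi^{HH}$ (this is Lemma \ref{lem-U-deforms}); so the condition $t\in ann(v^\vee)$ cannot be equivalent to that vanishing, contrary to what you assert. What $ann(v^\vee)$ actually controls is which graded piece of $HT^2(M)$ the image lands in: the relevant obstruction computations are for the objects $\Phi_\U(E^\vee[2])$ (Mukai vector $\alpha$, rank $2-2n$, giving the relation $\xi\cdot c_1(\alpha)+(2-2n)\theta=0$) and the skyscraper $\CC_m$ (killing the $H^0(\wedge^2TM)$ component) --- equivalently, the paper identifies $\phi^{HH}(ann(v^\vee))$ with ${\rm ann}\{\alpha,\beta\}$ using the module structure of $HH_*$ over $HH^*$ and the maps $\Phi_*,\Psi_*$ (Lemmas \ref{lemma-image-of-phi-HH-commutes-with-Phi-Psi} and \ref{lemma-image}). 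Your argument as written only gives containment of the image in the stated subspace; equality needs either the paper's hyperplane comparison or at least a dimension count using injectivity of $\phi^{HH}$, which you should make explicit. Your justification that $\_\circ\U$ is an isomorphism on $HH^2$ via ``deformations of $\U$ factor through deformations of $M$'' is a degree-one heuristic; the paper proves it by applying $\Hom(\_,\ko_{\gd_M}[2])$ to the triangle $\E[1]\to\F\to\ko_{\gd_M}$ and using Proposition \ref{resofE}.

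The serious problem is the equivalence $D^b(\MM_\xi)^{\overline{\LL}_\xi}\simeq D^b(Coh(X,t))$. Matching ``Kodaira--Spencer classes'' does not yield a proof here: Toda's functoriality (Theorem \ref{toda-main}) applies only to Fourier--Mukai \emph{equivalences}, and $\Phi_\U$ is not one; moreover there is no uniqueness principle asserting that two deformations of the triangulated category $D^b(X)$ with ``the same class in $HH^2(X)$'' are equivalent, and the comodule category is not presented as a deformation of a dg enhancement in any sense that would make such a transport meaningful. The paper's route, which your plan does not contain, is to deform the \emph{kernel}: produce $\fU_1\in D^b(X\times M,-\check{u}\boxplus w)$ by Toda--Lowen's criterion, form the adjunction $\Phi_{\fU_1}\dashv\Psi_{\fV_1}$ over the dual numbers, check that its unit is split (by computing $\Hom(\ko_{\ol{\gd}_X},\fV_1\circ\fU_1)$ from the triangle over $\CC[\eee]/(\eee^2)$ and the total splitting of $\A$), apply Barr--Beck to get $D^b(X,u)\simeq D^b(M,w)^{\LL_1'}$, and finally identify $\LL_1'$ with $\overline{\LL}_\xi$ using the infinitesimal rigidity of $\E$ and $\ko_{\Delta_M}$, the uniqueness of the extension $\fE[1]\to\fF\to\ko_{\Delta_\MM}$, and the uniqueness of the comultiplication as a section (Lemmas \ref{lem-direction-of-deformtion} and \ref{ext-line-bundle-2}). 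Also, your worry about passing to an abelian heart is beside the point: the statement is an equivalence of triangulated categories with $D^b(Coh(X,t))$, and no $t$-structure on the comodule side is needed.
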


\hide{
%
\subsection{Old introduction}

Let $X$ be a $K3$ surface with an ample line bundle $H$,
and suppose $F$ is an $H$-stable coherent sheaf on $X$. 
First order generalized deformations of $X$ are parametrized 
by the $22$-dimensional Hochschild cohomology 
\[
HH^2(X)=H^2(X,\StructureSheaf{X})\oplus H^1(X,T_X)\oplus H^2(X,\Wedge{2}{T_X}).
\]
The subspace of this vector space parametrizing
directions along which $F$ also deforms is the kernel of 
the homomorphism
\[
a_F \ : \ HH^2(X) \ \longrightarrow \ \Ext^2(F,F)
\]
given by cupping with the Atiyah class of $F$ \cite[Theorem 1.1]{lowen}.
Note that $\Ext^2(F,F)\cong \Hom(F,F)^*$ is one dimensional, by 
the stability of $F$, Serre duality, and the triviality of 
the canonical line-bundle of $X$. Thus, one expects a $21$-dimensional space 
$\mathbb{D}$ of 
generalized deformations of the $K3$ surface $X$ over which the pair $(X,F)$ 
deforms. The aim of this work is to give a construction of this family of 
deformations of $X$.

There is a very natural candidate for the space $\mathbb{D}$.
Let $v\in K(X)$ be the class of $F$ in the topological Grothendieck ring, and
$M:=M_H(v)$ the moduli space
of $H$-stable sheaves on $X$ of class $v$. 
In the rank $1$ case of ideal
sheaves of length $n$ zero dimensional subschemes, this
is nothing but the Hilbert scheme $X^{[n]}$. More generally, if
$\dim(M))=2n$, $M$ is known to be a smooth and projective
holomorphic-symplectic variety (for a suitable choice of the polarization $H$), 
deformation equivalent to the Hilbert scheme $X^{[n]}$.  
The Kuranishi deformation space $\Def(M)$ of K\"{a}hler deformations of $M$ is smooth,
by the Bogomolov-Tian-Todorov Theorem.
If $\dim(M)\geq 4$, then $\dim[Def(M)]=21$, while $\dim[Def(X)]=20$. 

Consider the universal family $\fM$ of (commutative) deformations of $M$.
\[
\xymatrix{
M \ar[d] & \subset& \fM \ar[d]_{\pi}
\\
0 & \in & \Def(M).
}
\]
We describe below a universal deformation of the derived category $D^b(X)$, as a triangulated category, over 
the $21$-dimensional space $\Def(M)$, and construct a universal object $\F$ over the $(21+2n)$-dimensional
$\fM$, such that $\F$ is the universal generalized-deformation of $F$.

\medskip

{\bf Reconstruction of $D^b(X)$ from an endo-functor of $D^b(M)$.}
By a {\em deformation of the derived category} 
$D^b(X)$ we mean a holomorphic deformation of the data, needed to define this category,
in such a way that the deformed data defines a triangulated category. 
We need to replace the data $(S,\StructureSheaf{S})$ by data which involve only the
moduli space $M$. The main ingredient is an endo-functor $L$ of $D^b(M)$, which we now define.
Let $\pi_S$ and $\pi_M$ be the projections from $S\times M$.
Assume, for simplicity, that a universal sheaf $\U$ exists over $S\times M$.
We get the {\em Fourier-Mukai functor  
$\Phi_\U:D^b(S)\rightarrow D^b(M)$ with kernel $\U$}, given by 
\[
\Phi_\U(F) \ \ := \ \ R\pi_{M_*}(\pi_S^*(F)\Lotimes \U).
\]
Denote by $\Psi_{\U}:D^b(M)\rightarrow D^b(S)$ its right adjoint 
and consider the composition
\[
L:=\Phi_\U\circ \Psi_\U:D^b(M)\rightarrow D^b(M).
\]

The Fourier-Mukai kernel $\F\in D^b(M\times M)$, of the endo-functor $L$, 
fits in an exact triangle 
$E[-1]\rightarrow \F\rightarrow \StructureSheaf{\Delta} \rightarrow E,$
where $\Delta\subset M\times M$ is the diagonal and 
$E$ is the relative extension sheaf 
\begin{equation}
\label{eq-relative-Ext-1-over-M-times-M}
E:=\SheafExt^1_{\pi_{13}}\left(\pi_{12}^*\U,\pi_{23}^*\U\right).
\end{equation}
$E$ is a reflexive sheaf of rank $\dim(M)-2$. 
The sheaf $E$ deforms to a sheaf\footnote{More precisely, the deformation is 
as a twisted sheaf, or equivalently, a deformation 
of the Azumaya algebra $\SheafEnd(E)$.} 
over the self product of every $X$ deformation equivalent 
to  $S^{[n]}$, $n:=\dim(M)/2$,
by \cite{markman-hodge}.
We show that 
the kernel $\F$ of $L$ similarly deforms to an object in the 
derived category of twisted sheaves over $\M_t\times \M_t$ of every fiber $\M_t$, $t\in Def(M)$, of the Kuranishi family.
We get a deformation of the endo-functor $L$
to endo-functors of bounded derived categories of twisted sheaves
$D^b(\M_t,\theta)$, for a natural torsion Brauer class $\theta$. We will omit the technicality involved with the Brauer class 
in this introduction.

We show that the functor $\Phi_\U$ is {\em faithful}. Using this faithfulness we prove that  
the category $D^b(S)$ can be reconstructed from the endo-functor $L$
as follows. Much of the information, encoded by the adjoint pair $\Phi_\U\dashv \Psi_\U$, 
is captured by the data 
$\LL:=(L,\epsilon,\delta)$, of the endo-functor $L$ and two natural transformation: the counit
$\epsilon:L\rightarrow id$, and the comultiplication
$\delta:L\rightarrow L^2$.   
The data $\LL$ 
defines a {\em comonad} in $D^b(M)$, in the sense that it 
satisfies the coassociativity and the left and right counit laws.
Associated to this comonad is the category 
$D^b(M)^\LL$ of coalgebras for the comonad
\cite[section VI]{working}. 
The reconstruction of $D^b(S)$ is an extension to the case of triangulated categories of 
Beck's Theorem   \cite[Section VI.7]{working},
which states that when $\Phi_\U$ is faithful, then its natural lift
$\widetilde{\Phi}_\U:D^b(S)\rightarrow D^b(M)^\LL$
is an equivalence of categories.

Denote  the Kuranishi deformation space $Def(M)$ by ${\mathfrak D}$ for brevity.
The comonad data $\LL:=(L,\epsilon,\delta)$ deforms to a universal monad
$\widetilde{\LL}:=(\widetilde{L},\tilde{\epsilon},\tilde{\delta})$ over ${\mathfrak D}$
(including deformations  
along which the $K3$ surface $S$ does not deform). In other words, we extend the Fourier-Mukai kernel 
$\F$ of the endo-functor $L$ to a sheaf 
$\overline{\F}$ over $\M\times_{\mathfrak D}\M$, and similarly extend the counit and comultiplications
as morphisms of Fourier-Mukai kernels.
We deform the triangulated structure of $D^b(S)$ to a triangulated structure on 
the comodules categories, using recent results of P. Balmer.
This is the sense, in which the category $D^b(M)^\LL$, which is equivalent to $D^b(S)$, deforms along 
every  deformation of $M$. 

{\bf The universal object:} 
The pair $(\overline{\F},\bar{\delta})$ may be viewed as the
universal object over $\M$, deforming the pair $(S,F)$, in the following sense. 
Let $F_t$ be a point  in the fiber  $\M_t$ of $\M$ over $t\in {\mathfrak D}$. 
Then the restriction of $(\overline{\F},\bar{\delta})$ to
$\M_t\times \{F_t\}$  is an object of $D^b(\M_t)^{\LL_t}$.
At a point $F\in M$ of the central fiber,
the restriction of the kernel $\F\in D^b(M\times M)$ to 
$M\times \{F\}$ is isomorphic to $\Phi_\U(F^\vee)$ (up to a shift). Restriction of the comultiplication $\delta$
provides the lift $\widetilde{\Phi}_\U(F^\vee)$ to an object of the category $D^b(M)^\LL$ of comodules.
Thus, the pair $(\widetilde{\F},\tilde{\delta})$ represents the desired  universal object.

}

%
\subsection{Notational conventions}  
\label{sec-notation-and-terminology}
We shall be working throughout over the complex numbers. 
The spaces that we deal with will be denoted by roman letters, while their deformations  
will be denoted by the corresponding calligraphic letters. For instance, if $X$ is a K3 surface, then
$\mathcal{X}$ will stand for a flat family with $X$ as its central fiber. 
Azumaya algebras will be denoted by fraktur letters, such as $\Az$.
Sheaves, and more generally, 
complexes of sheaves in the derived category will be denoted by script letters, while their 
deformations will be denoted by the same letter decorated with an over-line: for example, $\E$ 
and $\overline{\E}$. 
The same notational convention for deformations will be followed for Azumaya algebras and for 
morphisms between complexes.

Given schemes or analytic spaces $X$ and $Y$, and a morphism $f:X \to Y$, we 
denote by $f^*:D^b(Y) \to D^b(X)$ the left derived functor of the pullback 
functor from $Coh(Y)$ to $Coh(X)$. When $f$ is proper $f_*:D^b(X) \to D^b(Y)$  will 
denote the right derived functor of the direct image functor. Occasionally, we will use 
the notation $Lf^*$ and $Rf_*$ for the same functors to emphasize their derived nature.


{\bf Acknowledgments:} We thank Ivan Mirkovic for pointing out to us the relevance of the categorical
concept of monads. The work of Eyal Markman was partially supported by a grant  from the Simons Foundation (\#245840), and by NSA grant H98230-13-1-0239. Sukhendu Mehrotra was partially supported by a grant from the
Infosys Foundation, and FONDECYT grant 1150404. He thanks Daniel Huybrechts for a useful conversation,
to Emanuele Macr\`i for kindly inviting him to the Hausdorff Institute, and the institute for its hospitality.
He is grateful to Paul Balmer and Matthias K\"unzer, and especially to Moritz Groth for very useful 
email correspondence about $N$-triangulations.

\newpage

%
\section{A universal monad in $D^b(X\times X)$}

%
\subsection{The monad associated to a morphism of varieties}
\label{sec-introduction-to-monads}

The following basic construction will be used repeatedly in the proof of the main result of this section. 

\begin{cons}\label{cons-yoneda-monad} Let $f: T \to S$ be a morphism of smooth and projective 
varieties. We get the endofunctor $f_*f^*$ of $D^b(S)$, where the pullback and push forward are in the derived sense.
Denote by $u: \mathbb{1}_S \rightarrow f_*f^*$ the unit for the adjunction,
and let $\epsilon : f^*f_* \rightarrow \mathbb{1}_T$ be the counit. 
Set $\mu:= f_*\epsilon f^*: f_*f^*f_*f^* \rightarrow  f_*f^* $  and consider the 
monad ${\mathbb Y}:=(f_*f^*,u,\mu)$ in $D^b(S)$.
Any object, which is isomorphic to $f_*(\G)$ for some $\G\in D^b(T)$, admits an action 
\begin{equation}\label{action}
f_*f^*f_*\G \stackrel{f_*\ee}{\longrightarrow} f_*\G,
\end{equation}
so that the pair $(f_*\G,f_*\ee)$ is an object of the category 
$D^b(S)^{\mathbb Y}$ of modules for the monad.
We get the functor
\[
\widetilde{f_*} \ : \ D^b(T) \ \ \ \longrightarrow \ \ \ D^b(S)^{\mathbb Y},
\]
sending an object $\G$ of $D^b(T)$ to $\widetilde{f_*}(\G):=(f_*\G,f_*\epsilon)$.
What is more, under the isomorphism $f_*f^*f_*\G \to f_*\G \otimes f_*\ko_T$, this 
monadic action can be seen as an action of the algebra object $f_*\ko_T$. Indeed, recall
that the product on $f_*\ko_X$ is given by the composition
$$
f_*\ko_T\otimes f_*\ko_T \stackrel{\cong}{\longrightarrow}f_*f^*f_*f^*\ko_S
\stackrel{\mu_{\ko_S}}{\longrightarrow} f_*f^*\ko_S \cong f_*\ko_T.
$$
The desired compatability between the the product on $f_*\ko_X$ and its action on 
an object $\widetilde{f_*}(\G):=(f_*\G,f_*\epsilon)$
now follows from the axioms for a module for the monad
${\mathbb Y}$. 
\end{cons}


%
\subsection{A splitting of the monad}
Keep the notation of Theorem \ref{introduction-thm-A-is-direct-sum}.
Set $\pt:=Spec(\ComplexNumbers)$.
Let $c:M\rightarrow \pt$ be the constant map and set
$Y(\StructureSheaf{M}):=Rc_*(\StructureSheaf{M})$, as an object in $D^b(\pt)$. 
Then $Y(\StructureSheaf{M})$ is naturally isomorphic to
$\oplus_{i=0}^n\Ext^{2i}(\StructureSheaf{M},\StructureSheaf{M})[-2i]$,
thought of as the Yoneda algebra of $\StructureSheaf{M}$.

Let $u:\pi_{13_*}\pi_{13}^*\rightarrow \mathbb{1}_{X\times X}$ be the unit for the adjunction
$\pi_{13}^*\vdash \pi_{13_*}$, 
$\epsilon:\pi_{13}^*\pi_{13_*}\rightarrow \mathbb{1}_{X\times M\times X}$ the counit,
and $\mu:=\pi_{13_*}\epsilon:(\pi_{13_*}\pi_{13}^*)^2\rightarrow \pi_{13_*}\pi_{13}^*$ the multiplication 
natural transformation.
Denote by
\begin{equation}
\label{eq-Monad-YY}
\YY \ \ := \ \ (\pi_{13_*}\pi_{13}^*,u,\mu)
\end{equation}
the monad in $D^b(X\times X)$. 
We get the category $D^b(X\times X)^\YY$ of modules for the monad $\YY$ and
the functor 
\[
\widetilde{\pi_{13_*}}:D^b(X\times M \times X)\rightarrow D^b(X\times X)^\YY,
\]
as a special case of the construction in section 
\ref{sec-introduction-to-monads}.

$\A$ is the push-forward of the object 
$\widetilde{\A}:=\pi_1^*\omega_X\otimes \pi_{12}^*(\U)^\vee\otimes\pi_{23}^*(\U)[2]$
of $D^b(X\times M\times X)$ by $\pi_{13}$.
We get a natural morphism
\begin{equation}
\label{eq-multiplication-of-A-by-Y-M}
m:\A\otimes_{\ComplexNumbers}Y(\StructureSheaf{M}) \ \ \rightarrow \ \ \A,
\end{equation}
so that the object $(\A,m)$ of $D^b(X\times X)^\YY$
is the $\YY$-module corresponding to the object $\widetilde{\A}$ 
via the functor $\widetilde{\pi_{13_*}}$.
Note that the algebra structure on $\pi_{13_*}\ko_{X\times M \times X}$ is now identified with 
cup product on $H^*(\ko_M)$, or the composition product on 
$Y(\StructureSheaf{M})=\oplus_{i=0}^n\Ext^{2i}(\ko_M,\ko_M)[-2i]$. 

Let $\eta:\Delta_*\StructureSheaf{X}\rightarrow \A$ 
be the morphism corresponding to the unit of the adjunction $\Phi_\U\dashv\Psi_\U$. 
We get the composite morphism 
\begin{equation}
\label{eq-q-original}
\xymatrix{
\Delta_*\ko_X\otimes_\ComplexNumbers Y(\StructureSheaf{M}) 
\ar[r]^{\eta\otimes id} \ar@/^2pc/[rr]^{q} 
&
\A\otimes_\ComplexNumbers Y(\StructureSheaf{M}) \ar[r]^{\hspace{4ex} m} 
&
\A.
}
\end{equation}

Let $\lambda_n$ be the object 
$\oplus_{i=0}^{n-1}\Ext^{2i}(\StructureSheaf{M},\StructureSheaf{M})[-2i]$ in $D^b(\pt)$.
We have a natural morphism $\iota:\lambda_n\rightarrow Y(\StructureSheaf{M})$.
The object $Y(\StructureSheaf{M})$ is naturally the direct sum of $\lambda_n$ and 
$\Ext^{2n}(\StructureSheaf{M},\StructureSheaf{M})[-2n]$. 
Set $\alpha:=q\circ\iota:\Delta_*\ko_X\otimes_\ComplexNumbers \lambda_n\rightarrow \A$. 
So $\alpha$ is the composition
\begin{equation}
\label{eq-composition-of-three-morphisms}
\Delta_*\ko_X\otimes_\ComplexNumbers \lambda_n \LongRightArrowOf{\iota}
\Delta_*\ko_X\otimes_\ComplexNumbers Y(\StructureSheaf{M}) 
\LongRightArrowOf{\eta\otimes id} 
\A\otimes_\ComplexNumbers Y(\StructureSheaf{M}) 
\LongRightArrowOf{m}
\A.
\end{equation}
The main result of this section is 

\begin{thm}
\label{thm-A-is-direct-sum}
Let $v\in K(X)$ be a primitive class with $(v,v)=2n-2$,
$n\geq 2$, and  $H$ a $v$-generic polarization.
\begin{enumerate}
\item 
\label{thm-item-A-is-a-direct-sum-Hilbert-scheme-case}
When $v=(1,0,1-n)$, that is when $M:=M_H(v)$ is the Hilbert scheme of $n$ points on $X$,
then the morphism $\alpha$, 
displayed in equation (\ref{eq-composition-of-three-morphisms}),
is an isomorphism.
In particular, a choice of a non-zero element $t_M$ of \ 
$\Ext^2\left(\StructureSheaf{M},\StructureSheaf{M}\right)$ 
determines an isomorphism 
$$\A \cong \oplus_{i=0}^{n-1} \Delta_*\ko_X[-2i].$$
\item 
\label{thm-item-structure-sheaf-is-a-direct-summand}
In general, for $v$ arbitrary, the structure sheaf of the diagonal  $\Delta_*\ko_X$ is a 
direct summand of $\A$ in $D^b(X\times X)$. In particular, the integral transform 
$\Phi_\U : D^b(X) \to \nolinebreak D^b(M,\theta)$ is faithful.
\end{enumerate}
\end{thm}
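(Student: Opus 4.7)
The plan for Part~(1) is to transfer the computation of $\A = \Psi_{\U}\circ\Phi_{\U}$ to the equivariant side via the Bridgeland-King-Reid derived equivalence $\Phi_{BKR}: D^b(X^{[n]})\cong D^b_{\Sn}(X^n)$, in which the universal ideal sheaf $\U=\Ideal{\Xi}$ has an explicit description in terms of the isospectral Hilbert scheme $Z_n := X^{[n]}\times_{X^{(n)}}X^n$. The key technical input is Haiman's theorem (the geometric form of the $n!$-conjecture) that $Z_n$ is reduced, Cohen-Macaulay, Gorenstein, and flat over $X^{[n]}$; this gives the control needed for the derived-category calculations. The composition $\Psi_\U\circ\Phi_\U$ then becomes an equivariant push-forward from $X\times X^n\times X$ down to $X\times X$, which I would decompose by stratifying $X^n$ by coincidence loci, extracting the $\Sn$-invariants. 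The resulting graded pieces match term by term with $\bigoplus_{i=0}^{n-1}\Delta_*\ko_X\otimes H^{2i}(X^{[n]},\ko)[-2i]$, using the classical identification $H^{*}(X^{[n]},\ko)\cong\wedge^{*}H^2(X,\ko)$. To verify that the specific morphism $\alpha$ of Equation \eqref{eq-composition-of-three-morphisms} realizes this decomposition I would trace through its construction under BKR: the unit $\eta$ becomes the natural inclusion of invariants, and the multiplication $m$ corresponds to the composition product in the Yoneda algebra of $\ko_{X^{[n]}}$.

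The plan for Part~(2) is a direct construction of a splitting $\phi:\A\to\Delta_*\ko_X$ of $\eta$. The adjunctions $\pi_{13}^{*}\dashv\pi_{13,*}\dashv\pi_{13}^{!}$ and $Lj^{*}\dashv j_{*}\dashv j^{!}$, where $j:X\times M\hookrightarrow X\times M\times X$ sends $(x,m)\mapsto(x,m,x)$, combined with the triviality of $\omega_X$ and $\omega_M$, give
\begin{align*}
\Hom_{X\times X}(\Delta_*\ko_X,\A) & \cong \End_{X\times M}(\U)=\CC,\\
\Hom_{X\times X}(\A,\Delta_*\ko_X) & \cong \Ext^{2n-2}_{X\times M}(\U,\U),
\end{align*}
and under the first isomorphism $\eta$ corresponds to $\id_\U$. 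To build $\phi$, apply the trace $\tr:\SheafHom(\U,\U)\to\ko_{X\times M}$ along the relative diagonal $\Delta\times M\subset X\times M\times X$ and push forward along $\pi_{13}\circ j=\Delta\circ\pi_X$, producing a morphism
\[
\A\;\longrightarrow\;\bigoplus_{i=0}^{n}\Delta_*\ko_X\otimes_\CC H^{i}(M,\ko_M)[2-i].
\]
Projection to the $i=2$ component, with $H^2(M,\ko_M)\cong\CC$ trivialized via the Mukai-O'Grady-Yoshioka Hodge isometry $H^2(M,\ZZ)\cong v^\perp$ (which in particular identifies $H^{0,2}(M)$ with $H^{0,2}(X)$), yields $\phi$. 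Unwinding the adjunctions, $\phi\circ\eta\in\End(\Delta_*\ko_X)=\CC$ is then an explicit cohomological pairing determined by the Mukai vector $v$, nonzero because $v$ is primitive and the Mukai homomorphism is an isomorphism onto $H^2(M,\ZZ)$. Faithfulness of $\Phi_\U$ follows from splitness of $\eta$ via the standard adjunction identity $\widetilde{\Phi_\U(f)}=\eta_G\circ f$.

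The main obstacle is Part~(1): distilling from Haiman's polygraph theorems a clean derived-category identification of $\A$ on $X^{[n]}\times X^{[n]}$ respecting the explicit form of the morphism $\alpha$. Haiman's structural results are precisely what is needed to carry out the convolution computation, but the translation requires careful bookkeeping of equivariant data, the $\omega_X$-twist, and degree shifts. A secondary subtlety in Part~(2) is confirming non-vanishing of the scalar $\phi\circ\eta$ uniformly over all primitive Mukai vectors $v$, in particular when $\rank(\U)=0$; here the argument must use the full strength of Yoshioka's Hodge isomorphism, since the relevant pairing is computed from the entire Mukai vector rather than from its rank alone.
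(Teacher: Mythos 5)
Your proposal follows essentially the same route as the paper: Part~(1) via the Bridgeland--King--Reid/Haiman equivalence and the computation of $\Sn$-invariants of the convolution over $X\times X^n\times X$, and Part~(2) via the injectivity of the Mukai homomorphism $\theta_v$ on $H^{0,2}(X)$, which is exactly the non-degeneracy input the paper extracts from the Mukai--O'Grady--Yoshioka isometry. Three points where your plan underestimates or omits the actual work. First, in Part~(1) the step you call ``careful bookkeeping'' --- that the \emph{specific} morphism $\alpha$ of Equation~(\ref{eq-composition-of-three-morphisms}), and not merely some abstract map, induces isomorphisms on all cohomology sheaves --- is where the paper spends most of its effort: after the Haiman/Scala computation splits $\A$ abstractly, one still needs a separate inductive argument (Claim~\ref{claim-t-M-to-the-power-n-1-is-an-isomorphism}) showing that multiplication by $t_M$ acts by isomorphisms $\SheafExt^{2i}_{\pi_{13}}\to\SheafExt^{2i+2}_{\pi_{13}}$ for $0\le i\le n-1$, run against the exact triangle obtained by tensoring with $\I_{\Delta_X}$; this is not recoverable by simply ``tracing $\eta$ and $m$ through BKR.'' Second, in Part~(2) your assertion that $\phi\circ\eta$ is ``an explicit cohomological pairing determined by $v$'' is the crux, not a footnote: identifying that scalar with the $H^{0,2}(M)$-component of $\theta_v(t_X)$ requires the compatibility of the categorical trace/Hochschild pushforward with the cohomological Fourier--Mukai transform under the modified HKR isomorphism (the paper's Theorem~\ref{HH-H-commute}, imported from [MSM]); without it you cannot rule out that your particular element of the (in general $24$-dimensional) space $\Hom(\A,\Delta_*\ko_X)$ pairs to zero with $\eta$ even though the evaluation map is surjective --- the paper sidesteps this by proving surjectivity of the whole evaluation map rather than computing one matrix entry. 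Third, the theorem allows $\U$ to be a twisted (quasi-)universal sheaf and the target to be $D^b(M,\theta)$; your trace-and-Chern-character argument needs the reduction to the untwisted case, which the paper carries out by passing to a projective bundle over a blow-up of $M$ at a point.
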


Part (\ref{thm-item-A-is-a-direct-sum-Hilbert-scheme-case}) of the theorem is proven in Section 
\ref{sec-hilbert-scheme-case-of-thm-A-is-direct-sum}
and part (\ref{thm-item-structure-sheaf-is-a-direct-summand}) in Section 
\ref{general-moduli}.

The splitting of the monad object $\A$ in Theorem 
\ref{thm-A-is-direct-sum} (\ref{thm-item-A-is-a-direct-sum-Hilbert-scheme-case}) extends over
a Zariski open subset of the base of a family in the following sense.
Let $\pi : \X\rightarrow B$ be a smooth and proper family of $K3$ surfaces over an analytic space $B$
and $v$ a continuous primitive section of the local system $R\pi_*\Integers$ of Mukai lattices.
Let $p:\M\rightarrow B$ be a smooth and proper family of irreducible holomorphic symplectic manifolds,
such that each fiber $\M_b$ of $p$ is isomorphic to the moduli space $M_{H_b}(v_b)$ of $H_b$-stable sheaves with Mukai vector $v_b$ over the fiber $\X_b$
of $\pi$ for some polarization $H_b$ over $\X_b$. We do not assume that $H_b$ varies continuously (see for example
\cite[Prop. 5.1]{yoshioka-abelian-surface}). There exists
a twisted sheaf $\U$ over $\X\times_B\M$, flat over $B$,
such that its restriction $\U_b$ to $\X_b\times \M_b$ is a universal sheaf for the coarse moduli space 
$M_{H_b}(v_b)$, by the appendix in \cite{mukai-hodge}. 
Denote by $\A_b$ the monad object over $\X_b\times \X_b$ associated to $\U_b$.
The construction of the morphism $\alpha$, given in Equation (\ref{eq-composition-of-three-morphisms}) above,
goes through in this relative setting to yield a family of morphisms 
$
\alpha_b:\Delta_*\StructureSheaf{\X_b}\otimes_{\ComplexNumbers} \lambda_n\rightarrow \A_b,
$ 
$b\in B$, corresponding to a global morphism 
\[
\alpha:\Delta_*\StructureSheaf{\X}\otimes_{\StructureSheaf{B}} 
\left(\oplus_{i=0}^{n-1}R^{2i}p_*\StructureSheaf{\M}[-2i]\right)\rightarrow \overline{\A}
\]
of monad objects over $\X\times_B\X$. 

\begin{lem}
\label{lem-A-splits-over-an-open-subset}
The locus $B_0$ in $B$, where $\alpha_b$  is an isomorphism, is a {\em Zariski open subset} of $B$ in the analytic topology.
The open subset $B_0$ is non-empty,
whenever there exists a point $b_0\in B$, a $K3$ surface $X$, and an equivalence of derived categories 
$D^b(\X_b)\rightarrow D^b(X)$, which maps the Mukai vector $v_b$ to that of the ideal sheaf of a
length $n$ subscheme of $X$, and which maps $H_b$-stable sheaves on $\X_b$ to
ideal sheaves on $X$. 
\end{lem}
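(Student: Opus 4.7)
The plan is to handle the two assertions separately. Openness of $B_0$ will come from a semicontinuity argument applied to the cone of $\alpha$, while non-emptiness follows by transporting the Hilbert scheme case of Theorem~\ref{thm-A-is-direct-sum}(\ref{thm-item-A-is-a-direct-sum-Hilbert-scheme-case}) along the assumed derived equivalence.

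For openness, I would first verify that the formation of both the source and the target of $\alpha$ commutes with base change to each fiber. For $\overline{\A}$, this is proper base change for $\pi_{13}$ applied to the $B$-flat bounded complex $\pi_1^*\omega_{\X/B}\otimes\pi_{12}^*\U^\vee\otimes\pi_{23}^*\U[2]$ on $\X\times_B\M\times_B\X$, exploiting the flatness of $\U$ over $B$. For the source, I would use that the Hodge numbers $h^i(\M_b,\StructureSheaf{\M_b})$ are constant on $B$ (as topological invariants of a family of $K3^{[n]}$-type manifolds), so each $R^{2i}p_*\StructureSheaf{\M}$ is a locally free $\StructureSheaf{B}$-module whose formation commutes with base change. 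Consequently $\mathcal{C}:=\mathrm{Cone}(\alpha)$ is a bounded complex on $\X\times_B\X$ whose fiber at $b$ is $\mathrm{Cone}(\alpha_b)$. Then $B_0$ is the complement of the image, under the proper projection $\X\times_B\X\to B$, of the support of $\mathcal{C}$, which is a closed analytic subset.

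For non-emptiness, let $b_0\in B$ and $\Phi:D^b(\X_{b_0})\to D^b(X)$ be as in the hypothesis. Because $\Phi$ sends $H_{b_0}$-stable sheaves of class $v_{b_0}$ to ideal sheaves of length $n$ subschemes, it induces an isomorphism $\M_{b_0}\cong X^{[n]}$ of fine moduli spaces under which the twisted universal sheaf $\U_{b_0}$ corresponds to the universal ideal sheaf, up to a line bundle on the moduli factor and a Brauer twist on the K3 factor; both of these cancel in the combination $\U^\vee\otimes\U$ defining the kernel $\A$. The equivalence $\Phi\boxtimes\Phi$ on $D^b(\X_{b_0}\times\X_{b_0})$ therefore identifies $\A_{b_0}$ with the analogous monad kernel for the Hilbert scheme, and because the morphism $\alpha$ of (\ref{eq-composition-of-three-morphisms}) is assembled from the unit of the adjunction $\Phi_\U\dashv\Psi_\U$ and the multiplication $m$ of (\ref{eq-multiplication-of-A-by-Y-M}), the same identification takes $\alpha_{b_0}$ to $\alpha_{X^{[n]}}$. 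Theorem~\ref{thm-A-is-direct-sum}(\ref{thm-item-A-is-a-direct-sum-Hilbert-scheme-case}) forces $\alpha_{b_0}$ to be an isomorphism, so $b_0\in B_0$.

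The step I expect to be most delicate is the naturality assertion in the previous paragraph: namely, that $\Phi\boxtimes\Phi$ intertwines $\alpha_{b_0}$ with $\alpha_{X^{[n]}}$. One must keep careful track of the Brauer twists attached to the two universal sheaves, and verify that the identification $H^{2i}(\M_{b_0},\StructureSheaf{\M_{b_0}})\cong H^{2i}(X^{[n]},\StructureSheaf{X^{[n]}})$ coming from the moduli isomorphism agrees with the one induced by the ambient categorical equivalence. Once this compatibility is pinned down, the whole construction is intrinsic to the abstract adjoint pair $\Phi_\U\dashv\Psi_\U$, and the reduction to the Hilbert scheme case is formal.
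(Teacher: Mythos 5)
Your proposal is correct and follows essentially the same route as the paper: openness is obtained by identifying $B_0$ with the complement of the image of the support of a cone of $\alpha$ under the proper projection to $B$ (the paper invokes \cite[Lemma 5.3]{BM} for the pointwise support criterion where you instead spell out base-change compatibility of source and target), and non-emptiness is the reduction to Theorem \ref{thm-A-is-direct-sum}(\ref{thm-item-A-is-a-direct-sum-Hilbert-scheme-case}) via the assumed equivalence. The paper states this last step in one line; your transport of $\alpha_{b_0}$ along the equivalence is the intended justification.
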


\begin{proof}
$B_0$ is the complement of the image in $B$ of the support of the object $C_\alpha$ in $D^b(\X\times_B\X)$
representing a cone of the morphism $\alpha$. The support of $C_\alpha$ is the union of the support of the 
sheaf cohomologies of $C_\alpha$. The support of $C_\alpha$ intersects the fiber $\X_b\times\X_b$ if
and only if $\alpha_b$ is not an isomorphism, since a point $x\in \X_b\times\X_b$ belongs to the support of
$C_\alpha$ if and only if one of the the cohomologies of
$C_\alpha\otimes \StructureSheaf{x}$ is non-zero, 
by \cite[Lemma 5.3]{BM}. The non-emptiness statement follows from Theorem 
\ref{thm-A-is-direct-sum} (\ref{thm-item-A-is-a-direct-sum-Hilbert-scheme-case}).
\end{proof}

A more explicit extension of the splitting Lemma \ref{lem-A-splits-over-an-open-subset} is given in 
Lemmas \ref{lemma-a-sufficient-condition-to-be-totally-split} and \ref{lemma-a-sufficient-condition-to-belong-to-U}.
%
\subsection{Splitting of the monad in the Hilbert scheme case}
\label{sec-hilbert-scheme-case-of-thm-A-is-direct-sum}
Write $X^{[n]}$ for the Hilbert scheme of $n$ points on $X$ and $S^nX$ for the
$n$-th symmetric product of $X$. Denote by $\mu$ the Hilbert-Chow morphism from 
$X^{[n]}$ to $S^nX$, and let $\pi: X^n \to S^nX$ be the quotient by $\mathfrak{S}_n$.
The following result is the main point of the proof of Theorem \ref{thm-A-is-direct-sum} for
$M=X^{[n]}$; it allows us to transport calculations from the derived category
of $X^{[n]}$ to the more combinatorial ${\mathfrak{S}_n}$-equivariant derived category 
of $X^n$. The latter is denoted by $D^b_{\mathfrak{S}_n}(X^n)$ below.

\begin{thm}(\cite{Haiman-2}, Corollary 5.1) 
\label{Deep}
Let $X$ be a smooth quasi-projective surface, and denote by $B_n$
the reduced fiber-product of $X^{[n]}$ and $X^n$ over $S^nX$:
$$\xymatrix{
 & \ar[dl]_q B_n \ar[dr]^p & \\
X^{[n]} \ar[dr]_{\mu} & & X^n \ar[dl]^{\pi}\\
 & S^nX & }$$
Then, the map $q$ is flat, and
$Rp_*q^* : D^b(X^{[n]}) \stackrel{\cong}{\longrightarrow} D^b_{\mathfrak{S}_n}(X^n)$
is an equivalence of derived categories.
\end{thm}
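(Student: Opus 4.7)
The plan is to realize this as a special case of the derived McKay correspondence of Bridgeland--King--Reid \cite{BKR} applied to the finite group $\mathfrak{S}_n$ acting on $X^n$, with the Hilbert--Chow morphism $\mu: X^{[n]} \to S^nX$ as the crepant resolution. The BKR theorem provides an equivalence $D^b(Y) \cong D^b_G(M)$ whenever $Y$ is a smooth crepant resolution of $M/G$ and the natural correspondence $Y \times_{M/G} M$ is a scheme of the expected dimension (equivalently, the projection to $Y$ is flat of degree $|G|$). Thus the whole theorem is reduced to verifying this flatness hypothesis for $q: B_n \to X^{[n]}$, together with an application of BKR's criterion.

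First I would identify $B_n$ with the isospectral Hilbert scheme: its points are pairs $(\xi, (x_1,\ldots,x_n))$ with $\xi \in X^{[n]}$ and $\sum x_i$ equal to the $0$-cycle $\mu(\xi)$, and $\mathfrak{S}_n$ acts through its action on $X^n$. On the open locus of reduced $n$-tuples, $B_n$ is smooth and $q$ is an $\mathfrak{S}_n$-torsor, so $q$ has generic degree $n!$. The substantive input is the $n!$-theorem of Haiman \cite{Haiman-1}, which asserts that $q_*\mathcal{O}_{B_n}$ is locally free of rank $n!$ over $\mathcal{O}_{X^{[n]}}$ and carries the regular representation of $\mathfrak{S}_n$ on each fiber. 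This is the main obstacle and must be imported wholesale: its proof requires showing that $B_n$ is Cohen--Macaulay (in fact Gorenstein) by resolving the ideal sheaf of the polygraph arrangement in $X^{[n]} \times X^n$, which is the deep combinatorial/geometric content of Haiman's work.

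Granting the $n!$-theorem, the equivalence follows by a formal BKR-type argument. Since $q$ is flat, $q^*$ is exact and $Rp_* q^*$ takes $D^b(X^{[n]})$ into $D^b_{\mathfrak{S}_n}(X^n)$. To test fullness and faithfulness, I would use the spanning class of structure sheaves $\mathcal{O}_\xi$ of closed points: $\Phi(\mathcal{O}_\xi)$ is the equivariant structure sheaf of the $\mathfrak{S}_n$-cluster $q^{-1}(\xi) \subset X^n$, and the Ext-matching reduces, by flatness and base change, to an equivariant computation on these zero-dimensional clusters, which holds because the fibers of $q$ are Gorenstein of length $n!$ with the regular representation (this is precisely the local BKR criterion verified by Haiman). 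Essential surjectivity follows because the image contains a spanning class in $D^b_{\mathfrak{S}_n}(X^n)$ (e.g.\ the equivariant skyscrapers of $\mathfrak{S}_n$-clusters), and any fully faithful exact functor between triangulated categories of the same Serre-type whose image contains a spanning class is essentially surjective.

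In summary, the argument is structurally: BKR reduces the equivalence to the flatness of $q$, Haiman's $n!$-theorem supplies precisely that flatness along with the regular-representation condition on fibers, and the formal machinery then delivers the stated equivalence. The only non-formal step is Haiman's theorem, which is the work being cited.
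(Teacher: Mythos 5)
Your proposal matches the paper's treatment of this statement: the theorem is imported from Haiman, with the flatness of $q$ (his $n!$-theorem) identifying $X^{[n]}$ with the $\mathfrak{S}_n$-Hilbert scheme of $X^n$ and $B_n$ with its universal family, after which the Bridgeland--King--Reid correspondence --- whose dimension hypothesis is exactly the semismallness of the Hilbert--Chow morphism $\mu$, i.e.\ your ``expected dimension of the fiber product'' condition --- yields the equivalence. The only cosmetic difference is that you sketch the fully-faithful-plus-spanning-class argument inside BKR, whereas the paper simply verifies the semismallness criterion and invokes their theorem as a black box.
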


Let us give a quick word of explanation here. Denote by $\SHilb(X^n)$ the 
${\mathfrak{S}_n}$-{\em Hilbert scheme} of $X^n$, the
fine moduli space whose closed points parametrize the ${\mathfrak{S}_n}$ orbits in $X^n$ 
with structure sheaves isomorphic to the regular representation of ${\mathfrak{S}_n}$.
The flatness of the map $q$ above says that $B_n\subset X^n \times X^{[n]}$ is a family of such
${\mathfrak{S}_n}$ orbits parametrized by $X^{[n]}$. This yields
a morphism from  $X^{[n]}$ to $\SHilb(X^n)$, which is in fact seen to be an isomorphism, 
identifying $B_n \to X^{[n]}$ with the universal family of $\SHilb(X^n)$. 

On the other hand, given a finite group $G$ acting nicely\footnote{Nicely here means that
the  canonical bundle of $M$ is locally trivial as a $G$-sheaf.}
on a smooth projective variety $M$,
the derived McKay correspondence of \cite{BKR} relates the derived category of the $G$-Hilbert scheme $D^b({\rm Hilb}^G(M))$ and the $G$-equivariant derived category
$D^b_G(M)$: whenever the map 
${\rm Hilb}^G(M) \to M/G$ is {\em semismall}, it says that the structure sheaf of the universal family
gives a Fourier-Mukai type equivalence between these two categories. 
The map $\mu: X^{[n]} \to S^nX$ satisfies the
semismallness hypothesis. This establishes the second statement of Theorem \ref{Deep}.

The following is a special case of a vanishing theorem proved in \cite{Haiman-2}.
Let $Z_n \subset X\times B_n$ be the pullback of the universal family 
$U_n\subset X\times X^{[n]}$ of $n$ points on $X$, and $D_n\subset X\times X^n$ the union of
graphs of the $n$ projections to the $i$th factor, $\pi_i : X^n \to X$.
Consider the following diagram
$$
\xymatrix{
&& Z_n\ar[d]_{\cap} \ar[ddll] \ar[ddrr] \ar[r]^{t} & B_n \ar[ddrr]^{p}
\\
& & \ar[dl]_{id\times q = \hat{q}} X\times B_n \ar[dr]^{id \times p=\hat{p}} & \\
U_n \ar[r]^{\subset}
& X\times X^{[n]} & & X\times X^n  
& D_n \ar[l]_{\supset} \ar[r]^{s} & \hspace{2Ex} X^n \hspace{2Ex}
}
$$

\begin{thm}
[\cite{Haiman-2}, Proposition 5.1]
$R\hat{p}_* \ko_{Z_n} 
\cong \ko_{D_n}$.
\end{thm}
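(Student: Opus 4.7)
First, my plan is to verify the set-theoretic containment $\hat{p}(Z_n)\subseteq D_n$, then combine a generic identification of $\hat{p}_*\ko_{Z_n}$ with $\ko_{D_n}$ on the locus of distinct $n$-tuples with a Cohen--Macaulay / Serre $S_2$ argument, using Haiman's polygraph theorem as an essential input. For the set-theoretic statement, a closed point of $Z_n$ projecting to $(\xi,(x_1,\ldots,x_n))\in B_n$ has the form $(y,\xi,(x_1,\ldots,x_n))$ with $y$ in the set-theoretic support of the subscheme $\xi\subset X$. Since $(\xi,(x_1,\ldots,x_n))\in B_n$, the $0$-cycle of $\xi$ equals $\sum[x_i]$, so this support is $\{x_1,\ldots,x_n\}$; hence $y=x_i$ for some $i$, and $(y,x_1,\ldots,x_n)$ lies on the graph of the $i$th projection, i.e.\ in $D_n$. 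On the Zariski-open complement $U\subset X^n$ of the big diagonal, $q$ restricts to an isomorphism and $U_n\to X^{[n]}$ is finite \'etale of degree $n$, and one checks directly that $\hat{p}|_{Z_n}:Z_n\to D_n$ restricts to an isomorphism over $X\times U$; in particular the adjunction map $\ko_{D_n}\to \hat{p}_*\ko_{Z_n}$ is an isomorphism on the dense open subset $D_n\cap(X\times U)$.

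Next, I would invoke Haiman's polygraph theorem, which asserts that $D_n$ (with its reduced scheme structure in $X\times X^n$) is Cohen--Macaulay and locally free of rank $n$ over $X^n$, together with the $n!$-theorem establishing the Cohen--Macaulayness of the isospectral Hilbert scheme $B_n$. Because $Z_n$ is obtained as the pullback along the flat map $\hat{q}$ of the universal family $U_n\to X^{[n]}$, which is itself finite flat of degree $n$, the sheaf $\ko_{Z_n}$ is Cohen--Macaulay and $\hat{p}|_{Z_n}:Z_n\to D_n$ is generically finite of degree one. A cohomology-and-base-change argument, controlled by the uniform depth of $\ko_{Z_n}$ along the fibres of $\hat{p}$ meeting $Z_n$, then shows that $R^i\hat{p}_*\ko_{Z_n}=0$ for $i>0$, so $R\hat{p}_*\ko_{Z_n}$ is concentrated in degree zero.

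Finally, I would promote the generic isomorphism to a global one: the coherent sheaf $\hat{p}_*\ko_{Z_n}$ is supported on $D_n$ and satisfies Serre's $S_2$-condition, being the pushforward of a Cohen--Macaulay sheaf by a map whose fibres on $Z_n$ have the expected dimension; moreover it agrees with $\ko_{D_n}$ on an open subset of $D_n$ whose complement has codimension $\geq 2$. Since $\ko_{D_n}$ is itself $S_2$ by the polygraph theorem, a Hartogs-type extension then forces the adjunction map $\ko_{D_n}\to \hat{p}_*\ko_{Z_n}$ to be an isomorphism globally. The main obstacle throughout is the invocation of Haiman's polygraph and $n!$ theorems: these supply the Cohen--Macaulay properties of $B_n$ and $D_n$ on which both the higher vanishing of step two and the extension argument of step three depend, and without them the easy generic identification on $X\times U$ does not propagate across the diagonals.
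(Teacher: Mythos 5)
The paper does not actually prove this statement: it is quoted verbatim from Haiman (\cite{Haiman-2}, Prop.\ 5.1), so the only meaningful comparison is with Haiman's own argument. To your credit, your outline correctly identifies the two essential inputs --- the polygraph theorem (which makes $\ko_{D_n}$ a free $\ko_{X^n}$-module, hence Cohen--Macaulay) and the $n!$-theorem (Cohen--Macaulayness of $B_n$, hence of $Z_n$, which is finite flat over it) --- and your first paragraph (set-theoretic image and the isomorphism over the locus of distinct points) is correct.

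The genuine gap is in your second step. The morphism $\hat{p}|_{Z_n}\colon Z_n\to D_n$ is \emph{not} finite: over a point $(x,x,\dots,x)$ of the small diagonal, the fibre of $p\colon B_n\to X^n$ contains the punctual Hilbert scheme $\mu^{-1}(n\cdot x)$, which has dimension $n-1$, and every $\xi$ in it contains $x$ in its support, so the fibre of $\hat{p}|_{Z_n}$ over that point of $D_n$ is positive-dimensional for every $n\geq 2$. Consequently a ``cohomology-and-base-change argument controlled by the uniform depth of $\ko_{Z_n}$'' cannot yield $R^i\hat{p}_*\ko_{Z_n}=0$ for $i>0$: depth of the source sheaf controls local cohomology on $Z_n$, not cohomology along the fibres of $\hat{p}$, and there is no dimension-theoretic reason for the vanishing. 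That vanishing \emph{is} the content of Haiman's proposition; in his work it is extracted from the polygraph theorem by a genuinely nontrivial induction (via the higher polygraphs $D(n,l)$ and nested Hilbert schemes), not by a formal depth argument. A secondary gap sits in your last step: the Hartogs extension requires $\hat{p}_*\ko_{Z_n}$ itself to be $S_2$ along $D_n$, and the pushforward of a Cohen--Macaulay sheaf under a proper, generically finite but non-finite map is not automatically $S_2$; as written, this property would again have to be deduced from the very vanishing you have not established. So the skeleton and the external inputs are right, but the load-bearing step is asserted rather than proved.
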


\begin{prop} \label{ideal}
$R\hat{p}_*\hat{q}^*(\I_{U_n}) \cong \I_{D_n}$, where $\I_{U_n}$ is the ideal sheaf of $U_n\subset X\times X^{[n]}$,and $\I_{D_n}$ that of ${D_n}\subset X\times X^n$.
\end{prop}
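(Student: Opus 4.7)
My plan is to apply $R\hat{p}_*\hat{q}^*$ to the defining short exact sequence of $\I_{U_n}$ and identify each piece of the resulting triangle in $D^b(X \times X^n)$. Because $q$ is flat by Theorem~\ref{Deep}, so is $\hat{q} = id_X \times q$; thus $\hat{q}^*$ is exact, and the sequence $0 \to \I_{U_n} \to \ko_{X \times X^{[n]}} \to \ko_{U_n} \to 0$ pulls back to $0 \to \hat{q}^*\I_{U_n} \to \ko_{X \times B_n} \to \ko_{Z_n} \to 0$, using $\hat{q}^{-1}(U_n) = Z_n$ scheme-theoretically. Pushing forward by $R\hat{p}_*$ yields a distinguished triangle
\[
R\hat{p}_*\hat{q}^*\I_{U_n} \longrightarrow R\hat{p}_*\ko_{X \times B_n} \longrightarrow R\hat{p}_*\ko_{Z_n} \stackrel{+1}{\longrightarrow},
\]
whose third term equals $\ko_{D_n}$ by the preceding theorem of Haiman.

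Next I compute the middle term. By flat base change along $id_X$ one has $R\hat{p}_*\ko_{X \times B_n} \cong \ko_X \boxtimes Rp_*\ko_{B_n}$. Now $Rp_*\ko_{B_n} = Rp_*q^*\ko_{X^{[n]}}$ is the image of $\ko_{X^{[n]}}$ under the Bridgeland--King--Reid equivalence $Rp_*q^*$ of Theorem~\ref{Deep}, and this equivalence sends the structure sheaf of the $\mathfrak{S}_n$-Hilbert scheme to the structure sheaf of $X^n$ with trivial equivariant structure. Hence $Rp_*\ko_{B_n} = \ko_{X^n}$, and therefore $R\hat{p}_*\ko_{X \times B_n} = \ko_{X \times X^n}$, concentrated in degree zero.

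It remains to identify the map $\ko_{X \times X^n} \to \ko_{D_n}$ in the triangle. Under the adjunction $\hat{p}^* \dashv R\hat{p}_*$, this map is adjoint to the surjection $\hat{p}^*\ko_{X \times X^n} = \ko_{X \times B_n} \twoheadrightarrow \ko_{Z_n}$ produced by applying $\hat{q}^*$ to the tautological surjection $\ko_{X \times X^{[n]}} \twoheadrightarrow \ko_{U_n}$. Under the identifications of the previous two paragraphs, the adjoint map is therefore the tautological restriction $\ko_{X \times X^n} \twoheadrightarrow \ko_{D_n}$, whose kernel is $\I_{D_n}$. Since both outer terms sit in degree zero, the triangle degenerates to the short exact sequence $0 \to \I_{D_n} \to \ko_{X \times X^n} \to \ko_{D_n} \to 0$, and one concludes $R\hat{p}_*\hat{q}^*\I_{U_n} \cong \I_{D_n}$.

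The most delicate step will be the last one: even after the adjunction reduction, one must verify that the induced map of sheaves is the tautological surjection rather than, say, a nonzero scalar multiple or a map factoring through a proper subsheaf of $\ko_{D_n}$. This is essentially bookkeeping of the adjunction units, but deserves care. A secondary obstacle is the identification $Rp_*\ko_{B_n} = \ko_{X^n}$: although this is a standard consequence of the BKR equivalence, a self-contained argument may require a small digression to check that the McKay equivalence of Theorem~\ref{Deep} sends $\ko_{X^{[n]}}$ to the trivially $\mathfrak{S}_n$-equivariant structure sheaf of $X^n$.
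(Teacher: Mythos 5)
Your overall strategy coincides with the paper's: apply $R\hat{p}_*\hat{q}^*$ to the short exact sequence defining $\I_{U_n}$, identify the third term of the resulting triangle with $\ko_{D_n}$ via Haiman's theorem, and reduce the whole statement to the identification $Rp_*\ko_{B_n}\cong\ko_{X^n}$. The gap is in how you dispose of that identification. You assert that $Rp_*\ko_{B_n}=Rp_*q^*\ko_{X^{[n]}}$ equals $\ko_{X^n}$ because the BKR equivalence "sends the structure sheaf of the $\mathfrak{S}_n$-Hilbert scheme to the trivially $\mathfrak{S}_n$-equivariant structure sheaf of $X^n$." But since $q^*\ko_{X^{[n]}}$ is literally $\ko_{B_n}$, that assertion \emph{is} the statement $Rp_*\ko_{B_n}\cong\ko_{X^n}$; invoking it as a known consequence of the equivalence is circular. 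An exact equivalence carries no a priori information about where it sends the structure sheaf, and in particular the vanishing of $R^{>0}p_*\ko_{B_n}$ genuinely requires an argument: $p$ is not finite and $B_n$ is singular. You flag this as a "secondary obstacle," but it is the entire non-formal content of the proposition.

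The paper closes exactly this gap, and does so by using Haiman's theorem a second time together with a direct-summand trick. The map $t:Z_n\to B_n$ is finite and flat, being the pullback of the finite flat family $U_n\to X^{[n]}$; hence (e.g.\ by the normalized trace) $\ko_{B_n}$ is a direct summand of $t_*\ko_{Z_n}$. Since $s:D_n\to X^n$ is finite and $s\circ(\hat{p}|_{Z_n})=p\circ t$, one gets $Rp_*t_*\ko_{Z_n}\cong s_*R\hat{p}_*\ko_{Z_n}\cong s_*\ko_{D_n}$, which is concentrated in degree $0$; the direct summand $Rp_*\ko_{B_n}$ is therefore also concentrated in degree $0$, and connectedness of the fibers of $p$ (with $X^n$ smooth, hence normal, and $B_n$ reduced) gives $p_*\ko_{B_n}\cong\ko_{X^n}$. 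If you supply this (or an equivalent) argument, your proof is complete: your third paragraph, identifying the connecting map with the tautological restriction, is correct but essentially automatic once $\hat{p}_*\ko_{X\times B_n}\cong\ko_{X\times X^n}$ is known, because the unit of adjunction $\ko_{X\times X^n}\to\hat{p}_*\ko_{Z_n}\cong\ko_{D_n}$ is a homomorphism of sheaves of rings and hence sends $1$ to $1$.
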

\begin{proof}
Applying $R\hat{p}_*\hat{q}^*$ to the sequence
$$
0 \to \I_{U_n} \to \ko_{X\times X^{[n]}} \to \ko_{U_n} \to 0,
$$
and using the previous result, we obtain the exact triangle
$$
R\hat{p}_*\I_{Z_n} \to R\hat{p}_*\ko_{X\times B_n} \to \ko_{D_n}.
$$
Thus, it suffices to show that $R\hat{p}_*\ko_{X\times B_n} \cong \ko_{X\times X^n}$, or even that
$Rp_*\ko_{B_n} \cong \ko_{X^n}$.
Now, $Z_n \stackrel{t}{\to} B_n$ is flat and finite, being the 
pullback of $U_n \to X^{[n]}$. Therefore, $t_*\ko_{Z_n}$ contains $\ko_{B_n}$ as a direct factor. 
Further, $D_n \stackrel{s}{\to} X^n$ is finite, and $s\circ \hat{p} = p\circ t$, so that
$s_* R\hat{p}\ko_{Z_n} \cong s_*\ko_{D_n} \cong Rp_*t_*\ko_{Z_n}$ is concentrated 
in degree 0. Consequently $Rp_*\ko_{B_n}$ is concentrated in degree 0 also, and because 
$p$ has connected fibers, we are done. (See also Prop. 1.3.3, \cite{Scala}.)
\end{proof}

\begin{lem} \label{fiber}
\begin{itemize}
\item[(i)]Suppose the schemes and morphisms in the commutative diagram 
$$\xymatrix{
 & \ar[dl]_q X \ar[dr]^p & \\
Y \ar[dr]_{\mu} & & Z \ar[dl]^{\pi}\\
 & W & }$$
are such that $Lq^*$ and $p^!$ are defined between bounded derived categories
(for example, if $Y,Z$ are smooth and projective, and $X$ is a closed subscheme of $Y\times Z$). 
If $\Phi:=Rp_*Lq^* : D^b(Y)  \stackrel{\cong}{\longrightarrow} D^b(Z)$ is an equivalence, given 
$\M,\N \in D^b(Y)$, we have a bifunctorial isomorphism 
$$\mu_* R\H om_{D^b(Y)}(\M,\N) \cong \pi_*R\H om_{D^b(Z)} (\Phi(\M), \Phi(\N)).$$
\item[(ii)] Suppose $G$ is a finite group, and the morphism $p: X \to Z$ in part (i) is $G$-equivariant.
Further, let $Y=X/G$, $W=Z/G$, $q$ and $\pi$ the quotient morphisms, and $\mu: X/G\to Z/G$
the morphism induced by $p$. Denote by $D^b_G(Z)$ the derived category of equivariant coherent
sheaves. Then, if $\Phi:=Rp_*Lq^* : D^b(Y)  \stackrel{\cong}{\longrightarrow} D^b_G(Z)$ is
an equivalence, there is a bifunctorial isomorphism
$$\mu_* R\H om_{D^b(Y)}(\M,\N) \cong [\pi_*R\H om_{D^b_G(Z)} (\Phi(\M), \Phi(\N))]^{G}.$$   
\end{itemize}
\end{lem}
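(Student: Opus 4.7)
The plan is to prove part (i) via a chain of adjunction and Grothendieck duality manipulations, with the main input being that $\Phi = Rp_*Lq^*$ being an equivalence forces its right adjoint to serve as a quasi-inverse. First I would apply the sheaf-level form of the adjunction $(Rp_*, p^!)$ to rewrite
\[
\pi_* R\H om_Z(\Phi\M,\Phi\N) \;\cong\; \pi_* Rp_* R\H om_X(Lq^*\M,\, p^!\Phi\N).
\]
Next, the commutativity $\pi \circ p = \mu \circ q$ of the diagram lets me replace $\pi_* Rp_*$ by $\mu_* Rq_*$, and then the sheaf-level adjunction $(Lq^*, Rq_*)$ gives
\[
\mu_* Rq_* R\H om_X(Lq^*\M,\, p^!\Phi\N) \;\cong\; \mu_* R\H om_Y(\M,\, Rq_* p^!\Phi\N).
\]

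The key observation is that $Rq_* p^!$ is precisely the right adjoint $\Psi$ of $\Phi = Rp_* Lq^*$. Since $\Phi$ is an equivalence, $\Psi$ is a quasi-inverse of $\Phi$, so the unit map $\N \to \Psi\Phi\N = Rq_* p^!\Phi\N$ is an isomorphism. Substituting yields $\mu_* R\H om_Y(\M,\N)$, proving (i); bifunctoriality is transparent from the construction, as each step is a natural transformation in both $\M$ and $\N$.

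For part (ii), the same string of manipulations applies verbatim inside the $G$-equivariant derived category, with one modification: since $q: X \to X/G = Y$ is now the quotient morphism, the right adjoint of the pullback $Lq^*: D^b(Y) \to D^b_G(X)$ is $(Rq_*)^G$, the composition of ordinary derived push-forward with the $G$-invariants functor. This is exactly what produces the $[\,\cdot\,]^G$ on the right-hand side of the claimed identity, and the right adjoint of $\Phi = Rp_*Lq^*$ becomes $\Psi = (Rq_*)^G p^!$, which the equivalence hypothesis forces to be a quasi-inverse. The main obstacle to watch for is verifying that the sheaf-level forms of Grothendieck duality and of the $(Lq^*, Rq_*)$-adjunction used above hold in the stated generality: under the standing hypotheses (smoothness and projectivity of $Y,Z$, together with $X\subset Y\times Z$ closed) this is standard, but in the equivariant setting the proper definition of $p^!$ and the bookkeeping of the $G$-action through each natural isomorphism need to be tracked carefully.
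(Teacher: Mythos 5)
Your argument is correct and is essentially the paper's own proof: the same chain of sheaf-level Grothendieck duality for $p$, commutativity $\pi p = \mu q$, the $(Lq^*,Rq_*)$-adjunction (with $q_*^G$ in the equivariant case), and the observation that $Rq_*p^!$ (resp.\ $q_*^Gp^!$) is the right adjoint and hence quasi-inverse of $\Phi$. The only cosmetic difference is that you run the computation for (i) and adapt it to (ii), while the paper writes out (ii) and notes (i) is the same formal calculation.
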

\begin{proof}
Both parts follow from essentially the same formal calculation using Grothendieck Duality. 
We provide a proof of part (ii). Denote by $\pi_*^G$ the composition $[-]^G\circ \pi_*$; the
symbols $q_*^G$, $\mu_*^G$ are defined similarly. Then,
\begin{align*}
\pi_*^G R\H om_{D^b_G(Z)}(p_*q^*\M, p_*q^*\N) & \cong \pi_*^G p_* R\H om_{D^b_G(X)}(q^*\M, p^!p_*q^*\N)\\
 & \cong \mu_*q_*^G R\H om_{D^b_G(X)} (q^*\M, p^!p_*q^*\N)\\
 & \cong \mu_* [R\H om_{D^b(Y)}(\M,q_*p^!p_*q^*\N)]^G\\
 & \cong \mu_* R\H om_{D^b(Y)}(\M,(q_*^Gp^!)(p_*q^*)\N) \\
 & \cong \mu_* R\H om_{D^b(Y)}(\M,\N).
\end{align*}
The first isomorphism is Grothendieck Duality, the second follows from the commutativity
of the diagram above and the $G$-invariance of $\mu$, the third is adjunction, and the
fourth follows from the $G$-invariance of $\M$ (see \cite{Scala}, Prop. 1.3.2). The last 
isomorphism follows from the fact that $q_*^Gp^!$ is the right adjoint of $\Phi=p_*q^*$,
and so also its quasi-inverse.
\end{proof}

$$\xymatrix{
 & \ar[dl]_q X\times B_n\times X \ar[dr]^p & \\
\ar[ddr]_{\pi_{13}} X\times X^{[n]}\times X \ar[dr]_{\mu} & & 
\ar[ddl]^{p_{13}} X\times X^n\times X \ar[dl]^{\pi}\\
 & \ar[d]_f X\times S^nX \times X & \\
& X\times X
}$$

We shall now apply this lemma to the diagram above:
Let $p_{ij}$ stand for the projection from the product $X\times X^n \times X$ to the $(i,j)$-th 
factor. Consider the object $\A$, given in equation
(\ref{eq-kernel-A}), in the case $M=X^{[n]}$ and $\U = \I_{U_n}$.
In view of Proposition \ref{ideal}, we immediately have the isomorphism
$$\A \cong \{Rp_{13*} (p_{12}^*\I_{D_n}^\vee \otimes p_{23}^*\I_{D_n})[2]\}^{\Sn}.$$
Furthermore, the above is an isomorphism of $Y(\StructureSheaf{X^{[n]}})$-modules, under the 
natural identification of $Y(\StructureSheaf{X^{[n]}})$ with 
$Y(\StructureSheaf{X^n})^{\Sn}$.
\medskip
 
Denote by $\gd_i \subset X\times X^n$, the graph of the $i$-th projection $\pi_i:X^n \to X$,
and by $\gd_I$, for $I\subset {1,...,n}$, the partial diagonal $\cap_{i\in I}\gd_i$.  By Corollary A.4
of \cite{Scala}, there is a \v{C}ech-type $\Sn$-equivariant resolution of $\I_{D_n}$ as follows:
$$
0 \to \I_{\D_n} \to \ko_{X\times X^n} \to \oplus_{i=1}^n \ko_{\gd_i} \to \cdots \to \oplus_{|I|=k} \ko_{\gd_I} \to
\cdots \to \ko_{\gd_{\{1,...,n\}}} \to 0.
$$ 
As the diagonals $\gd_i$ intersect transversally, it is easy to see that, in fact, this resolution
is the tensor product of complexes 
$$
\otimes_{i=1}^n \{ \ko_{X\times X^n} \to \ko_{\gd_i} \},
$$
or alternatively, 
$$\I_{D_n}\cong \I_{\gd_1}\otimes \I_{\gd_2} \otimes \cdots \otimes \I_{\gd_n}$$ in
$D^b_{\Sn}(X\times X^n)$, 
where the $\I_{\gd_i}$ are the ideal sheaves of the indicated diagonals.

\begin{rem}\label{linearization} The $\Sn$-linearization of the component $\oplus_{|I|=k} \ko_{\gd_I}$ in the
complex above consists simply of permuting the factors $\ko_{\gd_I}$ according to the action 
of $\Sn$ on the indexing sets $I$. Tracing through the above calculation, it is easily seen
that the $\Sn$-linearization of 
$(\I_{\gd_1}\otimes \I_{\gd_2} \otimes \cdots \otimes \I_{\gd_n})$ is also the expected
one, namely, permutation of factors.
\end{rem}

The following calculation is due to Mukai. We present the details for the convenience of the reader
\begin{lem} 
\label{lemma-Mukai-Prop-4-10}
(\cite{mukai-hodge}, Prop. 4.10).
Let $p_{ij}$ be the $(i,j)$-th projection from $X\times X\times X$. Then, there is a
natural isomorphism 
$\B:=p_{13,*}(p_{12}^*\I_{\gd}^\vee \otimes p_{23}^*\I_{\gd}) \cong 
H^2(X,\StructureSheaf{X})\otimes_{\ComplexNumbers}\ko_{\gd}[-2]$,
where $\gd$ is the diagonal in $X\times X$. 
Equivalently, the relative extension sheaves $\SheafExt^j_{p_{13}}\left(p_{12}^*\I_{\gd},p_{23}^*\I_{\gd}\right)$
vanish, for $j\neq 2$, and 
$\SheafExt^2_{p_{13}}\left(p_{12}^*\I_{\gd},p_{23}^*\I_{\gd}\right)\cong 
H^2(X,\StructureSheaf{X})\otimes_\ComplexNumbers\StructureSheaf{\Delta}$.
\end{lem}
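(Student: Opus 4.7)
The plan is to compute $\B$ by a direct calculation, reducing everything to transversal intersections of partial diagonals in $X^3$. Throughout, I use $\omega_X\cong \StructureSheaf{X}$ freely.

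\medskip

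First I would compute the derived dual of $\StructureSheaf{\delta}$. Since $\delta:X\hookrightarrow X\times X$ is a regular closed embedding of codimension $2$ with trivial relative dualizing sheaf, Grothendieck--Verdier duality yields $(\StructureSheaf{\delta})^{\vee}:=R\SheafHom(\delta_*\StructureSheaf{X},\StructureSheaf{X\times X})\cong \delta_*\StructureSheaf{X}[-2]$. Dualizing the tautological triangle $\I_\delta\to\StructureSheaf{X\times X}\to \delta_*\StructureSheaf{X}$ then produces the exact triangle
\[
\delta_*\StructureSheaf{X}[-2]\ \longrightarrow\ \StructureSheaf{X\times X}\ \longrightarrow\ \I_\delta^{\vee}.
\]
Pulling this back along $p_{12}$, tensoring with $p_{23}^*\I_\delta$, and applying $Rp_{13,*}$ produces a triangle whose third term is $\B$, whose middle term is $Rp_{13,*}p_{23}^*\I_\delta$, and whose first term is $Rp_{13,*}(\StructureSheaf{\delta_{12}}\otimes^{L}p_{23}^*\I_\delta)[-2]$.

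\medskip

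Next I would handle the two simpler terms. The middle term is computed by flat base change along the cartesian square
\[
\xymatrix@R=1em@C=3em{X^3 \ar[r]^{p_{23}} \ar[d]_{p_{13}} & X^2 \ar[d]^{\pi_2}\\ X^2 \ar[r]_{\pi_2} & X}
\]
so that $Rp_{13,*}p_{23}^*\I_\delta=\pi_2^*R\pi_{2,*}\I_\delta$. A short diagram chase using $R\pi_{2,*}\StructureSheaf{X\times X}\cong \StructureSheaf{X}\oplus \StructureSheaf{X}[-2]$ and $R\pi_{2,*}\delta_*\StructureSheaf{X}\cong \StructureSheaf{X}$ shows $R\pi_{2,*}\I_\delta\cong \StructureSheaf{X}[-2]$, hence the middle term is $\StructureSheaf{X\times X}[-2]$. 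For the first term, note that $\delta_{12},\delta_{23}\subset X^3$ are smooth codimension $2$ subvarieties meeting in the small diagonal $\delta_{123}$, which has codimension $4$ equal to the sum of their codimensions; by Serre's vanishing criterion for Cohen--Macaulay intersections, $\StructureSheaf{\delta_{12}}\otimes^{L}\StructureSheaf{\delta_{23}}\cong \StructureSheaf{\delta_{123}}$. Tensoring with the triangle $p_{23}^*\I_\delta\to\StructureSheaf{X^3}\to\StructureSheaf{\delta_{23}}$ over $\StructureSheaf{\delta_{12}}$ identifies $\StructureSheaf{\delta_{12}}\otimes^{L}p_{23}^*\I_\delta$ with the fiber of $\StructureSheaf{\delta_{12}}\to\StructureSheaf{\delta_{123}}$. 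Under the isomorphism $\delta_{12}\cong X\times X$ sending $(u,u,v)\mapsto(u,v)$, both $p_{13}|_{\delta_{12}}$ and the restriction of $p_{13}$ to $\delta_{123}\cong X$ become the identity and the diagonal respectively, so the pushforward is $\I_\delta$. Including the shift from Step~1, the first term equals $\I_\delta[-2]$.

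\medskip

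Putting these together gives a triangle $\I_\delta[-2]\stackrel{\phi}{\to}\StructureSheaf{X\times X}[-2]\to\B$, and the task reduces to showing that $\phi$ is a nonzero scalar times the shift of the natural inclusion $\I_\delta\hookrightarrow\StructureSheaf{X\times X}$. This is the only delicate point. Since $\I_\delta$ is a torsion-free rank-one sheaf with trivial determinant, $\Hom(\I_\delta,\StructureSheaf{X\times X})\cong\ComplexNumbers$, generated by the inclusion; so it suffices to rule out $\phi=0$. If $\phi$ vanished, $\B$ would split as $\StructureSheaf{X\times X}[-2]\oplus\I_\delta[-1]$, which has nonzero derived fiber at every point $(x,z)\in X\times X$ with $x\neq z$. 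However, by flat base change the fiber of $\B$ at such $(x,z)$ is $R\Hom_{X}(\I_x,\I_z)$, and a direct short calculation via the short exact sequences of $\I_x$ and $\I_z$ (together with the elementary vanishings $H^*(X,\I_z)=(0,0,\ComplexNumbers)$ and $\Ext^*(\StructureSheaf{x},\I_z)=\ComplexNumbers[-2]$ for $x\neq z$) shows this Ext vanishes. Hence $\phi\neq 0$, so $\phi$ is a nonzero scalar times the natural inclusion, and its cone is $\delta_*\StructureSheaf{X}[-2]$. Using the canonical identification $H^2(X,\StructureSheaf{X})\cong\ComplexNumbers$ rewrites this as $H^2(X,\StructureSheaf{X})\otimes_\ComplexNumbers \StructureSheaf{\delta}[-2]$. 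The equivalent statement about $\SheafExt^j_{p_{13}}$ is the corresponding assertion on cohomology sheaves.

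\medskip

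The main obstacle I expect is the verification that $\phi\neq 0$; everything else is either standard duality or a transverse intersection Tor-computation. My approach to it is to use a single off-diagonal fiber calculation, which cleanly forces nonvanishing.
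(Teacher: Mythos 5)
Your proof is correct and follows essentially the same route as the paper: dualize the tautological triangle of the diagonal, convolve with $\I_{\gd}$, compute the middle term $p_{13,*}p_{23}^*\I_{\gd}\cong H^2(X,\StructureSheaf{X})\otimes\ko_{X\times X}[-2]$ by base change along the projection, identify the first term with $\I_{\gd}[-2]$, and recognize the connecting map as a nonzero multiple of the inclusion $\I_{\gd}\hookrightarrow\ko_{X\times X}$ using $\Hom(\I_{\gd},\ko_{X\times X})\cong\ComplexNumbers$. The only (harmless) differences are that you compute the first term via Tor-independence of the partial diagonals where the paper applies Grothendieck duality to $R\SheafHom(p_{12}^*\ko_{\gd},-)$, and that you spell out the off-diagonal fiber computation $\Ext^*(\I_x,\I_z)=0$, which the paper invokes implicitly when asserting that $\B$ is supported on the diagonal.
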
 
\begin{proof}
The convolution of  the exact triangle
$$
\ko_{\gd}^\vee \to \ko_{X\times X} \to \I_{\gd}^\vee
$$
with $\I_{\gd}$ yields the following exact triangle on $X\times X$:
$$
p_{13*}R\H om (p_{12}^* \ko_{\gd}, p_{23}^*\I_{\gd})\to p_{13,*}p_{23}^*\I_{\gd} \to \B.
$$
By the use of flat base-change for the Cartesian diagram
$$\xymatrix{
 & \ar[dl]_{p_{12}} X\times X \times X \ar[dr]^{p_{13}} & \\
X\times X \ar[dr]_{p_1} & & X\times X \ar[dl]^{p_1}\\
& X & }$$
the second term in the triangle is isomorphic to $p_1^*p_{1*}\I_{\gd}$. 
It is then simple to conclude from the short exact sequence 
\[0\to \I_{\gd} \to \ko_{X\times X} \to \ko_{\gd}\to 0\]
that $p_{13,*}p_{23}^*\I_{\gd} \cong p_1^*p_{1*}\I_{\gd} \cong 
H^2(X,\StructureSheaf{X})\otimes_\ComplexNumbers \ko_{X\times X}[-2]$. 
The first term in the triangle is computed as follows:
\begin{align*}
p_{13*}R\H om (p_{12}^* \ko_{\gd}, p_{23}^*\I_{\gd}) &\cong  p_{13*}R\H om (\gd_{12*} \ko_{X\times X}, p_{23}^*\I_{\gd})\\
&\cong p_{13*}\gd_{12*}R\H om (\ko_{X\times X}, \gd_{12}^! \; p_{23}^*\I_{\gd})\\
& \cong  p_{13*}\gd_{12*}R\H om (\ko_{X\times X}, p_3^*\omega_X^{-1}\otimes\gd_{12}^*\; p_{23}^*\I_{\gd}[-2])\\
& \cong p_2^*\omega_X^{-1}\otimes \I_{\gd}[-2].
\end{align*}
The first isomorphism is flat base-change, while the second is Grothendieck duality. As 
$Hom(\I_{\gd}, \ko_{X\times X}) \cong \mathbb{C} $ and $\B$ is supported on the diagonal, it 
follows that $\B$ is isomorphic to $H^2(X,\StructureSheaf{X})\otimes_{\ComplexNumbers}\ko_{\gd}[-2]$. 
\end{proof}

Let us now calculate $ Rp_{13*} (p_{12}^*\I_{D_n}^\vee \otimes p_{23}^*\I_{D_n})$ 
when $n=2$; the answer for general $n$ will be apparent from this. Set $X_1 = \cdots X_4 =X$,
and for $I \subset \{1,...,4\}$, let $X_I := \times_{i\in I} X_i$. Denote by $u_I: X_{\{1,..,4\}} \to X_I$ 
the projection to the $I$-th factor; similarly,  for $I \subset J  \subset \{1,..,4\}$, $|I|=2, |J|=3$, let $v^J_I:X_J\to X_I$ be the obvious projection. 

\[
\xymatrix{
 & \ar[dl]_{u_{124}} X_1\times X_2\times X_3\times X_4 \ar[dr]^{u_{134}} \ar[dd]_{u_{14}}& \\
 X_1\times X_2\times X_4 \ar[dr]_{v^{124}_{14}} & & X_1\times X_3\times X_4 \ar[dl]^{v^{134}_{14}}\\
 &  X_1\times X_4 & 
}
\]

We then have
\begin{align*}
Rp_{13*} (p_{12}^*\I_{D_2}^\vee \otimes p_{23}^*\I_{D_2}) & \cong 
u_{14*} \{u_{12}^*\I_{\gd}^\vee \otimes  u_{13}^*\I_{\gd}^\vee  \otimes u_{24}^*\I_\gd \otimes u_{34}^*\I_\gd \} \\ 
& \cong v^{134}_{14*}u_{134*} \{(u_{134}^*v^{134*}_{13}\I_{\gd}^\vee \otimes  u_{134}^*v^{134*}_{34}\I_\gd ) \otimes (u_{12}^*\I_{\gd}^\vee  \otimes  u_{24}^*\I_\gd)\} \\
& \cong v^{134}_{14*}\{(v^{134*}_{13}\I_{\gd}^\vee \otimes v^{134*}_{34}\I_\gd ) \otimes 
u_{134*}( u_{12}^*\I_{\gd}^\vee  \otimes  u_{24}^*\I_\gd)\} \\
& \cong v^{134}_{14*}\{(v^{134*}_{13}\I_{\gd}^\vee \otimes v^{134*}_{34}\I_\gd ) \otimes 
u_{134*}u_{124}^*( v_{12}^{124*}\I_{\gd}^\vee  \otimes  v_{24}^{124*}\I_\gd)\} \\
& \cong v^{134}_{14*}\{(v^{134*}_{13}\I_{\gd}^\vee \otimes v^{134*}_{34}\I_\gd ) \otimes 
v^{134*}_{14}v^{124}_{14*}( v_{12}^{124*}\I_{\gd}^\vee  \otimes  v_{24}^{124*}\I_\gd)\} \\
& \cong v^{134}_{14*}(v^{134*}_{13}\I_{\gd}^\vee \otimes v^{134*}_{34}\I_\gd ) \otimes 
v^{124}_{14*}( v_{12}^{124*}\I_{\gd}^\vee  \otimes  v_{24}^{124*}\I_\gd)\\
& \cong \left(H^2(X,\StructureSheaf{X})\otimes_\ComplexNumbers\ko_{\gd}[-2]\right) 
\otimes \left(H^2(X,\StructureSheaf{X})\otimes_\ComplexNumbers\ko_{\gd}[-2]\right),
\end{align*}
where the third isomorphism is the projection formula, the fifth is flat base-change and the 
last one is the isomorphism of the previous lemma. Clearly, the same working proves the

\begin{prop}
$Rp_{13*} (p_{12}^*\I_{D_n}^\vee \otimes p_{23}^*\I_{D_n}) \cong 
\left(H^2(X,\StructureSheaf{X})\otimes_\ComplexNumbers\ko_{\gd}[-2]\right)^{n\otimes}$
as objects in the category $D^b_{\Sn}(X\times X)$ (where $\Sn$-acts trivially on $X\times X$). 
Moreover, the $\Sn$-linearization of  the tensor product on the
right hand side is simply permutation of factors.
\end{prop}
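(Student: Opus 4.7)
The plan is to iterate the algebraic manipulations already executed for the case $n=2$. Starting from the $\Sn$-equivariant factorization $\I_{D_n} \cong \I_{\gd_1} \otimes \cdots \otimes \I_{\gd_n}$ in $D^b_{\Sn}(X \times X^n)$ recalled before Lemma \ref{lemma-Mukai-Prop-4-10}, with $\Sn$ acting by permutation of factors as in Remark \ref{linearization}, I would first substitute this into $p_{12}^*\I_{D_n}^\vee \otimes p_{23}^*\I_{D_n}$. This rewrites the integrand on $X \times X^n \times X$ as a tensor product $\bigotimes_{i=1}^n \J_i$, where $\J_i$ involves only the two outer copies of $X$ and the $i$-th middle factor of $X^n$.

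Push-forward by $p_{13}$ then reduces to integrating out the $n$ middle factors one at a time. At the step that removes the $i$-th factor, observe that among $\J_1,\ldots,\J_n$ only $\J_i$ depends on that coordinate; by the projection formula combined with flat base change, precisely the three-line move displayed in the $n=2$ computation, the remaining $\J_j$'s can be pulled out of the integral, leaving a single push-forward that, by Lemma \ref{lemma-Mukai-Prop-4-10}, evaluates to $H^2(X,\StructureSheaf{X}) \otimes_\ComplexNumbers \ko_\gd[-2]$ on the outer $X \times X$. After $n$ such reductions one arrives at the claimed $n$-fold derived tensor product on $X \times X$.

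For the $\Sn$-linearization I would note that each of the three tools used (projection formula, flat base change, and Lemma \ref{lemma-Mukai-Prop-4-10}) is functorial and hence compatible with the action. Organizing the induction so that the $k$-th reduction commutes with the subgroup of $\Sn$ fixing the already-integrated indices preserves the permutation symmetry on the surviving tensor factors, so the final linearization on $\bigl(H^2(X,\StructureSheaf{X})\otimes_\ComplexNumbers \ko_\gd[-2]\bigr)^{\otimes n}$ is indeed permutation of factors, as claimed.

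The only real obstacle I anticipate is notational rather than mathematical: labeling the many projections from the $(n+2)$-fold product $X \times X^n \times X$ in a way that keeps both the iterated projection-formula/base-change argument and the tracking of the $\Sn$-equivariance legible. The substance of the argument is a mechanical iteration of the $n=2$ case just carried out, and no new derived-category input beyond Lemma \ref{lemma-Mukai-Prop-4-10} is required.
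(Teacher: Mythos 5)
Your proposal matches the paper's own argument: the paper carries out the $n=2$ computation explicitly (factorization of $\I_{D_n}$ into the $\I_{\gd_i}$, grouping by middle factor, projection formula and flat base change to integrate out one middle copy of $X$ at a time via Lemma \ref{lemma-Mukai-Prop-4-10}) and then simply states that ``the same working'' proves the general case, with the linearization read off from Remark \ref{linearization}. Your iteration scheme and your treatment of the $\Sn$-equivariance are exactly this, so the proposal is correct and essentially identical in approach.
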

\begin{proof}
The linearization is clear from the calculation above and Remark \ref{linearization}.
\end{proof}
 
\begin{lem}
\label{lemma-triviality-of-an-object}
Let $S$ be a scheme and $F$ an object in $D^b(S)$.
Assume that the cohomology sheaves $\fH^j(F)$ satisfy the condition:
$\Ext^{k+1}(\fH^j(F),\fH^{j-k}(F))  =  0$,
$\forall j$ and $\forall k > 0$.
Then $F$ is isomorphic to $\oplus_{j}\fH^j(F)[-j].$
\end{lem}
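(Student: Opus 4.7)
The plan is to induct on the number $N$ of indices $j$ for which $\fH^j(F) \neq 0$. The base case $N \leq 1$ is immediate: if $\fH^j(F)$ is concentrated in a single degree $j_0$, then both truncations $\tau_{\leq j_0}F$ and $\tau_{\geq j_0}F$ coincide with $F$, and the canonical map $F \to \fH^{j_0}(F)[-j_0]$ is a quasi-isomorphism, hence an isomorphism in $D^b(S)$.

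For the inductive step, suppose the nonzero cohomology of $F$ occurs in degrees $[a,b]$ with $b > a$. Set $G := \tau_{\leq b-1}F$. Then $\fH^j(G) = \fH^j(F)$ for $j \leq b-1$ and $\fH^j(G) = 0$ otherwise, so $G$ again satisfies the hypothesis of the lemma and has $N-1$ nonzero cohomology sheaves. The canonical truncation triangle reads
\[
G \longrightarrow F \longrightarrow \fH^b(F)[-b] \stackrel{\delta}{\longrightarrow} G[1].
\]
If we can show $\delta = 0$, the triangle splits (in any triangulated category a triangle with zero connecting morphism splits), giving $F \cong G \oplus \fH^b(F)[-b]$, and then the inductive hypothesis applied to $G$ finishes the proof.

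To kill $\delta$, apply the inductive hypothesis to $G$ first to get $G \cong \oplus_{j=a}^{b-1}\fH^j(F)[-j]$. Then
\[
\delta \ \in \ \Hom(\fH^b(F)[-b],G[1]) \ \cong \ \bigoplus_{j=a}^{b-1}\Ext^{b+1-j}\!\bigl(\fH^b(F),\fH^j(F)\bigr).
\]
Setting $k := b-j > 0$, each summand is $\Ext^{k+1}(\fH^b(F),\fH^{b-k}(F))$, which vanishes by hypothesis. Hence $\delta = 0$, as required.

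The argument is essentially formal; the only subtlety is the ordering, namely that one must invoke the inductive hypothesis on $G$ before computing $\Hom(\fH^b(F)[-b], G[1])$ as a direct sum of $\Ext$-groups between cohomology sheaves, so that the vanishing assumption can be applied summand by summand. No further obstacle arises.
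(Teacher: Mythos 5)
Your proof is correct and follows essentially the same strategy as the paper's: induction on the number of nonzero cohomology sheaves, splitting off one extreme cohomology sheaf via a truncation triangle, applying the inductive hypothesis to the remaining piece so that the connecting morphism decomposes into $\Ext^{k+1}(\fH^j(F),\fH^{j-k}(F))$ summands, which vanish by hypothesis. The only (immaterial) difference is that you peel off the top cohomology sheaf $\fH^b(F)[-b]$ whereas the paper peels off the bottom one $\fH^a(F)[-a]$; both land the connecting map in exactly the Ext-groups killed by the hypothesis.
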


\begin{proof}
Set $J_F:=\{j \ : \ \fH^j(F)\neq 0\}$. 
The proof is by induction on the cardinality 
$\sharp(J_F)$ of $J_F$. Set $b:=\max(J_F)$ and $a:=\min(J_F)$.
Then $F$ is represented by a complex of coherent 
$\StructureSheaf{S}$-modules 
\[
F_a\rightarrow F_{a+1} \rightarrow \dots \rightarrow F_b,
\]
with $F_i$ in degree $i$.
If $\sharp(J_F)=1$, then $a=b$ and the statement holds trivially.
Assume that the statement holds for every object $F'$
satisfying $\sharp(J_{F'})< n$, where $n:=\sharp(J_F)$. 
Set $A:=\fH^a(F)$.
Let $F'$ be the cone of the natural morphism $\fH^a(F)[-a]\rightarrow F$. 
Then $\sharp(J_{F'})=n-1$, $\fH^a(F')=0$, and $\fH^i(F')=\fH^i(F)$,
for $i\neq a$. The equivalence
$F'=\oplus_{j}\fH^j(F')[-j]$ follows, by the induction hypothesis. 
We get the exact triangle
\[
A[-a]\rightarrow F \rightarrow F' \RightArrowOf{\delta} A[1-a].
\]
The morphism $\delta$ decomposes as the sum of the morphisms
\[
\delta_j\in \Hom(\fH^j(F')[-j],A[1-a])=\Ext^{1+j-a}(\fH^j(F),\fH^a(F)),
\]
for $j> a$.
These groups vanish, by assumption. Hence, $\delta=0$, and $F$ is 
isomorphic to the direct sum of $A[-a]$ and $F'$.
\end{proof}

\noindent {\it Proof of Theorem \ref{thm-A-is-direct-sum} part
\ref{thm-item-A-is-a-direct-sum-Hilbert-scheme-case}.}

\underline{Step 1:}
To compute the $\Sn$-invariants in $Rp_{13*} (p_{12}^*\I_{D_n}^\vee \otimes p_{23}^*\I_{D_n})$, 
we first calculate the $\Sn$-invariant parts of the the $n$-fold multi-tors
$Tor^n_q(\ko_{\gd}):=Tor_q(\ko_{\gd}, \ko_{\gd}, \cdots ,\ko_{\gd})$. This is precisely the 
sort of calculation that is carried out in Corollary B.7 in \cite{Scala}. 
In our particular situation this says 
that $Tor^n_q(\ko_{\gd})$ has nonzero $\Sn$-invariants if and only if $q=2h$, $0\leq h \leq n-1$,
in which case 
$$
Tor^n_{2h}(\ko_{\gd})^{\Sn} \simeq (\Wedge{2}N^*_{\Delta/X\times X})^{\otimes h}\simeq
H^0(X,\omega_X)^{\otimes h}\otimes_\ComplexNumbers \ko_{\gd}.
$$
Equivalently, the relative extension sheaves 
$\SheafExt^j(\pi_{12}^*\I_U,\pi_{23}^*\I_U)$ are isomorphic to $\Delta_*\StructureSheaf{X}$,
for $j$ even in the range $2\leq j \leq 2n$, and vanish for all other values of $j$.

The hypotheses of Lemma \ref{lemma-triviality-of-an-object},
applied to the object $R\pi_{13*} (\pi_{12}^*\I_U^\vee \otimes \pi_{23}^*\I_U)$, are satisfied, 
since the odd cohomologies of  this object
vanish,  and 
$\Ext^{i}_{X\times X}(\ko_{\gd},\ko_{\gd})  =  0$ for odd $i$. 
Lemma \ref{lemma-triviality-of-an-object} 
implies the existence of an isomorphism
\begin{equation}
\label{eq-object-isomorphic-to-direct-sum-of-relative-ext-sheaves}
R\pi_{13*} (\pi_{12}^*\I_U^\vee \otimes \pi_{23}^*\I_U)\cong
\oplus_{i=1}^nH^2(X,\StructureSheaf{X})^{\otimes i}\otimes_\ComplexNumbers
\Delta_*\StructureSheaf{X}[-2i].
\end{equation}
Note that the $i$-th summand on the right hand side is naturally isomorphic to 
the sheaf cohomology of degree $2i$ of the left hand side,
namely the relative extension sheaf $\SheafExt^{2i}_{\pi_{13}}\left(\pi_{12}^*\I_U, \pi_{23}^*\I_U\right)$.
The isomorphism
 $\A\cong \oplus_{i=0}^{n-1} H^2(X,\StructureSheaf{X})^{\otimes i}\otimes_\ComplexNumbers
\Delta_*\StructureSheaf{X}[-2i]$
follows immediately from Equation
(\ref{eq-object-isomorphic-to-direct-sum-of-relative-ext-sheaves}).
In particular, the functor $\Phi_\U$ is faithful.

\underline{Step 2:} We prove in this step the following claim. 
\begin{claim}
\label{claim-exact-triangle-g-e-f}
For $i$ even, we have the canonical short exact sequence
\[
0\rightarrow \H^i(\A) \RightArrowOf{e^i} 
\Delta_{X_*}\left[\omega_X\otimes \SheafExt^{i+2}_{\pi_X}\left(\I_U,\I_U\right)\right]
\RightArrowOf{f^i}
\H^{i+2}(\A)\otimes\pi_1^*\omega_X
\rightarrow 0.
\]
For $i$ odd, we have the following natural isomorphism:
\[
\Delta_{X_*}\left[
\omega_X\otimes\SheafExt^{i+2}_{\pi_{X_*}}(\I_U,\I_U)
\right]
\LongRightArrowOf{f^i}
\H^{i+1}(\A)\otimes \pi_1^*(T^*X).
\]
\end{claim}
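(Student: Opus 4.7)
The plan is to derive the claim from a master exact triangle on $X\times X$ obtained as the pushforward along $\pi_{13}$ of the structure triangle
$\mathcal I_{j(X\times M)}\to \StructureSheaf{X\times M\times X}\to j_*\StructureSheaf{X\times M}$
on $X\times M\times X$, where $j:X\times M\hookrightarrow X\times M\times X$ is the embedding $(x,m)\mapsto (x,m,x)$, identified with the pullback of the diagonal $\Delta$ along the flat morphism $\pi_{13}$. Tensoring with $T:=\pi_1^*\omega_X\otimes \pi_{12}^*\I_U^\vee\otimes \pi_{23}^*\I_U$, applying $R\pi_{13,*}$, and invoking the projection formula together with the identities $\pi_{13}\circ j = \Delta\circ \pi_X$ and $j^*T = \pi_X^*\omega_X\otimes \I_U^\vee\otimes \I_U$, produces the triangle
\[
\mathcal I_\Delta \Lotimes \A \;\longrightarrow\; \A \;\longrightarrow\; \Delta_*\!\bigl[\omega_X\otimes R\SheafHom_{\pi_X}(\I_U,\I_U)\bigr][2].
\]

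Taking the long exact sequence of cohomology sheaves, the map out of $\H^i(\A)$ lands in $\Delta_*[\omega_X\otimes \SheafExt^{i+2}_{\pi_X}(\I_U,\I_U)]$ since $\Delta_*$ is exact. To finish I identify the flanking terms $\H^k(\mathcal I_\Delta\Lotimes \A)$ via the second exact triangle $\mathcal I_\Delta\Lotimes \A \to \A \to \Delta_* L\Delta^*\A$, combined with the derived-restriction formula for the regular codimension-$2$ embedding $\Delta$ with conormal $T^*X$: $L\Delta^*\Delta_*G \cong G\oplus (G\otimes T^*X)[1]\oplus (G\otimes \omega_X)[2]$ for any sheaf $G$ on $X$. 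By Step 1 the cohomology sheaves of $\A$ are supported on $\Delta$ and vanish in odd degree, so writing $\H^q(\A)=\Delta_*G^q$ the hypercohomology spectral sequence for $L\Delta^*\A$ degenerates at $E_2$ by parity, yielding $\H^k(\mathcal I_\Delta\Lotimes \A)\cong \Delta_*(G^k\otimes T^*X)\oplus\Delta_*(G^{k+1}\otimes \omega_X)$, while the natural map $\H^k(\A)\to \H^k(\Delta_* L\Delta^*\A)$ is the inclusion of the degree-zero direct summand, making the connecting map $\H^k(\mathcal I_\Delta\Lotimes \A)\to\H^k(\A)$ vanish.

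Inserting these identifications, the long exact sequence collapses into
\[
0\to \H^i(\A)\to \Delta_*[\omega_X\otimes \SheafExt^{i+2}_{\pi_X}(\I_U,\I_U)]\to \H^{i+1}(\mathcal I_\Delta\Lotimes \A)\to 0.
\]
For $i$ even, the parity vanishing $G^{i+1}=0$ forces $\H^{i+1}(\mathcal I_\Delta\Lotimes \A)=\Delta_*(G^{i+2}\otimes \omega_X)=\H^{i+2}(\A)\otimes \pi_1^*\omega_X$, producing the claimed SES. For $i$ odd the parities swap: $G^i=0$ gives $\H^i(\A)=0$, and $G^{i+2}=0$ reduces $\H^{i+1}(\mathcal I_\Delta\Lotimes \A)$ to $\Delta_*(G^{i+1}\otimes T^*X)=\H^{i+1}(\A)\otimes \pi_1^*T^*X$, yielding the claimed isomorphism. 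The main obstacle will be verifying the $E_2$-degeneration of the hypercohomology spectral sequence for $L\Delta^*\A$ and showing that the unit map $\A\to \Delta_*L\Delta^*\A$ identifies on cohomology with the inclusion of the degree-zero summand of the Koszul filtration; both rest on the K3 structure (trivialising $\omega_X$ so that $\wedge^2 T^*X\cong \StructureSheaf{X}$) and on the vanishing of the odd-graded $G^q$ established in Step 1.
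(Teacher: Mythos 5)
Your proposal is correct and follows essentially the same route as the paper: the same master triangle obtained by tensoring the structure sequence of the relative diagonal with $\pi_1^*\omega_X[2]\otimes\pi_{12}^*\I_U^\vee\otimes\pi_{23}^*\I_U$ and pushing forward along $\pi_{13}$, the same splitting of the long exact sequence (the paper shows the map $\A\otimes\I_{\Delta_X}\to\A$ vanishes because it factors through $\A\otimes\StructureSheaf{X\times X}$ and $\A$ is supported on the diagonal, which is the mirror image of your split-injectivity of $\A\to\Delta_*L\Delta^*\A$ on cohomology), and the same Koszul computation of $\H^{i+1}(\A\Lotimes\I_{\Delta_X})$ via $\Tor_\bullet(\StructureSheaf{\Delta_X},\I_{\Delta_X})\cong\Delta_{X_*}T^*X\oplus\Delta_{X_*}\omega_X[1]$. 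The spectral-sequence degeneration you worry about is automatic here since $\A$ is already split as a direct sum of shifted sheaves by Step 1, and parity prevents any extension ambiguity.
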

\begin{proof}
Let $\Delta_{13}:X\times M \rightarrow X\times M \times X$ be the diagonal embedding.
Consider the short exact sequence:
\[
0\rightarrow \pi_{13}^*\I_{\Delta_X}\rightarrow \StructureSheaf{X\times M\times X}\rightarrow 
\Delta_{13_*}\StructureSheaf{X\times M}\rightarrow 0.
\]

Tensoring with $\widetilde{\A}:=\pi_1^*\omega_X[2]\otimes \pi_{12}^*\I_U^\vee\otimes \pi_{23}^*\I_U$
we get the exact triangle:
\begin{equation}
\label{eq-exact-triangle-g-f-e}
\widetilde{\A}\otimes \pi_{13}^*\I_{\Delta_X} \RightArrowOf{\tilde{g}}
\widetilde{\A} \RightArrowOf{\tilde{e}} 
\pi_1^*\omega_X[2]\otimes \Delta_{13_*}\left(\I_U^\vee\otimes\I_U\right) \RightArrowOf{\tilde{f}} 
\widetilde{\A}\otimes \pi_{13}^*\I_{\Delta_X}[1]
\end{equation}
Applying the derived push-forward $\pi_{13_*}$ to the exact triangle 
(\ref{eq-exact-triangle-g-f-e}) we get the following exact triangle in $D^b(X\times X)$.
\begin{equation}\label{exact-triangle-20}
\A\otimes\I_{\Delta_X} \RightArrowOf{g} \A \RightArrowOf{e} 
\Delta_{X_*}\left[\omega_X[2]\otimes \pi_{X_*}\left(\I_U^\vee\otimes \I_U\right)
\right] \RightArrowOf{f} \A\otimes\I_{\Delta_X}[1].
\end{equation}
Let $\iota:\I_{\Delta_X}\rightarrow \StructureSheaf{X\times X}$ be the natural inclusion.
The object $\A$ has been shown to be a direct sum of sheaves supported on the diagonal,
and the morphism $g$ is the composition
$\A\otimes \I_{\Delta_X}\RightArrowOf{\id_\A\otimes \iota}\A\otimes\StructureSheaf{X\times X}\cong \A$.
Hence, the morphism $g$ vanishes and we get a short exact sequence, for every integer $i$.
\[
0\rightarrow \H^i(\A) \RightArrowOf{e^i} 
\Delta_{X_*}\H^i\left[\omega_X[2]\otimes \pi_{X_*}\left(\I_U^\vee\otimes \I_U\right)\right] 
\RightArrowOf{f^i} \H^{i+1}\left(\A\otimes\I_{\Delta_X}\right)\rightarrow 0.
\]

It remains to calculate the sheaf cohomologies $\H^{i+1}\left(\A\otimes\I_{\Delta_X}[1]\right)$.
Note first the isomorphisms 
\begin{eqnarray*}
\Tor_0^{X\times X}(\StructureSheaf{\Delta_X},\I_{\Delta_X}) & \cong & \Delta_{X_*}T^*X,
\\
\Tor_{-1}^{X\times X}(\StructureSheaf{\Delta_X},\I_{\Delta_X})
& \cong & \Delta_{X_*}\omega_X,
\end{eqnarray*}
and $\Tor_i^{X\times X}(\StructureSheaf{\Delta_X},\I_{\Delta_X})=0,$
if $i$ is not equal to $0$ or $-1$. We have already shown that 
$\A$ is isomorphic to $\oplus_{i=0}^{n-1}\H^{2i}(\A)$.
For even $i$ we get the isomorphism 
\[
\H^{i+1}(\A\otimes\I_{\Delta_X}) \ \ \cong \ \ 
\Tor_{-1}^{X\times X}\left(\H^{i+2}(\A),\I_{\Delta_X}\right)\ \ \cong \ \ 
\H^{i+2}(\A)\otimes \pi_1^*\omega_X.
\]
For odd $i$ we get:
\[
\H^{i+1}(\A\otimes\I_{\Delta_X}) \ \ \cong \ \ 
\Tor_{0}^{X\times X}\left(\H^{i+1}(\A),\I_{\Delta_X}\right)\ \ \cong \ \ 
\H^{i+1}(\A)\otimes \pi_1^*(T^*X).
\]
\end{proof}

\underline{Step 3:}  We have the isomorphisms
$\H^{2i}(\A)\cong \pi_1^*\omega_X\otimes \SheafExt^{2i+2}_{\pi_{13}}(\pi_{12}^*\I_U,\pi_{23}^*\I_U)$, by definition of $\A$.
The homomorphisms 
\[
e^{j-2} \ : \ \SheafExt^{j}_{\pi_{13}}\left(\pi_{12}^*\I_U, \pi_{23}^*\I_U\right) \ \ \rightarrow \ \ 
\Delta_*\left(\SheafExt^j_{\pi_X}(\I_U,\I_U)\right)
\]
are injective, for all $j$, and $e^{2n-2}$ is an isomorphism, by Claim 
\ref{claim-exact-triangle-g-e-f}.
Let $t_M$ be a non-zero element of $H^2(M,\StructureSheaf{M})$, where $M=X^{[n]}$. 
Then $t_M$ induces a morphism from $\pi_{13_*}\left(\pi_{12}^*\I_U^\vee\otimes\pi_{23}^*\I_U\right)$
to $\pi_{13_*}\left(\pi_{12}^*\I_U^\vee\otimes\pi_{23}^*\I_U\right)[2]$. We get an induces homomorphism
of relative extension sheaves
\[
t_M:\SheafExt^j_{\pi_{13}}\left(\pi_{12}^*\I_U,\pi_{23}^*\I_U\right) 
\rightarrow \SheafExt^{j+2}_{\pi_{13}}\left(\pi_{12}^*\I_U,\pi_{23}^*\I_U\right),
\]
via the morphism (\ref{eq-multiplication-of-A-by-Y-M}).
We prove in this step the following statement. 

\begin{claim}
\label{claim-t-M-to-the-power-n-1-is-an-isomorphism}
The sheaf homomorphism 
\begin{equation}
\label{eq-t-M-to-the-power-n-1-is-an-isomorphism}
t_M^{i} \ : \ \SheafExt^2_{\pi_{13_*}}\left(\pi_{12}^*\I_U,\pi_{23}^*\I_U\right)
\ \ \ \rightarrow \ \ \ 
\SheafExt^{2i+2}_{\pi_{13_*}}\left(\pi_{12}^*\I_U,\pi_{23}^*\I_U\right)
\end{equation}
is an isomorphism, for $0\leq i \leq n-1$.
\end{claim}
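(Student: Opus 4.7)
The plan is to exploit the simplicity of the source and target sheaves to reduce to a non-vanishing question, and then to verify non-vanishing by a concrete combinatorial computation in the $\Sn$-equivariant model of Theorem \ref{Deep}. By Step 1, each relative extension sheaf $\SheafExt^{2j+2}_{\pi_{13}}(\pi_{12}^*\I_U, \pi_{23}^*\I_U)$ is isomorphic to $\Delta_*\StructureSheaf{X}$ tensored with a $1$-dimensional vector space, for $0 \leq j \leq n-1$. Since $\Hom_{X\times X}(\Delta_*\StructureSheaf{X}, \Delta_*\StructureSheaf{X}) \cong \ComplexNumbers$, any morphism between two such sheaves is either zero or an isomorphism, so it suffices to show that $t_M^i$ is nonzero.

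To prove non-vanishing, I would pass to the $\Sn$-equivariant picture via the McKay-Haiman equivalence $Rp_*q^*:D^b(X^{[n]}) \to D^b_{\Sn}(X^n)$ used in Step 1. Under the identifications developed there, $R\pi_{13*}(\pi_{12}^*\I_U^\vee \otimes \pi_{23}^*\I_U)$ corresponds to the $\Sn$-invariants of the $n$-fold tensor product $(H^2(X, \StructureSheaf{X}) \otimes \ko_\gd[-2])^{\otimes n}$, with $\Sn$ permuting the tensor factors; the $1$-dimensional invariant piece in cohomological degree $2(j+1)$ is spanned by the symmetric class $\omega_{j+1} := \sum_{|I|=j+1} \sigma_I$, where $\sigma_I := \bigotimes_{k\in I} \sigma \otimes \bigotimes_{k \notin I} 1$ and $\sigma$ generates $H^2(X, \StructureSheaf{X})$.

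Under the composition of Mukai's Hodge isometry with the canonical isomorphism $H^*(X^{[n]}, \StructureSheaf{X^{[n]}}) \cong H^*(X^n, \StructureSheaf{X^n})^{\Sn}$ from the McKay equivalence, the class $t_M \in H^2(X^{[n]}, \StructureSheaf{X^{[n]}})$ is identified with the symmetric class $\tau := \sum_{k=1}^n e_k$, where $e_k := 1^{\otimes(k-1)} \otimes \sigma \otimes 1^{\otimes(n-k)}$. A direct computation using $e_k^2 = 0$ and $e_j e_k = e_k e_j$ then gives
\[
\tau \cdot \omega_j \ = \ \sum_{|I|=j} \sum_{k \notin I} \sigma_{I\cup\{k\}} \ = \ (j+1)\,\omega_{j+1},
\]
since every basis element $\sigma_J$ with $|J|=j+1$ is hit exactly $j+1$ times, once for each index one could remove from $J$ to obtain an index set appearing in $\omega_j$. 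In particular $\omega_1 = \tau$, so iterating yields $\tau^i \cdot \omega_1 = \tau^{i+1} = (i+1)!\,\omega_{i+1}$, which is nonzero in characteristic zero. This forces $t_M^i$ to act as a nonzero, hence invertible, scalar in the $1$-dimensional $\Hom$-space.

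The main obstacle is the compatibility assertion in the third step: that $t_M$ corresponds to $\tau$ under the McKay and Mukai identifications, and relatedly that the Yoneda/cup-product action of $H^*(X^{[n]}, \StructureSheaf{X^{[n]}})$ on the relative Ext sheaves translates to multiplication by $\tau$ on the $\Sn$-invariants. This relies on the naturality and Hodge-compatibility of Mukai's Hodge isometry and the Bridgeland-King-Reid equivalence; once these compatibilities are traced through, the combinatorial step above is essentially elementary.
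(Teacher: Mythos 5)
Your opening reduction is correct and matches the paper's implicit use of the same fact: since Step 1 identifies each $\SheafExt^{2j+2}_{\pi_{13}}\left(\pi_{12}^*\I_U,\pi_{23}^*\I_U\right)$ with $\Delta_*\StructureSheaf{X}$ tensored with a line, and $\Hom(\Delta_*\StructureSheaf{X},\Delta_*\StructureSheaf{X})\cong\ComplexNumbers$, the map $t_M^i$ is either zero or an isomorphism, so only non-vanishing is at stake. The gap is in how you establish non-vanishing. Your combinatorial model misdescribes the object: under the equivalence of Theorem \ref{Deep}, $Rp_{13*}(p_{12}^*\I_{D_n}^\vee\otimes p_{23}^*\I_{D_n})$ is the $n$-fold \emph{derived} tensor power of $\B\cong H^2(X,\StructureSheaf{X})\otimes\ko_{\gd}[-2]$ over $\StructureSheaf{X\times X}$, not a tensor power of graded vector spaces. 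Its graded pieces are the multi-Tor sheaves $Tor^n_{q}(\ko_\gd)$, whose $\Sn$-invariants are the lines $(\wedge^2N^*_{\Delta/X\times X})^{\otimes h}\otimes\ko_\gd$ for $q=2h$ (Scala's Corollary B.7, quoted in Step 1); they are not spanned by classes $\sigma_I$ indexed by subsets $I\subset\{1,\dots,n\}$, and there is no direct sum decomposition over subsets on which $\Sn$ acts by permutation. Consequently the identity $\tau\cdot\omega_j=(j+1)\,\omega_{j+1}$ has no meaning in this module; what actually governs the action of $e_k$ is the class $a\in\Ext^2_{X\times X}(\ko_\gd,\ko_\gd)$ by which $\sigma\in H^2(X,\StructureSheaf{X})$ acts on a single factor $\B$, and the induced map on invariant Tor sheaves is (essentially) contraction with the $H^0(\wedge^2T_X)$-component of $a$. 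Showing that this contraction $\Omega^2_\Delta\to\ko_\Delta$ is an isomorphism is precisely the content of the claim; it is not a formal consequence of naturality of the Mukai isometry or of BKR, so the "compatibility" you defer at the end is the whole theorem, not a routine check. (What your computation does prove is the standard fact that $t_M^i\neq 0$ in $H^{2i}(X^{[n]},\StructureSheaf{X^{[n]}})\cong\left(H^*(X,\StructureSheaf{X})^{\otimes n}\right)^{\Sn}$, which does not by itself control the action of $t_M^i$ on the $Y(\StructureSheaf{M})$-module $\A$.)

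For comparison, the paper supplies the missing non-degeneracy by a different mechanism: it restricts to the diagonal of $X\times X$ via the short exact sequences of Claim \ref{claim-exact-triangle-g-e-f}, relating $\SheafExt^{2i}_{\pi_{13}}\left(\pi_{12}^*\I_U,\pi_{23}^*\I_U\right)$ to $\Delta_{X_*}\SheafExt^{2i}_{\pi_X}(\I_U,\I_U)$; it then proves that $\mu_{\I_U}(t_M)^n:\SheafExt^0_{\pi_X}(\I_U,\I_U)\to\SheafExt^{2n}_{\pi_X}(\I_U,\I_U)$ is an isomorphism by composing with the trace and using $\rank(\I_U)=1$; finally it runs an induction-by-contradiction through the resulting commutative ladder. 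If you want to salvage your route, you would need to prove that the $H^0(\wedge^2T_X)$-part of the action of $\sigma$ on $\B$ is a non-degenerate bivector — and the trace/rank argument is in effect how the paper does exactly that.
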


\begin{proof}
We have the homomorphism 
\[
\mu_{\I_U}(t_M): \SheafExt^j_{\pi_{X_*}}\left(\I_U,\I_U\right)\rightarrow \SheafExt^{j+2}_{\pi_{X_*}}\left(\I_U,\I_U\right).
\]
The composition 
$
\StructureSheaf{X\times M}\LongRightArrowOf{\mu_{\I_U}}
\I_U^\vee\otimes \I_U \LongRightArrowOf{\tr}
\StructureSheaf{X\times M}
$
is the identity, since $\rank(\I_U)=1$. 
Thus, the composition 
\[
R^0_{\pi_{X_*}}\StructureSheaf{X\times M} \LongRightArrowOf{\mu_{\I_U}}
\SheafExt^0_{\pi_X}\left(\I_U,\I_U\right) \LongRightArrowOf{\mu_{\I_U}(t_M)^n}
\SheafExt^{2n}_{\pi_X}\left(\I_U,\I_U\right)
\LongRightArrowOf{\tr} R^{2n}_{\pi_{X_*}}\StructureSheaf{X\times M}
\]
is multiplication by $t_M^n$.
Hence, the following is an isomorphism
\[
\mu_{\I_U}(t_M)^n \ : \ \SheafExt^0_{\pi_{X_*}}\left(\I_U,\I_U\right) \ \ \ \longrightarrow \ \ \ 
\SheafExt^{2n}_{\pi_{X_*}}\left(\I_U,\I_U\right).
\]

The exact triangle (\ref{exact-triangle-20}) is the image of one in $D^b(X\times M\times X)$
and lifts to an exact triangle in the category $D^b(X\times X)^\YY$ for the monad $\YY$
given in (\ref{eq-Monad-YY}). Taking cohomology we get 
the following commutative diagram: 
\[
\xymatrix{
\hspace{3ex}
\SheafExt^{2i}_{\pi_{13_*}}\left(\pi_{12}^*\I_U,\pi_{23}^*\I_U\right)
\hspace{3ex}
\ar[r]^{t_M} \ar[d]_{e^{2i-2}}
&
\hspace{3ex}
\SheafExt^{2i+2}_{\pi_{13_*}}\left(\pi_{12}^*\I_U,\pi_{23}^*\I_U\right)
\hspace{3ex}
\ar[d]^{e^{2i}}
\\
\hspace{4ex}  \Delta_{X_*}\left[\SheafExt^{2i}_{\pi_X}(\I_U,\I_U)\right] \hspace{4ex} 
\ar[r]^{\Delta_*(\mu_{\I_U}(t_M))}
\ar[d]_{f^{2i-2}}
&
\hspace{4ex} 
\Delta_{X_*}\left[\SheafExt^{2i+2}_{\pi_X}(\I_U,\I_U)\right]
\hspace{4ex} 
\ar[d]^{f^{2i}}
\\
\hspace{4ex} 
\pi_1^*\omega_X\otimes \SheafExt^{2i+2}_{\pi_{13_*}}\left(\pi_{12}^*\I_U,\pi_{23}^*\I_U\right)
\hspace{4ex} 
\ar[r]^{\pi_1^*1_{\omega_X}\otimes t_M}
&
\hspace{4ex} 
\pi_1^*\omega_X\otimes \SheafExt^{2i+4}_{\pi_{13_*}}\left(\pi_{12}^*\I_U,\pi_{23}^*\I_U\right).
\hspace{4ex} 
}
\]
We are ready to prove that the homomorphism (\ref{eq-t-M-to-the-power-n-1-is-an-isomorphism})
is an isomorphism. The proof is by contradiction. Assume otherwise and 
let $i$ be the minimal integer in the range $1\leq i \leq n-1$, such that 
\begin{equation}
\label{eq-t-M-power-i-assumed-to-vanish}
t_M^{i} \ : \ \SheafExt^2_{\pi_{13_*}}\left(\pi_{12}^*\I_U,\pi_{23}^*\I_U\right)
\ \ \ \rightarrow \ \ \ 
\SheafExt^{2i+2}_{\pi_{13_*}}\left(\pi_{12}^*\I_U,\pi_{23}^*\I_U\right)
\end{equation}
is not an isomorphism. Then the above homomorphism must vanish. 
Hence,
\begin{equation}
\label{eq-t-M-power-i-tensor-1-vanishes}
t_M^i\otimes \pi_1^*1_{\omega_X} \ : \ 
\SheafExt^2_{\pi_{13_*}}\left(\pi_{12}^*\I_U,\pi_{23}^*\I_U\right) \otimes\pi_1^*\omega_X
\ \ \ \rightarrow \ \ \ 
\SheafExt^{2i+2}_{\pi_{13_*}}\left(\pi_{12}^*\I_U,\pi_{23}^*\I_U\right) \otimes\pi_1^*\omega_X
\end{equation}
vanishes as well. Consider the following (abreviated) commutative diagram with short exact columns.
\[
\xymatrix{
0 \ar[r] \ar[d]_{e^{-2}} 
& 
\SheafExt^2_{\pi_{13}} \ar[r]^{t_M}\ar[d]_{e^0}
&
\cdots \ar[r]
&
\SheafExt^{2i}_{\pi_{13}} \ar[r]^{t_M}\ar[d]^{e^{2i-2}}
& \SheafExt^{2i+2}_{\pi_{13}}\ar[d]^{e^{2i}}
\\
\Delta_{X_*}\SheafExt^0_{\pi_X}\ar[r]^{\mu_{\I_U}(t_M)}\ar[d]_{f^{-2}}^{\cong}
&
\Delta_{X_*}\SheafExt^2_{\pi_X}\ar[r]\ar[d]_{f^{0}}
&
\cdots\ar[r]
&
\Delta_{X_*}\SheafExt^{2i}_{\pi_X}\ar[r]^{\mu_{\I_U}(t_M)}\ar[d]^{f^{2i-2}}
&
\Delta_{X_*}\SheafExt^{2i+2}_{\pi_X}
\\
\hspace{3ex} 
\SheafExt^2_{\pi_{13}}\otimes\pi_1^*\omega_X 
\hspace{3ex} 
\ar[r]^{t_M\otimes\pi_1^*1_{\omega_X}}
&
\SheafExt^4_{\pi_{13}}\otimes\pi_1^*\omega_X \ar[r]
&
\cdots\ar[r]
&
\SheafExt^{2i+2}_{\pi_{13}}\otimes\pi_1^*\omega_X&&
}
\]
The vanishing of (\ref{eq-t-M-power-i-tensor-1-vanishes}), the fact that $f^{-2}$ is an isomorphism,
and the exactness of the columns in the above diagram, combine to
imply that the image of 
\[
\Delta_{X_*}\left(\mu_{\I_U}(t_M)\right)^i \  : \ 
\Delta_{X_*}\SheafExt^0_{\pi_X}\left(\I_U,\I_U\right)
\rightarrow 
\Delta_{X_*}\SheafExt^{2i}_{\pi_X}\left(\I_U,\I_U\right)
\]
is contained in the image of the injective homomorphism
\[
e^{2i-2}:\SheafExt^{2i}_{\pi_{13}}\left(\pi_{12}^*\I_U,\pi_{23}^*\I_U\right)\rightarrow 
\Delta_{X_*}\SheafExt^{2i}_{\pi_X}\left(\I_U,\I_U\right).
\]
Hence, the injective homomorphism
\[
\Delta_{X_*}\left(\mu_{\I_U}(t_M)\right)^{i+1}:
\Delta_{X_*}\SheafExt^0_{\pi_X}\left(\I_U,\I_U\right)
\rightarrow 
\Delta_{X_*}\SheafExt^{2i+2}_{\pi_X}\left(\I_U,\I_U\right)
\]
factors through 
\[
t_M:\SheafExt^{2i}_{\pi_{13}}\left(\pi_{12}^*\I_U,\pi_{23}^*\I_U\right)\rightarrow
\SheafExt^{2i+2}_{\pi_{13}}\left(\pi_{12}^*\I_U,\pi_{23}^*\I_U\right).
\]
In particular, the above homomorphism does not vanish. Hence, the above homomorphism 
is an isomorphism. The minimality of $i$ implies that 
\[
t_M^{i-1}:\SheafExt^{2}_{\pi_{13}}\left(\pi_{12}^*\I_U,\pi_{23}^*\I_U\right)
\rightarrow 
\SheafExt^{2i}_{\pi_{13}}\left(\pi_{12}^*\I_U,\pi_{23}^*\I_U\right)
\]
is an isomorphism. 
It follows that the homomorphism (\ref{eq-t-M-power-i-assumed-to-vanish})
is an isomorphism. A contradiction. 
This complete the proof of Claim
\ref{claim-t-M-to-the-power-n-1-is-an-isomorphism}.
\end{proof}
\hide{
\underline{Step 4:}  (???) Do we need the following (???)
\[
\xymatrix{
& 
\SheafExt^{2}_{\pi_{13}}\left(\pi_{12}^*\I_U, \pi_{23}^*\I_U\right)
\ar[r]^{t_M^{n-1}}
\ar[d]_{e^0} &
\SheafExt^{2n}_{\pi_{13}}\left(\pi_{12}^*\I_U, \pi_{23}^*\I_U\right)
\ar[d]_{e^{2n-2}}^{\cong}
\\
\Delta_*\left(\SheafExt^2_{p_1}(\ko_\Delta,\ko_\Delta)\right)
\ar[r]_{\Delta_*(\Phi_\U)} &
\hspace{4ex} \Delta_*\left(\SheafExt^{2}_{\pi_X}(\I_U,\I_U)\right) \hspace{4ex}
\ar[r]_{\Delta_*(\mu_{\I_U}(t_M)^{n-1})} &
\hspace{4ex} \Delta_*\left(\SheafExt^{2n}_{\pi_X}(\I_U,\I_U)\right). \hspace{4ex}
}
\]
We have already proven that the integral transform $\Phi_\U$ is faithful. 
Hence, the homomorphism $\Delta_*(\Phi_\U)$ is injective. 
Considering $\ko_\Delta$ as a family of structure sheaves of length one subschemes of $X$,
we see that the composition  $\Delta_*(t_M^{n-1})\circ \Delta_*(\Phi_\U)$ vanishes, by Lemma 
\ref{lemma-equality-in-Ext-2n}
(??? change the order of the presentation avoiding forward references).
The homomorphism $e^0$ is injective, by Claim \ref{claim-exact-triangle-g-e-f}.
}

\underline{Step 4:}  
Let $\iota_2:\SheafExt^{2}_{\pi_{13}}\left(\pi_{12}^*\I_U, \pi_{23}^*\I_U\right)\rightarrow
R\pi_{13}\left(\pi_{12}^*\I_U^\vee\otimes \pi_{23}^*\I_U\right)$ be the natural morphism
from the first non-vanishing sheaf cohomology of the object to the object itself.
Let 
\[
m:R\pi_{13}\left(\pi_{12}^*\I_U^\vee\otimes \pi_{23}^*\I_U\right)\otimes_\ComplexNumbers Y(\StructureSheaf{M})
\rightarrow R\pi_{13}\left(\pi_{12}^*\I_U^\vee\otimes \pi_{23}^*\I_U\right)
\] 
be the natural morphism.
The morphism of $Y(\StructureSheaf{M})$-modules
\[
m\circ (\iota_2\otimes id) : 
\SheafExt^{2}_{\pi_{13}}\left(\pi_{12}^*\I_U, \pi_{23}^*\I_U\right)\otimes_\ComplexNumbers Y(\StructureSheaf{M})
\rightarrow
R\pi_{13}\left(\pi_{12}^*\I_U^\vee\otimes \pi_{23}^*\I_U\right)
\]
induces an isomorphism of the sheaf cohomologies for degrees between $2$ and $2n$,
by Claim \ref{claim-t-M-to-the-power-n-1-is-an-isomorphism}.
Thus, the composite morphism $\alpha$ 
in equation (\ref{eq-composition-of-three-morphisms}) induces an isomorphism of all
sheaf cohomologies, i.e., for degrees in the range from $0$ to $2n-2$. 
\qed

%
\subsection{Hochschild (co)homology}
\label{HH-coho}

We present a brief review here of some concepts and definitions on this topic which will be 
used in the proof of part \ref{thm-item-structure-sheaf-is-a-direct-summand} 
of Theorem \ref{thm-A-is-direct-sum}, in the proof of Proposition \ref{prop-a-topological-formula-for-a-trace},
and also later
in Section \ref{Toda-comp}. We follow the presentation in \cite{cal-mukai1,cal-wil-mukai1}.
Those familiar with the Hochschild (co)homology of varieties 
should skip to Section \ref{general-moduli}.

Let $T$ be a smooth, projective variety over $\CC$. Write 
${\gd_T}: T\to T\times T$ for the diagonal, and $\pi_i:T\times T \to T, \; i=1,2$, 
for the projections; denote by $S_{\gd_T}$ the kernel of the Serre functor 
$\gd_{T,*}\go_T[\dim T]$, and by $S_{\gd_T}^{-1}$ the object 
$\gd_{T,*}\go_T^{-1}[-\dim T]$.
Let $\gd_{T,!}:D^b(T)\rightarrow D^b(T\times T)$ be the left adjoint of
$\gd_T^*$. 

\begin{defi} (i) The {\em Hochschild structure} of $T$ consists of a graded  
ring $HH^*(T)$,
$$HH^i(T):=\Hom_{T\times T}(\ko_{\gd_T},\ko_{\gd_T}[i]),$$
and a graded left module $HH_*(T)$ over $HH^*(T)$,
$$HH_i(T):=\Hom_{T\times T}(\gd_{T,!}\ko_{T}[i],\ko_{\gd_T})=
\Hom_{T\times T}(S_{\gd_T}^{-1}[i],\ko_{\gd_T}).$$ 
Both the ring and module structures are defined by Yoneda composition in $D^b(T\times T)$.

\noindent (ii) The {\em harmonic structure} of $T$ consists of a graded ring $HT^*(T)$,
$$
HT^i(T):= \bigoplus_{p+q=i}H^p(T, \wedge^q \T_T),
$$
and a graded module $H\Go_*(T)$ over $HT^*(T)$,
$$H\Go_i(T):= \bigoplus_{q-p=i}H^p(T, \Go_T^q).$$
Exterior product and contraction define the ring and module structures, respectively.
\end{defi}

These two structures are related by the Hochschild-Kostant-Rosenberg isomorphism
\begin{equation}\label{HKR-isom}
I_T: \gd_T^*\ko_{\gd_T} \stackrel{\sim}{\longrightarrow}\bigoplus_i\Go_T^i[i], 
\end{equation}
which yields isomorphisms of graded vector spaces:
\begin{align*}
&I^*: HT^*(T)\stackrel{\sim}{\longrightarrow}HH^*(T),\\
&I_*: HH_*(T)\stackrel{\sim}{\longrightarrow}H\Go_*(T).
\end{align*}
We shall also have occasion to work with the following modified isomorphisms 
\begin{equation}
\label{eq-normalized-HKR-isomorphisms}
\widetilde{I_T}^*=I^*\circ (\frac{1}{\sqrt{td_T}}\lrcorner(\_)),
\mbox{\;\;\;and\;\;\;} \widetilde{I^T}_*=(\sqrt{td_T}\wedge(\_))\circ I_*.
\end{equation}

Hochschild homology behaves functorially under {\em integral transforms}: 
associated to an integral functor $\Phi: D^b(T) \to D^b(Y)$, there is a 
natural map of graded vector spaces
\begin{equation}
\label{eq-push-forward-on-Hochschild-homology}
\Phi_*:HH_*(T) \to HH_*(Y),
\end{equation}
which is constructed as follows. Let $\F$ be the kernel of $\Phi$, $\R$ the
kernel of its right adjoint, and $\LB$ the kernel of its left adjoint. 
Denote by $p_{ij}$ the projection from $T\times Y \times T$ to its
$ij$-th factor. Any object $\Q\in D^b(Y\times T)$ gives rise to a functor 
\begin{eqnarray}
\label{defi-convolution}
\Q\circ\_  : & D^b(T\times Y) &  \to \;\;\;\;\;  D^b(T \times T) \\
  & \T & \mapsto \; p_{13,*}(p_{12}^*(\T)\otimes  p_{23}^*(\Q)) \nonumber
\end{eqnarray}
via convolution; define $\_\circ \Q:D^b(Y\times T)\rightarrow D^b(Y\times Y)$ similarly.   
We first define a natural map 
$$\Phi^{\dagger}: \Hom_{Y\times Y}(\ko_{\gd_Y}, S_{\gd_Y}[i])
\to \Hom_{T\times T}(\ko_{\gd_T}, S_{\gd_T}[i]).$$
Given a morphism $\nu: \ko_{\gd_Y} \to S_{\gd_Y}[i]$, let
$\Phi^{\dagger}\nu$ be the composite morphism
\begin{equation}\label{HH-functoriality}
\ko_{\gd_T}\stackrel{\eta}{\to} \R\circ\F=\R\circ\ko_{\gd_Y}\circ\F \stackrel{\nu}{\to}
\R\circ S_{\gd_Y}[i]\circ\F= S_{\gd_T}[i]\circ\LB \circ \F \stackrel{\epsilon}{\to}S_{\gd_T}[i],
\end{equation}
where $\eta$ and $\epsilon$ are the natural unit and the counit, respectively. Notice
that we have the isomorphism 
\begin{equation}\label{HH-dual}
\Hom_{T\times T}(\ko_{\gd_T}, S_{\gd_T}[i])\cong HH_{i}(T)^\vee
\end{equation}
by Serre duality on $T\times T$, and that $\Hom_{Y\times Y}(\ko_{\gd_Y}, S_{\gd_Y}[i])\cong 
HH_{i}(Y)^\vee$ in the same way. 
The above construction defines a homomorphism
\begin{eqnarray*}
\Phi^{\dagger}:HH_{i}(Y)^\vee&\rightarrow & HH_{i}(T)^\vee,
\\
\nu&\mapsto & \Phi^{\dagger}\nu.
\end{eqnarray*}
Then the desired map $\Phi_*$ is the transpose of 
$\Phi^{\dagger}$ under these identifications.

Recall that any integral transform $\Phi$ induces a map $\varphi: H^*(T,\CC)\to H^*(Y,\CC)$ on singular cohomology. We have the following result stating the compatibility of this map
with $\Phi_*$ under the HKR isomorphism:

\begin{thm}[\cite{mms}]\label{HH-H-commute}
Let $\Phi: D^b(T)\to D^b(Y)$ be an integral transform. Then,
the following diagram commutes.
 \[
\xymatrix{
\ar[d]_{\widetilde{I}_*^T}HH_*(T)\ar[r]^{\Phi_*} & HH_*(Y)\ar[d]^{\widetilde{I}_*^Y}\\
H\Go_*(T) \ar[r]_{\varphi} & H\Go_*(Y)
}
\]
\end{thm}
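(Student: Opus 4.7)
I would prove the theorem by reducing the compatibility for an arbitrary integral transform $\Phi$ with kernel $\F\in D^b(T\times Y)$ to three elementary cases, via the factorization
\[
\Phi \;=\; Rq_{*}\circ (-\otimes \F)\circ Lp^{*},
\]
where $p:T\times Y\to T$ and $q:T\times Y\to Y$ are the projections. The first step is to verify that both assignments $\Phi\mapsto \Phi_{*}$ and $\Phi\mapsto \varphi$ respect composition of integral kernels: for $\varphi$ this is the classical functoriality of the Mukai-type action on cohomology, and for $\Phi_{*}$ it follows from the convolution description \eqref{defi-convolution} together with the naturality of the unit and counit that appear in the construction \eqref{HH-functoriality}. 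It is then enough to verify the commutativity of the square for (i) derived pullback along a smooth projection, (ii) tensor product with a single perfect complex, and (iii) proper pushforward along a smooth projection.

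For (i), the kernel of $Lp^{*}$ is supported on the graph of $p$, and a direct local calculation using the HKR resolutions of the diagonals of $T$ and of $T\times Y$ identifies $(Lp^{*})_{*}$ with the ordinary cohomological pullback $p^{*}$ on $H^{\bullet}(\_,\Go^{\bullet}_{\_})$. The Todd factors built into the normalized HKR of \eqref{eq-normalized-HKR-isomorphisms} are compatible because $p$ is smooth and $td_{T\times Y}=p^{*}(td_{T})\cdot td_{T\times Y/T}$, so the discrepancies cancel. For (ii), the recipe \eqref{HH-functoriality} applied to the endofunctor $-\otimes \F$ of $D^b(T\times Y)$ shows that its action on Hochschild homology is Yoneda multiplication by a canonical class in $HH^{0}(T\times Y)$; under HKR this class corresponds to $\mathrm{ch}(\F)$, and once the $\sqrt{td}$ normalization of \eqref{eq-normalized-HKR-isomorphisms} is accounted for, this reproduces exactly the cohomological action of $\varphi$ by the Mukai vector $v(\F)=\mathrm{ch}(\F)\sqrt{td_{T\times Y}}$.

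The decisive case, and the main obstacle, is (iii): the compatibility for proper pushforward $Rq_{*}$, which is essentially a derived Grothendieck--Riemann--Roch statement. The construction of $(Rq_{*})_{*}$ through \eqref{HH-functoriality} is mediated by the Serre kernel $S_{\gd}$, and hence by the relative dualizing complex $\omega_{q}$, whereas the cohomological pushforward $q_{*}$ on $H\Go_{*}$ is an ordinary proper pushforward. The discrepancy between these two operations is exactly the relative Todd genus $\sqrt{td_{T\times Y/Y}}$, which is precisely what is absorbed by the normalization in \eqref{eq-normalized-HKR-isomorphisms}. I would complete this step by unwinding the composite $\epsilon\circ(\R\circ \nu\circ \F)\circ\eta$ in \eqref{HH-functoriality}, applying HKR to each factor, and matching the result termwise against $q_{*}(\sqrt{td_{T\times Y/Y}}\wedge -)$. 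The hardest bookkeeping is tracking Serre duality identifications across HKR, which is cleanest in Markarian's Atiyah-class formulation of Riemann--Roch or, equivalently, in C\u{a}ld\u{a}raru--Willerton's Mukai-pairing framework on $D^b(T\times T)$; in either setup the identity reduces to local Grothendieck--Riemann--Roch for the smooth morphism $q$.
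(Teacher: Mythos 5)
You should note at the outset that the paper contains no proof of Theorem \ref{HH-H-commute}: it is imported verbatim from the appendix of \cite{mms}, so there is no internal argument to compare yours against. On its own terms, your opening move is sound: writing $\Phi=Rq_*\circ(-\otimes\F)\circ Lp^*$ and invoking the compatibility of both $\Phi\mapsto\Phi_*$ and $\Phi\mapsto\varphi$ with convolution of kernels (the former being Caldararu's functoriality theorem for Hochschild homology \cite{cal-mukai1,cal-mukai2}) correctly reduces the statement to three elementary kernels.

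The gaps are in the elementary cases themselves. In case (ii) the action of $-\otimes\F$ on $HH_*(T\times Y)$ is not Yoneda multiplication by a class in $HH^0(T\times Y)$ --- that group is one-dimensional for connected $T\times Y$ --- but multiplication by the Hochschild Chern character $ch_{HH}(\F)\in HH_0(T\times Y)$ through a product $HH_0\otimes HH_*\to HH_*$. To conclude you need two inputs: that $\widetilde{I}_*$ sends $ch_{HH}(\F)$ to the Mukai vector of $\F$, which is \cite[Thm.~4.5]{cal-mukai2}, and that $\widetilde{I}_*$ intertwines this product with cup product on $H\Go_*$. The latter is precisely the multiplicative half of Caldararu's conjecture, i.e.\ the Todd/Duflo correction phenomenon settled only in \cite{CRVdB}; it will not fall to a ``direct local calculation.'' In case (iii), asserting that the composite (\ref{HH-functoriality}) for $Rq_*$ ``reduces to local Grothendieck--Riemann--Roch'' after unwinding is to restate the hardest part of the theorem rather than to prove it: reconciling the Serre-duality identifications built into the definition of $\Phi_*$ with the $\sqrt{td}$-twisted pushforward on $H\Go_*$ is exactly the content of Markarian/Ramadoss-style Hochschild Riemann--Roch. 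So your skeleton is a recognized and workable one, but every load-bearing step is deferred to results (\cite{cal-mukai2}, \cite{CRVdB}, and a Hochschild-level GRR) that must be cited or reproved; as written, the proposal relocates the difficulty rather than resolving it.
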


%
\subsection{Splitting of the monad for a general moduli space} 
\label{general-moduli}
Let $v\in H^*(X)$ be a primitive and effective class with $\langle v,v\rangle=2n-2$,
$n\geq 2$, and, as above, let $H$ be a $v$-generic polarization. We recall the
fundamental result on the second cohomology of the moduli space $M_H(v)$.

Write $v^{\perp}\subset H^*(X,\Integers)$ for the sublattice orthogonal to $v$. 
Mukai introduced the natural homomorphism 
\begin{align}\label{mukai-hom}
\theta_v : & \; v^{\perp}  \to  H^2(M_H(v),\ZZ) \\
          & \; \theta_v(x)  \mapsto  
\frac{1}{\rho}\left[ \pi_{M,*}\left( v(\E^\vee)\cdot\pi^*_X(x)\right)\right]_2 \nonumber
\end{align}
where $\E$ is a quasi-universal family of similitude $\rho$ (see \cite{mukai-tata, mukai-sugaku}).

\begin{thm}[\cite{huybrechts-basic-results,OG, yoshioka-abelian-surface, yoshioka-note-on-fourier-mukai}]
\label{Yoshioka-main} 
\begin{enumerate}
\item[(1)] The moduli space $M_H(v)$ is an irreducible pojective holomorphic symplectic
manifold deformation equivalent to the Hilbert scheme of $n$ points on $X$.
\item[(2)] The homomorphism {\rm (\ref{mukai-hom})} is a Hodge isometry between $v^{\perp}$ 
and $H^2(M_H(v),\ZZ)$, where the lattice structure on the latter is given by the 
Beauville-Bogomolov form. 
\end{enumerate}
\end{thm}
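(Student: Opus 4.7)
The plan is to handle both parts via a common deformation-theoretic reduction to the Hilbert scheme case.

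For part (1), smoothness and the holomorphic symplectic structure on $M_H(v)$ go back to Mukai: at a stable sheaf $E$ the tangent space is $\Ext^1(E,E)$, the obstruction lies in the trace-free part of $\Ext^2(E,E)$ which vanishes for simple sheaves on a $K3$, and a holomorphic symplectic form is produced by composing the Yoneda product $\Ext^1(E,E) \otimes \Ext^1(E,E) \to \Ext^2(E,E)$ with the trace to $H^2(X,\ko_X) \cong \ComplexNumbers$; this also pins down $\dim M_H(v) = 2 + (v,v) = 2n$. Projectivity comes from Gieseker's GIT construction, and $v$-genericity of $H$ ensures there are no strict semistables, so the GIT quotient $M_H(v)$ is smooth and projective.

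For the deformation equivalence with $X^{[n]}$, the strategy is to connect the triple $(X, H, v)$ to one in which $v$ is the Mukai vector of an ideal sheaf of length-$n$ subschemes, through a smooth proper family. This proceeds in two steps. First, one lets $(X, H)$ vary in an analytic family of polarized $K3$ surfaces and observes that the relative moduli space stays smooth and proper along the open locus where the polarization is $v$-generic; since the non-generic locus is a countable union of real codimension-$\geq 2$ walls inside the K\"ahler cone, one can connect any starting point to a $K3$ of Picard rank one without leaving this open locus. Second, following Yoshioka, one applies Fourier-Mukai equivalences to reduce the rank: using reflections by spherical objects together with twists by line bundles, any primitive $v$ with $(v,v) = 2n - 2$ can be transported on the Mukai lattice to the vector $(1, 0, 1-n)$ on a possibly different $K3$ surface $Y$, producing a birational equivalence between $M_H(v)$ and $Y^{[n]}$. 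Huybrechts' theorem that birational irreducible holomorphic symplectic manifolds are deformation equivalent then completes the argument, and in particular yields irreducibility.

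For part (2), the map $\theta_v$ is well-defined independently of the quasi-universal family because the similitude factor $\rho$ absorbs the ambiguity, and it is automatically a morphism of integral Hodge structures since the formula is assembled only from pullback, cup product with the algebraic class $v(\E^\vee)\sqrt{\mathrm{td}_X}$, and proper pushforward. To see that $\theta_v$ is an isomorphism of lattices and an isometry for the Beauville-Bogomolov-Fujiki form, one transports it along the family built in part (1): both the lattice structure on $H^2(M_H(v),\Integers)$ and the BBF pairing deform continuously in this family, and the Fourier-Mukai equivalences used to reduce to rank one act on the Mukai lattice by Hodge isometries which interchange the relevant $\theta_v$'s compatibly. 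It therefore suffices to verify the claim when $M = X^{[n]}$ and $v = (1, 0, 1-n)$, where Beauville's description gives $H^2(X^{[n]}, \Integers) \cong H^2(X, \Integers) \oplus \Integers\delta$ with BBF form extending the K3 intersection form and $(\delta,\delta) = -2(n-1)$, matching the Mukai pairing on $v^\perp = H^2(X, \Integers) \oplus \Integers(1, 0, n-1)$ under $\theta_v$ by a direct computation.

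The main obstacle is the globalization required for both parts: one must rigorously construct the relative moduli space $\mathcal{M} \to B$ as a smooth proper family together with a relative quasi-universal sheaf of constant similitude, and then check that the whole package is preserved (up to unambiguous actions of derived autoequivalences) under the Fourier-Mukai reductions that lower the rank of $v$. Once this bookkeeping is in place, both the topological statement in (1) and the Hodge-theoretic statement in (2) reduce cleanly to the explicit classical computation on the Hilbert scheme.
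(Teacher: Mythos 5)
The paper does not prove this statement: it is quoted verbatim from the literature (Huybrechts for irreducibility and deformation equivalence in special cases, O'Grady and Yoshioka for the general case and for the Hodge isometry), so there is no internal proof to compare against. Your sketch is a faithful high-level account of the strategy actually used in those references: Mukai's symplectic structure and dimension count, reduction to the Hilbert scheme by a combination of deformations of $(X,H)$ and Fourier--Mukai transforms lowering the rank of $v$, Huybrechts' theorem that birational irreducible holomorphic symplectic manifolds are deformation equivalent, and the explicit computation of $\theta_v$ on $X^{[n]}$ against Beauville's description $H^2(X^{[n]},\Integers)\cong H^2(X,\Integers)\oplus\Integers\delta$.

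Two points in your outline understate genuine difficulties. First, the walls in the ample cone where $H$ fails to be $v$-generic are real codimension \emph{one}, not $\geq 2$, so one cannot simply choose a path of polarizations avoiding them; the actual arguments either deform the surface itself to reach a chamber containing the desired polarization, or analyze what happens to $M_H(v)$ under wall-crossing (birational modification, hence deformation equivalence by Huybrechts). Second, the assertion that a Fourier--Mukai equivalence carrying $v$ to $(1,0,1-n)$ "produces a birational equivalence between $M_H(v)$ and $Y^{[n]}$" is the technical heart of Yoshioka's papers, not bookkeeping: one must show the transform preserves stability for a general member of the moduli space, which requires delicate estimates and is where most of the work in \cite{yoshioka-abelian-surface, yoshioka-note-on-fourier-mukai} lies. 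With those caveats, your reduction scheme and the final computation on the Hilbert scheme are the correct and standard route.
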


\begin{proof}[Proof of Theorem \ref{thm-A-is-direct-sum} part 
\ref{thm-item-structure-sheaf-is-a-direct-summand}]
We treat first the case where an untwisted universal sheaf $\U$ exists over $X\times M_H(v)$. Let 
$\eta: \ko_{\gd_X} \to \A=\U^{\vee}[2]\circ\U$ 
be the morphism in $D^b(X\times X)$ corresponding to the unit of the adjunction
$\Phi_{\U} \dashv \Psi_{\U}$ (see \cite{cal-mukai1}, Prop. 5.1). It suffices to show that 
pre-composition induces a surjection:
$$
\Hom_{X\times X}(\U^{\vee}[2]\circ\U, \ko_{\gd_X}) \stackrel{\circ\eta}{\longrightarrow} 
\Hom_{X\times X}(\ko_{\gd_X},\ko_{\gd_X}).
$$
Note that since $S_{\gd_X}=\ko_{\gd_X}[2]$, by (\ref{HH-dual}), we may interpret this as
a map $$\Hom(\U^{\vee}[2]\circ\U, S_{\gd_X}[-2]) \to HH_{-2}(X)^{\vee}.$$ Thus the construction
(\ref{HH-functoriality}) gives homomorphisms
$$
HH_{-2}(M_H(v))^{\vee}=\Hom(\ko_{\gd_{M_H(v)}},S_{\gd_{M_H(v)}}[-2])\to \Hom(\U^{\vee}[2]\circ\U, S_{\gd_X}[-2])
\stackrel{\circ\eta}{\to} HH_{-2}(X)^{\vee} 
$$ 
whose transpose is the natural map in Hochschild homology induced by $\Phi_{\U}$. So it is
enough to show that $\Phi_{\U,*}:HH_{-2}(X) \to HH_{-2}(M_H(v))$ is injective.
Now, note that $\widetilde{I}_*^X(HH_{-2}(X))\subset H\Omega_{-2}(X)=H^2(\ko_X)$ as 
$\widetilde{I}_*^X$ is a graded map. Therefore, by Theorem \ref{HH-H-commute}, 
$$\Phi_{\U,*}|_{HH_{-2}(X)}=(\widetilde{I}_*^M)^{-1}\circ [\varphi_\U]_2\circ\widetilde{I}_*^X,$$
where $[\varphi_\U]_2$ is the degree 2 part of the map on singular cohomology induced by 
$\Phi_\U$.
But observe that by the formula (\ref{mukai-hom}), $\theta_v=-[\varphi_\U]_2$,
whence, by Theorem \ref{Yoshioka-main}, $[\varphi_\U]_2$ is injective. This proves the result for fine moduli spaces.

We sketch next the proof in the case where the universal sheaf $\U$ is twisted with respect to a Brauer class 
$\alpha\in H^2_{an}(M_H(v),\StructureSheaf{}^*)$. Let $F$ be an $H$-stable sheaf of class $v$ and denote by $[F]$ the 
corresponding point of $M_H(v)$. Let $\beta:\hat{M}\rightarrow M_H(v)$ be the blow-up centered at $[F]$.
The sheaf cohomology $\H^1(\Phi_\U(F^\vee))$
is an $\alpha$-twisted reflexive sheaf of rank $2n-2$ over $\M_H(v)$, which is locally free away from the point $[F]$,
and the quotient $V$ of $\beta^*\H^1(\Phi_\U(F^\vee))$ by its torsion subsheaf is a locally free $\beta^*\alpha$-twisted sheaf
\cite[Prop. 4.5]{markman-hodge}.
Let $p:\PP(V)\rightarrow \hat{M}$ be the associated projective bundle.
Set $\widetilde{\U}:=(1\times \beta p)^*\U$, and let $\tilde{\pi}_{ij}$ be the projection from $X\times \PP{V}\times X$ to the
product of the $i$-th and $j$-th factors. Set
\[
\B:=R\tilde{\pi}_{13_*}\left[\tilde{\pi}_{12}^*(\widetilde{\U})^\vee\otimes \tilde{\pi}_{23}^*\widetilde{\U}\otimes \tilde{\pi}_1^*\omega_X[2]
\right]
\]
Then we have the natural isomorphism
$\A:= R\pi_{13_*}\left[\pi_{12}^*\U^\vee\otimes \pi_{23}^*\U\otimes \pi_1^*\omega_X[2]\right]\cong \B$, where the latter
isomorphism follows from the projection formula and the isomorphism $R(\beta p)_*\StructureSheaf{\PP{V}}\cong 
\StructureSheaf{M_H(v)}$.

The Brauer class $p^*\beta^*\alpha$ is trivial, by \cite[Lemma 29(4)]{markman-poisson}.
We thus have an equivalence of triangulated categories $D^b(X\times\PP{V},p^*\beta^*\alpha)\cong D^b(X\times\PP{V})$
and the image of $\widetilde{\U}$ is represented by an untwisted coherent sheaf $\G$. We get an induced isomorphism
\[
\B\cong R\tilde{\pi}_{13_*}\left[\tilde{\pi}_{12}^*\G^\vee\otimes \tilde{\pi}_{23}^*\G\otimes \tilde{\pi}_1^*\omega_X[2]
\right].
\]
Replacing $M_H(v)$ by $\PP{V}$ and $\E$ by $\G$ in Equation (\ref{mukai-hom}) 
we get an analogue $\tilde{\theta}_v:v^\perp\rightarrow H^2(\PP{V},\Integers)$
of the Mukai homomorphism. The homomorphism  $\tilde{\theta}_v$ is the composition of 
$p^*\beta^*:H^2(M_H(v),\Integers)\rightarrow H^2(\PP{V},\Integers)$ with the Mukai homomorphism 
(\ref{mukai-hom}). Indeed, the argument which shows that the Mukai homomorphism is independent of the choice of a quasi-universal sheaf shows also that using either $(1\times \beta p)^*\E$ or the direct sum $\G^{\oplus \rho}$ 
of $\rho$ copies of $\G$ in Equation (\ref{mukai-hom}) results in the same homomorphism.

Theorem \ref{HH-H-commute} applies now to the integral transform $\Phi_{\G}:D^b(X)\rightarrow D^b(\PP{V})$
with kernel $\G$ and the argument in the case of untwisted universal sheaf goes through to show that 
$\StructureSheaf{\Delta_X}$ is a direct summand of
$\B$, and hence of $\A$ as well.
\end{proof}

%
\section{Yoneda algebras}
\label{sec-Yoneda-Algebra}
We prove Theorem \ref{thm-introduction-computation-of-the-monad-A} in this section.
The reader interested only in the results on generalized deformations (Theorems \ref{deformability} and \ref{thm-comparison}) may skip this section.
Let $X$ be a projective $K3$ surface, $M:=M_H(v)$ a moduli space 
of $H$-stable sheaves with Mukai vector $v$ satisfying the hypothesis of Theorem \ref{thm-A-is-direct-sum},
and $\Phi_\U:D^b(X)\rightarrow D^b(M,\theta)$ the faithful functor in Theorem
\ref{thm-A-is-direct-sum}. Assume that $(v,v)\geq 2$.

\begin{defi}
\label{def-totally-split-monad}
We say  that 
the monad object $\A$ given in Equation (\ref{eq-kernel-A})
is {\em totally split}, if the composition $\alpha$ given in 
Equation (\ref{eq-composition-of-three-morphisms}) is an isomorphism. 
\end{defi}

Assume for the rest of section 
\ref{sec-Yoneda-Algebra}  that the monad object $\A$ is totally split. 
This is the case if $M$ is the Hilbert scheme $X^{[n]}$ and $\U$ is the universal ideal sheaf, 
by Theoem \ref{thm-A-is-direct-sum}. For a more general sufficient condition for
$\alpha$ to be an isomorphism, see Lemmas
\ref{lemma-a-sufficient-condition-to-be-totally-split} and  \ref{lemma-a-sufficient-condition-to-belong-to-U}.

Set ${\rm pt}:={\rm Spec}(\ComplexNumbers)$ and let $c:M\rightarrow {\rm pt}$ be the constant morphism. We get the object
$Y(\StructureSheaf{M}):=Rc_*\StructureSheaf{M}$ in $D^b({\rm pt})$. 
As a graded vector space $Y(\StructureSheaf{M})$ is 
$\oplus_{i=0}^n H^{2i}(M,\StructureSheaf{M})[-2i]$, where $n=\dim_\ComplexNumbers(M)/2$. 
Given a graded vector space $V$, 
let $\id_{D^b(X)}\otimes_\ComplexNumbers V$
be the endofunctor of $D^b(X)$ sending an object $x$ to
$x\otimes_\ComplexNumbers V$.
Set
\begin{eqnarray*}
\Upsilon & := & \id_{D^b(X)}\otimes_\ComplexNumbers Y(\StructureSheaf{M}),
\\
R & := & \id_{D^b(X)}\otimes_\ComplexNumbers H^{2n}(M,\StructureSheaf{M})[-2n].
\end{eqnarray*}
We define next a natural transformation
\[
h \ : \ R \rightarrow \Upsilon.
\]
Write $h=\sum_{i=0}^nh_{2i}$ according to the direct sum decomposition of $Y(\StructureSheaf{M})$,
so that 
\[
h_{2i} \ : \ \id_{D^b(X)}\otimes_\ComplexNumbers H^{2n}(M,\StructureSheaf{M})[-2n]
\ \ \ \rightarrow \ \ \ 
\id_{D^b(X)}\otimes_\ComplexNumbers H^{2n-2i}(M,\StructureSheaf{M})[2i-2n].
\]

Let $t_X$ be a non-zero element of $H^2(X,\StructureSheaf{X})$, considered as the subspace $H^{0,2}(X)$ 
of the complexified Mukai lattice, and let $t_M$ be its image in $H^2(M,\StructureSheaf{M})$ via 
Mukai's Hodge isometry (\ref{mukai-hom}).
Denote by $t_M^*:Y(\StructureSheaf{M})\rightarrow Y(\StructureSheaf{M})$ the homomorphism, which sends 
$t_M^j$ to $t_M^{j-1}$, $1\leq j\leq n$, and sends $1$ to $0$.
The choice of $t_M$ identifies $h_{2i}$ as an element of 
the Hochschild cohomology $HH^{2i}(X)$. 
Explicitly, $h_{2i}=\tilde{h}_{2i}\otimes (t^*_M)^i$,
where $\tilde{h}_{2i}$ belongs to $\Ext^{2i}_{X\times X}(\StructureSheaf{\Delta_X},\StructureSheaf{\Delta_X})$.
Let $\sigma_X$ be the class in $H^0(X,\omega_X)$ dual to the class $t_X$ with respect to  Serre's duality.
Given $h_2$, the class $\tilde{h}_2\otimes \sigma_X$ in 
$\Hom_{X\times X}(\Delta_{X,*}(\StructureSheaf{X}),\Delta_{X,*}(\omega_X)[2])$ is a class in $HH_0(X)$, 
which depends canonically on $h_2$ and is
independent of the choice of the class $t_X$, since $t_M$ depends on $t_X$ linearly. 
Hence, the choice of $h_2$ corresponds to a choice of a class in $HH_0(X)$, which we denote by $h_2$ as well.
Let $h_2$ be the class in $HH_0(X)$ which is mapped to the Chern character $ch(v)$ 
in $H\Omega_0(X)$ of sheaves with Mukai vector $v$
via the Hochschild-Kostant-Rosenberg isomorphism $I^X_*:HH_0(X)\rightarrow H\Omega_0(X)$.
\[
I^X_*(h_2) := ch(v).
\]
Set $h_{2i}:=(-1)^{i+1}(h_2)^i$. Explicitly,
\begin{eqnarray}
\label{eq-h-4}
h_0 &=& -1,
\\
\nonumber
h_4&=&-(\tilde{h}_2)^2\otimes (t^*_M)^2,
\\
\nonumber
h_{k}&=&0, \ \mbox{for} \ k>4.
\end{eqnarray} 

Given an object $x$ in $D^b(X)$, let 
\[
h_x:x\otimes_\ComplexNumbers H^{2n}(M,\StructureSheaf{M})[-2n]\rightarrow 
x\otimes_\ComplexNumbers Y(\StructureSheaf{M})
\] 
be the morphism induced by the natural transformation $h$. 
Let $\pi_X:X\times M\rightarrow X$ be the projection. Note that the endo-functor $\Upsilon$ is naturally isomorphic to
$R\pi_{X_*}\circ\pi_X^*$. Denote\footnote{$\SU$ is the {\em Kleisli category} associated to the adjunction 
$\Phi_\U\dashv \Psi_\U$. The subscript ${\mathbb T}$ will later denote the monad associated to this adjunction, so that our notation is the standard one \cite[Sec. VI.5]{working}.}
by $\SU$
the full subcategory of $D^b(M)$ with objects of the form $\Phi_\U(x)$,
for some object $x$ in $D^b(X)$. 
Let $\Xi_\U:D^b(X\times M)\rightarrow D^b(M)$ be the composition of tensorization by $\U$ followed
by $R\pi_{M_*}$. Then $\Phi_\U=\Xi_\U\circ\pi_X^*$. Denote by $\Spi$ the full subcategory of $D^b(X\times M)$
with objects of the form $\pi_X^*(x)$, for some object $x$ in $D^b(X)$. 
Let $Q:\Spi\rightarrow \SU$ be the restriction of the functor $\Xi_\U$.

\begin{thm}
\label{thm-determination-of-the-category-SU}
The natural transformation $q:\Upsilon \rightarrow \Psi_\U\Phi_\U$, given in Equation (\ref{eq-q-original}) above, 
has the following properties.
\begin{enumerate}
\item
For every pair of objects $x_1$ and $x_2$ in $D^b(X)$, the first row below is a
short exact sequence. 
\[
\xymatrix{
0\ar[r] &
\Hom_{}(x_1,Rx_2)
\ar[r]^-{(h_{x_2})_*} &
\Hom_{}(x_1,\Upsilon x_2)
\ar[r]^-{(q_{x_2})_*}\ar[d]^{\cong}&
\Hom(x_1,\Psi_\U\Phi_\U (x_2))
\rightarrow 0
\ar[d]_{\cong}
\\
& 
& \Hom_{D^b(X\times M)}(\pi_X^*(x_1),\pi_X^*(x_2))
\ar[r]_-{Q} &
\Hom_{D^b(M)}(\Phi_\U(x_1),\Phi_\U(x_2)).
}
\]
\item
The vertical adjunction isomorphisms above conjugate $(q_{x_2})_*$ to the homomorphism induced by the
functor $Q:\Spi\rightarrow \SU$. The functor $Q$ is full.
\end{enumerate}
\end{thm}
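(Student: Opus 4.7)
The plan is to work at the level of Fourier--Mukai kernels in $D^b(X\times X)$ and apply $\Hom(x_1,-)$ at the end. First I would recognize that the natural transformations $R$, $\Upsilon$, and $\Psi_\U\Phi_\U$ are integral transforms with kernels
\[
\R \ = \ \Delta_*\ko_X\otimes_\ComplexNumbers H^{2n}(M,\StructureSheaf M)[-2n],
\qquad
\Y \ = \ \Delta_*\ko_X\otimes_\ComplexNumbers Y(\StructureSheaf M),
\qquad
\A,
\]
and that by hypothesis $\A$ is totally split, so that $\alpha$ identifies $\A$ with $\bigoplus_{i=0}^{n-1}\Delta_*\ko_X\otimes_\ComplexNumbers H^{2i}(M,\StructureSheaf M)[-2i]$. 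The natural transformation $q:\Upsilon\to\Psi_\U\Phi_\U$ comes from a morphism of kernels $q:\Y\to\A$ defined by the unit $\eta$ of $\Phi_\U\dashv\Psi_\U$ composed with the $Y(\StructureSheaf M)$-action, exactly as in equation~(\ref{eq-q-original}).

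Next I would invoke Proposition~\ref{thm-annihilator-of-A-in-Y} and Theorem~\ref{thm-universal-class-h-in-Hochschild-cohomology-of-X}, whose combined content is that the composition $q\circ h$ vanishes and that the resulting sequence fits into an exact triangle
\[
\R \ \LongRightArrowOf{h} \ \Y \ \LongRightArrowOf{q} \ \A \ \LongRightArrowOf{0} \ \R[1]
\]
in $D^b(X\times X)$ whose connecting morphism is zero, i.e.\ the triangle is split. Under the total splitting of $\A$, the morphism $h$ realizes $\R$ as the top-degree summand that $q$ kills, and the remaining summands of $\Y$ map isomorphically onto $\A$ via $q$. Applying $\Hom_{D^b(X\times X)}(\Delta_*x_1^\vee\otimes \omega_X[2],-)$ to this split triangle (equivalently, applying $\Hom(x_1,-)$ to the induced endofunctor triangle evaluated on $x_2$) yields the short exact sequence of the first part of the theorem.

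For the commutative square in the second part, I would use the two compatible adjunctions. The left vertical arrow is the adjunction $\Hom(x_1,\Upsilon x_2)=\Hom(x_1,R\pi_{X*}\pi_X^*x_2)\cong\Hom_{D^b(X\times M)}(\pi_X^*x_1,\pi_X^*x_2)$ arising from $\pi_X^*\dashv R\pi_{X*}$, while the right vertical arrow is the adjunction $\Xi_\U\dashv\Xi_\U^\dagger$, giving $\Hom(x_1,\Psi_\U\Phi_\U x_2)\cong\Hom(\Phi_\U x_1,\Phi_\U x_2)$. Since $q$ was built from the unit of $\Xi_\U\dashv\Xi_\U^\dagger$ pre- and post-composed with $R\pi_{X*}$ and $\pi_X^*$ (i.e.\ $q=R\pi_{X*}\DoubleTilde{\eta}\pi_X^*$), a diagram chase using the triangular identity for this unit shows that the square commutes. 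Surjectivity of $(q_{x_2})_*$ established in the first part then yields fullness of $Q$.

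The main obstacle is the splitting of the triangle $\R\to\Y\to\A\to\R[1]$: proving $q\circ h=0$ and that the connecting map is zero requires the delicate Hochschild-theoretic computation that $h$ is indeed the universal class producing this kernel $\R$, which is what Theorem~\ref{thm-universal-class-h-in-Hochschild-cohomology-of-X} provides. Once the split triangle of kernels is in hand, the passage to $\Hom$-groups and the identification of $Q$ with $(q_{x_2})_*$ is a formal consequence of adjunction compatibilities.
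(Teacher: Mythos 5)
Your proposal is correct and follows essentially the same route as the paper: the short exact sequence comes from applying $\Hom(x_1,-)$ to the split exact triangle $\R\RightArrowOf{h}\Y\RightArrowOf{q}\A\RightArrowOf{0}\R[1]$ supplied by Proposition \ref{thm-annihilator-of-A-in-Y} and Theorem \ref{thm-universal-class-h-in-Hochschild-cohomology-of-X}, and the commutativity of the square is exactly the adjunction compatibility recorded in Lemma \ref{lemma-push-forward-by-q-x-2-is-Xi-U}, with fullness then formal. The one point to keep precise is that you should apply the split triangle of kernels to the object $x_2$ and then take $\Hom(x_1,-)$ (your parenthetical version), rather than taking $\Hom$ of kernels directly.
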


The theorem is proven in section \ref{sec-proof-of-thm-determination-of-the-category-SU}.
Given objects $x$ and $y$ in a bounded triangulated category, set $\Hom^\bullet(x,y):=\oplus_{k\in\Integers}\Hom(x,y[k])[-k]$.
Note that $\Hom^\bullet_{D^b(X)}(x_1,\Upsilon x_2)$ is simply 
$\Hom^\bullet_{}(x_1,x_2)\otimes_\ComplexNumbers Y(\StructureSheaf{M})$.
An explicit calculation of the algebra $\Hom^\bullet_{}(\Phi_\U(x),\Phi_\U(x))$
as a quotient of $\Hom^\bullet_{}(x,x)\otimes_\ComplexNumbers Y(\StructureSheaf{M})$  is carried out in 
Theorem \ref{thm-the-yoneda-algebra-ofPhi-U-of-a-simple-sheaf} for any object $x$ 
represented by a simple sheaf.

Section \ref{sec-Yoneda-Algebra} is organized as follows.
In subsection \ref{sec-congruence-relation} we interpret Theorem \ref{thm-determination-of-the-category-SU}
in terms of the standard construction of a {\em quotient category} by a {\em congruence relation} (Definition
\ref{def-congruence-relation}).
The natural transformation $h$ gives rise to such a relation and 
Theorem \ref{thm-determination-of-the-category-SU} expresses the full subcategory $\SU$ of $D^b(M)$ as the quotient category
of the category $\Spi$, whose objects are the same as those of $D^b(X)$,
and such that 
\[
\Hom^\bullet_{\Spi}(x_1,x_2)=\Hom^\bullet_{D^b(X)}(x_1,x_2)\otimes_\ComplexNumbers \ComplexNumbers[t]/(t^{n+1}),
\] 
where $t$ has degree $2$.
%
In subsection \ref{sec-monad-A-is-a-quotient-of-a-constant-monad}
we show that the natural transformation $q:\Upsilon\rightarrow T$
in Theorem \ref{thm-determination-of-the-category-SU} induces a monad map between two monads in $D^b(X)$. 
Under the analogy between rings and monads, the statement that $q$ is a monad map says that $q$ is analogous to a ring homomorphism.
The functor $Q$ in Theorem \ref{thm-determination-of-the-category-SU} is an example of the general construction of a 
{\em Kleisli lifting} of a monad map \cite[Theorem 2.2.2 and Def. 2.2.3]{manes-mulry}.
In subsection
\ref{sec-a-universal-relation-ideal}
the natural transformation $h$ is defined formally in terms of a cone of $q$. In subsection
\ref{sec-proof-of-thm-determination-of-the-category-SU}
we reduce the proof of Theorem \ref{thm-determination-of-the-category-SU}
to the computation of the component $h_2$ of the natural transformation $h$. 

Subsections \ref{sec-traces} to \ref{subsection-structure-of-yoneda-algebra}
are dedicated to the proof that $h_2$ is the inverse image of $ch(v)$ via the Hochschild-Kostant-Rosenberg
isomorphism. The class $t_X$ in $H^2(X,\StructureSheaf{X})$ gives rise to a natural transformation $\id_{D^b(X)}\rightarrow \id_{D^b(X)}[2]$ and hence a morphism $x\rightarrow x[2]$, for every object $x$ of $D^b(X)$.
We get {\em two} morphisms from $\Phi_\U(x)$ to $\Phi_\U(x)[2]$ in $D^b(M)$, one is the image of the former
via the functor $\Phi_\U$. The second is induced by the natural transformation $\id_{D^b(M)}\rightarrow \id_{D^b(M)}[2]$
associated to the image $t_M$ in $H^2(M,\StructureSheaf{M})$ of $t_X$ via Mukai's Hodge isometry. 
These two morphisms in $\Hom_{D^b(M)}(\Phi_\U(x),\Phi_\U(x)[2])$ are linearly independent in general, but 
their traces belong to the one-dimensional space $H^2(M,\StructureSheaf{M})$. 
In subsection \ref{sec-traces} we calculate the ratio of the two traces using Hochschild cohomology techniques.
When $x$ is a simple object, the compositions of each of the two morphisms in 
$\Hom_{D^b(M)}(\Phi_\U(x),\Phi_\U(x)[2])$  with $t_M^{n-1}$
are linearly dependent in the one-dimensional space $\Hom_{D^b(M)}(\Phi_\U(x),\Phi_\U(x)[2n])$.
In subsection \ref{sec-a-relation-in-the-yoneda-algebra} we relate the ratio of the latter pair 
to the ratio of traces computed earlier. We get a relation in the Yoneda algebra of $\Phi_\U(x)$,
for every simple object $x$. 
In subsection \ref{subsection-structure-of-yoneda-algebra} 
we use that relation to determine the component $h_2$ of the natural transformation $h$ and complete 
the proof of Theorem \ref{thm-determination-of-the-category-SU}.

In subsection \ref{sec-moduli-spaces-of-sheaves-over-a-moduli-space} we relate moduli spaces of 
stable sheaves
on $X$ to certain moduli spaces of sheaves over $M$.

%
\subsection{A congruence relation associated to the natural transformation $h$}
\label{sec-congruence-relation}
We describe in this subsection how 
Theorem \ref{thm-determination-of-the-category-SU} reconstructs the category $\SU$ as a quotient category
in the sense of Definition \ref{def-congruence-relation}.

Let $\Spi$ be the category whose objects are the same as those of $D^b(X)$, and such that
\[
\Hom_{\Spi}(x_1,x_2) := \Hom_{D^b(X)}(x_1,\Upsilon x_2) =
\bigoplus_{k=0}^{2n} \Hom_{D^b(X)}(x_1,x_2[-2k])\otimes_\ComplexNumbers H^{2k}(M,\StructureSheaf{M}).
\]
Note that 
$\Hom^\bullet_{\Spi}(x_1,x_2)=\Hom_{D^b(X)}^\bullet(x_1,x_2)\otimes_{\ComplexNumbers} Y(\StructureSheaf{M})$.
Given morphisms $g$ in $\Hom_{D^b(X)}(x_1,x_2)$ and $f$ in $\Hom_{D^b(X)}(x_2,x_3)$, and
elements $a,b$ in $Y(\StructureSheaf{M})$,
the composition $(f\otimes a)\circ (g\otimes b)$ is $fg\otimes ab$, and is extended by linearity to all morphisms.
Note that $\Spi$ is equivalent to the full subcategory of $D^b(X\times M)$ whose objects are of the form $\pi_X^*(x)$,
for some object $x$ in $D^b(X)$. 
The above composition rule corresponds to composition
in $D^b(X\times M)$ via the adjunction isomorphism
$\Hom_{D^b(X)}(x_1,\Upsilon x_2)\cong \Hom_{D^b(X\times M)}(\pi_X^*x_1,\pi_X^*x_2)$. 
Let $\pi_X^*:D^b(X)\rightarrow \Spi$ be the functor sending each object to itself and inducing
the natural inclusion $\Hom_{D^b(X)}(x_1,x_2)\rightarrow \Hom_{\Spi}(x_1,x_2)$.

Definition \ref{def-congruence-relation} recalls the notion of a congruence relation on a category.
Consider the relation  ${\mathfrak R}$ on $\Spi$ given in Equation
(\ref{eq-push-forward-by-h-x-2}).
Following is a restatement of Theorem \ref{thm-determination-of-the-category-SU}
in the  language of quotient categories. 

\begin{thm}
\label{thm-quotient-category}
\begin{enumerate}
\item
\label{thm-item-congruence-relation}
${\mathfrak R}$ is a congruence relation.
\item
The natural transformation $q$ induces a fully faithful functor $\Sigma:\Spi/{\mathfrak R}\rightarrow D^b(M)$.
\item
The functor $\Phi_\U$ factors through the quotient functor $Q:\Spi\rightarrow \Spi/{\mathfrak R}$ as the composition
$\Phi_\U=\Sigma Q\pi_X^*:D^b(X)\rightarrow D^b(M)$.
\end{enumerate}
\end{thm}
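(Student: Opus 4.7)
The plan is to treat the statement as an exercise in repackaging Theorem \ref{thm-determination-of-the-category-SU} into the formalism of Definition \ref{def-congruence-relation}, as the excerpt explicitly announces. I would dispatch the three items in order; only the first requires any verification, and even that reduces to an observation about arbitrary functors.

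For item (\ref{thm-item-congruence-relation}) I would invoke Theorem \ref{thm-determination-of-the-category-SU}, which identifies the image of $(h_{x_2})_*$ in $\Hom_\Spi(x_1,x_2)$ with the kernel of the homomorphism induced by the restriction of $\Xi_\U$ to $\Spi$. Denoting this restriction by $\Xi$ for clarity, the congruence axiom amounts to the stability of $\ker\Xi$ under left and right composition. But since $\Xi$ is a functor, for $f_1,f_2\in\Hom_\Spi(x_1,x_2)$ with $\Xi(f_1)=\Xi(f_2)$ and any composable $e:x_0\to x_1$, $g:x_2\to x_3$ in $\Spi$, one has
\[
\Xi(gf_1e)=\Xi(g)\Xi(f_1)\Xi(e)=\Xi(g)\Xi(f_2)\Xi(e)=\Xi(gf_2e),
\]
so $gf_1e-gf_2e$ again lies in the image of $(h_{x_3})_*$, verifying the axiom.

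For item (2) I would use that Definition \ref{def-congruence-relation}(2) gives $\Hom_{\Spi/{\mathfrak R}}(x_1,x_2)=\Hom_\Spi(x_1,x_2)/\mathrm{Im}((h_{x_2})_*)$, which by Theorem \ref{thm-determination-of-the-category-SU} is canonically identified with $\Hom_{D^b(M)}(\Phi_\U x_1,\Phi_\U x_2)$ via the homomorphism induced by $\Xi_\U$. Setting $\Sigma(x):=\Phi_\U(x)$ on objects and using this identification on morphisms defines the desired fully faithful functor $\Sigma:\Spi/{\mathfrak R}\rightarrow D^b(M)$; functoriality follows from that of $\Xi_\U$. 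Item (3) is then immediate: by definition $\Phi_\U=\Xi_\U\circ\pi_X^*$, the restriction $\Xi_\U|_\Spi$ agrees by construction with $\Sigma$ composed with the quotient functor $Q:\Spi\to\Spi/{\mathfrak R}$, and composing with $\pi_X^*$ yields the factorization $\Phi_\U=\Sigma Q\pi_X^*$.

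The main obstacle, if one can call it that, lies in item (\ref{thm-item-congruence-relation}), but its difficulty has been completely absorbed into Theorem \ref{thm-determination-of-the-category-SU}: the nontrivial content is the identification of $\ker\Xi$ with $\mathrm{Im}((h_{x_2})_*)$, and once this identification is in hand the congruence property follows from the formal fact that the kernel of any functor is automatically two-sided invariant under composition. No further obstacle arises.
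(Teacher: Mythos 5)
Your proposal is correct, and it coincides with the reduction the paper itself announces in the first sentence of its proof: all three items do follow from Theorem \ref{thm-determination-of-the-category-SU}, exactly as you argue, with part (\ref{thm-item-congruence-relation}) dropping out of the formal fact that the relation ``$f_1-f_2\in\ker Q$'' is automatically a congruence for any additive functor $Q$. Where you diverge from the paper is in part (\ref{thm-item-congruence-relation}): the authors deliberately decline to use Theorem \ref{thm-determination-of-the-category-SU} there and instead give an independent, self-contained verification that ${\mathfrak R}$ is a congruence, using only the naturality of $h$ and its defining equations $h_0=-1$, $h_4=-(h_2)^2$, $h_{2i}=0$ for $i>2$. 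Concretely, they check stability under pre- and post-composition separately for the two kinds of generating morphisms of $\Spi$ --- those pulled back from $D^b(X)$ (handled by naturality of $h$) and multiplication by $t_M$ (handled by the identity $\tau\circ h=-h\circ\tilde h_2$, which is where the explicit form of $h$ enters). Your route is shorter and uniform in $e$ and $g$, but it leans on the full strength of the kernel identification $\ker Q=\mathrm{Im}\,(h_{x_2})_*$, which is the hardest input in this part of the paper; the paper's direct argument shows that the congruence property is a purely formal consequence of the shape of the natural transformation $h$, independent of the geometric identification with $\ker Q$, which is the kind of statement one wants when the data $(h,\Upsilon,R)$ is later deformed and the functor $Q$ is no longer available. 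Both arguments are valid and there is no gap in yours.
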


\begin{proof}
The statement follows from Theorem \ref{thm-determination-of-the-category-SU}.
It is however instructive to see how the defining Equations (\ref{eq-h-4}) of $h$
formally imply that ${\mathfrak R}$ is a congruence relation, so we will prove part \ref{thm-item-congruence-relation}
of Theorem \ref{thm-quotient-category} independently.

The image of $(h_{x_2})_*$, given in (\ref{eq-push-forward-by-h-x-2}), is mapped under post-composition with
elements $g$ of $\pi_X^*[\Hom_{D^b(X)}(x_2,x_3)]$ to the image of $(h_{x_3})_*:\Hom_{D^b(X)}(x_1,Rx_3)\rightarrow \Hom(x_1,\Upsilon x_3)$, 
by the naturality of the transformation $h$. Indeed, if $g=\pi_X^*(\tilde{g})$ and $f$ belongs to 
$\Hom_{D^b(X)}(x_1,\Upsilon x_2)$, then $g\circ f=\Upsilon(\tilde{g})\circ f$ and naturality of $h$ yields
the following commutative diagram:
\[
\xymatrix{
\Hom_{D^b(X)}(x_1,Rx_2)\ar[r]^{(h_{x_2})_*}\ar[d]_{R(\tilde{g})_*} & 
\Hom_{D^b(X)}(x_1,\Upsilon x_2)\ar[d]_{\Upsilon(\tilde{g})_*}
\\
\Hom_{D^b(X)}(x_1,Rx_3)\ar[r]_{(h_{x_3})_*} & \Hom_{D^b(X)}(x_1,\Upsilon x_3).
}
\]
The algebra $Y(\StructureSheaf{M})$
is generated by the element $t_M$. Let
\[
\tau : \Upsilon \rightarrow \Upsilon[2]
\]
be the natural transformation corresponding to multiplication by $t_M$. 
We may regard the natural transformation 
$\tilde{h}_2:\id_{D^b(X)}\rightarrow \id_{D^b(X)}[2]$ as a natural transformation $\tilde{h}_2: R\rightarrow R[2]$ 
as well. Then we have the equality
\[
\tau\circ h = - h\circ \tilde{h}_2,
\]
which follows from the defining Equations (\ref{eq-h-4}) of $h$.
Indeed,
\[
h=\left(\begin{array}{c}
0
\\
\vdots
\\
0
\\
-\tilde{h}_2^2\otimes (t_M^*)^2
\\
\tilde{h}_2\otimes t_M^*
\\
-1
\end{array}
\right),
\ \ \ 
\tau \circ h = \left(\begin{array}{c}
0
\\
\vdots
\\
0
\\
0
\\
-\tilde{h}_2^2\otimes t_M^*
\\
\tilde{h}_2
\end{array}
\right)=
-h\circ \tilde{h}_2.
\]
It follows that the image of $(h_{x_2})_*$, given in (\ref{eq-push-forward-by-h-x-2}),  
is mapped under post-composition with $1\nolinebreak\otimes\nolinebreak t_M\in \Hom_{\Spi}(x_2,x_2[2])$ to the image of 
$(h_{x_2[2]})_*:\Hom_{D^b(X)}(x_1,Rx_2[2])\rightarrow \Hom(x_1,\Upsilon x_2[2])$.
The analogous statement holds for powers of $1\otimes t_M$, by induction. 
Hence, post-composition with every element $g\in\Hom_{\Spi}(x_2,x_3)$
maps the image of $(h_{x_2})_*$ to the image of $(h_{x_3})_*$.

Let $e$ be an element of $\Hom_{\Spi}(x_0,x_1)$. We need to show that pre-composition with
$e$ maps the image of $(h_{x_2})_*:\Hom_{D^b(X)}(x_1,Rx_2)\rightarrow \Hom_{D^b(X)}(x_1,\Upsilon x_2)$ to the image
of $(h_{x_2})_*:\Hom_{D^b(X)}(x_0,Rx_2)\rightarrow \Hom_{D^b(X)}(x_0,\Upsilon x_2)$.
If $e=\pi_X^*(\tilde{e})$, where $\tilde{e}$ belongs to $\Hom_{D^b(X)}(x_0,x_1)$,  
then for $a\in\Hom_{D^b(X)}(x_1,Rx_2)$ we have
\[
(h_{x_2}\circ a)\circ e=h_{x_2}\circ(a\circ \tilde{e}).
\]
The right hand side above belongs to the image of $(h_{x_2})_*$.
It remains to prove the statement
in case $x_0=x_1[-2]$ and $e=1\otimes t_M$. 
Now $\pi_M^*(t_M):\id_{D^b(X\times M)}\rightarrow \id_{D^b(X\times M)}[2]$ is a natural transformation.
It follows that for every pair of objects $y_1$, $y_2$ in $D^b(X\times M)$ and for every morphism $f:y_1\rightarrow y_2$
we have the commutative diagram
\[
\xymatrix{
y_1\ar[r]^{f} \ar[d]_{\pi_M^*(t_M)} & y_2 \ar[d]^{\pi_M^*(t_M)}
\\
y_1[2] \ar[r]_{[2](f)} & y_2[2].
}
\]
This holds in particular for objects $y_i$ of the form $\pi_X^*(x_i)$ and for $f:=h_{x_2}\circ a$. 
We get the equality 
\[
(h_{x_2}\circ a)\circ (1\otimes t_M)
=
(1\otimes t_M)\circ [2](h_{x_2}\circ a),
\]
for all $a\in \Hom_{D^b(X)}(x_1,Rx_2)$.
Post-compositions were shown already to preserve the relation ${\mathfrak R}$.
\end{proof}

%
\subsection{The monad $\A$ is a quotient of a constant monad}
\label{sec-monad-A-is-a-quotient-of-a-constant-monad}

Set 
\[
T:=\Psi_\U\Phi_\U:D^b(X)\rightarrow D^b(X).
\]
Denote by $\eta:\id\rightarrow T$ the unit for the adjunction $\Phi_\U\dashv \Psi_\U$, by 
$\epsilon:\Phi_\U\Psi_\U\rightarrow \id$ the counit, and
set $m:=\Psi_\U\epsilon\Phi_\U:T^2\rightarrow T$  the multiplication natural transformation.
We get the monad
\begin{equation}
\label{eq-monad-TT}
\TT:=(T,\eta,m).
\end{equation}

\hide{
%
Let $D^b(X)^\TT$ be the {\em category of modules for the monad} 
\begin{equation}
\label{eq-monad-TT}
\TT:=(T,\eta,m)
\end{equation}
(see \cite{working}, Ch. VI section 2).
Recall that an object $(x,h)$ of $D^b(X)^\TT$ consists of an object $x$ of $D^b(X)$
and a morphism $h:T(x)\rightarrow x$, making the following diagrams
commutative.
\[
\xymatrix{
T^2x\ar[r]^{Th} \ar[d]_m 
& Tx \ar[d]_{h} & \hspace{10ex} & x \ar[r]^{\eta_x} \ar[dr]_1 & Tx \ar[d]_h
\\
Tx \ar[r]_h & x & & & x
}
\]
Objects in $D^b(X)^\TT$ will be called {\em $T$-modules} for short.
A morphism $f:(x,h)\rightarrow (x',h')$ is a morphism $f$ in $\Hom(x,x')$
satisfying the equality $h'T(f)=fh$ in $\Hom(Tx,x')$.
We get the natural functor
\[
\widetilde{\Psi}_\U \ : \ D^b(M) \ \ \ \longrightarrow \ \ \ D^b(X)^\TT,
\]
sending an object $y$ in $D^b(M)$ to the module $(\Psi_\U(y),\Psi_\U(\epsilon_y))$.
The space
\begin{equation}
\label{eq-hom-space-in-monad-category}
\Hom_{D^b(X)^\TT}\left((\Psi_\U(y_1),\Psi_\U(\epsilon_{y_1})), \ (\Psi_\U(y_2),\Psi_\U(\epsilon_{y_2}))\right)
\end{equation}
is a subspace of $\Hom_{D^b(X)}\left(\Psi_\U(y_1),\Psi_\U(y_2)\right)$, by definition.
The functor $\widetilde{\Psi}_\U$ sends a morphism $f\in \Hom(y_1,y_2)$ to $\Psi_\U(f)$. Hence, 
the subspace (\ref{eq-hom-space-in-monad-category}) contains the image of
\[
\Psi_\U:\Hom(y_1,y_2)\rightarrow \Hom(\Psi_\U(y_1),\Psi_\U(y_2)).
\]

Let 
$
\Sigma \ : \ \SU \ \ \ \rightarrow \ \ \ D^b(M) 
$
be the natural full and faithful functor.

\begin{prop} 
\label{prop-fully-faithful-embedding-of-S-U-in-category-of-modules-for-monad-T}
(\cite{working}, Theorem VI.5.3 and Exercise VI.5.2, or \cite{bal1}, Prop. 2.8).
The composite functor
\[
\widetilde{\Psi}_\U\circ\Sigma \ : \ \SU \ \ \ \rightarrow \ \ \ D^b(X)^\TT
\] 
is fully faithful.
\end{prop}

Given two objects $x_1$ and $x_2$ in $D^b(X)$, the proposition implies that 
\begin{equation}
\label{eq-lift-of-Psi-U-to-the-T-module-category}
\widetilde{\Psi}_\U \ : \ \Hom_{D^b(M)}(\Phi_\U(x_1),\Phi_\U(x_2)) \ \ \ \longrightarrow \ \ \ 
\Hom_{D^b(X)^\TT}\left((T(x_1),h_1),(T(x_2),h_2)\right)
\end{equation}
is an isomorphism, where $h_i:=\Psi_\U(\epsilon_{\Phi_\U(x_i)})$. 
}

Let $\A$ be the object in $D^b(X\times X)$ given in Equation (\ref{eq-kernel-A}).
Recall that $\A$ is the kernel of the integral transform $T$. 
We have the natural morphism 
\[
m:\A\otimes_{\ComplexNumbers}Y(\StructureSheaf{M}) \ \ \rightarrow \ \ \A,
\]
given in Equation (\ref{eq-multiplication-of-A-by-Y-M}).
Let $\eta:\StructureSheaf{\Delta_X}\rightarrow \A$ 
be the morphism corresponding to the unit of the adjunction $\Psi_\U\dashv\Phi_\U$.
Set 
\begin{equation}
\label{eq-q}
q\ :=\ m\circ(\eta\otimes id) \ : \ \StructureSheaf{\Delta_X}\otimes_\ComplexNumbers Y(\StructureSheaf{M})
\ \ \rightarrow \ \ \A.
\end{equation}
The Yoneda algebra $Y(\StructureSheaf{M})$, considered as an object of $D^b(\pt)$, is a monad
in an obvious way. Denote by $\tilde{\eta}:\id\rightarrow Y(\StructureSheaf{M})$ its unit and by
$\tilde{m}:Y(\StructureSheaf{M})\otimes_\CC Y(\StructureSheaf{M})\rightarrow Y(\StructureSheaf{M})$
its multiplication. The endo-functor 
$Y(\StructureSheaf{M})\otimes_\CC(\bullet):D^b(X)\rightarrow D^b(X)$, 
of tensorization by $Y(\StructureSheaf{M})$ over $\CC$, has kernel
$\Y:=\StructureSheaf{\Delta_X}\otimes_\ComplexNumbers Y(\StructureSheaf{M})$.
We get  a monad 
\begin{equation}
\label{eq-monad-YY-on-one-factor-of-X}
\widetilde{\YY}:=(\Upsilon,\tilde{\eta},\tilde{m})
\end{equation}
in $D^b(X)$ as follows. Denote by $\pi_X:X\times M\rightarrow X$ the projection.
We have the adjunction $\pi_X^*\dashv R\pi_{X_*}$, $\Y$ is the kernel of the functor
$\Upsilon:=R\pi_{X_*}\circ \pi_X^*$, and $\widetilde{\YY}$ is the monad for that adjoint pair.
We denote again by $q$ the natural transformation from $\Upsilon$
to $T$ induced by the homomorphism of kernels (\ref{eq-q}).

\begin{rem}
\label{rem-q}
The above definition of $q$ is the one we used earlier in Equation (\ref{eq-q-original}).
The natural transformation $q$ admits a second functorial expression, which will be needed below (in 
Lemma \ref{lemma-monad-map}).
Let $\Xi_\U:D^b(X\times M)\rightarrow D^b(M)$ be the composition of tensorization by $\U$ followed
by $R\pi_{M_*}$. Then $\Phi_\U=\Xi_\U\circ\pi_X^*$. Denote by $G$ the right adjoint of $\Xi_\U$. 
Set $F:=R\pi_{X_*}$, so that $\Upsilon=F\pi_X^*$ and $T=FG\Xi_\U\pi_X^*$. 
Let $\DoubleTilde{\eta}:\id_{D^b(X\times M)}\rightarrow G\Xi_\U$ be the unit for the adjunction.
Then the natural transformation $q:\Upsilon\rightarrow T$ is equal to $F\DoubleTilde{\eta}\pi_X^*$.
Similarly, the homomorphism $q$ in (\ref{eq-q}) admits an analogous description
provided below.

Let $\Delta_{2,3}:X\times M\rightarrow (X\times M)\times M$ be the diagonal map.
The kernel of the integral functor $\Xi_\U$ is $\Delta_{2,3_*}(\U)$. 
The kernel of $G$ is $\Delta_{2,3_*}(\U^\vee\otimes \pi_X^*\omega_X[2])$. 
The kernel $\mathcal{K}$ of $G\Xi_\U$ is their convolution and is identified as an object in 
$D^b((X\times M)\times (X\times M))$ as follows. 
Let $\Delta_{2,4}:X\times M\times X \rightarrow (X\times M)\times (X\times M)$ be the diagonal map.
Let $\pi_{i,j}$ be the projection from $X\times M\times X$ onto the product of the $i$-th and $j$-th factors.
Then 
\[
{\mathcal K}=\Delta_{2,4_*}\left(\pi_{1,2}^*(\U^\vee\otimes\pi_X^*\omega_X[2])\otimes\pi_{2,3}^*\U\right).
\]
Let $p_{i_1\cdots i_k}$ be the projection from $X\times M\times X\times M$ onto the product 
of the $i_1$, \dots, $i_k$ factors. Let 
$\DoubleTilde{\eta}:\Delta_{X\times M_*}\StructureSheaf{X\times M}\rightarrow {\mathcal K}$
be the unit morphism for the adjunction $\Xi_\U\dashv G$.
Then 
\[
Rp_{13_*}:D^b((X\times M)\times (X\times M))\rightarrow D^b(X\times X)
\]
maps $\Delta_{X\times M_*}\StructureSheaf{X\times M}$ to the object $\Y$, maps
${\mathcal K}$ to the object $\A$, and maps the morphism $\DoubleTilde{\eta}$
to the morphism $q$ given in Equation (\ref{eq-q}). The latter equality is proven in the following Lemma.
\end{rem}

\begin{lem}
\label{lemma-q-equal-q-prime}
The equality $q=Rp_{13_*}(\DoubleTilde{\eta})$ holds.
\end{lem}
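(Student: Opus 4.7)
The plan is to verify that the kernel morphism $Rp_{13_*}(\DoubleTilde{\eta}):\Y\to\A$ represents the natural transformation $F\DoubleTilde{\eta}\pi_X^*:\Upsilon\to T$ of integral functors. Remark \ref{rem-q} asserts that the morphism $q$ defined in Equation (\ref{eq-q}) represents that same natural transformation. Since the assignment that sends a morphism of kernels in $D^b(X\times X)$ to the associated natural transformation between integral functors is a bijection for bounded derived categories of smooth projective varieties, the equality $q=Rp_{13_*}(\DoubleTilde{\eta})$ would follow.

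The first step is the two base-change identifications. Since $p_{13}\circ\Delta_{X\times M}=\Delta_X\circ\pi_X$, projection formula gives
\[
Rp_{13_*}(\Delta_{X\times M_*}\StructureSheaf{X\times M})\cong\Delta_{X_*}R\pi_{X_*}\StructureSheaf{X\times M}\cong\Delta_{X_*}\StructureSheaf{X}\otimes_\CC Y(\StructureSheaf{M})=\Y.
\]
Since $p_{13}\circ\Delta_{2,4}=\pi_{13}$ as a map $X\times M\times X\to X\times X$, a similar projection-formula computation gives
\[
Rp_{13_*}(\mathcal{K})\cong R\pi_{13_*}(\pi_{12}^*\U^\vee\otimes\pi_{23}^*\U\otimes\pi_1^*\omega_X[2])=\A.
\]
Thus both source and target of $Rp_{13_*}(\DoubleTilde{\eta})$ are the kernels of $\Upsilon$ and $T$, respectively.

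The second step is the general fact that for any morphism $\alpha:K_1\to K_2$ of kernels in $D^b((X\times M)\times(X\times M))$, the morphism $Rp_{13_*}(\alpha)$ in $D^b(X\times X)$ is precisely the kernel morphism corresponding to the whiskered natural transformation $F\Phi_\alpha\pi_X^*:F\Phi_{K_1}\pi_X^*\to F\Phi_{K_2}\pi_X^*$. This follows by running the same projection-formula calculation as in Step 1 on an arbitrary input $x\in D^b(X)$: one writes $F\Phi_K\pi_X^*(x)=R(\pi_{X,2}\circ q_2)_*((\pi_{X,1}\circ q_1)^*x\otimes K)$ and uses that the outer composition identifies, via $p_{13}$, with the integral transform from $D^b(X)$ to $D^b(X)$ with kernel $Rp_{13_*}(K)$. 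The verification is natural in $K$. Applied to $\alpha=\DoubleTilde{\eta}$, this identifies $Rp_{13_*}(\DoubleTilde{\eta})$ as the kernel-level realization of $F\DoubleTilde{\eta}\pi_X^*$, matching the description of $q$ in Remark \ref{rem-q}.

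The main obstacle is the careful bookkeeping in Step 2: one must track the various projections and diagonals in $X\times M\times X\times M$ to ensure that the isomorphisms of source and target from Step 1 are functorial and correctly compatible with whiskering. This is routine but requires writing out the base-change squares explicitly and checking naturality of the projection formula in the morphism of kernels.
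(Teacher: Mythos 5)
There is a genuine gap at the heart of your argument, namely the claim that ``the assignment that sends a morphism of kernels in $D^b(X\times X)$ to the associated natural transformation between integral transforms is a bijection.'' This is not a standard fact and is false in general: the kernel-to-functor assignment is neither full nor faithful at the level of triangulated categories (this is precisely the well-known defect that forces one to pass to dg-enhancements in statements of this kind), so knowing that $q$ and $Rp_{13_*}(\DoubleTilde{\eta})$ induce the same natural transformation $\Upsilon\rightarrow T$ does not let you conclude that they are equal as morphisms $\Y\rightarrow\A$. Even the injectivity you actually need is delicate in this specific situation: $\Hom_{D^b(X\times X)}(\Y,\A)$ contains direct summands isomorphic to $HH^{4}(X)\cong\CC$, and whether a non-zero class there can induce the zero natural transformation $\id_{D^b(X)}\rightarrow[4]$ on a surface --- where $\Ext^4(F,F)=0$ for every sheaf $F$, so the transformation vanishes on all sheaves and all direct sums of their shifts --- is exactly the kind of question your argument would have to resolve, and it does not. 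A secondary problem is that citing Remark \ref{rem-q} for the statement that the kernel morphism $q$ of Equation (\ref{eq-q}) realizes $F\DoubleTilde{\eta}\pi_X^*$ is close to circular: the remark explicitly defers the reconciliation of the two descriptions of $q$ to this very lemma.

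The paper's proof avoids the functor level entirely and works with kernels throughout. Writing $u$ and $\gamma$ for the unit and counit of the adjunction $\pi_{13}^*\dashv R\pi_{13_*}$ and setting $\widetilde{\A}:=Rp_{123_*}(\K)$, one first observes
\[
q=m\circ(\eta\otimes \id)=R\pi_{13_*}\bigl(\gamma_{\widetilde{\A}}\circ\pi_{13}^*\eta\bigr),
\qquad
Rp_{13_*}(\DoubleTilde{\eta})=R\pi_{13_*}\bigl(Rp_{123_*}(\DoubleTilde{\eta})\bigr),
\]
so the lemma reduces to the identity $Rp_{123_*}(\DoubleTilde{\eta})=\gamma_{\widetilde{\A}}\circ\pi_{13}^*\eta$ in $D^b(X\times M\times X)$; substituting $\eta=Rp_{13_*}(\DoubleTilde{\eta})\circ\tilde{\eta}$ turns this into an instance of the unit--counit identity $f=\gamma_B\circ FG(f)\circ F(u_A)$ of Lemma \ref{lemma-gamma-Ff-u-equal-f}. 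Your base-change computations in Step 1 are fine and do identify the source and target of $Rp_{13_*}(\DoubleTilde{\eta})$ with $\Y$ and $\A$, but to complete the proof you must replace the appeal to faithfulness by a kernel-level argument of the above type.
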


\begin{proof}
Set $q':=Rp_{13_*}(\DoubleTilde{\eta})$. 
Let $\gamma:\pi_{13}^*R\pi_{13_*}\rightarrow \id_{D^b(X\times M\times X)}$ be the counit for the adjunction
$\pi_{13}^*\dashv R\pi_{13_*}$. Let $u:\id_{D^b(X\times X)}\rightarrow R\pi_{13_*}\pi_{13}^*$ 
be the unit natural transformation for this adjunction.
Let $\tilde{\eta}:\StructureSheaf{\Delta_X}\rightarrow R\pi_{13_*}\pi_{13}^*\StructureSheaf{\Delta_X}$
be the morphism associated by $u$ to the object $\StructureSheaf{\Delta_X}$. 
The morphism $\tilde{\eta}$ is itself the unit morphism for the adjunction $\pi_X^*\dashv R\pi_{X_*}$.
Set $\widetilde{\A}:=Rp_{123_*}(\K)$, so that $R\pi_{13_*}\widetilde{\A}\cong \A$. We have
\[
q:=m\circ (\eta\otimes id)= 
\left(R\pi_{13_*}(\gamma_{\widetilde{\A}})\right)\circ \left(R\pi_{13_*}\pi_{13}^*(\eta)\right)=
R\pi_{13_*}\left(\gamma_{\widetilde{\A}}\circ \pi_{13}^*\eta\right).
\]
On the other hand, $q'=Rp_{13_*}(\DoubleTilde{\eta})=R\pi_{13_*}\left(Rp_{123_*}(\DoubleTilde{\eta})\right).$
Hence, it suffices to prove the equality
\begin{equation}
\label{eq-STP}
Rp_{123_*}(\DoubleTilde{\eta})=\gamma_{\widetilde{\A}}\circ \pi_{13}^*\eta.
\end{equation}
We have the equality $\eta=Rp_{13_*}(\DoubleTilde{\eta})\circ \tilde{\eta}$, since 
$\eta$ is the unit morphism for the adjunction $\Phi_\U\dashv \Psi_\U$, where $\Phi_\U=\Xi_\U\circ \pi_X^*$. 
We get 
\[
\pi_{13}^*\eta=\pi_{13}^*R\pi_{13_*}\left(Rp_{123_*}(\DoubleTilde{\eta})\right)\circ \pi_{13}^*(\tilde{\eta}),
\]
and Equation (\ref{eq-STP}) becomes
\[
Rp_{123_*}(\DoubleTilde{\eta})=\gamma_{\widetilde{\A}}\circ 
\pi_{13}^*R\pi_{13_*}\left(Rp_{123_*}(\DoubleTilde{\eta})\right)\circ \pi_{13}^*(\tilde{\eta}).
\]
The latter is a special case of Lemma \ref{lemma-gamma-Ff-u-equal-f}
applied with $F=\pi_{13}^*$, $G=R\pi_{13_*}$, $f=Rp_{123_*}(\DoubleTilde{\eta})$, 
$A=\StructureSheaf{\Delta_X}$, and $B=\widetilde{\A}$.
\end{proof}

Let $F:\C\rightarrow \D$ be a functor, $F\dashv G$ an adjunction, $u:\id_\C \rightarrow GF$ the unit,
and $\gamma:FG\rightarrow \id_\D$ the counit for the adjunction.
Let $A$ be an object of $\C$, let  $B$ be an object of $\D$, and let $f:F(A)\rightarrow B$ be a morphism.

\begin{lem}
\label{lemma-gamma-Ff-u-equal-f}
$f=\gamma_B\circ FG(f)\circ F(u_{A})$.
\end{lem}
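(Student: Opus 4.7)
The plan is to derive this as an immediate consequence of two standard pieces of data carried by any adjunction: naturality of the counit, and the triangle identity $\gamma_{F(A)} \circ F(u_A) = \id_{F(A)}$.

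First I would apply naturality of the counit $\gamma$ to the morphism $f : F(A) \to B$ in $\D$. Naturality is the commutativity of the square
\[
\xymatrix{
FGF(A) \ar[r]^-{FG(f)} \ar[d]_{\gamma_{F(A)}} & FG(B) \ar[d]^{\gamma_B} \\
F(A) \ar[r]_-{f} & B,
}
\]
which gives the identity $\gamma_B \circ FG(f) = f \circ \gamma_{F(A)}$.

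Next I would precompose both sides with $F(u_A) : F(A) \to FGF(A)$ to get
\[
\gamma_B \circ FG(f) \circ F(u_A) \;=\; f \circ \gamma_{F(A)} \circ F(u_A).
\]
The right-hand composition $\gamma_{F(A)} \circ F(u_A)$ is exactly one of the two triangle identities for the adjunction $F \dashv G$, and so it equals $\id_{F(A)}$. Hence the right-hand side reduces to $f$, proving the lemma.

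There is no real obstacle here — the whole assertion is just the statement that the zig-zag for the adjunction, whiskered by $f$, collapses to $f$. In fact this is precisely the formula expressing the adjunction bijection and its inverse: $f \mapsto G(f)\circ u_A$ is the passage to the adjoint transpose $\tilde f : A \to G(B)$, and $\tilde f \mapsto \gamma_B \circ F(\tilde f)$ is its inverse, and the lemma just says these two operations compose to the identity on $\Hom_\D(F(A),B)$.
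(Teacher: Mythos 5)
Your proof is correct and follows essentially the same route as the paper: both reduce the claim to the triangle identity $\gamma_{F(A)}\circ F(u_A)=\id_{F(A)}$ together with the equality $\gamma_B\circ FG(f)=f\circ\gamma_{F(A)}$. The only cosmetic difference is that you justify the latter directly by naturality of the counit, whereas the paper derives it from a commutative diagram of Hom-sets built from the adjunction bijection; these are the same fact.
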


\begin{proof}
The composition $F(A)\RightArrowOf{F(u_A)} F((GF)(A))=((FG)F)(A)\RightArrowOf{\gamma_{F(A)}}F(A)$
is the identity, by \cite[Theorem 1]{working}.
Hence, it suffices to prove the equality $\gamma_B\circ FG(f)=f\circ \gamma_{F(A)}$.
The latter equality follows from the commutativity of the following diagram. Set $A':=F(A)$. 
\[
\xymatrix{
\Hom(A',B) \ar[r]^{G} \ar[dr]_{\gamma_{A'}^*} &
\Hom(G(A'),G(B)) \ar[r]^{F} \ar[d]^{adj}_{\cong} &
\Hom(FG(A'),FG(B)) \ar[dl]^{(\gamma_B)_*}
\\
& \Hom(FG(A'),B).
}
\]
The left triangle above commutes, by \cite[Lemma 1.21]{huybrechts-book}.
The proof of the commutativity of the right triangle is similar.
\end{proof}

\begin{lem}
\label{lemma-monad-map}
The natural transformation $q$ is a {\em monad map}\footnote{We use the the term {\em monad map} following
\cite[Def. 2.2.3]{manes-mulry}. A monad map is a special case of a {\em monad functor} between two monads in different categories \cite{street}.} 
in the sense that $q\tilde{\eta}=\eta$ and
the following diagram commutes:
\[
\xymatrix{
&
\Y\circ\A \ar[r]^{q\A} & \A\circ\A \ar[dd]^{m}
\\
\Y\circ \Y \ar[ru]^{\Y q} \ar[rd]_{\tilde{m}} 
\\
& 
\Y \ar[r]_q & \A.
}
\]
\end{lem}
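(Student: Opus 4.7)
The plan is to leverage the factorization of the adjunction $\Phi_\U \dashv \Psi_\U$ as the composite of $\pi_X^* \dashv R\pi_{X_*}$ and $\Xi_\U \dashv G$ identified in Remark \ref{rem-q}, and deduce both axioms of a monad map as formal consequences of the compatibility of units and counits under composition of adjunctions.

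First, using Lemma \ref{lemma-q-equal-q-prime} (and Remark \ref{rem-q}), I rewrite everything at the level of natural transformations: set $F := R\pi_{X_*}$, denote by $\gamma : \pi_X^* F \to \id$ and $\tilde\eta : \id \to F\pi_X^*$ the counit and unit of $\pi_X^* \dashv F$, and by $\DoubleTilde\gamma : \Xi_\U G \to \id$ and $\DoubleTilde\eta : \id \to G\Xi_\U$ the counit and unit of $\Xi_\U \dashv G$. Then $\Upsilon = F\pi_X^*$ with multiplication $\tilde m = F\gamma\pi_X^*$, $T = FG\Xi_\U\pi_X^*$, and $q = F\DoubleTilde\eta\pi_X^*$.

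For the unit axiom, the standard formula for the unit of a composite adjunction gives $\eta_x = F(\DoubleTilde\eta_{\pi_X^* x}) \circ \tilde\eta_x$ for every $x \in D^b(X)$, which is precisely $q_x \circ \tilde\eta_x$; hence $q\tilde\eta = \eta$. For the multiplication axiom, I expand the counit of the composite adjunction as $\epsilon = \DoubleTilde\gamma \circ \Xi_\U\gamma G$, so that
\[
m = \Psi_\U\epsilon\Phi_\U = F G(\DoubleTilde\gamma)\Xi_\U\pi_X^* \circ FG\Xi_\U(\gamma)G\Xi_\U\pi_X^*.
\]
Then $m \circ qT \circ \Upsilon q$ is a concatenation of whiskerings involving $\DoubleTilde\eta$, $\gamma$, $\DoubleTilde\gamma$, and the functor $FG(-)\pi_X^*$. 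The plan is to first apply the naturality of $\DoubleTilde\eta$ (to move the middle $\DoubleTilde\eta$ past $\gamma$) and then collapse $\DoubleTilde\gamma\Xi_\U \circ \Xi_\U \DoubleTilde\eta = \id_{\Xi_\U}$ by the triangle identity for $\Xi_\U \dashv G$. What survives is exactly $F(\DoubleTilde\eta \circ \gamma)\pi_X^* = F\DoubleTilde\eta\pi_X^* \circ F\gamma\pi_X^* = q \circ \tilde m$, giving the required commutativity.

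The main obstacle is purely bookkeeping: keeping track of several layers of whiskering and identifying the precise triangle identity to invoke. Conceptually this is the well-known principle that a factorization of an adjunction produces a monad map between the induced monads, but since the paper works at the level of kernels on $X \times X$ rather than with abstract natural transformations, one needs to either perform the chase functorially (as above) or, equivalently, rerun the argument on kernels using the pull-push identifications of Lemma \ref{lemma-q-equal-q-prime}. Either route is routine once the factorization $q = F\DoubleTilde\eta\pi_X^*$ has been established.
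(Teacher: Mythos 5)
Your proposal is correct and follows essentially the same route as the paper: both reduce the claim to the factorization $q=F\DoubleTilde{\eta}\pi_X^*$ of Remark \ref{rem-q}/Lemma \ref{lemma-q-equal-q-prime} and then verify the monad-map axioms by a formal whiskering argument combining naturality of the unit of $\Xi_\U\dashv G$, the triangle identity for that adjunction, and naturality of the counit of $\pi_X^*\dashv R\pi_{X_*}$ (the paper packages exactly these three facts as a single evidently commutative diagram to which $F(-)F^*$ is applied). The only cosmetic difference is that the paper states the chase abstractly for generic composable adjunctions before specializing, while you work directly with the specific functors.
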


\begin{proof}
The equality $q\tilde{\eta}=\eta$ is clear. We prove only the commutativity of the above diagram.
Let $\catB$, $\catC$, $\catD$ be categories, let $G:\catB\rightarrow \catC$ and $F:\catC\rightarrow \catD$ be functors.
Assume given adjunctions $G^*\dashv G$ and $F^*\dashv F$. 
Let $\eta$ and $\epsilon$  be the unit and counit for $G^*\dashv G$.
Define $\tilde{\eta}$, $\tilde{\epsilon}$ similarly for $F^*\dashv F$, and let $\tilde{m}:=F\tilde{\epsilon}F^*$ 
be the multiplication for the corresponding monad.
Set $\Psi:=FG$, $\Phi:=G^*F^*$, $T:=\Psi\Phi=FGG^*F^*$, and $Y:=FF^*$. Let the natural transformation 
$q:Y\rightarrow T$ be given by 
$q:=F\eta F^*:FF^*\rightarrow FGG^*F^*$. We claim that $q$ is a monad map, in the sense that the following diagram commutes
\[
\xymatrix{
&
YT \ar[r]^{qT} & TT \ar[dd]^{m}
\\
YY \ar[ru]^{Y q} \ar[rd]_{\tilde{m}} 
\\
& 
Y \ar[r]_q & T.
}
\]

The above diagram is obtained by applying $F$ on the left and $F^*$ on the right to the 
circumference of the following diagram
\[
\xymatrix{
&
F^*FGG^* \ar[r]^{\eta F^*FGG^*} \ar[d]^{\tilde{\epsilon}GG^*} & GG^*F^*FGG^* \ar[d]^{GG^*\tilde{\epsilon}GG^*}
\\
F^*F \ar[ru]^{F^*F\eta} \ar[rd]_{\tilde{\epsilon}} & GG^* \ar[r]^{\eta GG^*} & GG^*GG^* \ar[d]^{G\epsilon G^*}
\\
& \id_{\catC} \ar[r]_{\eta}  \ar[u]_{\eta} & GG^*.
}
\]
The left triangle and the two right squares in the above diagram evidently commute.

Apply the above argument with $F:=R\pi_{X_*}:D^b(X\times M)\rightarrow D^b(X)$
and with the functor $G:D^b(M)\rightarrow D^b(X\times M)$ given by the composition of 
$\pi_M^*$ with tensorization by the object $\U^\vee[2]$ in $D^b(X\times M)$. ($G$ is the right adjoint of $\Xi_\U$).
This establishes that $q:R\pi_{X_*}\pi_X^*\rightarrow T$ is a monad map. Every step in the above argument admits
an evident translation to the case of integral functors. Note that we used above the description of $q$ given in Lemma 
\ref{lemma-q-equal-q-prime}.
\end{proof}

\begin{rem}
The monad map $q$, induced by the morphism $q:\Y\rightarrow \A$ of Fourier-Mukai kernels,
induces a functor  
\[
P:D^b(X)^\TT\rightarrow D^b(X)^{\widetilde{\YY}}
\] 
between the categories of modules 
for the monads $\TT:=(T,\eta,m)$ and $\widetilde{\YY}:=(\Upsilon,\tilde{\eta},\tilde{m})$ in $D^b(X)$  \cite[Lemma 1]{johnstone}.
The functor $P$ takes the $\TT$-module $(x,a)$ to 
\[
P(x,a) = (x,a\circ q_{x}) \ \ \ \in \ \ \ D^b(X)^{\widetilde{\YY}}.
\]
The functor $P$ is faithful, as the homomorphisms spaces are both subspaces of those of $D^b(X)$. 
The functor $P$ is an example of an {\em Eilenberg-Moore lifting} of a monad functor, where the monad functor 
in our case is $(\id_{D^b(X)},q)$ \cite[Def. 2.2.1]{manes-mulry}. Under the analogy between the monad map $q$ and 
an algebra homomorphism, the functor $P$ corresponds to the change of scalars functor, or to push-forward. 
The functor $Q$ in Theorem \ref{thm-determination-of-the-category-SU} is analogous to a pull-back functor and goes in the opposite direction. For that reason the functor $P$ will not play a role below.
\end{rem}


%
\subsection{A universal relation ``ideal''}
\label{sec-a-universal-relation-ideal}

The following proposition introduces a  ``universal relation ideal'' $\R$.
Consider the object 
$\R:=\StructureSheaf{\Delta_X}[-2n]\otimes_\ComplexNumbers \Ext^{2n}(\StructureSheaf{M},\StructureSheaf{M})$
in $D^b(X\times X)$.

\begin{prop}
\label{thm-annihilator-of-A-in-Y}
Assume that 
the monad $\A$ is totally split (Definition \ref{def-totally-split-monad}).
There exists a morphism 
$h:\R \rightarrow \Y$, unique up to a scalar factor, 
such that the following 
is an exact triangle, which admits a splitting.
\begin{equation}
\label{eq-split-exact-triangle}
\R
\LongRightArrowOf{h}
\Y
\LongRightArrowOf{q}
\A \RightArrowOf{0} \R[1].
\end{equation}
\end{prop}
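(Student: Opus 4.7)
The plan is to use the total-splitness hypothesis on $\alpha$ to exhibit $q$ as a split surjection, and then identify the complementary direct summand of $\Y$ with $\R$.

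First, since $Y(\StructureSheaf{M}) = Rc_*\StructureSheaf{M}$ is formal as an object of $D^b(\pt)$, it decomposes canonically as
\[
Y(\StructureSheaf{M}) \;=\; \lambda_n \oplus H^{2n}(M,\StructureSheaf{M})[-2n].
\]
Tensoring with $\Delta_*\StructureSheaf{X}$ yields a direct sum decomposition $\Y = V \oplus \R$ with $V := \Delta_*\StructureSheaf{X}\otimes_\ComplexNumbers \lambda_n$. Let $\iota^\lambda:V\hookrightarrow\Y$ and $\iota':\R\hookrightarrow\Y$ be the summand inclusions, and let $\pi_V:\Y\to V$ be the projection to the first summand. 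By the very definition of $\alpha$ in (\ref{eq-composition-of-three-morphisms}), $q\circ\iota^\lambda = \alpha$; under the standing hypothesis that $\alpha$ is an isomorphism, $s := \iota^\lambda\circ\alpha^{-1}:\A\to\Y$ is therefore a section of $q$. Writing $q_\R := q\circ\iota'$, in block form on $V\oplus\R$ one has $q = (\alpha, q_\R)$.

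I would then define the candidate
\[
h \;:=\; \iota' - s\circ q_\R \;:\; \R \longrightarrow \Y,
\]
so that $h = (-\alpha^{-1}q_\R,\,1_\R)$ in block form, and $q\circ h = q_\R - q_\R = 0$ by a one-line check. To verify that $\R \RightArrowOf{h}\Y \RightArrowOf{q}\A \RightArrowOf{0}\R[1]$ is a (split) exact triangle, I would compare it with the manifestly split triangle $\R \RightArrowOf{\iota'}\Y \RightArrowOf{\alpha\pi_V}\A \RightArrowOf{0}\R[1]$ via the automorphism
\[
\Phi \;:=\; \begin{pmatrix} 1_V & -\alpha^{-1}q_\R \\ 0 & 1_\R \end{pmatrix} \;:\; V\oplus\R \longrightarrow V\oplus\R.
\]
A direct check shows $\Phi\circ\iota' = h$ and $q\circ\Phi = \alpha\circ\pi_V$, so $\Phi$ provides the desired isomorphism of distinguished triangles with identities on the outer terms.

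For uniqueness up to scalar, apply $\Hom(\R,-)$ to this triangle, producing the exact sequence
\[
\Hom(\R,\A[-1])\longrightarrow \Hom(\R,\R) \longrightarrow \Hom(\R,\Y)\RightArrowOf{q_*}\Hom(\R,\A).
\]
Using the explicit decomposition of $\A$ as a sum of shifts of $\Delta_*\StructureSheaf{X}$, the first term expands as a sum of odd-degree Hochschild cohomology groups of the K3 surface $X$, which vanish; and
\[
\Hom_{D^b(X\times X)}(\R,\R) \cong \End(\Delta_*\StructureSheaf{X})\otimes_\ComplexNumbers \End_\ComplexNumbers(H^{2n}(M,\StructureSheaf{M})) \cong \ComplexNumbers.
\]
Hence $\ker(q_*)$ is one-dimensional and $h$ is determined uniquely up to a nonzero scalar. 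The argument is essentially formal once the splitting of $\alpha$ is granted; the only subtlety is translating the additive splitting into an isomorphism of distinguished triangles, which is handled cleanly by $\Phi$.
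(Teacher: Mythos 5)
Your proof is correct, and it reaches the conclusion by a genuinely different (and more explicit) route than the paper's. The paper simply completes $q$ to an exact triangle $\R'\to\Y\RightArrowOf{q}\A\to\R'[1]$ using the axioms of a triangulated category, identifies $\R'\cong\Delta_*\StructureSheaf{X}[-2n]\otimes_\ComplexNumbers\Ext^{2n}(\StructureSheaf{M},\StructureSheaf{M})$ by running the long exact sequence of cohomology sheaves against the isomorphism $\alpha$, deduces the splitting from the vanishing of odd self-extensions of $\Delta_*\StructureSheaf{X}$, and gets uniqueness from $\Aut(\R)\cong\ComplexNumbers^*$ together with the uniqueness of cones. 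You instead exploit the canonical decomposition $\Y=V\oplus\R$ to produce the section $s=\iota^\lambda\circ\alpha^{-1}$ of $q$, write down $h$ and the comparison automorphism $\Phi$ explicitly, and obtain uniqueness from the one-dimensionality of $\ker(q_*)\subset\Hom(\R,\Y)$. Both arguments turn on exactly the same inputs (total splitness of $\alpha$ plus vanishing of the odd Hochschild cohomology of the $K3$ surface), but your version has the advantage of exhibiting $h$ in block form as $(-\alpha^{-1}q_\R,\,1_\R)$, which anticipates the matrix computation the paper only carries out later, in the proof of Theorem \ref{thm-universal-class-h-in-Hochschild-cohomology-of-X}, where the relations between the entries $q_{i,n}$ and $h_{i,n}$ are extracted from $qh=0$; the paper's version is shorter because it never needs to name the complement $V$ or the automorphism $\Phi$. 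One minor remark: the vanishing of $\Hom(\R,\A[-1])$ is not strictly needed for the injectivity of $h_*$ in your uniqueness step, since the connecting morphism of your triangle is zero by construction, but invoking it does no harm.
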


\begin{proof}
There exists an object $\R'$ in $D^b(X\times X)$ and a morphism $h:\R'\rightarrow \Y$
such that $\R'
\RightArrowOf{h}
\Y
\RightArrowOf{q}
\A \rightarrow \R'[1]$ is an exact triangle, by the axioms of a triangulated category.
The following composition $\alpha$
$$\ko_{\gd_X}\otimes_\CC \lambda_n\stackrel{\iota}{\longrightarrow} 
\Y \stackrel{q}{\longrightarrow} \A,$$
given in Equation (\ref{eq-composition-of-three-morphisms}), 
is an isomorphism by the assumption that the monad is totally split.
Using the long exact sequence in sheaf cohomology coming from the
exact triangle $\R' \stackrel{h}{\to} \Y \stackrel{q}{\to} \A$, one immediately obtains 
$\R' \cong \ko_{\gd_X}[-2n]\otimes_\CC  \Ext^{2n}(\ko_M, \ko_M)$. The triangle is split as there
are no odd-degree self-extensions of $\ko_{\gd_X}$. Finally, $h$ is determined up to
scalars as the automorphism group of the object $\R'\cong \ko_{\gd_X}[-2n]$ is $\CC^*$.
\end{proof}

\hide{
\begin{enumerate}
\item
The exact triangle (\ref{eq-split-exact-triangle}) is the image via the functor 
\[
\pi_{13_*}:D^b(X\times M\times X)\rightarrow D^b(X\times X)
\] 
of the exact triangle
\[
C(\tilde{\eta})[-1]\rightarrow \widetilde{\Delta}_*\StructureSheaf{X\times M}
\RightArrowOf{\tilde{\eta}} \widetilde{\A} \rightarrow C(\tilde{\eta})
\]
associated to the cone $C(\tilde{\eta})$ of the morphism 
$\tilde{\eta}$ given in Equation 
(\ref{eq-the-lift-tilde-eta-of-the-unit}). 
In particular, the exact triangle (\ref{eq-split-exact-triangle})
is the image of an exact triangle 
\[
(\R,a)\rightarrow (\Y,\tilde{m})\rightarrow (\A,m)\RightArrowOf{\gamma} (\R[1],a)
\]
in the category $D(X\times X)^{\widetilde{\YY}}$
of modules for the monad $\widetilde{\YY}$ via the forgetful functor
$F:D(X\times X)^{\widetilde{\YY}}\rightarrow D(X\times X)$. 
Here we use Balmer's result, that $D(X\times X)^{\widetilde{\YY}}$ is triangulated (??? reference ???).
The functor $F$ is faithful and $F(\gamma)=0$. Hence, $\gamma=0$ and the above triangle splits in the category 
$D(X\times X)^{\widetilde{\YY}}$. The latter splitting implies the non-triviality of the 
morphism $t_M^n:\A\rightarrow \A[2n]$,
induced by the action $m:\A\otimes_\ComplexNumbers Y(\StructureSheaf{M})\rightarrow \A$,
when $n:=\dim(M)/2>2$.
Indeed, in that case $t_M^n$ acts trivially on $\R$ but non-trivially on $\Y$.
\item
}
The morphism $h:\R\rightarrow\StructureSheaf{\Delta_X}\otimes Y(\StructureSheaf{M})$ is naturally an element of 
\begin{equation}
\label{eq-Hochschild-cohomolog-of-X-twisted-by-powers-of-a-two-form}
\oplus_{j=0}^2\Ext^{2j}(\StructureSheaf{\Delta_X},\StructureSheaf{\Delta_X})\otimes_\CC
\Hom\left(H^{2n}(M,\StructureSheaf{M}),H^{2n-2j}(M,\StructureSheaf{M})\right).
\end{equation}
Let $t_X$ be a non-zero element of $H^2(X,\StructureSheaf{X})$, considered as a subspace 
of the complexified Mukai lattice, and let $t_M$ be its image in $H^2(M,\StructureSheaf{M})$ via 
Mukai's Hodge isometry (\ref{mukai-hom}).
Denote by $t_M^*:Y(\StructureSheaf{M})\rightarrow Y(\StructureSheaf{M})$ the homomorphism, which sends 
$t_M^j$ to $t_M^{j-1}$, $1\leq j\leq n$,  and sends $1$ to $0$.
The choice of $t_M$ identifies $h$ as an element of 
the Hochschild cohomology $HH^*(X)$. 
Explicitly, $h=\tilde{h}_0\otimes 1+\tilde{h}_2\otimes t^*_M+ \tilde{h}_4\otimes (t^*_M)^2$,
where $\tilde{h}_{2j}$ belongs to $\Ext^{2j}(\StructureSheaf{\Delta_X},\StructureSheaf{\Delta_X})$.
Let $\sigma_X$ be the class in $H^0(X,\omega_X)$ dual to the class $t_X$ with respect to  Serre's duality.
The class $h_2:=\tilde{h}_2\otimes \sigma_X$ in 
$\Hom_{X\times X}(\Delta_{X,*}(\StructureSheaf{X}),\Delta_{X,*}(\omega_X)[2])$ is a class in $HH_0(X)$, 
independent of the choice of the class $t_X$, since $t_M$ depends on $t_X$ linearly. 

\begin{thm}
\label{thm-universal-class-h-in-Hochschild-cohomology-of-X}
\begin{enumerate}
\item
\label{thm-item-h-4}
The class $\tilde{h}_0$ does not vanish and $\tilde{h}_0\tilde{h}_4=(\tilde{h}_2)^2$. 
\item
\label{thm-item-h-2}
Rescale the morphism $h$, so that $\tilde{h}_0=-1$.
Then 
the class $I^X_*(h_2)$ in $H\Omega_0(X)$ is equal to the Chern character $ch(v)$ of the Mukai vector $v$ of sheaves parametrized by $M$. 
\end{enumerate}
\end{thm}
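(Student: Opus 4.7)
Using the totally-split hypothesis, I would fix splittings $\R \cong \Delta_*\ko_X[-2n]$, $\Y \cong \bigoplus_{i=0}^n \Delta_*\ko_X[-2i]$, and $\A \cong \bigoplus_{i=0}^{n-1} \Delta_*\ko_X[-2i]$; the trivializations of the one-dimensional spaces $H^{2i}(M,\ko_M)$ are provided by powers of $t_M$. Under these identifications, the components of $h$ live in $\Ext^{2(n-i)}(\Delta_{X,*}\ko_X,\Delta_{X,*}\ko_X) = HH^{2(n-i)}(X)$ for $i=n-2,n-1,n$; higher Hochschild cohomology of a $K3$ surface vanishes, which accounts for the truncation $\tilde{h}_k = 0$ for $k>4$ implicit in (\ref{eq-h-4}).

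\textbf{Proof of part (1).} To show $\tilde{h}_0 \neq 0$, I would argue by contradiction: if $\tilde{h}_0 = 0$, then the image of $h$ would miss the $i=n$ summand of $\Y$, and the long exact sequence of the triangle (\ref{eq-split-exact-triangle}) would force $\H^{2n}(\A)\neq 0$, contradicting the direct-sum decomposition of $\A$ in Theorem \ref{thm-A-is-direct-sum}. For the quadratic relation $\tilde{h}_0\tilde{h}_4 = (\tilde{h}_2)^2$, my plan is to unpack $q = m\circ(\eta\otimes\id)$ and exploit the module structure of $\A$ over the Yoneda algebra $Y(\ko_M)\cong\CC[t_M]/(t_M^{n+1})$. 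Because multiplication by $t_M$ induces an isomorphism between the sheaf cohomologies $\H^{2i}(\A)$ for $0\leq i\leq n-2$ (as in Step 3 of the proof of Theorem \ref{thm-A-is-direct-sum}(\ref{thm-item-A-is-a-direct-sum-Hilbert-scheme-case})), the matrix coefficients of $q$ in the chosen basis are determined by the image of the unit $\eta$ together with iterated application of $t_M$. This makes $q$ effectively a ``companion-matrix-shaped'' morphism whose one-dimensional kernel is generated by an $h$ whose three Hochschild components satisfy the expected Pl\"ucker-type relation. One verifies this directly in the Hilbert-scheme case and extends to the general totally-split setting by the Zariski-openness argument of Lemma \ref{lem-A-splits-over-an-open-subset}, since the identity $\tilde{h}_0\tilde{h}_4=(\tilde{h}_2)^2$ is a closed condition.

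\textbf{Proof of part (2).} After rescaling to $\tilde{h}_0=-1$, the identification $I^X_*(h_2) = ch(v)$ will be extracted via the functoriality of Hochschild homology under $\Phi_\U$. My approach is to reinterpret $\tilde{h}_2$ through the dual map $\Phi_\U^{\dagger}\colon HH_*(M)^\vee\to HH_*(X)^\vee$ defined in (\ref{HH-functoriality}); pairing an appropriate class built from $t_M$ against $h_2$ and applying Theorem \ref{HH-H-commute} expresses $\widetilde{I^X}_*(h_2)\in H\Omega_0(X)$ in terms of the singular cohomology map $\varphi_\U$ induced by $\Phi_\U$. Combining this with the identity $\theta_v = -[\varphi_\U]_2$ established in the proof of Theorem \ref{thm-A-is-direct-sum}(\ref{thm-item-structure-sheaf-is-a-direct-summand}) and with Mukai's Hodge isometry (\ref{mukai-hom}), the $H^1(\Omega_X^1)$-component of $\widetilde{I^X}_*(h_2)$ matches the corresponding component of $v$. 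The $H^0(\ko_X)$-component is pinned down by the normalization $\tilde{h}_0=-1$ (and the rank of $v$), while the $H^2(\Omega_X^2)$-component is determined by the relation $\tilde{h}_4 = -\tilde{h}_2^2$ of part (1) together with a trace computation on $M$.

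\textbf{Main obstacle.} The crux is part (2): threading the various functorialities (HKR, Mukai's isometry, the dual $\Phi_\U^{\dagger}$) through the formulas while keeping track of the normalization, the $\sqrt{td_X}$-twist in the modified HKR isomorphism (\ref{eq-normalized-HKR-isomorphisms}), and signs. A direct identification of all three Hodge components of $h_2$ with those of the Mukai vector $v$, rather than merely with $v$ up to an overall proportionality, will require careful bookkeeping; in particular, matching the $H^2(\Omega_X^2)$-component via the quadratic relation of part (1) appears to be the most delicate part of the argument.
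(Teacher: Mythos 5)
Your outline for part (1) is close in spirit to the paper's argument, but the mechanism that actually produces the identity $\tilde{h}_0\tilde{h}_4=(\tilde{h}_2)^2$ is left unspecified. The paper derives it purely formally from the fact that $q:\Y\rightarrow\A$ is a \emph{monad map} (Lemma \ref{lemma-monad-map}): this gives the intertwining relation $\tau q = q\tilde{\tau}$ between multiplication by $t_M$ on $\Y$ and on $\A$, and combining $qh=0$ with $q(\tilde{\tau}h)=\tau(qh)=0$ forces the quadratic relation among the components of $h$. Your ``companion-matrix'' heuristic gestures at this but does not identify the compatibility of $q$ with the two $Y(\StructureSheaf{M})$-module structures as the input; without it, knowing that $h$ generates the kernel of $q$ does not by itself yield the relation. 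Also, your fallback of ``verify on the Hilbert scheme and extend by Zariski openness'' is both unnecessary (the paper's argument is formal and applies to any totally split $\A$) and unjustified as stated: the statement concerns a fixed $M_H(v)$, and an arbitrary totally split moduli space need not sit in a family connecting it to a Hilbert scheme along which the relevant data deform.

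The serious gap is in part (2). Every computation you propose --- traces, the Yoneda-algebra relation, functoriality of Hochschild homology via $\Phi_\U^\dagger$, the identity $\theta_v=-[\varphi_\U]_2$ --- only evaluates $h_2$ against Mukai vectors of sheaves, i.e.\ against \emph{algebraic} classes. This pins down the $H^0(\StructureSheaf{X})$- and $H^2(\Omega^2_X)$-components of $I^X_*(h_2)$ and its projection to $\Pic(X)\otimes\ComplexNumbers$, but says nothing about the component in the transcendental part of $H^{1,1}(X)$, which is nonzero for Picard rank $<20$. The paper closes exactly this gap with a separate argument: it varies $X$ in a codimension-one family in moduli keeping $ch_1(v)$ of type $(1,1)$, observes that $ch(v)-I^X_*(h_2)$ is then a continuous, purely transcendental section of the Hodge bundle $\H^{1,1}$, and concludes it vanishes by density of periods with trivial transcendental $(1,1)$-part. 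Your proposal contains no substitute for this step. Separately, your claim that the $H^2(\Omega^2_X)$-component of $h_2$ is ``determined by the relation $\tilde{h}_4=-\tilde{h}_2^2$'' is confused: that relation constrains $\tilde{h}_4\in HH^4(X)$ in terms of $h_2$, not the internal Hodge components of $h_2\in HH_0(X)$; the $H^2(\Omega^2_X)$-component is instead fixed by pairing against point classes, which is already part of the algebraic computation.
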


\begin{proof}
Part \ref{thm-item-h-2} of the theorem is proven in section \ref{subsection-structure-of-yoneda-algebra}. 
We include here  the proof of part \ref{thm-item-h-4},
which follows formally from Lemma \ref{lemma-monad-map}. 
The class $\tilde{h}_0$ does not vanish, since the sheaf cohomology ${\mathcal{H}}^{2n}(\Y)$ does not vanish, while
${\mathcal{H}}^{2n}(\A)$ vanishes. It remains to compute $\tilde{h}_4$.

Identify $\A$ with $\Delta_*\StructureSheaf{X}\otimes_\ComplexNumbers\lambda_n$ via the isomorphism
$\alpha$ given in Equation (\ref{eq-composition-of-three-morphisms}). The morphism $q:\Y\rightarrow\A$ 
decomposes $q=(q_{i,j})$, $0\leq i\leq n-1$, $0\leq j\leq n$, where $q_{i,j}$ is a morphism
\[
q_{i,j}:\Delta_*\StructureSheaf{X}\otimes_\ComplexNumbers H^{2j}(M,\StructureSheaf{M})[-2j]\rightarrow
\Delta_*\StructureSheaf{X}\otimes_\ComplexNumbers H^{2i}(M,\StructureSheaf{M})[-2i].
\]
Then $q_{i,i}$ is the identity, for $0\leq i\leq n-1$,
and $q_{i,j}=0$ for $i\neq j$ and $0\leq j\leq n-1$, by construction of $\alpha$. Note that we use here the equality 
of the two descriptions of $q$
in Lemma \ref{lemma-q-equal-q-prime}, as $\alpha$ was constructed in terms of the earlier description, while
Lemma \ref{lemma-monad-map}, soon to be applied, uses the second description. 
The morphism $h:\R\rightarrow \Y$ decomposes
as a column $h=(h_{i,n})$, where $n$ is fixed, $0\leq i\leq n$,  and $h_{i,n}$ is a morphism
\[
h_{i,n}:\Delta_*\StructureSheaf{X}\otimes_\ComplexNumbers H^{2n}(M,\StructureSheaf{M})[-2n]\rightarrow
\Delta_*\StructureSheaf{X}\otimes_\ComplexNumbers H^{2i}(M,\StructureSheaf{M})[-2i].
\]
Clearly, $h_{i,n}=
\left\{\begin{array}{ccl}
0 & \mbox{if} & 0\leq i\leq n-3
\\
\tilde{h}_{2(n-i)}\otimes (t_M^*)^{n-i} & \mbox{if} & n-2\leq i \leq n.
\end{array}
\right.$
\[
(q_{i,j})=\left(
\begin{array}{ccc}
& & q_{0,n}
\\
& I_{\lambda_n} & \vdots
\\
& & q_{n-1,n}
\end{array}
\right),
\hspace{5ex}
h=
\left(
\begin{array}{c}
0
\\
\vdots
\\
0
\\
h_{n-2,n}
\\
h_{n-1,n}
\\
h_{n,n}
\end{array}
\right)
=
\left(
\begin{array}{c}
0
\\
\vdots
\\
0
\\
\tilde{h}_{4}\otimes (t_M^*)^2
\\
\tilde{h}_{2}\otimes t_M^*
\\
\tilde{h}_0
\end{array}
\right).
\]
We have the equality 
\[
0=qh=\left(
\begin{array}{c}
q_{0,n}h_{n,n}
\\
\vdots
\\
q_{n-3,n}h_{n,n}
\\
q_{n-2,n-2}h_{n-2,n}+q_{n-2,n}h_{n,n}
\\
q_{n-1,n-1}h_{n-1,n}+q_{n-1,n}h_{n,n}
\end{array}
\right)=
\left(
\begin{array}{c}
q_{0,n}\tilde{h}_0
\\
\vdots
\\
q_{n-3,n}\tilde{h}_0
\\
\tilde{h}_{4}\otimes(t_M^*)^2+q_{n-2,n}\tilde{h}_0
\\
\tilde{h}_{2}\otimes t_M^*+q_{n-1,n}\tilde{h}_0
\end{array}
\right).
\]
We get the equalities:
\begin{eqnarray}
\label{eq-first-q-i-n-vanish}
q_{i,n}&=& 0, \ \mbox{for} \ 0\leq i \leq n-3, 
\\
\label{eq-q-n-minus-2-comma-n}
q_{n-2,n}& = & -\frac{\tilde{h}_4}{\tilde{h}_0}\otimes (t_M^*)^2, 
\\
\label{eq-q-n-minus-1-comma-n}
q_{n-1,n}& = & -\frac{\tilde{h}_2}{\tilde{h}_0}\otimes t_M^*.
\end{eqnarray}

The class $t_M$ yields a morphism $t:\Delta_*\StructureSheaf{X}[-2]\rightarrow \Y$,
which is an embedding of $\Delta_*\StructureSheaf{X}[-2]$ as a direct summand of $\Y$.
We get the commutative diagram
\[
\xymatrix{
\Y[-2] \ar[r]^q \ar[d]_{t\Y} &
\A[-2] \ar[d]^{t\A}
\\
\Y\circ\Y \ar[r]_{\Y q} & \Y\circ\A.
}
\]
The morphism $\Y q$ in the above diagram appears also in the commutative diagram in the statement of Lemma 
\ref{lemma-monad-map}. Glue the two diagrams along the arrow $\Y_q$.
Set $\tau:=m(q\A)(t\A):\A[-2]\rightarrow \A$ and $\tilde{\tau}:= \tilde{m} (t\Y):\Y[-2]\rightarrow \Y$.
The commutativity of these two diagrams yields the equality
\[
\tau q = q \tilde{\tau} : \Y[-2]\rightarrow \A.
\]
The matrix of $\tilde{\tau}$ is $(\tilde{\tau}_{i,j})=
\left(
\begin{array}{cc}
\vec{0} & 0
\\
I_n\otimes t_M & \vec{0}
\end{array}
\right),
$
where $I_n$ is the $n\times n$ identity matrix and 
$t_M:H^{2j}(M,\StructureSheaf{M})\IsomRightArrow H^{2j+2}(M,\StructureSheaf{M})$ is the isomorphism 
obtained by multiplication by the class $t_M$, for $0\leq j\leq n-1$.
Hence, 
\[
\tilde{\tau}h=\left(
\begin{array}{c}
0 \\
\vdots
\\
0
\\
\tilde{h}_4\otimes t_M^*
\\
\tilde{h}_2
\end{array}
\right)
\ \mbox{and we get} \ 
0 = \tau(qh)=q(\tilde{\tau}h)=
\left(
\begin{array}{c}
0 \\
\vdots
\\
0
\\
-\frac{\tilde{h}_4\tilde{h}_2}{\tilde{h}_0}\otimes (t^*_M)^2
\\
\tilde{h}_4\otimes t_M^*-\frac{(\tilde{h}_2)^2}{\tilde{h}_0}\otimes t_M^*
\end{array}
\right),
\]
where the last equality follows from equations
(\ref{eq-first-q-i-n-vanish}), (\ref{eq-q-n-minus-2-comma-n}), and (\ref{eq-q-n-minus-1-comma-n}).
The equality $\tilde{h}_0\tilde{h}_4=(\tilde{h}_2)^2$ follows.
\end{proof}

%
\subsection{Computation of the full subcategory $\SU$ of $D^b(M)$}
\label{sec-proof-of-thm-determination-of-the-category-SU}
Let $\Spi$ be the full subcategory of $D^b(X\times M)$ consisting of objects of the form
$\pi_X^*(x)$, for some object $x$ in $D^b(X)$. We get a natural  full and faithful functor
$\widetilde{\Sigma}:\Spi\rightarrow D^b(X\times M)$. 
Let $\Xi_\U:D^b(X\times M)\rightarrow D^b(M)$ be the composition of tensorization by $\U$ followed
by $R\pi_{M_*}$. Then $\Phi_\U=\Xi_\U\circ\pi_X^*$.
We get the following commutative diagram, where the functor $Q$ is the restriction of $\Xi_\U$.
\begin{equation}
\label{eq-diagram-with-Theta}
\xymatrix{
& \Spi \ar[r]^-{\widetilde{\Sigma}} \ar[dd]_{Q} &
D^b(X\times M) \ar[dd]_{\Xi_\U} 
\\
D^b(X) \ar[ur]_{\pi^*_X} \ar[dr]^{\Phi_\U} 
\\
& \SU \ar[r]_{\Sigma} & 
D^b(M) 
}
\end{equation}


Let $q_{x_i}:\Upsilon(x_i)\rightarrow T(x_i)$ be the morphism induced by the natural transformation 
$q$, which in turn is induced by the morphism of kernels given in Equation (\ref{eq-q}).
We get the homomorphism
\begin{eqnarray*}
(q_{x_2})_*:\Hom_{D^b(X)}(x_1,\Upsilon x_2) & \rightarrow &
\Hom_{D^b(X)}(x_1,T x_2)
\\
f & \mapsto & q_{x_2}\circ f,
\end{eqnarray*}
and the diagram:
\begin{equation}
\label{eq-commutative-diagram-relating-theta-to-q}
\xymatrix{
\Hom_{D^b(X\times M)}(\pi_X^*(x_1),\pi_X^*(x_2)) \ar[r]^-{\cong} \ar[d]^{\Xi_\U}
& \Hom_{D^b(X)}(x_1,\Upsilon x_2) \ar[d]^{(q_{x_2})_*}
\\
\Hom_{D^b(X)}(\Phi_\U(x_1),\Phi_\U(x_2)) \ar[r]^-{\cong}&
\Hom_{D^b(X)}(x_1,Tx_2).
}
\end{equation}
where the horizontal isomorphisms are due to the adjunctions. 

\begin{lem}
\label{lemma-push-forward-by-q-x-2-is-Xi-U}
The above diagram is commutative. 
\end{lem}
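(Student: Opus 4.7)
The plan is to unravel both adjunction isomorphisms in terms of their units and then verify the commutativity by naturality. Write $G$ for the right adjoint of $\Xi_\U$, so that $\Psi_\U = R\pi_{X_*}\circ G$ and $\Phi_\U = \Xi_\U\circ \pi_X^*$. Denote by $\tilde\eta:\id_{D^b(X)}\to R\pi_{X_*}\pi_X^* = \Upsilon$ the unit of $\pi_X^*\dashv R\pi_{X_*}$, and by $\DoubleTilde\eta:\id_{D^b(X\times M)}\to G\Xi_\U$ the unit of $\Xi_\U\dashv G$. A standard composition-of-adjoints computation gives that the unit $\eta:\id_{D^b(X)}\to T$ of $\Phi_\U\dashv\Psi_\U$ factors as
\[
\eta \ = \ R\pi_{X_*}(\DoubleTilde\eta)\pi_X^* \circ \tilde\eta,
\]
and by Remark~\ref{rem-q} the natural transformation $q:\Upsilon\to T$ is exactly $R\pi_{X_*}(\DoubleTilde\eta)\pi_X^*$.

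Now take $f\in\Hom_{D^b(X\times M)}(\pi_X^*x_1,\pi_X^*x_2)$. Chasing it down-then-right in the diagram (\ref{eq-commutative-diagram-relating-theta-to-q}) produces
\[
\Psi_\U(\Xi_\U(f))\circ\eta_{x_1} \ = \ R\pi_{X_*}\bigl(G\Xi_\U(f)\bigr)\circ R\pi_{X_*}(\DoubleTilde\eta_{\pi_X^*x_1})\circ \tilde\eta_{x_1},
\]
using the factorization of $\eta$ above. Chasing $f$ right-then-down produces
\[
q_{x_2}\circ\bigl(R\pi_{X_*}(f)\circ \tilde\eta_{x_1}\bigr) \ = \ R\pi_{X_*}(\DoubleTilde\eta_{\pi_X^*x_2})\circ R\pi_{X_*}(f)\circ \tilde\eta_{x_1} \ = \ R\pi_{X_*}\bigl(\DoubleTilde\eta_{\pi_X^*x_2}\circ f\bigr)\circ \tilde\eta_{x_1}.
\]

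It remains to compare these two expressions. By naturality of $\DoubleTilde\eta$ applied to $f:\pi_X^*x_1\to\pi_X^*x_2$ we have
\[
\DoubleTilde\eta_{\pi_X^*x_2}\circ f \ = \ G\Xi_\U(f)\circ \DoubleTilde\eta_{\pi_X^*x_1}
\]
in $\Hom_{D^b(X\times M)}(\pi_X^*x_1,G\Xi_\U\pi_X^*x_2)$. Applying $R\pi_{X_*}$ and post-composing with $\tilde\eta_{x_1}$ gives precisely the down-then-right expression, establishing commutativity. No step is subtle; the only point worth noting is the consistency of the description of $q$ used here (via Lemma~\ref{lemma-q-equal-q-prime}) with the original definition in Equation (\ref{eq-q}), which is why the formulation from Remark~\ref{rem-q} is what enters the computation.
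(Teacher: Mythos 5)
Your proof is correct and is in substance the same as the paper's: both arguments reduce the commutativity to the factorization of the unit of the composed adjunction $\Phi_\U=\Xi_\U\circ\pi_X^*\dashv R\pi_{X_*}\circ G=\Psi_\U$, the identification $q=R\pi_{X_*}\DoubleTilde{\eta}\pi_X^*$ from Remark~\ref{rem-q}, and the naturality of $\DoubleTilde{\eta}$. The only difference is organizational --- the paper abstracts this into a general lemma about a pair of composed adjunctions and checks three triangles (one via \cite[Lemma 1.21]{huybrechts-book}), while you chase a morphism $f$ through both sides of the square directly; the ingredients and the logic are identical.
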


\begin{proof}
The commutativity is a special case 
of the following Lemma applied with $F:=R\pi_{X_*}$, $G^*:=\Xi_\U$, $\B:=D^b(M)$, $\C:=D^b(X\times M)$, and $\D:=D^b(X)$.
\end{proof}

Let $\B$, $\C$, and $\D$ be categories, let $G:\B\rightarrow \C$ and $F:\C\rightarrow \D$ be functors,
and let $G^*\dashv G$ and $F^*\dashv F$ be adjunctions.
Let $\eta:\id_{\C}\rightarrow GG^*$ be the unit for the adjunction.
Set $q:=F\eta F^* : FF^*\rightarrow FGG^*F^*$. 
\begin{lem}
The following diagram is commutative for every pair of objects $x_1$, $x_2$ in $\D$.
\[
\xymatrix{
\Hom(F^*x_1,F^*x_2)  \ar[dd]_{G^*} \ar[dr]^{(\eta_{F^*x_2})_*}
& & \Hom(x_1,FF^*x_2) \ar[ll]_-{\cong} \ar[dd]^{(q_{x_2})_*}
\\
& \Hom(F^*x_1,GG^*F^*x_2) \ar[dr]_-{\cong} \ar[dl]^-{\cong}
\\
\Hom(G^*F^*x_1,G^*F^*x_2) & &
\Hom(x_1,FGG^*F^*x_2). \ar[ll]^-{\cong} 
}
\]
\end{lem}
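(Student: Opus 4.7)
The plan is to view this as a purely formal statement about adjunctions and to decompose the big diagram into three sub-diagrams meeting at the central vertex $\Hom(F^*x_1,GG^*F^*x_2)$. Each sub-diagram will be verified separately, using only the naturality of the unit/counit of an adjunction and the triangle identities. Throughout, let $\eta^F:\id_\D\to FF^*$ and $\epsilon^F:F^*F\to\id_\C$ denote the unit and counit of $F^*\dashv F$, and $\eta = \eta^G:\id_\C\to GG^*$ and $\epsilon^G:G^*G\to\id_\B$ those of $G^*\dashv G$.

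First I would handle the upper sub-diagram, which involves chasing $g\in\Hom(x_1,FF^*x_2)$ two ways into the center. Along the top-then-diagonal path, $g$ is sent by the $F^*\dashv F$ adjunction to $\epsilon^F_{F^*x_2}\circ F^*g$, and then post-composed with $\eta_{F^*x_2}$ to yield $\eta_{F^*x_2}\circ\epsilon^F_{F^*x_2}\circ F^*g$. Along the right-then-diagonal path, $g$ is first post-composed with $q_{x_2}=F\eta_{F^*x_2}$, and then the adjunction sends $F\eta_{F^*x_2}\circ g$ to $\epsilon^F_{GG^*F^*x_2}\circ F^*F\eta_{F^*x_2}\circ F^*g$; naturality of $\epsilon^F$ identifies this with $\eta_{F^*x_2}\circ\epsilon^F_{F^*x_2}\circ F^*g$, so the two expressions agree.

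Next I would treat the left sub-diagram. Given $f\in\Hom(F^*x_1,F^*x_2)$, the direct arrow yields $G^*f$. The route through the center sends $f$ to $\eta_{F^*x_2}\circ f$, then the $G^*\dashv G$ adjunction gives $\epsilon^G_{G^*F^*x_2}\circ G^*\eta_{F^*x_2}\circ G^*f$, which collapses to $G^*f$ by the triangle identity $\epsilon^G G^*\circ G^*\eta=\id_{G^*}$. The remaining (bottom) sub-diagram simply records that the adjunction isomorphism for the composite adjunction $G^*F^*\dashv FG$ factors as the composition of those for $F^*\dashv F$ and $G^*\dashv G$, which is immediate from the standard formula $f\mapsto F(f)\circ\eta^F$ applied in stages.

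The only potential obstacle is bookkeeping—keeping the three units/counits, their naturality squares, and the two triangle identities straight through the notation $F,F^*,G,G^*$; there is no genuine mathematical content beyond these formal manipulations. Once each of the three sub-diagrams is verified, commutativity of the whole diagram follows by pasting, which proves the lemma (and hence Lemma~\ref{lemma-push-forward-by-q-x-2-is-Xi-U}).
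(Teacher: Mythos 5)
Your proof is correct and follows essentially the same route as the paper: the same decomposition into the left triangle (which collapses via the triangle identity for $G^*\dashv G$, the content of the reference the paper cites), the lower triangle (factorization of the composite adjunction isomorphism), and the upper region (definition of $q$ plus naturality of $\epsilon^F$). You merely make explicit the element chases that the paper handles by citation, so no further comment is needed.
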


\begin{proof}
All the arrows labeled as isomorphisms correspond to the adjunction isomorphisms. 
Hence, the lower middle triangle commutes. The left triangle commutes, by 
\cite[Lemma 1.21]{huybrechts-book}. The upper right triangle commutes, by definition of $q$ and the naturality of 
the adjunction isomorphisms.
\end{proof}



\begin{proof}[Proof of Theorem \ref{thm-determination-of-the-category-SU}]
Theorem \ref{thm-universal-class-h-in-Hochschild-cohomology-of-X}
yields the exact sequence with the natural transformation $h$ satisfying Equations (\ref{eq-h-4}).
The sequence in the statement of Theorem \ref{thm-determination-of-the-category-SU}
is short exact, by the the splitting of the exact triangle (\ref{eq-split-exact-triangle}). 
%
The diagram in Theorem \ref{thm-determination-of-the-category-SU} is commutative, 
by Lemma \ref{lemma-push-forward-by-q-x-2-is-Xi-U}.
The functor $Q$ is full, by the surjectivity of $(q_{x_2})_*$ in the diagram in 
Theorem \ref{thm-determination-of-the-category-SU}
and the commutativity of that diagram.
\end{proof}

%
\subsection{Traces}
\label{sec-traces}
Subsections \ref{sec-traces} to \ref{subsection-structure-of-yoneda-algebra}
are dedicated to the proof of part \ref{thm-item-h-2} of 
Theorem \ref{thm-universal-class-h-in-Hochschild-cohomology-of-X}. 
Given two smooth projective varieties $X$ and $M$, an integral functor $\Phi:D^b(X)\rightarrow D^b(M)$,
and an object $x\in D^b(X)$,
we have a natural composite homomorphism
\[
H^i(X,\StructureSheaf{X}) \LongRightArrowOf{\mu_x} \Hom(x,x[i])
\LongRightArrowOf{\Phi}
\Hom(\Phi(x),\Phi(x)[i])
\LongRightArrowOf{\tr} H^i(Y,\StructureSheaf{Y}).
\]
The definition of the natural transformations $\mu$ and $\tr$ are recalled below. In this subsection 
we use known results about the functoriality of Hochschild homology in order to 
provide a topological formula for the homomorphism displayed above in a special case
(see Proposition \ref{prop-a-topological-formula-for-a-trace}).

The following lemma will be needed in the proof of Proposition
\ref{prop-a-topological-formula-for-a-trace} below.
Let $X$ and $Y$ be smooth projective varieties, $f:X\rightarrow Y$ a morphism,
$x$ an object of $D^b(X)$ and $y$ an object of $D^b(Y)$. We will use the notation $f_*$ and $f^*$
for the right and left derived functors $Rf_*$ and $Lf^*$ for brevity.
Assume given morphisms $t:x\rightarrow f^*y$ and
$\phi:f^*y\rightarrow x\otimes \omega_X[\dim X]$.
Let $\eta:\id\rightarrow f_*f^*$ be the unit for the adjunction $f^*\dashv f_*$.
Let $f_!$ be the left adjoint of $f^*$ and 
let $\eta_y(t)\in \Hom\left(f_!(x),f_*f^*(y)\right)$ be the image of $t$ via the composition
\begin{equation}
\label{append1}
\Hom(x,f^*(y)) \LongRightArrowOf{f^*(\eta_y)}
\Hom(x,f^*f_*f^*(y)) \cong
\Hom(f_!(x),f_*f^*(y)).
\end{equation}
Note that $\phi\circ t$ belongs to $\Hom(x,x\otimes\omega_X[\dim X]))$.
Using the isomorphism
$f_*(x\otimes\omega_X[\dim X])\cong f_!(x)\otimes \omega_Y[\dim Y]$
we see that $f_*(\phi)\circ \eta_y(t)$ belongs to
$\Hom(f_!(x), \ f_!(x)\otimes\omega_Y[\dim Y])$. Let 
\begin{equation}
\label{eq-Tr-X}
Tr_X \ : \ \Hom(x,x\otimes\omega_X[\dim X]) 
\ \ \ \longrightarrow \ \ \ \ComplexNumbers
\end{equation}
be the composition of the isomorphism 
$\Hom(x,x\otimes\omega_X[\dim X])\cong \Hom(x,x)^*$
induced by Serre duality, followed by evaluation $\Hom(x,x)^*\rightarrow \ComplexNumbers$
on the identity morphism in $\Hom(x,x)$. 

\begin{lem}
\label{lemma-an-equality-of-traces}
The following equality holds.
\[
Tr_X(\phi\circ t) \ \ \ = \ \ \  Tr_Y(f_*(\phi)\circ \eta_y(t)).
\]
\end{lem}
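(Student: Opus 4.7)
The plan is to transport both sides of the equality to $Y$ via the adjunctions $(f_!\dashv f^*)$ and $(f^*\dashv f_*)$, reducing the lemma to a compatibility statement for Serre-duality traces that is a consequence of Grothendieck--Verdier duality for the smooth proper morphism $f$.

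First I would rewrite $\eta_y(t)$ in terms of the $(f_!\dashv f^*)$-adjoint $\hat t\in\Hom(f_!x,y)$ of $t$. If $\epsilon:f_!f^*\to\id$ denotes the counit of this adjunction, then $\hat t = \epsilon_y\circ f_!(t)$; applying naturality of $\epsilon$ to the morphism $\eta_y:y\to f_*f^*y$ yields the identity $\epsilon_{f_*f^*y}\circ f_!f^*(\eta_y) = \eta_y\circ\epsilon_y$, and unwinding the definition (\ref{append1}) of $\eta_y(t)$ gives $\eta_y(t)=\eta_y\circ\hat t$. Next, I recognize $f_*(\phi)\circ\eta_y$ as the $(f^*\dashv f_*)$-adjoint $\check\phi\in\Hom(y, f_*(x\otimes\omega_X[\dim X]))$ of $\phi$. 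Via the canonical isomorphism $f_*(x\otimes\omega_X[\dim X])\cong f_!(x)\otimes\omega_Y[\dim Y]$ (a consequence of the projection formula together with $\omega_X\cong f^*\omega_Y\otimes\omega_f$), we may view $\check\phi$ as a morphism $y\to f_!(x)\otimes\omega_Y[\dim Y]$, and combining the two steps identifies the right-hand side of the lemma with $Tr_Y(\check\phi\circ\hat t)$.

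The remaining, and main, step is the identity $Tr_X(\phi\circ t)=Tr_Y(\check\phi\circ\hat t)$. This asserts that the scalar Serre-duality traces on $X$ and $Y$ are intertwined by the pair of adjunctions: the bilinear pairing $(t,\phi)\mapsto Tr_X(\phi\circ t)$ on $\Hom(x,f^*y)\times\Hom(f^*y,x\otimes\omega_X[\dim X])$ coincides with the analogous pairing on $Y$ under $t\leftrightarrow\hat t$ and $\phi\leftrightarrow\check\phi$. To prove it I would unfold both traces using Serre duality, reducing to the statement that the $(f^*\dashv f_*)$-adjunction isomorphism $\Hom(f^*y,x\otimes\omega_X[\dim X])\cong\Hom(y, f_!(x)\otimes\omega_Y[\dim Y])$ is the Serre-dual of the $(f_!\dashv f^*)$-adjunction isomorphism $\Hom(x,f^*y)\cong\Hom(f_!x,y)$; equivalently, one verifies that the Grothendieck trace map $f_*\omega_X[\dim X]\to\omega_Y[\dim Y]$ underlying the Serre-duality comparison is precisely the one induced by the $(f_!\dashv f^*)$-counit at $\StructureSheaf{Y}$. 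This is the content of Grothendieck--Verdier duality in its functorial form for a proper smooth morphism, and constitutes the main obstacle; once granted, the lemma follows from the preceding reformulations.
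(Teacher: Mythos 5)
Your proposal is correct and rests on the same key fact as the paper's proof: the compatibility of the Serre-duality trace pairings on $X$ and $Y$ under the adjunction $f_!\dashv f^*$ and its Serre dual (the paper's equation (\ref{traces}), which is exactly your Grothendieck--Verdier step). The only difference is bookkeeping: you first rewrite $\eta_y(t)=\eta_y\circ\hat t$ by naturality of the counit and apply the compatibility with $z=y$, whereas the paper applies it with $z=f_*f^*y$ and finishes with the triangle identity $\ee_{f^*y}\circ f^*\eta_y=\id$; both routes require the same identification of $f_*(\phi)\circ\eta_y$ as the Serre-dual adjoint of $\phi$.
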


\begin{proof}
Given $z\in D^b(Y)$, we get the adjunction isomorphism
\begin{equation}\label{adjoint}
\Hom(x,f^*z) \stackrel{\cong}{\longrightarrow} \Hom(f_!x, z).
\end{equation}
Serre duality yields the dual  isomorphism:
$$
\Hom(f^*z, S_Xx) \stackrel{\cong}{\longleftarrow} \Hom(z,S_Yf_!x).
$$
Thus given $t'\in \Hom(x,f^*z)$ and $\phi' \in \Hom(f^*z, S_Xx)$, we have an equality
\begin{equation}\label{traces}
Tr_X(\phi'\circ t')=Tr_Y(\ol{\phi'}\circ \ee \circ f_!(t'))
\end{equation}
where $\ol{\phi'}\in \Hom(z, S_Yf_!x)$ is the preimage of $\phi'$, while 
$\ee(f_!(t'))\in\Hom(f_!x, z)$ is the adjoint map to $t'$, where $\epsilon:f_!f^*\rightarrow \id$ is the counit.

Replace $z$ by $f_*f^*y$ in (\ref{adjoint}), and pre-compose with the unit to get the
diagram (\ref{append1})
whose Serre dual is
\begin{equation}\label{append2}
\Hom(f^*y, S_Xx) \stackrel{\circ f^*\eta}{\longleftarrow} \Hom(f^*f_*f^*y, S_Xx)
\stackrel{\cong}{\longleftarrow} \Hom(f_*f^*y,S_Yf_!x)=\Hom(f_*f^*y,f_*S_Xx).
\end{equation}
Given $t\in \Hom(x,f^*y)$, its image in $\Hom(f_!x, f_*f^*y)$ via (\ref{append1}) is
nothing but $\eta_y(t)$. Similarly, for $\phi \in \Hom(f^*y, S_Xx)$, the preimage
of $\phi\circ\ee_{f^*y} \in \Hom(f^*f_*f^*y, S_Xx)$ in $\Hom(f_*f^*y,f_*S_Xx)$ via the middle isomorphism in 
(\ref{append2}) is easily verified to be $f_*(\phi)$. Thus, equation (\ref{traces})
implies that 
$$
Tr_X((\phi\circ\ee_{f^*y} )\circ(f^*\eta_y \circ t))=Tr_Y(f_*(\phi)\circ\eta_y(t)).
$$ 
Finally, since $\ee_{f^*y}\circ f^*\eta_y = id_{f^*y}$, we have that
$(\phi\circ\ee_{f^*y} )\circ(f^*\eta_y \circ t) = \phi\circ t$, and the result follows.
\end{proof}

Let $\nu$ be a class in $HH_0(X)\cong \Hom(\Delta_{X,*}\StructureSheaf{X},\Delta_{X,*}\omega_X[\dim X])$
and let $x$ be an object of $D^b(X)$. Regarding $\nu$ as a natural transformation from
$\id_{D^b(X)}$ to the Serre functor we get a morphism
$\nu_x:x\rightarrow x\otimes\omega_x[\dim X]$. Given a class $c$ in $H\Omega_*(X)$, denote by
$c_{p,q}$ the direct summand in $H^p(X,\Omega^q_X)$.
Set $d:=\dim(X)$.

\begin{lem}
\label{lemma-chern-character-as-a-linear-functional-on-HH}
The following equality holds:
${\displaystyle
Tr_X(\nu_x)=Tr_X\left(\left[\widetilde{I}^X_*(\nu)\sqrt{td_X}ch(x^\vee)\right]_{d,d}\right).
}$
\end{lem}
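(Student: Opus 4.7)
The plan is to express $Tr_X(\nu_x)$ as a Hochschild-theoretic pairing and then translate it to singular cohomology via the Hochschild-Kostant-Rosenberg isomorphism, essentially invoking Caldararu's Mukai-pairing formalism \cite{cal-mukai1,cal-wil-mukai1}.

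First, I would reinterpret $\nu_x$ using Hochschild functoriality. View the object $x\in D^b(X)$ as the kernel of the integral transform $\Phi_x:D^b(\pt)\to D^b(X)$ with $\Phi_x(\ComplexNumbers)=x$, and write $\Psi_x$ for its right adjoint. The morphism $\nu_x$ is the image of the unit $\eta_x:x\to x\otimes\omega_X[d]$ extracted from $\nu$, viewed as a natural transformation $\id_{D^b(X)}\to S_X$. Applying Lemma \ref{lemma-an-equality-of-traces} to the constant map $c:X\to\pt$ with $y=\ComplexNumbers$, or equivalently unwinding the definition (\ref{eq-Tr-X}) together with the Serre-duality identification (\ref{HH-dual}), the trace $Tr_X(\nu_x)$ is identified with the image of $\nu$ under the transpose
\[
\Phi_x^{\dagger}:HH_0(X)^\vee \longrightarrow HH_0(\pt)^\vee=\ComplexNumbers
\]
of the Hochschild pushforward $\Phi_{x,*}:HH_0(\pt)\to HH_0(X)$ given in (\ref{eq-push-forward-on-Hochschild-homology}). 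Concretely, $Tr_X(\nu_x)$ becomes the canonical Serre-duality pairing in $HH_0(X)$ between $\nu$ and the Hochschild-Chern-character class $\Phi_{x,*}(1)\in HH_0(X)$.

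Second, I would compute both sides cohomologically. By Theorem \ref{HH-H-commute} combined with the normalized HKR map (\ref{eq-normalized-HKR-isomorphisms}), one has
\[
\widetilde{I}_{*}^{X}\bigl(\Phi_{x,*}(1)\bigr) \ = \ \sqrt{td_X}\cdot ch(x),
\]
i.e.\ the Mukai vector of $x$. Transporting the Serre-duality pairing on $HH_0(X)$ to $H\Omega_*(X)$ via $\widetilde{I}_{*}^{X}$ produces the standard topological Mukai pairing on $H\Omega_{*}(X)$, which sends a pair $(\alpha,\beta)$ to $\int_X[\alpha\cdot\beta^{\vee}]_{d,d}$. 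Substituting $\alpha=\widetilde{I}_{*}^{X}(\nu)$ and $\beta=\sqrt{td_X}\cdot ch(x)$, the involution $\beta\mapsto\beta^{\vee}$ converts $ch(x)$ into $ch(x^\vee)$ while preserving $\sqrt{td_X}$, and the whole computation collapses to
\[
Tr_X\bigl(\bigl[\widetilde{I}_{*}^{X}(\nu)\,\sqrt{td_X}\,ch(x^{\vee})\bigr]_{d,d}\bigr),
\]
where the outer $Tr_X$ is the canonical trace isomorphism $H^d(X,\omega_X)\cong\ComplexNumbers$.

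The main obstacle is the careful bookkeeping of dualities and Todd-class normalizations: one must verify that the involution implicit in passing between $\Phi_{x,*}$ and $\Phi_x^{\dagger}$, together with the Serre-duality identification (\ref{HH-dual}), really produces $ch(x^{\vee})$ rather than $ch(x)$ on the topological side, and that the two $\sqrt{td_X}$ factors appearing in the normalizations (\ref{eq-normalized-HKR-isomorphisms}) combine to give a single $\sqrt{td_X}$ in the final answer. All of this is a direct specialization of Caldararu's Mukai-pairing formula, so the real work lies in invoking that machinery with the correct conventions rather than in proving anything new.
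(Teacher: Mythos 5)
Your argument is correct and is essentially the paper's proof written out in full: the paper disposes of this lemma in one line by citing Caldararu's definition of the Hochschild Chern character and the Markarian--Caldararu theorem (\cite[Sec. 6.2]{cal-mukai1}, \cite[Theorem 4.5]{cal-mukai2}), which is exactly the machinery you unpack via $\Phi_{x,*}(1)$, Theorem \ref{HH-H-commute}, and the Mukai pairing. No substantive difference in approach.
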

Note that the right hand side is the Mukai pairing of $\nu$ and the class $(\widetilde{I}^X_*)^{-1}(ch(x))$ in $HH_0(X)$
as defined in \cite[Def. 6.1]{cal-mukai1}. Mukai's sign convention, which we will follow, is different and 
we would regard the right hand side as {\em minus} the Mukai pairing.
\begin{proof}
The statement is essentially the definition of the Chern character as a class in $HH_0(X)$ (see
\cite[Sec. 6.2]{cal-mukai1} and \cite[Theorem 4.5]{cal-mukai2}). 
\end{proof}

Given a scheme $S$ and an object 
$x\in D^b(S)$,  denote by $x^\vee:= RHom(x,\StructureSheaf{S})  \in D^b(S)$ its dual object. Let
\[
\mu_x : \StructureSheaf{S}\rightarrow x^\vee\otimes x
\]
be the natural morphism and 
\[
\tr:x^\vee\otimes x \rightarrow \StructureSheaf{S}
\]
the trace morphism (\cite{mukai-symplectic}, page 114). 
The following identity holds. 
\begin{equation}
\label{eq-tr-composed-with-mu-is-scaling-by-rank}
\tr\circ \mu_x=\rank(x)\cdot 1.
\end{equation} 

Assume next that $S$ is smooth and projective. 
Consider the trace pairing 
\[
\Hom(x,x[i])\otimes \Hom(x[i],x\otimes\omega_S[\dim S])\RightArrowOf{\circ}
\Hom(x,x\otimes\omega_s[\dim S]) \RightArrowOf{tr} H^{\dim S}(S,\omega_S)\cong \ComplexNumbers,
\]
where the left arrow is composition. Mukai shows that the above pairing is a perfect pairing, for
$0\leq i \leq \dim S$
(\cite{mukai-symplectic}, page 114). Mukai's trace pairing is induced by Serre's duality
as follows. Set $y:=x^\vee\otimes x$. We can rewrite Mukai's pairing as
\begin{equation}
\label{eq-Mukai-tr-pairing}
H^i(y)\otimes H^{\dim S-i}(y\otimes\omega_S) \rightarrow H^{\dim S}(S,\omega_S),
\end{equation}
while Serre's duality yields a pairing
\begin{equation}
\label{eq-serre-duality-in-Mukais-setup}
H^i(y^\vee)\otimes H^{\dim S-i}(y\otimes\omega_S) \rightarrow H^{\dim S}(S,\omega_S).
\end{equation}
Mukai interprets the composition
\[
R\SheafHom(R\SheafHom(x,x)^\vee,R\SheafHom(x,x))
\cong R\SheafHom(x,x)\otimes R\SheafHom(x,x) \RightArrowOf{\circ} R\SheafHom(x,x) 
\RightArrowOf{tr} \StructureSheaf{S}
\]
as an isomorphism $\psi:R\SheafHom(x,x)^\vee\rightarrow R\SheafHom(x,x)$, or equivalently,
$\psi:y^\vee\rightarrow y$. Relating the leftmost factors in (\ref{eq-Mukai-tr-pairing})
and (\ref{eq-serre-duality-in-Mukais-setup}) via $\psi$ relates Mukai's trace pairing to Serre's duality.

Let $G:D^b(S)\rightarrow D^b(S)$ be the functor of tensorization by the object $x$.
The right and left adjoints of $G$ are both isomorphic to 
the functor $G^\dagger:D^b(S)\rightarrow D^b(S)$, of tensorization with $x^\vee$.
Let $\Delta_S:S\rightarrow S\times S$ be the diagonal morphism. 
Then $\Delta_{S_*}(\mu_x):\Delta_{S_*}(\StructureSheaf{S})\rightarrow \Delta_{S_*}(x^\vee\otimes x)$
induces the unit natural transformation $\mu_x:id\rightarrow G^\dagger G$
for the adjunction.
The morphism $\Delta_{S_*}(\tr):\Delta_{S_*}(x^\vee\otimes x)\rightarrow \Delta_{S_*}(\StructureSheaf{S})$
induces the counit natural transformation $\tr:G^\dagger G\rightarrow id$.
The morphisms 
\begin{eqnarray}
\label{eq-mu-x-equal-G}
\mu_x:\Hom(\StructureSheaf{S},\StructureSheaf{S}[i])&\rightarrow & 
\Hom(\StructureSheaf{S},x^\vee\otimes x[i]) \cong
\Hom(x,x[i]),
\\
\nonumber
G:\Hom(\StructureSheaf{S},\StructureSheaf{S}[i])&\rightarrow &\Hom(G(\StructureSheaf{S}),G(\StructureSheaf{S})[i])
\end{eqnarray}
are equal under the identification $x=G(\StructureSheaf{S})$, by \cite[Lemma 1.21]{huybrechts-book}.

\begin{rem}
\label{remark-non-vanishing-of-tr}
If $\Hom(x,x)$ is one-dimensional, then 
\begin{equation}
\label{eq-trace-isomorphism-for-simple-x}
\Hom(x,x\otimes\omega_S[\dim S]) \RightArrowOf{tr}
H^{\dim S}(S,\omega_S) 
\end{equation}
is an isomorphism. Indeed, both spaces are one dimensional and the statement reduces to the non-vanishing of
$tr:\Hom(x,x)\otimes\Hom(x,x\otimes\omega_S[\dim S])\rightarrow H^{\dim S}(S,\omega_S)$, which holds 
it being a perfect pairing. Now let $t$ be an element of $H^{\dim S}(S,\omega_S)$. As a consequence of the above isomorphism, we see that 
the element $\mu_x(t)$ of $\Hom(x,x\otimes\omega_S[\dim S])$ vanishes, if $\rank(x)=0$
and $\Hom(x,x)$ is one-dimensional. 
Indeed, $tr(\mu_x(t))=\rank(x)\cdot t=0$ in this case.
\hide{
then the linear functional
$Tr_X$, given in (\ref{eq-Tr-X}), is related to the composition
$
\Hom(x,x\otimes\omega_X[\dim X]) \RightArrowOf{tr}
H^{\dim X}(X,\omega_X) \cong \ComplexNumbers
$
by 
\begin{equation}
\label{equation-relating-two-traces}
tr=\rank(x)\cdot Tr_X. 
\end{equation}
We have the commutative diagram
\[
\xymatrix{
H^{\dim X}(X,\omega_X) \ar[r]^-{\mu_x} \ar[d]_{S.D}^{\cong}
& 
 \Hom(x,x\otimes\omega_X[\dim X]) \ar[r]^-{tr}
& 
H^{\dim X}(X,\omega_X) \ar[d]_{S.D}^{\cong}
\\
H^0(X,\StructureSheaf{X})^* \ar[r]^{tr^*} &
\Hom(x,x)^*\ar[r]^{\mu_x^*}&
H^0(X,\StructureSheaf{X})^*.
}
\]
The composition of the horizontal arrows in both the upper and lower rows is multiplication by $\rank(x)$.
The outer square commutes, but if we identify the middle vector spaces via Serre's duality,
the resulting two smaller squares will not commute.
}
\end{rem}

Let $X$ be a projective $K3$ surface, $M:=M_H(v)$ a moduli space 
of $H$-stable sheaves with Mukai vector $v$ satisfying the hypothesis of Theorem \ref{thm-A-is-direct-sum}
and $\Phi_\U:D^b(X)\rightarrow D^b(M)$ the faithful functor in Theorem
\ref{thm-A-is-direct-sum}. Assume that $(v,v)\geq 2$.
Given objects $x$ and $y$ in $D^b(X)$, set $\Hom^\bullet(x,y):=\oplus_i\Hom(x,y[i])[-i]$,
as an object of the derived category of a point.
Let $Y(x):=\Hom^\bullet(x,x)$ be the Yoneda algebra. 
The morphism $\mu_x$ induces the natural algebra homomorphism 
\[
\mu_x \ : \ Y(\StructureSheaf{X}) \ \ \longrightarrow \ \ Y(x),
\]
for every object $x$ of $D^b(X)$.  Define 
\[
\mu_{\Phi_\U(x)} \ : \ Y(\StructureSheaf{M}) \ \ \longrightarrow \ \ Y(\Phi_\U(x))
\]
similarly. 

Let $t_X\in H^{2}(X,\StructureSheaf{X})$ be a non-zero element.
Let 
\[
\varphi_\U \ : \ H^*(X,\ComplexNumbers) \ \ \ \longrightarrow \ \ \ H^*(M,\ComplexNumbers)
\]
be the homomorphism induced by the correspondence
$\sqrt{td_X}ch(\U)\sqrt{td_M}\in H^*(X\times M,\RationalNumbers)$.
Given $a\in H^*(X,\CC)$, denote by $[\varphi_\U(a)]_2$ the graded summand in $H^2(M,\CC)$.

\begin{prop}
\label{prop-a-topological-formula-for-a-trace}
For every object $x$ of $D^b(X)$, the following equality holds:
\begin{equation}
\label{eq-t-M-does-not-vanish}
\tr\left(\Phi_\U(\mu_x(t_X))\right) \ \ = \ \ 
\rank(x)
[\varphi_\U(t_X)]_2,
\end{equation}
where 
$t_X$ on the right hand side
is considered as an element
of the summand $H^{0,2}(X)$ of $H^2(X,\ComplexNumbers)$, via the Hodge decomposition,
and the left hand side is similarly considered as an element of the subspace 
$H^{0,2}(M)$ of $H^2(M,\ComplexNumbers)$. 
\end{prop}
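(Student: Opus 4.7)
The plan is to reinterpret the trace on $M$ through Hochschild homology and then transfer the computation to $X$ using the functoriality Theorem~\ref{HH-H-commute}.

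First I would test the identity by pairing with $H^{2n-2}(M,\omega_M)$, so that it becomes a numerical statement via Serre duality on $M$. By Lemma~\ref{lemma-chern-character-as-a-linear-functional-on-HH}, for any smooth projective $Y$ and $y\in D^b(Y)$, the map $\tr\colon\Hom(y,y[i])\to H^i(Y,\StructureSheaf{Y})$, viewed via Serre duality as a linear functional on $H^{\dim Y-i}(Y,\omega_Y)$, coincides with Mukai-pairing against the class of $ch(y)$ in $HH_0(Y)$. Applied on $X$, together with equation~\eqref{eq-tr-composed-with-mu-is-scaling-by-rank}, this gives the baseline identity $\tr(\mu_x(t_X))=\rank(x)\cdot t_X$ in $H^2(X,\StructureSheaf{X})$.

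Next I would transfer the computation to $M$. Writing $\Phi_\U(x)=\pi_{M,*}(\pi_X^*x\otimes\U)$, one can apply Lemma~\ref{lemma-an-equality-of-traces} twice — once to $\pi_M\colon X\times M\to M$ and once to $\pi_X\colon X\times M\to X$ — to rewrite $Tr_M$ of $\Phi_\U(\mu_x(t_X))$ composed with any test class $\beta\in H^{2n-2}(M,\omega_M)$, as the $Tr_X$ of a morphism on $X$ obtained by absorbing $\U$ and feeding $\beta$ back through $\Phi_\U^\dagger$. In the Hochschild-homological language of~\eqref{HH-functoriality}, this amounts to the statement that the integral transform $\Phi_\U$ identifies the $\beta$-pairing on $M$ with the $\Phi_\U^\dagger(\beta)$-pairing on $X$, applied to the class represented by $\mu_x(t_X)$.

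Finally, Theorem~\ref{HH-H-commute} identifies the map $\Phi_{\U,*}\colon HH_*(X)\to HH_*(M)$ with the cohomological map $\varphi_\U$ under the modified HKR isomorphisms~\eqref{eq-normalized-HKR-isomorphisms}. Combining this with the baseline identity, and noting that $t_X$ has Hodge type $(0,2)$ and the trace on $M$ lands in the $(0,2)$ Hodge summand (so that only the degree-$2$ component $[\varphi_\U(t_X)]_2$ contributes), yields the stated formula, with the factor $\rank(x)$ coming from~\eqref{eq-tr-composed-with-mu-is-scaling-by-rank}. The step I expect to be most delicate is paragraph two: carefully tracking the interplay of $\tr$, $\mu_x$, and the adjunction $\pi_M^*\dashv\pi_{M,*}$ so that the trace on $M$ reduces cleanly — through the definition of $\Phi_\U^\dagger$ in~\eqref{HH-functoriality} — to a Hochschild pairing on $X$. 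Once this bookkeeping is in place, Theorem~\ref{HH-H-commute} produces the $\varphi_\U$ factor and the result follows.
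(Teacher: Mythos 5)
Your overall architecture matches the paper's: test the identity against $H^{2n-2}(M,\omega_M)$ via Serre duality, and use the compatibility of $\Phi_{\U,*}$ with $\varphi_\U$ (Theorem \ref{HH-H-commute}) to land on $[\varphi_\U(t_X)]_2$. Your variant of extracting the factor $\rank(x)$ from $\tr\circ\mu_x=\rank(x)\cdot 1$ on $X$ is workable in principle, but note the paper proceeds differently: it absorbs $x$ into the kernel, replacing $\Phi_\U$ by the transform $\Phi$ with kernel $\U\otimes\pi_X^*x$, so that $\Phi_\U(\mu_x(t_X))=\Phi(t_X)$ by (\ref{eq-mu-x-equal-G}), and the factor $\rank(x)$ then falls out of the purely cohomological identity $ch(x)\cup t_X=\rank(x)\,t_X$ (a Hodge-type computation on the $K3$). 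Your route instead requires first establishing the stronger, general statement that the trace commutes with the transform, $\tr_M(\Phi_\U(f))=\epsilon_M\Phi_{\U,*}\eta_X(\tr_X(f))$ for $f\in\Hom(x,x[2])$, before you may feed in $\tr_X(\mu_x(t_X))=\rank(x)t_X$.

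The genuine gap is in your second paragraph, which is exactly where that statement has to be proved. Lemma \ref{lemma-an-equality-of-traces} concerns the bare adjunction $f^*\dashv f_*$ for a morphism of varieties; applying it ``once to $\pi_M$ and once to $\pi_X$'' does not account for the tensoring with $\U$ in the middle of the transform, and so it cannot by itself convert $Tr_M\left(\mu_{\Phi_\U(x)}(\tilde{\nu})\circ\Phi_\U(\mu_x(t_X))\right)$ into a trace on $X$. The paper's mechanism for this step is Caldararu's adjunction for the Mukai pairing, \cite[Prop. 3.1]{cal-mukai1}, used twice: once as the trace adjunction identifying $Tr_M$ of the composite with $Tr_X\left(\Phi^\dagger_L[\cdots]\circ t_X\right)$, and once to identify $\Phi^\dagger_L\left(\mu_{\Phi(\StructureSheaf{X})}(\tilde{\nu})\right)$ with the evaluation at $\StructureSheaf{X}$ of $(\Phi_*)^\dagger(\nu)$; this is supplemented by \cite[Lemma 2.4]{cal-mukai1} and by the isometry of the modified HKR isomorphism with respect to the Mukai pairings (\cite[Theorem 0.5]{HN}). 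Lemma \ref{lemma-an-equality-of-traces} enters only for the diagonal embeddings $\Delta_X$ and $\Delta_M$, to compare the Serre pairings on $X$ (resp. $M$) with those on $X\times X$ (resp. $M\times M$). Without invoking or reproving the Caldararu adjunction, the central transfer in your outline does not go through.
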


\begin{proof}
Let $\Phi:D^b(X)\rightarrow D^b(M)$ be the integral functor 
with kernel $\U\otimes \pi_X^*(x)$. So $\Phi(a)=\Phi_\U(x\otimes a)$.
Then $\Phi_\U(\mu_x(t_X))=\Phi(t_X)$, by the equality of the two homomorphisms
displayed in Equation (\ref{eq-mu-x-equal-G}).
Let 
\[
\varphi: H^*(X,\ComplexNumbers) \ \ \longrightarrow \ \ H^*(M,\ComplexNumbers)
\]
be the homomorphism induced by the correspondence
$\pi_X^*\sqrt{td_X}ch(\U\otimes \pi_X^*x)\pi_M^*\sqrt{td_M}$.
Note that $ch(x)t_X = v(x) t_X = \rank(x)t_X$ in  $H^*(X,\ComplexNumbers)$.
So we get the equality
\[
\varphi(t_X)=\varphi_\U(ch(x) t_X)= \rank(x)\varphi_\U(t_X).
\]
It remains to prove the equality
\begin{equation}
\label{eq-a-topological-formula-for-tr-Phi-U-mu-x-t-X}
\tr\left(\Phi(t_X)\right) \ \ = \ \ 
[\varphi(t_X)]_2.
\end{equation}

Let $\eta$ and $\epsilon$ be the unit and counit for the adjunction $\Delta_M^*\dashv\Delta_{M_*}$.
We get the following morphisms
\[
\StructureSheaf{M}=
\Delta_M^*\StructureSheaf{M\times M}
\LongRightArrowOf{\Delta_M^*\left(\eta_{\StructureSheaf{M\times M}}\right)}
\Delta_M^*\Delta_{M_*}\Delta_M^*\StructureSheaf{M\times M}
\LongRightArrowOf{\epsilon_{[\Delta_M^*\StructureSheaf{M\times M}]}}
\Delta_M^*\StructureSheaf{M\times M}=\StructureSheaf{M},
\]
which compose to the identity.
Let $\eta_M:H^2(M,\StructureSheaf{M})\rightarrow HH_{-2}(M)$ be the composition
\begin{eqnarray*}
H^2(M,\StructureSheaf{M})\cong 
\Hom(\StructureSheaf{M},\StructureSheaf{M}[2])
&\LongRightArrowOf{\Delta_M^*\left(\eta_{\StructureSheaf{M\times M}}\right)}&
\Hom(\StructureSheaf{M}[-2],\Delta_M^*\Delta_{M_*}\Delta_M^*\StructureSheaf{M\times M})
\\
& = & \Hom(\StructureSheaf{M}[-2],\Delta_M^*\Delta_{M_*}\StructureSheaf{M}) \  = \ 
HH_{-2}(M).
\end{eqnarray*}
The analogous homomorphism $\eta_X:H^2(X,\StructureSheaf{X})\rightarrow HH_{-2}(X)$, for the $K3$ surfaces $X$,
is an isomorphism. Let $\epsilon_M:HH_{-2}(M)\rightarrow H^2(M,\StructureSheaf{M})$ be the 
morphism induced by $\epsilon_{[\Delta_M^*\StructureSheaf{M\times M}]}$.

We have the following
diagram, where the middle square commutes by
Theorem \ref{HH-H-commute}.
\[
\xymatrix{ 
& HH_{-2}(X) \ar[r]^{\Phi_*} \ar[dd]_{\tilde{I}^X_*} & 
HH_{-2}(M) \ar[dd]^{\tilde{I}^M_*} \ar@/^3pc/[dr]^{\epsilon_M}
\\
t_X\in H^{0,2}(X) \ar[ur]^{\eta_X} \ar[dr]_{=} & & &
\hspace{6Ex}H^{0,2}(M)\ni tr(\Phi(t_X)). \ar[ul]_{\eta_M} 
\\
& H\Omega_{-2}(X) \ar[r]_{\varphi} & 
H\Omega_{-2}(M) \ar[ur]_{\pi^{0,2}}
}
\]
Here $\Phi_*$ is the homomorphism on Hochschild homology 
recalled in Equation (\ref{eq-push-forward-on-Hochschild-homology}).
The triangle on the right (with arrow $\epsilon_M$) commutes as well.
Indeed, the composition 
$$
\gd_M^*\gd_{M,*}\gd_M^*\ko_{M\times M} \stackrel{\widetilde{I}_M}{\longrightarrow} 
\oplus \Omega^i[i] \stackrel{pr^0}{\longrightarrow} \gd^*_M\ko_M,
$$
where $pr^0$ is the projection onto the component in degree $0$
and $\tilde{I}_M$ is given in equation (\ref{eq-normalized-HKR-isomorphisms}),
is nothing but
$\ee_{[\gd^*_M\ko_{M\times M}]}$. Furthermore, $\epsilon_M\circ\eta_M$ is the identity, since
$\ee_{[\gd^*_M\ko_{M\times M}]}\circ \gd^*_M\eta_{\ko_{M\times M}} = id_{\ko_M}$.

Equality (\ref{eq-a-topological-formula-for-tr-Phi-U-mu-x-t-X}) reduces to the equality
\begin{equation}
\label{eq-in-HH-minus-2}
\tr\left(\Phi(t_X)\right) \ \ = \ \ 
\pi^{0,2}\left[\widetilde{I}^M_*\left(\Phi_*(\eta_X(t_X))\right)\right].
\end{equation}
Equivalently, it suffices to prove that 
$\epsilon_M$ maps $\eta_M\left[\tr\left(\Phi(t_X)\right)\right]$ and 
$\Phi_*(\eta_X(t_X))$ to the same element of $H^{2,0}(M)$.

Let $(\Phi_*)^\dagger$ be the adjoint of $\Phi_*$ with respect to the
Serre Duality pairing, given in equation
(\ref{HH-functoriality}).
We will verify equality (\ref{eq-in-HH-minus-2}) by establishing the equality 
\begin{equation}
\label{eq-holds-for-all-nu}
\langle\nu,\eta_M\left[\tr\left(\Phi(t_X)\right)\right]\rangle \ \ = \ \ 
\langle (\Phi_*)^\dagger\nu,\eta_X(t_X)\rangle,
\end{equation}
for every element $\nu \in HH_{-2}(M)^\vee$, which is in the image of the following composition
\[
H^{2n-2}(M,\omega_M)\cong
\Hom(\StructureSheaf{M},\omega_M[2n-2])
\LongRightArrowOf{\Delta_{M_*}}
\Hom(\StructureSheaf{\Delta_{M}},\omega_{\Delta_M}[2n-2])\cong
HH_{-2}(M)^\vee,
\]
where the right isomorphism is Serre's duality.
This suffices because 
$\langle\Delta_{M_*}(\tilde{\nu}),\lambda\rangle=\langle\tilde{\nu},\epsilon_M(\lambda)\rangle$, for every $\lambda\in HH_{-2}(M)$ and
$\tilde{\nu}\in H^{2n-2}(M,\omega_M)$. The following three observations explain the latter equality.
\begin{enumerate}
\item[(i)] The right isomorphism in the displayed composition above is given also by the Mukai pairing under the identification
$HH_2(M)\cong \Hom(\StructureSheaf{\Delta_{M}},\omega_{\Delta_M}[2n-2])$,
by \cite[Subsection 4.11]{cal-mukai1}.
\item[(ii)]
The modified HKR isomorphism $\widetilde{I}^M_*$ is an isometry with respect to the Mukai pairings on 
$HH_*(M)$ and $H\Omega_*(M)$ (see the conjecture in \cite[Sec. 1.8]{cal-mukai2} and its proof in 
\cite[Theorem 0.5]{HN}).
\item[(iii)] The compositon of the map $\widetilde{I}^M_*\circ \gd_{M,*}:H^i(M, \go_M)
\to H\Go_{2n-i}(M)$ with the projection $H\Go_{2n-i}(M) \to H^i(M, \go_M)$ is the identity
(see the proof of  \cite[Prop. 2.1]{HN}). 
\end{enumerate}

Let $\tilde{\nu}$ be an element of $H^{2n-2}(M,\omega_M)$ mapping to $\nu$.
\hide{
Then $\tilde{\nu}\otimes \Phi(t_X)$ is an element of 
$\Hom(\Phi_\U(x),\Phi_\U(x)\otimes \omega_M[2n]).$
The left hand side of (\ref{eq-holds-for-all-nu}) is equal to
\[
\rank(x)\cdot 
\tr\left(\tilde{\nu}\otimes \Phi_\U(\mu_x(t_X)\right)\in H^{2n}(M,\omega_M)\stackrel{S.D.}{\cong} 
H^0(M,\StructureSheaf{M})^*
\cong \ComplexNumbers.
\]
The element above is equal to $\rank(x)\rank(\Phi_\U(x)))$ times the image $\lambda$ 
of $\tilde{\nu}\otimes \Phi_\U(\mu_x(t_X))$
via  the composition
\[
\Hom(\Phi_\U(x),\Phi_\U(x)\otimes \omega_M[2n]) \stackrel{S.D.}{\cong} 
\Hom(\Phi_\U(x),\Phi_\U(x))^\vee\RightArrowOf{(\Phi_\U)^*} 
\Hom(x,x)^\vee\RightArrowOf{H^0(\mu_x)^*} H^0(X,\StructureSheaf{X})^*\cong \ComplexNumbers.
\]
It remains to prove that
$\rank(x)\cdot \lambda$ is equal to the factor
$\langle (\Phi_*)^\dagger\nu,t_X\rangle$ appearing on the right hand side
of Equation (\ref{eq-holds-for-all-nu}). 
The factor $\rank(x)$ appears as follows.
Let $\epsilon_1:\Phi_{\U_L}^\dagger\Phi_\U\rightarrow id$ be the counit of  the adjunction 
$\Phi_{\U_L}^\dagger\dashv\Phi_\U$ and
$\epsilon_2:G^\dagger\Phi_{\U_L}^\dagger\Phi\rightarrow id$ the counit of the adjunction 
$G^\dagger\Phi_{\U_L}^\dagger\dashv\Phi$, where $G(a)=x\otimes a.$
Then $\epsilon_2=\tr(G^\dagger\epsilon_1 G)$.
Recall that 
\[
(\Phi_*)^\dagger(\nu)=\epsilon_2(\Phi^\dagger_R\nu\Phi)\eta_2=
\tr\circ G^\dagger\left(\epsilon_1(\Phi^\dagger_{\U_R}\nu\Phi_\U) \eta_1\right)G\circ\mu_x=
\tr\circ G^\dagger\left((\Phi_{\U_*})^\dagger(\nu)\right)G\circ\mu_x, 
\]
where
$\eta_1$ is the unit for the adjunction $\Phi_\U\dashv\Phi_{\U_R}$ and  
$\eta_2$ is the unit for the adjunction $\Phi\dashv G^\dagger\Phi_{\U_R}^\dagger$.
Hence, 
$(\Phi_*)^\dagger(\nu):\StructureSheaf{\Delta_X}\rightarrow S_{\Delta_X}[-2]$  factors as  the composition:
\[
\id\LongRightArrowOf{\mu_x}G^\dagger G
\LongRightArrowOf{G^\dagger\left((\Phi_{\U_*})^\dagger\nu\right)G}
G^\dagger S_{\Delta_X}[-2]G
\cong
S_{\Delta_X}[-2]G^\dagger G
\LongRightArrowOf{\tr}S_{\Delta_X}[-2].
\]
We claim that the following equality holds
\begin{equation}
\label{eq-Phi-dagger-nu-equal-rank-x-times-Phi-U-bla}
(\Phi_*)^\dagger(\nu) \ \ \ = \ \ \ \rank(x)(\Phi_{\U_*})^\dagger(\nu).
\end{equation}
Indeed, the functor $S_{\Delta_X}[-2]$ has kernel $\Delta_{X_*}\omega_X$, so 
the natural transformation $(\Phi_{\U_*})^\dagger\nu$ is induced by a homomorphism $f$ in
$\Hom(\StructureSheaf{\Delta_X},\Delta_{X_*}\omega_X)$, and it thus commutes with 
the functor $G^\dagger$, i.e., the homomorphism (natural transformation)
$G^\dagger f:\Delta_{X_*}(x^\vee)\rightarrow \Delta_{X_*}(x^\vee\otimes\omega_X)$
is equal to the homomorphism $f G^\dagger$. We conclude Equality
(\ref{eq-Phi-dagger-nu-equal-rank-x-times-Phi-U-bla}) 
from Equality (\ref{eq-tr-composed-with-mu-is-scaling-by-rank}).
}\!\!\!\!
The morphisms $\Phi(t_X):\Phi(\StructureSheaf{X})\rightarrow \Phi(\StructureSheaf{X})[2]$ and 
$\mu_{\Phi(\StructureSheaf{X})[2]}(\tilde{\nu}):\Phi(\StructureSheaf{X})[2]\rightarrow 
\Phi(\StructureSheaf{X})\otimes\omega_M[2n]$
compose to yield the morphism
\[
\mu_{\Phi(\StructureSheaf{X})[2]}(\tilde{\nu})\circ \Phi(t_X):\Phi(\StructureSheaf{X})\rightarrow 
\Phi(\StructureSheaf{X})\otimes\omega_M[2n].
\]
For any two objects $x_1$, $x_2$ of $D^b(X)$, we have the homomorphisms
\[
\Phi \ : \ \Hom(x_1,x_2) \ \ \ \rightarrow \ \ \ \Hom(\Phi(x_1),\Phi(x_2))
\]
and its left adjoint with respect to the Serre Duality pairing
\[
\Phi^\dagger_L \ : \ \Hom(\Phi(x_2),\Phi(x_1)\otimes \omega_M[2n]) \ \ \ \rightarrow \ \ \ 
\Hom(x_2,x_1\otimes \omega_X[2]).
\]
The morphism $\mu_{\Phi(\StructureSheaf{X})[2]}(\tilde{\nu})$ 
belongs to $\Hom(\Phi(x_2),\Phi(x_1)\otimes \omega_M[2n])$,
for $x_1=\StructureSheaf{X}$ and $x_2=\StructureSheaf{X}[2]$.
Hence, $\Phi^\dagger_L\left[\mu_{\Phi(\StructureSheaf{X})[2]}(\tilde{\nu})\right]$ belongs to 
$\Hom(\StructureSheaf{X},\StructureSheaf{X}\otimes\omega_X)$.
The equality
\begin{equation}
\label{eq-prop-3.1-of-caldararu-Mukai-pairing-I}
Tr_M\left(\tr\left[\mu_{\Phi(\StructureSheaf{X})[2]}(\tilde{\nu})\circ \Phi(t_X)\right]\right) \ \ \ = \ \ \ 
Tr_X
\left(\Phi^\dagger_L\left[\mu_{\Phi(\StructureSheaf{X})[2]}(\tilde{\nu})\right]\circ t_X\right)
\end{equation}
is established in \cite[Prop. 3.1]{cal-mukai1}.

We prove next that the left hand sides of equations
(\ref{eq-holds-for-all-nu}) and 
(\ref{eq-prop-3.1-of-caldararu-Mukai-pairing-I}) are equal. The equality
\[
\tr\left[\mu_{\Phi(\StructureSheaf{X})[2]}(\tilde{\nu})\circ \Phi(t_X)\right] \ \ \ = \ \ \ 
\tilde{\nu}\circ \tr\left(\Phi(t_X)\right)
\]
holds, by \cite[Lemma 2.4]{cal-mukai1}. Now
\[
Tr_M\left(\tilde{\nu}\circ \tr\left(\Phi(t_X)\right)\right) \ \ \ = \ \ \ 
Tr_{M\times M}\left(
\nu\circ \eta_M\left[\tr\left(\Phi(t_X)\right)\right]
\right),
\]
by Lemma \ref{lemma-an-equality-of-traces},
applied with $X=M$, $Y=M\times M$, $f=\Delta_M$, $x=\StructureSheaf{M}[-2]$, 
$y=\StructureSheaf{M\times M}$, $t=\tr\left(\Phi(t_X)\right)$, and $\phi=\tilde{\nu}$.
Hence, the left hand sides of equations
(\ref{eq-holds-for-all-nu}) and 
(\ref{eq-prop-3.1-of-caldararu-Mukai-pairing-I}) are equal. 

It remains to prove that the right hand sides 
of equations (\ref{eq-holds-for-all-nu}) and 
(\ref{eq-prop-3.1-of-caldararu-Mukai-pairing-I}) are equal. 
The following relation between $\Phi^\dagger_L$ and $(\Phi_*)^\dagger$ holds, 
for every object $F$ of $D^b(X)$
\[
\Phi^\dagger_L\left(\mu_{\Phi(F)}(\tilde{\nu})\right) \ \ \ = \ \ \ 
\left[(\Phi_*)^\dagger\left(\Delta_{M_*}(\tilde{\nu})\right)\right]_F,
\]
by \cite[Prop. 3.1]{cal-mukai1}. Taking $F=\StructureSheaf{X}$, we get
\begin{equation}
\label{eq-relating-two-adjoint-homomorphisms}
\Phi^\dagger_L\left(\mu_{\Phi(\StructureSheaf{X})}(\tilde{\nu})\right) \ \ \ = \ \ \ 
\left[(\Phi_*)^\dagger\left(\Delta_{M_*}(\tilde{\nu})\right)\right]_{\StructureSheaf{X}} \ \ \ = \ \ \ 
\left[(\Phi_*)^\dagger(\nu)\right]_{\StructureSheaf{X}}.
\end{equation}
In addition, we have the 
following relation between the Serre Duality pairing  for morphisms in $D^b(X)$ and 
the Serre Duality pairing  for morphisms in $D^b(X\times X)$, or for natural transformation.
\begin{equation}
\label{eq-relating-sd-pairings}
\langle(\Phi_*)^\dagger(\nu), \ \eta_X(t_X)\rangle \ \ \ = \ \ \ 
\langle\left[(\Phi_*)^\dagger(\nu)\right]_{\StructureSheaf{X}}, \ t_X\rangle.
\end{equation}
Indeed, the morphism $(\Phi_*)^\dagger(\nu)$ belongs to
$\Hom(\Delta_{X_*}\StructureSheaf{X},\Delta_{X_*}\omega_X)$.
Hence, $(\Phi_*)^\dagger(\nu)=\Delta_{X_*}(\phi)$, for a  
morphism $\phi$ in $\Hom(\StructureSheaf{X},\omega_X)$.
Now apply 
Lemma \ref{lemma-an-equality-of-traces} with 
$Y=X\times X$, $f=\Delta_X$, $x=\StructureSheaf{X}[-2]$, $y=\StructureSheaf{X\times X}$, $t=t_X$,
and $\phi$ as above, to obtain Equation
(\ref{eq-relating-sd-pairings}).

Combining the last two equations above we get:
\[
\langle(\Phi_*)^\dagger(\nu), \ \eta_X(t_X)\rangle \ \ \ \stackrel{(\ref{eq-relating-sd-pairings})}{=} \ \ \ 
\langle\left[(\Phi_*)^\dagger(\nu)\right]_{\StructureSheaf{X}}, \ t_X\rangle \ \ \ 
\stackrel{(\ref{eq-relating-two-adjoint-homomorphisms})}{=} \ \ \ 
\langle \Phi^\dagger_L\left(\mu_{\Phi(\StructureSheaf{X})}(\tilde{\nu})\right), \ t_X\rangle. 
\]
This is precisely the desired equality
of the two right hand sides of 
equations (\ref{eq-holds-for-all-nu}) and 
(\ref{eq-prop-3.1-of-caldararu-Mukai-pairing-I}).
\end{proof}

\hide{
\begin{proof}
The homomorphism $\Phi_\U:\Ext^1(F,F)\rightarrow \Ext^1(\Phi_\U(F),\Phi_\U(F))$
is surjective, by Theorem \ref{thm-faithful}.
The Yoneda pairing $\Ext^1(F,F)\otimes \Ext^1(F,F)\rightarrow \Ext^2(F,F)$ is surjective.
Hence, $\Phi_\U(t_F)$ spans the image of the Yoneda pairing
\[
y:\Ext^1(\Phi_\U(F),\Phi_\U(F))\otimes \Ext^1(\Phi_\U(F),\Phi_\U(F))\rightarrow \Ext^2(\Phi_\U(F),\Phi_\U(F)).
\]
Now the symplectic structure of the moduli space of objects $\Phi_\U(F)$, $F^\vee\in M$, 
as a moduli space of objects on $M$, is induced by 
$t_M^{n-1}\circ \tr\circ y : \Wedge{2}\Ext^1(\Phi_\U(F),\Phi_\U(F))\rightarrow H^{2n}(M,\StructureSheaf{M})$.
(???) Fill in the details along the following lines.
For the analogue for locally free hyper-holomorphic sheaves, see (\cite{verbitsky-1996}, Proposition 9.1).
This construction should generalize for the hyperholomorphic reflexive sheaves $\E_G$, the
restriction of $\E$ to $\{G\}\times M$. Next we have the following commutative diagram.
Denote by $\iota:\E_G[-1]\rightarrow \Phi_\U(F)$ the natural morphism.
\[
\xymatrix{
\Ext^1(\E_G[-1],\E_G[-1])\otimes \Ext^1(\E_G[-1],\E_G[-1])  \ar[r] \ar[d]_{\iota_*\otimes\iota^*}
&
\Ext^2(\E_G[-1],\E_G[-1]) \ar[dr]^{\tr} 
\\
\Ext^1(\E_G[-1],\Phi_\U(F))\otimes \Ext^1(\Phi_\U(F),\E_G[-1])
\ar[ur]^{\circ} \ar[dr]_{\circ} && H^2(M,\StructureSheaf{M}).
\\
\Ext^1(\Phi_\U(F),\Phi_\U(F))\otimes \Ext^1(\Phi_\U(F),\Phi_\U(F)) \ar[r]  \ar[u]^{\iota^*\otimes\iota_*}&
\Ext^2(\Phi_\U(F),\Phi_\U(F)) \ar[ur]_{\pm\tr} 
 }
\]
We need to prove that both left vertical homomorphisms have the same image, e.g., by 
proving that $\iota_*$ and $\iota^*$ induce isomorphisms. A lot of work towards the
proof is included in a deleted section of an old version of this 
paper from February 24, 2010 (Eyal still has the file).
\end{proof}
}

\begin{example}
Let us verify Equation (\ref{eq-t-M-does-not-vanish}) in a simple case. 
Take $n=1$, identify $X$ with $M:=X^{[1]}$, and set 
$\U:=\Ideal{\Delta}$ to be the ideal sheaf of the diagonal in $X\times X^{[1]}$. Choose
a sheaf $F$ on $X$ satisfying $h^i(F)=0$, for $i>0$. Consider the short exact sequence
\[
0\rightarrow \U \rightarrow \StructureSheaf{X\times X}\rightarrow
\StructureSheaf{\Delta_{X}}\rightarrow 0.
\]
Then $\rank(\Phi_\U(F))=\chi(F)-\rank(F)$ and 
$\Phi_\U(F)$ fits in the exact triangle
\[
F[-1]\rightarrow \Phi_\U(F) \rightarrow H^0(F)\otimes_\ComplexNumbers\StructureSheaf{X}
\rightarrow F.
\]
Furthermore, $[\varphi_\U(t_X)]_2=-t_X$, under the identification of $X$ with $M:=X^{[1]}$. 
The Fourier-Mukai transform $\Phi_{\StructureSheaf{X\times X}}$
with respect to the structure sheaf $\StructureSheaf{X\times X}$
sends $\mu_F(t_X):F\rightarrow F[2]$ to zero. Indeed, consider the cartesian diagram
\[
\xymatrix{ X\times X \ar[r]^{\pi_2} \ar[d]_{\pi_1} 
&
X \ar[d]_\kappa
\\
X \ar[r]_{\kappa} & \{\pt\}.
}
\]
Then $R\pi_{2_*}\pi_1^*(\mu_F(t_X))=\kappa^*R\kappa_*(\mu_F(t_X))=\kappa^*\kappa_*(\mu_F(t_X))$
and the morphism
$\kappa_*(\mu_F(t_X)):H^0(F)\rightarrow H^0(F)[2]$ in $D^b(\{\pt\})$ vanishes. 
We get the commutative  diagram
\[
\xymatrix{ 
\Phi_\U(F) \ar[r] \ar[d]_{\Phi_\U(\mu_F(t_X))}
& H^0(F) \otimes_\ComplexNumbers\StructureSheaf{X}\ar[r] \ar[d]_{\Phi_{\StructureSheaf{X\times X}}(\mu_F(t_X))=0}
& F\ar[d]_{\mu_F(t_X)}
\\
\Phi_\U(F)[2] \ar[r] & 
H^0(F) \otimes_\ComplexNumbers\StructureSheaf{X}[2] \ar[r] &
F[2].
}
\]
We see that indeed $\tr(\Phi_\U(\mu_F(t_X)))=-\rank(F)t_X=\rank(F)[\varphi_\U(t_X)]_2.$ 
Note, by the way, that $\Phi_\U:\Hom(F,F[2])\rightarrow \Hom(\Phi_\U(F),\Phi_\U(F)[2])$ is an isomorphism.
If the sheaf $F$ is simple, then $\Hom(\Phi_\U(F),\Phi_\U(F)[2])$ is one-dimensional.
In that case we get the equality, 
\[
\rank(\Phi_\U(F))\Phi_\U(\mu_F(t_X))=\rank(F)\mu_{\Phi_\U(F)}([\varphi_\U(t_X)]_2),
\]
since both sides above have the same trace.
\end{example}

%
\subsection{A relation in the Yoneda algebra of $\Phi_\U(x)$}
\label{sec-a-relation-in-the-yoneda-algebra}

Let $t_X$ be a non-zero element of $H^2(X,\StructureSheaf{X})$ and 
$\varphi_\U:H^*(X,\CC)\rightarrow H^*(M,\CC)$ 
the homomorphism in Proposition \ref{prop-a-topological-formula-for-a-trace}.
Set
\begin{equation}
\label{eq-t-M}
t_M := [\varphi_\U(t_X)]_2.
\end{equation}

\begin{lem}
\label{lemma-t-M-does-not-vanish}
The class $t_M$ spans $H^{2,0}(M)$. Furthermore, the following equality holds, for every object 
$x\in D^b(X)$.
\begin{equation}
\label{eq-trace-is-rank-x-times-t-M}
\tr\left(\Phi_\U(\mu_x(t_X))\right) \ \ \ = \ \ \rank(x) t_M.
\end{equation}
\end{lem}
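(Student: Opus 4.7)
The plan is to derive both assertions almost immediately from results already assembled. Equation (\ref{eq-trace-is-rank-x-times-t-M}) is just Proposition \ref{prop-a-topological-formula-for-a-trace} rewritten using the definition (\ref{eq-t-M}) of $t_M$; no further work is needed for the second statement of the lemma.

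The substantive content is the non-vanishing of $t_M$. I would invoke Mukai's Hodge isometry $\theta_v : v^\perp \to H^2(M,\ZZ)$ from (\ref{mukai-hom}) and Theorem \ref{Yoshioka-main}. As was already observed in the proof of Theorem \ref{thm-A-is-direct-sum} part (\ref{thm-item-structure-sheaf-is-a-direct-summand}), comparing definitions gives the identity $\theta_v = -[\varphi_\U]_2$ after complexification, so $t_M = -\theta_v(t_X)$.

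Now $v = ch(E)\sqrt{td_X}$ for an $H$-stable sheaf $E$, hence $v$ is an algebraic class and all its Künneth components lie in $\bigoplus_p H^{p,p}(X)$. Consequently $H^{0,2}(X)$ is orthogonal to $v$ with respect to the Mukai pairing, so $H^{0,2}(X)\subset v^\perp\otimes\CC$ and the restriction of $\theta_v$ to this one-dimensional subspace is defined. Since $\theta_v$ is a Hodge isometry (Theorem \ref{Yoshioka-main}), it maps $H^{0,2}(X)$ isomorphically onto the one-dimensional space $H^{0,2}(M)=H^2(M,\StructureSheaf{M})$. Because $t_X\neq 0$, it follows that $t_M=-\theta_v(t_X)$ is non-zero and therefore spans $H^{0,2}(M)$, which is what is to be shown.

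There is no real obstacle here; the only subtle point is making sure the sign/normalization in $\theta_v=-[\varphi_\U]_2$ is quoted correctly from the earlier identification. All of the heavy lifting (functoriality of Hochschild homology, the Mukai–Yoshioka Hodge isometry, and the trace computation of Proposition \ref{prop-a-topological-formula-for-a-trace}) has already been carried out.
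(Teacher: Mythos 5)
Your proposal is correct and follows essentially the same route as the paper, whose proof simply states that the lemma follows immediately from Theorem \ref{Yoshioka-main} and Proposition \ref{prop-a-topological-formula-for-a-trace}; you have merely unpacked the same two ingredients (the identity $\theta_v=-[\varphi_\U]_2$, the fact that $H^{0,2}(X)\subset v^\perp\otimes\CC$ since $v$ is algebraic, and the Hodge isometry carrying $H^{0,2}(X)$ isomorphically onto $H^{0,2}(M)$) in more detail.
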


\begin{proof}
The statement follows immediately from 
Theorem \ref{Yoshioka-main} and Proposition
\ref{prop-a-topological-formula-for-a-trace}. 
\end{proof}

We regard $t_X$ also as an element of the subspace 
$\Ext^2(\StructureSheaf{X},\StructureSheaf{X})$ of $Y(\StructureSheaf{X})$.
Given an object $x$ of $D^b(X)$, 
set 
\begin{eqnarray*}
t_x&:=&\mu_x(t_X)\\
t_{\Phi_\U(x)}&:=&\mu_{\Phi_\U(x)}(t_M). 
\end{eqnarray*}
Let $v(x):=ch(x)\sqrt{td_X}$ be the Mukai vector of $x$.
Then $t_{\Phi_\U(x)}$ is an element of $\Ext^2\left(\Phi_\U(x),\Phi_\U(x)\right)$ satisfying
\begin{equation}
\label{eq-of-traces}
\tr\left[t_{\Phi_\U(x)}\right]=\tr\left[\mu_{\Phi_\U(x)}\left(t_M\right)\right]=
\rank(\Phi_\U(x))\cdot t_M=-\left(v,v(x)^\vee\right)t_M.
\end{equation}

\begin{lem}
\label{lemma-equality-in-Ext-2n}
Let $x$ be an object of $D^b(X)$ satisfying $\Hom(x,x)\cong \ComplexNumbers$. 
The following equation holds in $\Ext^{2n}\left(\Phi_\U(x),\Phi_\U(x)\right)$.
\begin{equation}
\label{eq-relation-in-yoneda-algebra}
-(v,v(x)^\vee)
\left(t_{\Phi_\U(x)}\right)^{n-1} \Phi_\U(t_x)=
\rank(x)
\left(t_{\Phi_\U(x)}\right)^n.
\end{equation}
\end{lem}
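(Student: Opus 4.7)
The plan is to reduce the identity to a trace computation. First I would establish that $\Hom(\Phi_\U(x),\Phi_\U(x)) \cong \ComplexNumbers$, so that by Serre duality on $M$ (whose canonical bundle is trivial) the trace map
\[
\tr\ :\ \Ext^{2n}(\Phi_\U(x),\Phi_\U(x))\ \longrightarrow\ H^{2n}(M,\StructureSheaf{M})
\]
is an isomorphism (cf.\ Remark \ref{remark-non-vanishing-of-tr}). By the adjunction $\Phi_\U \dashv \Psi_\U$, $\Hom(\Phi_\U(x),\Phi_\U(x)) \cong \Hom(x,T(x))$; the split exact triangle $\R \to \Y \to \A$ of Proposition \ref{thm-annihilator-of-A-in-Y} decomposes
\[
T(x) \cong \bigoplus_{i=0}^{n-1} x \otimes_\ComplexNumbers H^{2i}(M,\StructureSheaf{M})[-2i],
\]
whence $\Hom(x,T(x)) = \bigoplus_{i=0}^{n-1} \Ext^{-2i}(x,x) \otimes_\ComplexNumbers H^{2i}(M,\StructureSheaf{M})$. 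For $x$ represented by a simple sheaf the negative self-Exts vanish and only the $i=0$ summand $\Hom(x,x) = \ComplexNumbers$ contributes.

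Next I would compute both traces. For the right-hand side, since $\mu_{\Phi_\U(x)}$ is an algebra homomorphism one has $(t_{\Phi_\U(x)})^n = \mu_{\Phi_\U(x)}(t_M^n)$; the graded form of Equation (\ref{eq-tr-composed-with-mu-is-scaling-by-rank}) combined with (\ref{eq-of-traces}) yields
\[
\tr\bigl(\rank(x)(t_{\Phi_\U(x)})^n\bigr) = \rank(x)\cdot\rank(\Phi_\U(x))\cdot t_M^n = -\rank(x)(v,v(x)^\vee)\,t_M^n.
\]
For the left-hand side I would invoke the $H^*(M,\StructureSheaf{M})$-linearity of the trace---namely $\tr(\mu_F(s) \circ g) = s \cdot \tr(g)$ for $s \in H^*(M,\StructureSheaf{M})$ and $g \in \Ext^*(F,F)$---with $s = t_M^{n-1}$ and $g = \Phi_\U(t_x)$, and use Lemma \ref{lemma-t-M-does-not-vanish} which supplies $\tr(\Phi_\U(t_x)) = \rank(x)\,t_M$. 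This yields
\[
\tr\bigl(-(v,v(x)^\vee)(t_{\Phi_\U(x)})^{n-1}\Phi_\U(t_x)\bigr) = -(v,v(x)^\vee)\cdot t_M^{n-1}\cdot \rank(x)\,t_M = -\rank(x)(v,v(x)^\vee)\,t_M^n.
\]
The two traces agree, and injectivity of $\tr$ on $\Ext^{2n}(\Phi_\U(x),\Phi_\U(x))$ forces the identity (\ref{eq-relation-in-yoneda-algebra}).

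The main obstacle is verifying the one-dimensionality of $\Hom(\Phi_\U(x),\Phi_\U(x))$, where the totally-split hypothesis on the monad $\A$ (Proposition \ref{thm-annihilator-of-A-in-Y}) and the simplicity of $x$ both enter essentially; the trace computations themselves reduce to routine bookkeeping with the $H^*(M,\StructureSheaf{M})$-module structure on the Yoneda algebra.
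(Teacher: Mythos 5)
Your proof is correct and takes essentially the same route as the paper's: both arguments rest on the one-dimensionality of $\Hom(\Phi_\U(x),\Phi_\U(x))$ (so that $\tr$ is an isomorphism on $\Ext^{2n}(\Phi_\U(x),\Phi_\U(x))$), on the $H^*(M,\StructureSheaf{M})$-linearity of the trace, and on the two trace formulas (\ref{eq-trace-is-rank-x-times-t-M}) and (\ref{eq-of-traces}). The paper merely packages the same facts as the assertion that Yoneda multiplication by $(t_{\Phi_\U(x)})^{n-1}$ factors through $\tr:\Ext^{2}(\Phi_\U(x),\Phi_\U(x))\rightarrow H^2(M,\StructureSheaf{M})$, whereas you apply $\tr$ to both sides and invoke its injectivity; the underlying computation is identical.
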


\begin{proof}
The vector space 
$\Ext^{2n}\left(\Phi_\U(x),\Phi_\U(x)\right)$ is dual to $\Hom(\Phi_\U(x),\Phi_\U(x))$, which 
is isomorphic to $\Hom(x,x)$, by Theorem \ref{thm-A-is-direct-sum}, and is thus one-dimensional.
Consequently, the two sides of Equation (\ref{eq-relation-in-yoneda-algebra}) are linearly dependent.
Equation (\ref{eq-relation-in-yoneda-algebra}) would thus follow from Equations
(\ref{eq-trace-is-rank-x-times-t-M}) and  (\ref{eq-of-traces}), once we 
prove that the Yoneda product
\[
\left(t_{\Phi_\U(x)}\right)^{n-1} \circ \ : \ 
\Ext^{2}\left(\Phi_\U(x),\Phi_\U(x)\right) \rightarrow \Ext^{2n}\left(\Phi_\U(x),\Phi_\U(x)\right)
\]
factors through $\tr:\Ext^{2}\left(\Phi_\U(x),\Phi_\U(x)\right) \rightarrow H^2(M,\StructureSheaf{M}).$
We prove this factorization next. Note that the morphism
$\mu_{\Phi_\U(x)}:\StructureSheaf{M}\rightarrow \Phi_\U(x)^\vee\otimes \Phi_\U(x)$ is compatible with the 
Yoneda product.
Hence, $\left(t_{\Phi_\U(x)}\right)^{n-1}=\mu_{\Phi_\U(x)}\left(t_M^{n-1}\right)$.
For every object $y$ of $D^b(M)$, and for every integers $i$ and $j$, the outer square of the following
diagram is commutative.
\[
\xymatrix{
\Ext^i(y,y)\otimes \Ext^j(y,y) \ar[r] & \Ext^{i+j}(y,y) \ar[dd]_{\tr}
\\
\Ext^i(y,y)\otimes H^j(\StructureSheaf{M}) \ar[u]^{id\otimes \mu} \ar[d]_{\tr\otimes id} \ar[ur]^{\alpha}
\\
H^i(\StructureSheaf{M})\otimes H^j(\StructureSheaf{M})
\ar[r] & H^{i+j}(\StructureSheaf{M}).
}
\]
The homomorphism $\alpha$, defined to make the diagram commutative,  
factors through the bottom left vertical 
homomorphism $\tr\otimes id$,
whenever the right vertical trace homomorphism
is an isomorphism. Apply it with $y=\Phi_{\U}(x)$, $i=2$, $j=2n-2$, 
the element $t_M^{n-1}$ of $H^{2n-2}(\StructureSheaf{M})$,
and observe that 
the trace homomorphism $\tr:\Ext^{2n}(\Phi_{\U}(x),\Phi_{\U}(x))\rightarrow H^{2n}(\StructureSheaf{M})$
is an isomorphism,  by Remark \ref{remark-non-vanishing-of-tr}, and the fact that
$\Hom(\Phi_{\U}(x),\Phi_{\U}(x))\cong\Hom(x,x)\cong\ComplexNumbers$.
The Equality (\ref{eq-relation-in-yoneda-algebra}) 
now follows, where the coefficient on its left hand side is explained by
the equality $\rank\left(\Phi_\U(x)\right)=-\left(v,v(x)^\vee\right)$.
\end{proof}

Both sides of Equation (\ref{eq-relation-in-yoneda-algebra}) vanish, whenever $\rank(x)=0$
or $\rank(\Phi_\U(x))=0$ (that is $-\left(v,v(x)^\vee\right)=0$). This can be seen directly, without using the above lemma, as follows.
The left hand side vanishes if $\rank(x)=0$, since $t_x:=\mu_x(t_X)$ vanishes, by Remark
\ref{remark-non-vanishing-of-tr}. The right hand side vanishes if $\rank(\Phi_\U(x))=0$, 
since $\left(t_{\Phi_\U(x)}\right)^n:=(\mu_{\Phi_\U(x)}(t_M))^n=\mu_{\Phi_\U(x)}((t_M)^n)$
vanishes, by Remark \ref{remark-non-vanishing-of-tr} again.
Assume next that $\Hom(x,x)$ is one-dimensional.  
Let $tr_x^{-1}$ be the inverse of the isomorphism given in Equation (\ref{eq-trace-isomorphism-for-simple-x}).
If $\rank(x)$ and $\rank(\Phi_\U(x))$ do not vanish, Equation (\ref{eq-relation-in-yoneda-algebra}) 
is equivalent to the following equation:
\begin{equation}
\label{eq-reformulated-relation-in-yoneda-algebra}
\left(t_{\Phi_\U(x)}\right)^{n-1} \Phi_\U(tr_x^{-1}(t_X))=
tr^{-1}_{\Phi_\U(x)}\left(t_M^n\right).
\end{equation}
We will verify Equation (\ref{eq-reformulated-relation-in-yoneda-algebra})
assuming only that $\rank(\Phi_\U(x))$ does not vanish 
(Theorem \ref{thm-the-yoneda-algebra-ofPhi-U-of-a-simple-sheaf}).
We expect the above equation to hold even if  $\rank(\Phi_\U(x))$
vanishes.

\hide{
The functor $Y$, of tensorization over $\ComplexNumbers$ by 
$\oplus_{i=0}^n\Ext^{2i}(\StructureSheaf{M},\StructureSheaf{M})[-2i]$, decomposes as a direct sum. 
The morphism $a\circ q_{T(x)} : YT(x)\rightarrow T(x)$ decomposes accordingly as a
direct sum of
\[
Q_i(a) \ : \ T(x)\otimes_\ComplexNumbers \Ext^{2i}(\StructureSheaf{M},\StructureSheaf{M})[-2i]
\rightarrow T(x).
\]
Let $(t_M^i)^*$ be the basis of $\Ext^{2i}(\StructureSheaf{M},\StructureSheaf{M})^*$
dual to $t_M^i$. Evaluating $Q_i(a)$ at $(t_M^i)^*$ we get the morphism
\[
Q_i\left(a,(t_M^i)^*\right) \ : \ T(x)[-2i] \rightarrow T(x).
\]
The $\widetilde{\YY}$-module axiom implies the equality $Q_i\left(a,(t_M^i)^*\right)=Q_1\left(a,t_M^*\right)^i$.
Lemma \ref{lemma-equality-in-Ext-2n} translates to the following relation in $\Hom(T(x)[-2n],T(x))$:
\begin{equation}
\label{eq-equality-in-Ext-2n-in-terms-of-monads}
(-v,v(x)^\vee) Q_1\left(a,t_M^*\right)^{n-1} T(\mu_x(t_X))
\ \ \ = \ \ \ 
\rank(x) Q_1\left(a,t_M^*\right)^n.
\end{equation}
The natural transformation $q:Y\rightarrow T$ is independent of the object $x$.
The dependence on $x$ of the coefficients $(-v,v(x)^\vee)$ and $\rank(x)$ in 
equation (\ref{eq-equality-in-Ext-2n-in-terms-of-monads})
is due to the dependence of $\mu_x$ on the object $x$, as well as the 
dependence of the action morphism 
$a:T^2(x)\rightarrow T(x)$ of the $\TT$-module $(T(x),a)$ on the object $x$.
In other words, given two {\em different} natural transformations $\tau_1$, $\tau_2$ from 
$T[-2n]$ to $T$, and a collection $Spl$ of objects, such that $\Hom(T(x)[-2n],T(x))$ is one dimensional,
for all $x\in Spl$, 
the ratio between $\tau_1(x)$ and $\tau_2(x)$, $x\in Spl$, depends in general on the object $x$.
}

%
\subsection{The natural transformation $h_2$ is the Mukai vector}
\label{subsection-structure-of-yoneda-algebra}

Set $R(M):=\Ext^{2n}(\StructureSheaf{M},\StructureSheaf{M})[-2n]$, regarded as an object of $D^b(\pt)$.
The objects  of the exact triangle displayed in Equation
(\ref{eq-split-exact-triangle}) correspond to kernels of integral endo-functors of $D^b(X)$.
The object $\R$ corresponds to the functor of tensorization by $R(M)$ over $\ComplexNumbers$.
The object $\Y:=\StructureSheaf{\Delta_X}\otimes_\ComplexNumbers Y(\StructureSheaf{M})$ 
corresponds to the functor of tensorization by $Y(\StructureSheaf{M})$ over $\ComplexNumbers$.
The object $\A$ is the kernel of the functor $\Psi_\U\Phi_\U$. 
The morphisms of the exact triangle (\ref{eq-split-exact-triangle}) correspond to natural transformations
between these endo-functors.

Given objects $F_1$, $F_2$ of $D^b(X)$, we get the short exact sequence
\begin{equation}
\label{eq-degree-zero-short-exact-sequence}
0\rightarrow 
\Hom\left(F_1,F_2\otimes_\ComplexNumbers R(M)\right) \LongRightArrowOf{h_{F_2}}
\Hom\left(F_1,F_2\otimes_\ComplexNumbers Y(\StructureSheaf{M})\right) \LongRightArrowOf{q_{{}_{F_2}}}
\Hom\left(F_1,\Psi_\U\Phi_\U(F_2)\right)\rightarrow 0.
\end{equation}
Exactness of the above sequence follows from the splitting of the exact triangle 
(\ref{eq-split-exact-triangle}). Equivalently, we have the short exact sequence
\begin{eqnarray*}
0&\rightarrow &
\Hom\left(F_1,F_2[-2n]\right)\otimes \Ext^{2n}(\StructureSheaf{M},\StructureSheaf{M}) \RightArrowOf{h_{F_2}}
\oplus_{i=0}^{n}\Hom\left(F_1,F_2[-2i]\right) \otimes  \Ext^{2i}(\StructureSheaf{M},\StructureSheaf{M}) 
\\
&\RightArrowOf{Q}&
\Hom\left(\Phi_\U(F_1),\Phi_\U(F_2)\right)\rightarrow 0.
\end{eqnarray*}

Set $Y^{2k}:=H^{2k}(M,\StructureSheaf{M})$, so that $Y^{2k}[-2k]$ is 
the graded summand of $Y(\StructureSheaf{M})$ of degree $2k$.
Let $t_X\in H^2(X,\StructureSheaf{X})$ be a non-zero class, 
$t_M\in H^2(M,\StructureSheaf{M})$ the class associated to $t_X$ in Equation
(\ref{eq-t-M}), and let $t_F$ be the class $\mu_F(t_X)$ in $\Ext^2(F,F)$.
Write $h=\tilde{h}_0\otimes 1+\tilde{h}_2\otimes t^*_M+ \tilde{h}_4\otimes (t^*_M)^2$,
using the notation of Theorem
\ref{thm-universal-class-h-in-Hochschild-cohomology-of-X}.
Above, $\tilde{h}_{2j}$ is a natural transformation from the identity functor $\id$ of $D^b(X)$ to
$\id[2j]$. 

\begin{thm}
\label{thm-Homs-for-objects-in-the-image-of-Phi-U}
Let $F_1$ and $F_2$ be objects of $D^b(X)$. 
\begin{enumerate}
\item
\label{cor-item-graded-short-exact-seq-of-Y-M-modules}
The following is a short exact sequence 
\[
0\rightarrow 
\Hom^\bullet(F_1,F_2[-2n])\otimes Y^{2n}
\RightArrowOf{h}
\Hom^\bullet(F_1,F_2)\otimes Y(\StructureSheaf{M})
\RightArrowOf{Q} 
\Hom^\bullet\left(\Phi_\U(F_1),\Phi_\U(F_2)\right)
\rightarrow 0,
\]
where $h$ and $Q$ are homomorphisms of degree $0$,
and $Q(g\otimes y)=\mu_{\Phi_\U(F_2)}(y)\Phi_\U(g)$.
\item
\label{cor-item-sheaf-case-has-on-relation-in-degree-2n}
If $F_1$ and $F_2$ are sheaves on $X$, then $\Hom(\Phi_\U(F_1),\Phi_\U(F_2)[k])=0,$
for $k>2n$ and for $k<0$. The homomorphism $Q$ restricts to an isomorphism for degrees
in the range $0\leq k\leq 2n-1$. In degree $2n$ we get the short exact sequence
\[
0\rightarrow \Hom(F_1,F_2)\otimes Y^{2n}\RightArrowOf{h}
\left[
\begin{array}{c}
\Ext^2(F_1,F_2)\otimes Y^{2n-2}
\\
\oplus
\\
\Hom(F_1,F_2)\otimes Y^{2n}
\end{array}
\right]
\RightArrowOf{Q}
\Hom\left(
\Phi_\U(F_1),\Phi_\U(F_2)[2n]
\right)
\rightarrow 0,
\]
where $h$ is given by the equality
\[
h(f\otimes t_M^n) \ \ \ = \ \ \ 
(\tilde{h}_{2_{F_2}}\circ f)\otimes t_M^{n-1} +
(\tilde{h}_{0_{F_2}}\circ f)\otimes t_M^n,
\]
for all $f\in \Hom(F_1,F_2)$.
Consequently, if in addition $\Hom(F_1,F_2)=0$, then $Q$ induces an isomorphism in degree $2n$ as well.
\item
\label{thm-item-yoneda-algebra-of-Phi-F-for-a-simple-sheaf-F}
When $F$ is a simple sheaf on $X$, the kernel of 
\[
Q\ : \ 
\left[
\begin{array}{c}
\Ext^2(F,F)\otimes Y^{2n-2}
\\
\oplus
\\
\Hom(F,F)\otimes Y^{2n}
\end{array}
\right]
\ \ \ \longrightarrow \ \ \ 
\Hom\left(
\Phi_\U(F),\Phi_\U(F)[2n]
\right)
\]
is spanned by the element
\begin{equation}
\label{eq-relation-in-tensor-product-of-Yoneda-algebras}
-(v,v(F^\vee))tr^{-1}_F(t_X)\otimes t_M^{n-1} - 1\otimes t_M^n,
\end{equation}
where $tr_F:\Ext^2(F,F)\rightarrow H^2(X,\StructureSheaf{X})$ is the isomorphism
given in Equation (\ref{eq-trace-isomorphism-for-simple-x}).
\end{enumerate}
\end{thm}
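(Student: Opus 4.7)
The plan is to derive all three parts from the split exact triangle of Proposition~\ref{thm-annihilator-of-A-in-Y}. For Part~(1), I would assemble the degree-zero short exact sequence~(\ref{eq-degree-zero-short-exact-sequence}) over all shifts to produce the stated graded SES; the explicit formula $Q(g\otimes y) = \mu_{\Phi_\U(F_2)}(y)\Phi_\U(g)$ can be read off from Equation~(\ref{eq-q}) together with the commutative diagram in Theorem~\ref{thm-determination-of-the-category-SU}. For Part~(2), the key input is the vanishing $\Ext^k(F_1,F_2)=0$ for $k\notin\{0,1,2\}$ when $F_1,F_2$ are sheaves on the $K3$ surface $X$. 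Combined with $H^{2i}(M,\StructureSheaf{M})=0$ outside $0\leq i\leq n$, this restricts the middle and left terms of the SES from Part~(1) to a narrow range of degrees: for $0\leq k\leq 2n-1$ the left term $\Hom(F_1,F_2[k-2n])\otimes Y^{2n}$ vanishes, giving the asserted isomorphism, while for $k>2n$ the map $h$ becomes surjective onto the middle term, forcing vanishing of the right term. The explicit formula for $h$ in degree $2n$ follows by restricting the decomposition $h=\tilde{h}_0\otimes 1 + \tilde{h}_2\otimes t_M^* + \tilde{h}_4\otimes (t_M^*)^2$ to this degree and noting that the $\tilde{h}_4$ component would land in $\Ext^4(F_1,F_2)$, which vanishes for sheaves.

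For Part~(3), simplicity of $F$ makes $\Hom(F,F)\otimes Y^{2n}$ one-dimensional, so by Part~(2) the kernel of $Q$ in degree $2n$ is at most one-dimensional. It therefore suffices to verify that the element~(\ref{eq-relation-in-tensor-product-of-Yoneda-algebras}) lies in $\ker Q$ and is nonzero. Applying the formula from Part~(1) gives
\[
Q\bigl(-(v,v(F^\vee))\,tr_F^{-1}(t_X)\otimes t_M^{n-1} - 1\otimes t_M^n\bigr) = -(v,v(F^\vee))\,\bigl(t_{\Phi_\U(F)}\bigr)^{n-1}\Phi_\U\bigl(tr_F^{-1}(t_X)\bigr) - \bigl(t_{\Phi_\U(F)}\bigr)^n.
\]
When $\rank(F)\neq 0$, I would apply the identity $\tr\circ\mu_F=\rank(F)\cdot 1$ from Equation~(\ref{eq-tr-composed-with-mu-is-scaling-by-rank}) to rewrite $tr_F^{-1}(t_X)=t_F/\rank(F)$ and then substitute into Lemma~\ref{lemma-equality-in-Ext-2n}, producing vanishing of the displayed expression; nonvanishing of the element itself is immediate from Part~(2), since $1\otimes t_M^n$ is a basis vector of the summand $\Hom(F,F)\otimes Y^{2n}$ and $h$ is injective.

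The hard part will be the remaining case $\rank(F)=0$, where Lemma~\ref{lemma-equality-in-Ext-2n} degenerates to the trivial identity $0=0$. Here I would instead identify $\tilde{h}_{2_F}\in\Ext^2(F,F)$ directly: by Part~(2) the one-dimensional kernel is also generated by $h(1\otimes t_M^n)=\tilde{h}_{2_F}\otimes t_M^{n-1} - 1\otimes t_M^n$ (using the normalization $\tilde{h}_0=-1$), so matching this expression against~(\ref{eq-relation-in-tensor-product-of-Yoneda-algebras}) reduces the theorem to the identity $\tilde{h}_{2_F} = -(v,v(F^\vee))\,tr_F^{-1}(t_X)$ in $\Ext^2(F,F)$. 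Since $tr_F$ is an isomorphism by simplicity of $F$, it suffices to compute $\tr(\tilde{h}_{2_F})\in H^2(\StructureSheaf{X})$. The natural approach uses Theorem~\ref{thm-universal-class-h-in-Hochschild-cohomology-of-X}(\ref{thm-item-h-2}), which identifies the Serre-dual class $h_2\in HH_0(X)$ with the Mukai vector via $\widetilde{I}^X_*(h_2)=v$, together with Lemma~\ref{lemma-chern-character-as-a-linear-functional-on-HH}: pairing $\tr(\tilde{h}_{2_F})$ with $\sigma_X$ via Serre duality produces an integral which by Hirzebruch--Riemann--Roch evaluates to the Mukai pairing $-(v,v(F^\vee))$, modulo careful tracking of sign conventions between the Mukai pairing and the HKR normalization.
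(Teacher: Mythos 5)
Your parts (1) and (2), and the $\rank(F)\neq 0$ half of part (3), follow the paper's own proof essentially step for step: degree zero of the sequence is Equation (\ref{eq-degree-zero-short-exact-sequence}), higher degrees come from replacing $F_2$ by $F_2[k]$, and the vanishing for $k>2n$ is the paper's observation that in those degrees $h$ is an injective endomorphism of a finite-dimensional space, hence an isomorphism --- your phrase ``$h$ becomes surjective'' should be backed by exactly this dimension count, which also needs $\Ext^{3}(F_1,F_2)=\Ext^4(F_1,F_2)=0$ to kill the $Y^{2n-2}$ summand of the middle term in degrees $2n+1$ and $2n+2$. For part (3) with $\rank(F)\neq 0$, combining Lemma \ref{lemma-equality-in-Ext-2n} with Equation (\ref{eq-tr-composed-with-mu-is-scaling-by-rank}) to place the element (\ref{eq-relation-in-tensor-product-of-Yoneda-algebras}) in the one-dimensional kernel is precisely what the paper does.

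The one genuine divergence is your treatment of the case $\rank(F)=0$, and there your plan has a circularity hazard you must address. You propose to invoke Theorem \ref{thm-universal-class-h-in-Hochschild-cohomology-of-X} (\ref{thm-item-h-2}), i.e.\ the full identification $I^X_*(h_2)=ch(v)$. But in the paper that statement is proven \emph{after} the present theorem, and its proof cites Equation (\ref{eq-topological-formula-for-tilde-h-2-F}) from the proof of part (\ref{thm-item-yoneda-algebra-of-Phi-F-for-a-simple-sheaf-F}); moreover pinning down the transcendental component of $I^X_*(h_2)$ requires an extra density-of-Hodge-structures argument obtained by varying $X$ in moduli. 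Your route is salvageable because that later proof only consumes the $\rank(F)\neq 0$ instance of (\ref{eq-topological-formula-for-tilde-h-2-F}), but you would have to say so explicitly --- and in fact the full statement is more than you need. The paper's more economical argument is: the identities $Tr_X(h_{2_F})=-(v,v(F^\vee))$ for simple sheaves of nonzero rank already determine the \emph{algebraic} part of $\widetilde{I}^X_*(h_2)$ as $ch(v)\sqrt{td_X}$ via Lemma \ref{lemma-chern-character-as-a-linear-functional-on-HH}, since Mukai vectors of such sheaves span the algebraic part of the Mukai lattice; and because any sheaf $F$ has algebraic $ch(F^\vee)\sqrt{td_X}$, the trace pairing in that lemma sees only this algebraic component, so the same formula $tr_F(\tilde{h}_{2_F})=-(v,v(F^\vee))t_X$ follows for $\rank(F)=0$ with no appeal to the transcendental part and no forward reference. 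I recommend you replace your final step by this argument, or at minimum record that Theorem \ref{thm-universal-class-h-in-Hochschild-cohomology-of-X} (\ref{thm-item-h-2}) is not being used circularly.
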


\begin{proof}
(\ref{cor-item-graded-short-exact-seq-of-Y-M-modules})
The short exact sequence in formula (\ref{eq-degree-zero-short-exact-sequence})
establishes the statement in degree zero. For degree $k$, replace $F_2$ by the object $F_2[k]$.
We have the equalities
\[
(q_{F_2})_*(g\otimes y)=\Xi_\U\left(\mu_{\pi_X^*F_2}(\pi_M^*(y))\circ \pi_X^*(g)\right)=
\mu_{\Xi_\U(\pi_X^*F_2)}(y)\circ\Xi_\U( \pi_X^*(g))=
\mu_{\Phi_\U(F_2)}(y)\circ \Phi_\U(g),
\]
where the first equality follows by Lemma \ref{lemma-push-forward-by-q-x-2-is-Xi-U},
the second is due to the fact that the kernel $\Delta_{23_*}(\U)$ in $D^b((X\times M)\times M)$ of the integral functor $\Xi_\U$
is supported on the diagonal, and the third since $\Phi_\U=\Xi_\U\circ \pi_X^*$. The formula for $Q(g\otimes y)$ follows.

(\ref{cor-item-sheaf-case-has-on-relation-in-degree-2n})
Note first that $\Ext^{j}(F_1,F_2)$ vanishes, for $j\not\in\{0,1,2\}$, since $F_1$ and $F_2$
are sheaves. 
When $F_1$ and $F_2$ are sheaves, and $k>2n$ or $k<0$, then the degree $k$ component of 
$h$ is an injective homomorphism 
from a finite dimensional vector space to itself, by part \ref{cor-item-graded-short-exact-seq-of-Y-M-modules}, 
hence an isomorphism. 
If $k$ is in the range $0\leq k\leq 2n-1$, the space
$\Hom(F_1,F_2[k-2n])\otimes Y^{2n}$ vanishes, hence the degree $k$ component of $Q$ is an isomorphism.
The statement in degree $2n$ follows easily from 
part \ref{cor-item-graded-short-exact-seq-of-Y-M-modules}.

(\ref{thm-item-yoneda-algebra-of-Phi-F-for-a-simple-sheaf-F})
If $\rank(F)\neq 0$, then 
the element given in  (\ref{eq-relation-in-tensor-product-of-Yoneda-algebras}) belongs to the kernel of $m_{2n}$, 
by Equations (\ref{eq-tr-composed-with-mu-is-scaling-by-rank}) and 
(\ref{eq-relation-in-yoneda-algebra}). This element must span the kernel, as the kernel is
one-dimensional, by part \ref{cor-item-sheaf-case-has-on-relation-in-degree-2n}. The kernel is spanned by
\[
h(1_F\otimes t_M^n) =  
\tilde{h}_{0_F}\otimes t_M^n+\tilde{h}_{2_F}\otimes t^{n-1}_M
\]
as well.
Hence, $\tilde{h}_{2_F}= \tilde{h}_{0_F}(v,v(F^\vee))tr_F^{-1}(t_X)$, for every simple sheaf $F$ 
satisfying the inequality $\rank(F)\neq 0$. 
We conclude that $\tilde{h}_0$ does not vanish and we normalize the natural transformation $h$
by rescaling it, so that $\tilde{h}_{0_F}=-1.$ Then 
\begin{equation}
\label{eq-topological-formula-for-tilde-h-2-F}
\tilde{h}_{2_F}= -(v,v(F^\vee))tr_F^{-1}(t_X),
\end{equation}
for every simple object $F$ satisfying the inequality $\rank(F)\neq 0$. 
Let $\sigma_X\in H^0(X,\omega_X)$ be the class, such that $Tr_X(t_X\otimes\sigma_X)=1.$
The class $h_2:=\tilde{h}_2\otimes\sigma_X$ belongs to $HH_0(X)$ and the above equation translates to
$Tr_X(h_{2_F})=-(v,v(F^\vee)).$ The latter equality holds for all  simple sheaves $F$ 
satisfying $\rank(F)\neq 0$.
This suffices to determines the algebraic part of the class $\widetilde{I}_*^X(h_2)$ in $H\Omega_0(X)$
as $ch(v)\sqrt{td_X}$, by Lemma \ref{lemma-chern-character-as-a-linear-functional-on-HH}.
Hence, Equality (\ref{eq-topological-formula-for-tilde-h-2-F}) holds regardless of the vanishing of $\rank(F)$.
Part (\ref{thm-item-yoneda-algebra-of-Phi-F-for-a-simple-sheaf-F}) of the Theorem follows.
\end{proof}

\begin{proof}[Proof of part  \ref{thm-item-h-2} of Theorem \ref{thm-universal-class-h-in-Hochschild-cohomology-of-X}]
Write $H^{1,1}(X,\ComplexNumbers)=[H^{1,1}(X,\Integers)\otimes_\Integers\ComplexNumbers]\oplus\Theta(X)$,
where $\Theta(X)$ is the transcendental subspace. 
Both $ch(v)$ and $I^X_*(h_2)$ are elements of
\[
H^0(X,\ComplexNumbers)\oplus H^{1,1}(X,\ComplexNumbers)\oplus H^4(X,\ComplexNumbers).
\]
Now $ch_1(v)$ belongs to $H^{1,1}(X,\Integers)\otimes_\Integers\ComplexNumbers$ and 
Equation (\ref{eq-topological-formula-for-tilde-h-2-F}) in the proof of Theorem
\ref{thm-Homs-for-objects-in-the-image-of-Phi-U} (\ref{thm-item-yoneda-algebra-of-Phi-F-for-a-simple-sheaf-F})
implies that the projection of $I^X_*(h_2)$
to $H^{1,1}(X,\Integers)\otimes_\Integers\ComplexNumbers$ is equal to $ch_1(v)$.
Varying the surface $X$ in a codimension one family in moduli keeping $ch_1(v)$ of Hodge type $(1,1)$, 
the classes $ch(v)$ and $I^X_*(h_2)$
define two continuous sections of the Hodge bundle $\H^{1,1}$ with fiber $H^{1,1}(X,\ComplexNumbers)$, 
the difference of which is purely transcendental. But a purely transcendental continuous section 
of $\H^{1,1}$ over such a family must vanish, by the density of Hodge structures with trivial transcendental subspace.
\end{proof}

Let $F$ be a simple sheaf on $X$ and $Y(F):=\oplus_{i=0}^2\Ext^i(F,F)[-i]$  its Yoneda algebra.
Let $Y(\Phi_\U(F)):=\oplus_{i\in\Integers}\Hom(\Phi_\U(F),\Phi_\U(F)[i])[-i]$ be the Yoneda algebra of 
$\Phi_\U(F)$. Let
\[
f:Y(F)\otimes_\ComplexNumbers \ComplexNumbers[z]\rightarrow Y(\Phi_\U(F))
\]
be the algebra homomorphism restricting to $Y(F)\otimes 1$ as $\Phi_\U$ and sending the indeterminate $z$
to $\mu_{\Phi_\U(F)}(t_M)$ in $\Hom(\Phi_\U(F),\Phi_\U(F)[2])$. The homomorphism $f$ is well defined, since 
$\mu_{\Phi_\U(F)}(t_M)$ belongs to the center of $Y(\Phi_\U(F))$, as $t_M$ is a
natural transformation from $\id_{D^b(M)}$ to $\id_{D^b(M)}[2].$

\begin{thm}
\label{thm-the-yoneda-algebra-ofPhi-U-of-a-simple-sheaf}
The homomorphism $f$ is surjective and its kernel is the ideal generated by 
\begin{equation}
\label{eq-generator-of-principal-ideal}
1\otimes z^n+(v,v(F^\vee))tr_F^{-1}(t_X)\otimes z^{n-1}.
\end{equation}
\end{thm}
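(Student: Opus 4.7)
The plan is to deduce the theorem from the degree-by-degree description of $\Hom^\bullet(\Phi_\U(F),\Phi_\U(F))$ obtained in Theorem \ref{thm-Homs-for-objects-in-the-image-of-Phi-U}. Write $\alpha := tr_F^{-1}(t_X)\in\Ext^2(F,F)$ and $c:=(v,v(F^\vee))$, so the putative kernel generator is $r:=1\otimes z^n+c\,\alpha\otimes z^{n-1}$, of total degree $2n$. First I would observe that $r$ is central in $Y(F)\otimes_\ComplexNumbers\ComplexNumbers[z]$: the indeterminate $z$ is central by construction, since it is the image of $\mu_{\Phi_\U(F)}(t_M)$ coming from a natural transformation of the identity; and $\alpha$ is central because it lies in the top graded piece of $Y(F)$, forcing all its Yoneda products with classes of positive degree to land in $\Ext^{\geq 3}(F,F)=0$. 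Hence the two-sided ideal $(r)$ coincides with $r\cdot(Y(F)\otimes\ComplexNumbers[z])$. The vanishing $f(r)=0$ is a direct reformulation of part (\ref{thm-item-yoneda-algebra-of-Phi-F-for-a-simple-sheaf-F}) of Theorem \ref{thm-Homs-for-objects-in-the-image-of-Phi-U}, so $f$ descends to a graded algebra homomorphism $\bar f$ from the quotient to $Y(\Phi_\U(F))$.

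The bulk of the proof will be a degree-by-degree verification that $\bar f$ is an isomorphism. In degrees $0\leq k\leq 2n-1$ this is immediate from Theorem \ref{thm-Homs-for-objects-in-the-image-of-Phi-U}(\ref{cor-item-sheaf-case-has-on-relation-in-degree-2n}), since $Q$ is already an isomorphism there and the ideal $(r)$ vanishes in these degrees for grading reasons. In degree $2n$, the same theorem identifies the kernel of $Q$ as the one-dimensional span of $r$, giving the desired isomorphism after passing to the quotient.

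The remaining task is to handle degrees $k>2n$. In these degrees $\Hom(\Phi_\U(F),\Phi_\U(F)[k])=0$ by Theorem \ref{thm-Homs-for-objects-in-the-image-of-Phi-U}(\ref{cor-item-sheaf-case-has-on-relation-in-degree-2n}), so it suffices to show the ideal $(r)$ exhausts $(Y(F)\otimes\ComplexNumbers[z])^k$. A basis of the latter in even degree $2n+2m$ (with $m\geq 1$) is $\{1\otimes z^{n+m},\,\alpha\otimes z^{n+m-1}\}$, and in odd degree $2n+2m+1$ (with $m\geq 0$) it is $\{\xi\otimes z^{n+m}:\xi\in\Ext^1(F,F)\}$. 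The element $z^m\cdot r$ equals $1\otimes z^{n+m}+c\,\alpha\otimes z^{n+m-1}$, while $\alpha z^{m-1}\cdot r=\alpha\otimes z^{n+m-1}$ because $\alpha^2\in\Ext^4(F,F)=0$; together these span the even-degree part. For the odd part, $\xi z^m\cdot r=\xi\otimes z^{n+m}$ since $\xi\alpha\in\Ext^3(F,F)=0$. The main obstacle is concentrated in the degree $2n$ computation, which is already handled by the preceding theorem; everything above degree $2n$ collapses from the degree bound $Y(F)^{>2}=0$ inherent to sheaves on a surface.
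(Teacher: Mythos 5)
Your proof is correct and follows essentially the same route as the paper's: both reduce everything to Theorem \ref{thm-Homs-for-objects-in-the-image-of-Phi-U}, using part (\ref{thm-item-yoneda-algebra-of-Phi-F-for-a-simple-sheaf-F}) to see that $f$ kills the relation and part (\ref{cor-item-sheaf-case-has-on-relation-in-degree-2n}) to match the quotient with $Y(\Phi_\U(F))$ degree by degree. The only difference is presentational — the paper compares dimensions of the graded pieces of the quotient with those of $Y(\Phi_\U(F))$ and then invokes surjectivity, while you identify the kernel of $f$ with the ideal $(r)$ directly and spell out the check (left implicit in the paper's dimension table) that $(r)$ exhausts all degrees above $2n$.
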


Note that the equality $f(1\otimes z^n)=-f\left((v,v(F^\vee))tr_F^{-1}(t_X)\otimes z^{n-1}\right)$ implies the vanishing of 
$f(1\otimes z^{n+1})$, since $(tr_F^{-1}(t_X))^2=0.$ Hence, $f$ factors through $Y(F)\otimes Y(\StructureSheaf{M})$.

\begin{proof}
The algebra $Y(F)\otimes_\ComplexNumbers \ComplexNumbers[z]$ is graded,
where $z$ has degree $2$. 
Let $Y'$ be its quotient 
by the ideal generated by the homogeneous element (\ref{eq-generator-of-principal-ideal}). 
The homomorphism $f$ factors through a homomorphism 
$\bar{f}:Y'\rightarrow Y(\Phi_\U(F))$, by 
Theorem
\ref{thm-Homs-for-objects-in-the-image-of-Phi-U} (\ref{thm-item-yoneda-algebra-of-Phi-F-for-a-simple-sheaf-F}).
The algebra $Y'$ is graded. Denote by $Y'_d$ its graded summand of degree $d$. Then
\[
\dim(Y'_d) = \left\{
\begin{array}{cl}
\dim\Ext^1(F,F) & \mbox{if} \ d \ \mbox{is odd and} \ 0< d< 2n,
\\
2 & \mbox{if} \ d \ \mbox{is even and} \ 0< d< 2n,
\\
1  & \mbox{if} \ d =0 \ \mbox{or} \  d=2n,
\\
0 &  \mbox{otherwise}. 
\end{array}
\right.
\]
These are precisely the dimensions of the graded summands of $Y(\Phi_\U(F))$, by Theorem
\ref{thm-Homs-for-objects-in-the-image-of-Phi-U} (\ref{cor-item-sheaf-case-has-on-relation-in-degree-2n}).
Hence, it suffices to prove that the homomorphism
$\bar{f}$ is surjective. Surjectivity follows from 
Theorem
\ref{thm-Homs-for-objects-in-the-image-of-Phi-U} (\ref{cor-item-sheaf-case-has-on-relation-in-degree-2n}).
\end{proof}

\hide{
\begin{thm}
\label{thm-yoneda}
Let $F$ be a simple sheaf on $X$, set $w:=-v(F^\vee)$, and assume that
$(v,w)\neq 0$.
\begin{enumerate}
\item
\label{thm-item-generaotrs-for-ideal}
Let $m:Y(F)\otimes_{\ComplexNumbers}Y(\StructureSheaf{M})  \rightarrow Y\left(\Phi_\U(F)\right)$ be the  composition
\[
Y(F)\otimes_{\ComplexNumbers} Y(\StructureSheaf{M})
\LongRightArrowOf{\Phi_\U(F)\otimes \mu_{\Phi_\U}}
Y(\Phi_\U(F))\otimes_{\ComplexNumbers} Y(\Phi_\U(F)) 
\LongRightArrowOf{\circ} 
Y\left(\Phi_\U(F)\right).
\]
Then $m$ is a surjective homomorphism of graded algebras.
Its kernel consists of the graded sumands of degree larger than $2n$, as well as
the one dimensional subspace of degree $2n$ spanned by the element
\begin{equation}
\label{eq-relation-in-tensor-product-of-Yoneda-algebras}
t_F\otimes t_M^{n-1} - 1\otimes t_M^n.
\end{equation}
\item
\label{thm-item-m-restricts-to-an-isomorphism}
Let $\iota:\lambda_n\rightarrow Y(\StructureSheaf{M})$ be the morphism from the direct summand of 

degree $<2n$. 
The homomorphism $m$ restricts to an isomorphism of objects in $D^b(\pt)$
\begin{equation}
\label{eq-m-restrict-to-isomorphism}
m : Y(F)\otimes_{\ComplexNumbers}\lambda_n \ \ \longrightarrow \ \ Y(\Phi_\U(F)).
\end{equation}
\item
\label{thm-item-generators-for-submodule}
Let $G_1$ and $G_2$ be two non-isomorphic stable sheaves representing two points of $M$. 
Set $F_i:=G_i^\vee$. 
Denote by $m$
the composition 
\begin{eqnarray*}
\Hom^\bullet(F_1,F_2)\otimes_{\ComplexNumbers}Y(\StructureSheaf{M})
&\LongRightArrowOf{\Phi_\U\otimes\mu_{\Phi_\U(F)}}&
\Hom^\bullet\left(\Phi_\U(F_1),\Phi_\U(F_2)\right)\otimes_{\ComplexNumbers}
\Hom^\bullet\left(\Phi_\U(F_2),\Phi_\U(F_2)\right)  \\
& \LongRightArrowOf{\circ} &
\Hom^\bullet\left(\Phi_\U(F_1),\Phi_\U(F_2)\right).
\end{eqnarray*}
Then $m$ 
is a surjective $Y(F_2)\otimes_{\ComplexNumbers} Y(\StructureSheaf{M})$ bi-module  homomorphism.
Its kernel consists of the graded sumands of degree larger than $2n$. In particular, 
$m$ restricts to an isomorphism 
\begin{equation}
\label{eq-m-restrict-to-isomorphism-case-of-F-1-F-2}
m : \Hom^\bullet(F_1,F_2)\otimes_{\ComplexNumbers}\lambda_n \ \ \longrightarrow \ \ 
\Hom^\bullet\left(\Phi_\U(F_1),\Phi_\U(F_2)\right).
\end{equation}
\end{enumerate}
\end{thm}

\begin{proof}
The relation (\ref{eq-relation-in-tensor-product-of-Yoneda-algebras}) follows immediately from 
Equation (\ref{eq-relation-in-yoneda-algebra}).
The rest follows from Theorem \ref{thm-Homs-for-objects-in-the-image-of-Phi-U}.

\smallskip
(\ref{thm-item-m-restricts-to-an-isomorphism})
Follows immediately from part \ref{thm-item-generaotrs-for-ideal}.

\smallskip
(\ref{thm-item-generators-for-submodule})
The statement follows from Theorem \ref{thm-Homs-for-objects-in-the-image-of-Phi-U}
part (\ref{cor-item-sheaf-case-has-on-relation-in-degree-2n}) and the vanishing of $\Hom(F_1,F_2)$.
\end{proof}
}

%
\subsection{Moduli spaces of sheaves over the moduli space $M$}
\label{sec-moduli-spaces-of-sheaves-over-a-moduli-space}
We continue to assume that $M:=M_H(v)$ and the modular object $\A$ over $X\times X$ is totally split as in 
Definition \ref{def-totally-split-monad}.
Let $S$ be a scheme of finite type over $\ComplexNumbers$.
A coherent $\StructureSheaf{S}$-module $F$ is {\em simple}, 
if $\End_S(F,F)$ is spanned by the identity.
Let $Spl(S)$ be the moduli space of simple sheaves over $S$
\cite{altman-kleiman}.
Denote by $[F]$ the point of $Spl(S)$ corresponding to $F$.

\begin{cor}
\label{cor-isomorphism-of-moduli-spaces-of-simple-sheaves}
Let $F$ be a simple coherent $\StructureSheaf{X}$-module and assume that
$\Phi_\U(F)[i]$ is equivalent to a coherent $\StructureSheaf{M}$-module
$F_M$, for some integer $i$. 
Then $F_M$ is simple.  
The functor $\Phi_\U$ induces an isomorphism, from
the open subset 
\[
U:=\{[F]\ : \ \Phi_\U(F)[i] \ \mbox{is equivalent to a coherent}
\  \StructureSheaf{M}\mbox{-module}\}
\] 
of $Spl(X)$, 
onto an open subset of $Spl(M)$. 
\end{cor}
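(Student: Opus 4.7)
The plan is to verify the corollary in four steps: (a) simplicity of $F_M$, (b) openness of $U$ in $Spl(X)$, (c) construction of a classifying morphism $\Phi_*:U\to Spl(M,\theta)$, and (d) showing $\Phi_*$ is an open immersion by checking injectivity on points and bijectivity on Zariski tangent spaces.

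For (a), apply Theorem \ref{thm-Homs-for-objects-in-the-image-of-Phi-U}(\ref{cor-item-sheaf-case-has-on-relation-in-degree-2n}) with $F_1=F_2=F$ and degree $k=0$, which lies in the range $0\le k\le 2n-1$ where $Q$ is an isomorphism. This gives $\End(F_M)=\Hom(\Phi_\U(F),\Phi_\U(F))\cong\Hom(F,F)=\ComplexNumbers$, so $F_M$ is simple. For (b), given a (possibly étale-local, $\alpha$-twisted) universal family $\mathcal{F}$ over $X\times Spl(X)$, the relative integral transform $\Phi_\U(\mathcal{F}):=R\pi_{M,*}(\pi_X^*\mathcal{F}\otimes\U)$ is an object of $D^b(M\times Spl(X),\theta\boxtimes\alpha^{-1})$. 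The locus where the cohomology sheaves $\mathcal{H}^j(\Phi_\U(\mathcal{F}))$ vanish for $j\ne -i$, when restricted to the fiber over $[F]\in Spl(X)$, is Zariski open by the semicontinuity of sheaf cohomology for the bounded complex $\Phi_\U(\mathcal{F})$; hence $U$ is open. Refining $U$ into connected components on which $i$ is locally constant, the shifted object $\Phi_\U(\mathcal{F})[i]|_{M\times U}$ is a flat family of simple twisted sheaves on $M$, which by universality of $Spl(M,\theta)$ \cite{altman-kleiman} defines a morphism $\Phi_*:U\to Spl(M,\theta)$.

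For (d), I would first check injectivity of $\Phi_*$ on closed points. If $\Phi_\U(F)[i]\cong\Phi_\U(F')[i']$ as sheaves, then the integer shifts $i,i'$ must agree (being determined by the cohomological amplitude of $\Phi_\U(F)$), hence $\Phi_\U(F)\cong\Phi_\U(F')$ as objects of $D^b(M,\theta)$. Applying $\Psi_\U$ gives an isomorphism of the objects whose Fourier-Mukai kernels are $\A\circ F$ and $\A\circ F'$. By Theorem \ref{introduction-thm-A-is-direct-sum}(2), $\Delta_*\ko_X$ is a direct summand of $\A$, so $F$ and $F'$ are direct summands of the same object $\Psi_\U\Phi_\U(F)\cong\Psi_\U\Phi_\U(F')$; since both are simple and the other summands are shifts coming from the complement of $\Delta_*\ko_X$ in $\A$, the Krull-Schmidt theorem in $D^b(X)$ forces $F\cong F'$. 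Next, the map on tangent spaces at $[F]$ is
\[
T_{[F]}\Phi_*:\Ext^1_X(F,F)\;\longrightarrow\;\Ext^1_M(F_M,F_M),
\]
which is identified with the degree $k=1$ component of $Q$ in Theorem \ref{thm-Homs-for-objects-in-the-image-of-Phi-U}(\ref{cor-item-sheaf-case-has-on-relation-in-degree-2n}), and is therefore an isomorphism since $1$ lies in the range $0\le k\le 2n-1$ (using $n\ge 2$).

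To conclude, an unramified, injective morphism of finite type between algebraic spaces that induces an isomorphism on all Zariski tangent spaces at points of $U$ is a local isomorphism onto its image, and the image is therefore open in $Spl(M,\theta)$. The main obstacle will be dealing cleanly with the twisting by $\theta$ (and with the nonexistence of a global universal sheaf on $Spl(X)$); this will be handled by working étale-locally on $Spl(X)$ and observing that the whole construction is compatible with the natural gerbe structures, so the morphism $\Phi_*$ descends.
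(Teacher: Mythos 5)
Your steps (a)--(c) and the tangent-space computation in (d) follow the same route as the paper: simplicity of $F_M$ and the identification of the differential with $\Phi_\U:\Ext^1_X(F,F)\to\Ext^1_M(F_M,F_M)$ both come from Theorem \ref{thm-Homs-for-objects-in-the-image-of-Phi-U} in degrees $0$ and $1$, and the classifying morphism $U\to Spl(M)$ is the standard relative Fourier--Mukai construction (the paper cites Mukai's flatness theorem for this). Your injectivity argument is a harmless detour: since $\A\cong\oplus_{i=0}^{n-1}\Delta_*\ko_X[-2i]$ in the standing totally-split hypothesis of this section, $\Psi_\U\Phi_\U(F)\cong\oplus F[-2i]$ and one recovers $F$ as $\H^0$; the paper instead gets injectivity directly from the fact that $\Phi_\U:\Hom(F_1,F_2)\to\Hom(\Phi_\U F_1,\Phi_\U F_2)$ is an isomorphism for sheaves, so an isomorphism downstairs lifts to one upstairs. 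Either works.

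The one genuine gap is the final step. The principle you invoke --- ``an unramified, injective morphism inducing isomorphisms on Zariski tangent spaces is a local isomorphism onto its image, and the image is therefore open'' --- is false in general: a closed immersion of a singular space into a larger space can be injective with bijective tangent maps at the relevant points without having open image, and more to the point, ``local isomorphism onto its image'' never by itself yields openness of that image. What is missing is the input that $Spl(X)$ is \emph{smooth} at $[F]$ (Mukai: deformations of a simple sheaf on a symplectic surface are unobstructed). With that, the argument closes: injectivity of $\phi$ together with injectivity of $d\phi$ shows $\dim_{[F_M]}Spl(M)\geq\dim_{[F]}Spl(X)$, while surjectivity of $d\phi$ gives $\dim_{[F_M]}Spl(M)\leq\dim T_{[F_M]}Spl(M)=\dim_{[F]}Spl(X)$; hence $Spl(M)$ is smooth at $[F_M]$ of the same dimension, $\phi$ is \'etale there, and being injective it is an open immersion. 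You should add the appeal to Mukai's smoothness theorem; without it the last sentence of your proof does not follow.
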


\begin{proof}
If $F$ is simple, then it is a smooth point of the moduli space
$Spl(X)$ of simple sheaves \cite{mukai-symplectic}.
The functor $\Phi_\U$ induces isomorphisms
$\Phi_\U:\Ext^i_X(F,F)\rightarrow \Ext^i_M(F_M,F_M)$, for $i=0,1$,
by Theorem \ref{thm-Homs-for-objects-in-the-image-of-Phi-U}. 
Hence, $F_M$ is simple, if it is a sheaf.
The functor $\Phi_\U$ defines a morphism $\phi:U\rightarrow Spl(M)$.
This follows 
from a flatness result for Fourier-Mukai functors (\cite{mukai-applications}, 
Theorem 1.6). 
The differential of $\phi$ at $[F]$ is the isomorphism 
of Zariski tangent spaces $\Phi_\U:\Ext^1_X(F,F)\rightarrow\Ext^1_M(F_M,F_M)$. 
Combining the injectivity of the differential with the smoothness
of $Spl(X)$, we conclude that the dimension $D$ of $Spl(M)$
at $[F_M]$ is larger than or equal to the dimension $d$ of 
$Spl(X)$ at $[F]$. The surjectivity of the differential 
implies that $D\leq d$. Thus, $D=d=\dim\left(\Ext^1(F_M,F_M)\right)$,
and $Spl(M)$ is smooth at $[F_M]$. 
The homomorphism
$
\Phi_\U: \Hom(F_1,F_2) \rightarrow \Hom(\Phi(F_1),\Phi(F_2))
$
is an isomorphism, for any two sheaves $F_1$, $F_2$ on $X$, by 
Theorem \ref{thm-Homs-for-objects-in-the-image-of-Phi-U}.
It follows that $\phi$ is injective and hence an isomorphism
onto its image. 
\end{proof}

\begin{example}
Choosing $F$ to be a sky-scraper sheaf of a point of the $K3$ surface $X$ we see that 
$X$ is a connected component of $Spl(M)$. 
The twisted universal sheaf $\U$ over $X\times M$ for the coarse moduli space $M:=M_H(v)$ of 
$H$-stable sheaves over the $K3$ surface $X$ is also a universal sheaf of $X$ as a moduli space 
of sheaves over $M$.
\end{example}

\begin{example}
Let $\Phi_{\U^\vee}:D^b(X)\rightarrow D^b(M)$ 
be the integral functor defined by replacing the kernel $\U$ with $\U^\vee$ 
in equation (\ref{eq-phi-U}). Define $\Psi_{\U^\vee}:D^b(M)\rightarrow D^b(X)$
similarly. 
Note that the kernel of the integral functor $\Psi_{\U^\vee}\circ \Phi_{\U^\vee}$ is also isomorphic to
$\oplus_{i=0}^{n-1}\Delta_*\StructureSheaf{X}[-2i]$,
by Theorem \ref{thm-A-is-direct-sum}, as its kernel is the pullback of the kernel $\A$ of
$\Psi_{\U}\circ \Phi_{\U}$ via the transposition $\tau$ of the two factors of $X\times X$. Now
$\A$ is $\tau$-invariant.
Theorem \ref{thm-Homs-for-objects-in-the-image-of-Phi-U} applies to the functor $\Phi_{\U^\vee}$
as well. 
Let $w\in K(X)$ be another class and
$H'$ a $w$-generic polarization, such that $\M_{H'}(w)$ is smooth and projective. 
Assume that all sheaves parametrized by one of $\M_H(v)$ or $\M_{H'}(w)$ are locally free.\footnote{
If $v=(r,c,s)$ is primitive, $H$ is $v$-generic, $r\geq 2$, $(r,c,s+1)=ku$, where $k$ is an integer and $u$ is a primitive Mukai vector satisfying $(u,u)\leq -4$, 
then all sheaves parametrized by $M_H(v)$ are locally free.
}
Denote by $w_n$ the class of $w\otimes [H'^{\otimes n}]$. Then
$\M_{H'}(w)$ is isomorphic to $\M_{H'}(w_n)$. 
Assume that both $v$ and $w$ have positive rank. 
Fix $n$ sufficiently negative.
Then  
$\Ext^i(F,F')$ is isomorphic to $H^i(F^*\otimes F')$ if $F$ is locally free and to 
$H^{2-i}(F\otimes (F')^*)^*$ if $F'$ is locally free, and it thus 
vanishes, for $i\neq 2$, for all $H$-stable sheaves $F$
of class $v$, and for all $H'$-stable sheaves $F'$ of class $w$. 
Thus, $\Phi_{\U^\vee}(F')[2]$ is equivalent to a coherent sheaf,
for all such $F'$. We get a component of $Spl(\M_H(v))$
isomorphic to $\M_{H'}(w)$, via Corollary 
\ref{cor-isomorphism-of-moduli-spaces-of-simple-sheaves}.
\end{example}

%
\section{The transposition of the factors of $M\times M$}
Let $X$ be a $K3$ surface. 
If $X$ is projective we assume given a primitive Mukai vector $v$ with $c_1(v)$ of Hodge-type $(1,1)$ and 
a $v$-generic polarization $H$ and 
we  set $M:=M_H(v)$. We also consider the case where $X$ is a K\"{a}hler non-projective $K3$ surface
and $M=X^{[n]}$. 
Let $\F$ be the modular object and $\E$ the modular sheaf over $M\times M$ (Definition \ref{def-modular}).
We calculate the extension sheaves 
$\SheafExt^i(\E,\StructureSheaf{M\times M})$ and 
$\SheafExt^i(\E,\E)$ and the torsion sheaves $\SheafTor_i(\E^*,\E)$ in this section.
Let $\tau:M\times M\rightarrow M\times M$ be the transposition of the two factors. 
\begin{lem}\label{lemma-kernel-of-the-adjoint-of-F}
The following two objects of $D^b(M\times M)$ are isomorphic.
\begin{equation}
\label{eq-pull-back-of-F-by-tau}
\F \ \ \cong \ \ \tau^*\F^\vee[2].
\end{equation}
\end{lem}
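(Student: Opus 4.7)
The plan is to unwind $\F$ as an explicit pushforward from the triple product $M\times X\times M$ and then exploit two symmetries: equivariance of the pushforward under the transposition, and Grothendieck–Serre duality along the relative $K3$ fiber.

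First I would identify $\F$ as the convolution kernel of $L=\Phi_\U\circ\Psi_\U$. Since $\Psi_\U$ has kernel $\U^\vee[2]$ on $X\times M$ (per the definition given in Section \ref{sec-faithful-functor}) and $\Phi_\U$ has kernel $\U$, writing $\pi_{ij}$ for the projections from $M\times X\times M$ to products of the indicated factors, one obtains
\begin{equation*}
\F \ \cong \ R\pi_{13,*}\!\left(\pi_{12}^*\U^\vee\otimes \pi_{23}^*\U\right)[2].
\end{equation*}
This is consistent with the cohomology sheaves recorded in (\ref{coho-of-F}), since the $[2]$ shift identifies $\H^{-1}(\F)$ with $\SheafExt^1_{\pi_{13}}(\pi_{12}^*\U,\pi_{23}^*\U)=\E$ and $\H^0(\F)$ with the diagonal $\SheafExt^2_{\pi_{13}}(\pi_{12}^*\U,\pi_{23}^*\U)\cong \StructureSheaf{\Delta_M}$ via relative Serre duality on the K3 fibers.

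Next I would compute $\tau^*\F$ via flat base change along the cartesian diagram whose top arrow is the involution $\sigma:M\times X\times M\to M\times X\times M$ exchanging the two $M$-factors and whose bottom arrow is $\tau$; both lie over $\pi_{13}$. Since $\sigma^*\pi_{12}^*\U\cong \pi_{23}^*\U$ and $\sigma^*\pi_{23}^*\U\cong \pi_{12}^*\U$, base change gives
\begin{equation*}
\tau^*\F \ \cong \ R\pi_{13,*}\!\left(\pi_{12}^*\U\otimes \pi_{23}^*\U^\vee\right)[2].
\end{equation*}

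I would then dualize $\F$ using relative Grothendieck–Serre duality along $\pi_{13}$. The map $\pi_{13}$ is smooth and proper of relative dimension $2$, with relative dualizing sheaf $\pi_2^*\omega_X$, which is trivial because $X$ is a K3 surface. Hence for any perfect $G$ on $M\times X\times M$,
\begin{equation*}
(R\pi_{13,*}G)^\vee \ \cong \ R\pi_{13,*}(G^\vee)[2].
\end{equation*}
Applying this with $G=\pi_{12}^*\U^\vee\otimes\pi_{23}^*\U$ and remembering that $(\,\cdot\,[2])^\vee=(\,\cdot\,)^\vee[-2]$ cancels the outer shift in $\F$, one concludes
\begin{equation*}
\F^\vee \ \cong \ R\pi_{13,*}\!\left(\pi_{12}^*\U\otimes\pi_{23}^*\U^\vee\right).
\end{equation*}
Comparing with the previous display yields $\tau^*\F\cong \F^\vee[2]$, and applying $\tau^*$ (an involution) gives $\F\cong \tau^*\F^\vee[2]$.

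The only real subtlety is bookkeeping with the Brauer twist: $\F$ lives in $D^b(M\times M,\pi_1^*\theta^{-1}\pi_2^*\theta)$, and one must check that $\tau^*(\_)$ and $(\_)^\vee$ each negate the Brauer class in the appropriate factor, so their composition preserves the twisting of $\F$. Both base change and Serre duality extend to the twisted setting (e.g. via Caldararu's framework in \cite{Cal-thesis}), so this is routine; this will be the main bookkeeping burden, but no new geometric input is needed beyond triviality of $\omega_X$ and the involution swap.
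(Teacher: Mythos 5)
Your proof is correct and takes essentially the same route as the paper's: both identify $\F^\vee$ via Grothendieck--Verdier duality along $\pi_{13}$, using that the relative dualizing sheaf $\pi_2^*\omega_X$ is trivial for a $K3$ surface, and then match the result with $\tau^*\F[-2]$ by swapping the two $\U$-factors under the involution of $M\times X\times M$. The only difference is one of exposition: you spell out the base-change step for $\tau^*$ and flag the Brauer-twist bookkeeping, both of which the paper treats as ``clear.''
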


\begin{proof}
The object $\F$ is defined as $R\pi_{13_*}\left(\pi_{12}^*\U^\vee\otimes \pi_{23}^*\U[2]\right)$.
We have the isomorphisms:
\begin{eqnarray*}
\F^\vee&\cong&R\SheafHom\left(R\pi_{13_*}\left(\pi_{12}^*\U^\vee\otimes \pi_{23}^*\U[2]\right),\StructureSheaf{M\times M}\right)
\\
&\cong & 
R\pi_{13_*}\left\{R\SheafHom\left(\pi_{12}^*\U^\vee\otimes \pi_{23}^*\U[2],\pi_{13}^!\StructureSheaf{M\times M}\right)\right\}
\\
& \cong & 
R\pi_{13_*}\left(\pi_{12}^*\U\otimes \pi_{23}^*\U^\vee\right)
\cong\tau^*\F[-2],
\end{eqnarray*}
where the fist isomorphism is clear, the second is Grothendieck-Verdier Duality, the third uses the triviality of 
$\omega_{\pi_{13}}$, and the last is clear.
\hide{
The statement reflects the following two description of the adjoint $\LL^\dagger$ of $\LL$.
Recall that if $\Phi:D^b(X_1)\rightarrow D^b(X_2)$ is an exact functor between two smooth
projective Calabi-Yau varieties, then a choice of  trivializations of the canonical line-bundles
$\omega_{X_i}$ yields a natural isomorphism $\Phi^\dagger_R\cong\Phi^\dagger_L[d_1-d_2]$
relating the left and right adjoint functors, where $d_i$ is the dimension of $X_i$
(\cite{mukai-duality}, or \cite{huybrechts-book}, Proposition 5.9). 
If $d_1=d_2$ then the two adjoint functors are isomorphic, and we denote both by $\Phi^\dagger$.
Taking $X_1=X$, $X_2=M$, and $\Phi=\Phi_\U$, we get
$\Psi_\U:=\left[\Phi_\U\right]^\dagger_R\cong\left[\Phi_\U\right]^\dagger_L[2-2n]$.
We get
\begin{equation}
\label{eq-perp-versus-perp-adjoint}
\LL^\dagger\cong \left\{\Phi_\U\Psi_\U\right\}^\dagger=
\left\{\Phi_\U\left(\Phi_\U\right)^\dagger_R\right\}^\dagger\cong
\left(\left[\Phi_\U\right]^\dagger_R\right)^\dagger_L\left[\Phi_\U\right]^\dagger_L \cong
\Phi_\U \Psi_\U[2n-2]=\LL[2n-2].
\end{equation}
Hence, the kernel of the endo-functor $\LL^\dagger$ is isomorphic to $\F[2n-2]$.
On the other hand, the kernel of $\LL^\dagger$ is $\tau^*\F^\vee[2n]$,
by \cite{mukai-duality}. This explains the isomorphism in Equation
(\ref{eq-pull-back-of-F-by-tau}). (???) Refer to Caldararu's result relating isomorphisms of integral functors and
their kernels.
}
\end{proof}

Let  $\gd\subset M\times M$ be the diagonal. Pushforward via the inclusion morphism 
embeds the category of coherent sheaves 
on $\Delta$ in that of $M\times M$. We suppress the pushforward notation and regard the former as a subcategory of the latter.
Recall that $\E:=\H^{-1}(\F)$, $\H^0(\F)\cong\StructureSheaf{\Delta}$, and all other sheaf
cohomologies of $\F$ vanish. 
The following is a more precise description of $\E$ due 
to the first author.

\begin{prop}[\cite{markman-hodge}, Proposition 4.5]\label{resofE}
The reflexive sheaf $\E$ is locally free away from $\gd$. Its restriction to $\gd$ 
is untwisted, and is described as follows:
\begin{enumerate}
\item[(i)] $\E\otimes \StructureSheaf{\Delta} \cong \Go^2_{\Delta}/\ko_{\Delta}\cdot \sigma$,
where $\sigma$ is the symplectic form;
\item[(ii)] $\mathcal{T}or_i(\E,\ko_{\Delta})\cong 
\Go^{i+2}_{\Delta}$, for $i>0$.
\end{enumerate}
\end{prop}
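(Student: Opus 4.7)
The plan is to compute the restriction of the exact triangle $\E[1] \to \F \to \StructureSheaf{\gd} \to \E[2]$ to the diagonal via the derived pullback $L\gd^*$ and read the proposition off the resulting long exact sequence.
Step one is a base-change computation: since $\F \cong R\pi_{13,*}(\pi_{12}^*\U^\vee \otimes \pi_{23}^*\U)[2]$ on $M\times M$, the diagram with the Cartesian square pulling back by $\gd$ gives
$$L\gd^*\F \;\cong\; R\pi_{*}R\SheafHom(\U,\U)[2],$$
where $\pi:X\times M\to M$. The Brauer twisting on $\F$ is $\pi_1^*\theta^{-1}\otimes\pi_2^*\theta$, which becomes trivial along $\gd$, which already shows that $\E|_\gd$ is untwisted.

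Step two is to identify the cohomology sheaves of the two outer terms of the pulled-back triangle. For $L\gd^*\StructureSheaf{\gd}$ this is the classical Hochschild--Kostant--Rosenberg theorem: $\H^{-i}(L\gd^*\StructureSheaf{\gd}) \cong \Omega^i_M$ for $i\geq 0$. For $R\pi_*R\SheafHom(\U,\U)$ the sheaves $\SheafExt^i_\pi(\U,\U)$ are $\StructureSheaf{M}$, $T_M$, $\StructureSheaf{M}$ in degrees $0,1,2$ respectively (the first from simplicity of stable sheaves, the middle by Kodaira--Spencer, the last by relative Serre duality on the K3 fibre). Consequently $L\gd^*\F$ has cohomology sheaves $\StructureSheaf{M}, T_M, \StructureSheaf{M}$ in degrees $-2,-1,0$, and in particular vanishes outside $[-2,0]$.

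Step three is to extract the long exact sequence in cohomology coming from $L\gd^*\E[1]\to L\gd^*\F\to L\gd^*\StructureSheaf{\gd}$. The piece around degree $-1$ reads
$$0\longrightarrow \Omega^2_M\longrightarrow \E\otimes\StructureSheaf{\gd}\longrightarrow T_M\xrightarrow{\;\phi\;}\Omega^1_M\longrightarrow 0,$$
and I will identify the connecting map $\phi:T_M\to\Omega^1_M$ with contraction against the symplectic form $\sigma$ (this is Mukai's computation of the Yoneda pairing on $\Ext^*(F,F)$). Since $\phi$ is then an isomorphism, the map $\Omega^2_M\to \E\otimes\StructureSheaf{\gd}$ is surjective. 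Pushing one step further in the LES, the connecting map $\StructureSheaf{M}\to\Omega^2_M$ sends $1\mapsto\sigma$ (again by Mukai's identification of the Yoneda product with wedge of Atiyah classes), so it is injective and its image is $\StructureSheaf{M}\!\cdot\sigma$; this yields $\E\otimes\StructureSheaf{\gd}\cong\Omega^2_M/\StructureSheaf{M}\cdot\sigma$, proving (i). For $i\geq 1$ the corresponding piece of the LES degenerates, because $\H^{-i-2}(L\gd^*\F)=0$, giving the isomorphisms $\SheafTor_i(\E,\StructureSheaf{\gd})\cong \Omega^{i+2}_M$ of (ii).

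Finally, for local freeness off the diagonal, I use that on $U:=M\times M\smallsetminus\gd$ the sheaves $\SheafExt^0_{\pi_{13}}$ and $\SheafExt^2_{\pi_{13}}$ of $(\pi_{12}^*\U,\pi_{23}^*\U)$ vanish (non-isomorphic stable sheaves with identical Mukai vector have $\Hom=\Ext^2=0$), while $\SheafExt^1_{\pi_{13}}$ has constant fibre dimension $2n-2$ by Riemann--Roch; base change and cohomology-and-base-change then force $\E|_U$ to be locally free of rank $2n-2$ and $\F|_U\cong\E|_U[1]$. The principal difficulty is the symplectic identification of the two connecting maps in the LES: showing that $\phi$ is $\sigma$ and that the connector $\StructureSheaf{M}\to\Omega^2_M$ realises $\sigma$ itself. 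This is exactly the content of Mukai's identification of the Yoneda pairing on $\Ext^1(F,F)$ with the symplectic form on $M$, so the argument reduces to a careful application of that result at the level of relative $\SheafExt$-sheaves.
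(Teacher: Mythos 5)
The paper does not prove this proposition: it is quoted, with proof, from \cite[Prop.~4.5]{markman-hodge}, so there is no internal argument to compare against. Taken on its own terms, your strategy --- apply $L\Delta^*$ to the triangle $\E[1]\to\F\to\StructureSheaf{\Delta}$, identify $L\Delta^*\F\cong R\pi_{M*}R\SheafHom(\U,\U)[2]$ by flat base change, use $\SheafTor_i(\StructureSheaf{\Delta},\StructureSheaf{\Delta})\cong\Omega^i_{\Delta}$, and read off the long exact sequence of cohomology sheaves --- is the natural one and is essentially the route of the cited reference. The identifications $\SheafExt^0_\pi(\U,\U)\cong\StructureSheaf{M}$, $\SheafExt^1_\pi(\U,\U)\cong T_M$, $\SheafExt^2_\pi(\U,\U)\cong\StructureSheaf{M}$ are correct, the alternating-rank check is consistent ($\sum_i(-1)^i\rank\,\SheafTor_i(\E,\StructureSheaf{\Delta})=2n-2$), and your reduction of local freeness off $\Delta$ to the vanishing of $\SheafHom$ and $\SheafExt^2$ between non-isomorphic stable sheaves of the same Mukai vector is standard and correct.

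Two points need attention. First, your displayed sequence
$0\to\Omega^2_M\to\E\otimes\StructureSheaf{\Delta}\to T_M\to\Omega^1_M\to0$
is not what the long exact sequence produces: the term to the left of $\Omega^2_M=\H^{-2}(L\Delta^*\StructureSheaf{\Delta})$ is $\H^{-2}(L\Delta^*\F)\cong\StructureSheaf{M}$, not $0$, and if the sequence were exact as displayed then $\phi$ being an isomorphism would force $\E\otimes\StructureSheaf{\Delta}\cong\Omega^2_M$, contradicting your own conclusion. Your prose does recover the correct statement (the kernel of $\Omega^2_M\to\E\otimes\StructureSheaf{\Delta}$ is the image of $\StructureSheaf{M}\xrightarrow{1\mapsto\sigma}\Omega^2_M$), but the display should be fixed; note also that the two maps you call connecting maps are both induced by the counit $\F\to\StructureSheaf{\Delta}$ in degrees $-1$ and $-2$, not by the boundary of the triangle. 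Second, the entire content of the proposition sits in the assertion that the degree $-1$ and $-2$ components of $L\Delta^*(\epsilon)$ are $\lrcorner\sigma\colon T_M\to\Omega^1_M$ and $1\mapsto\sigma\colon\StructureSheaf{M}\to\Omega^2_M$. You correctly isolate this as the crux, but it does not follow formally from Mukai's pairing on $\Ext^1(F,F)$: one must show that the counit of $\Phi_\U\dashv\Psi_\U$, restricted to the diagonal and decomposed via HKR, realizes the relative trace pairing, and that computation is exactly what \cite{markman-hodge} supplies. Finally, for part (ii) with $i=1$, the vanishing of $\H^{-3}(L\Delta^*\F)$ only gives injectivity of $\Omega^3_\Delta\to\SheafTor_1(\E,\StructureSheaf{\Delta})$; surjectivity again uses the injectivity of $\StructureSheaf{M}\xrightarrow{\sigma}\Omega^2_M$, so your one-line justification there should be expanded.
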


Set $\E^\vee:=R\SheafHom(\E,\StructureSheaf{M\times M})$ and 
$\E^*:=\SheafHom(\E,\StructureSheaf{M\times M})$, so that 
$\E^*:=\H^0(\E^\vee)$.

\begin{lem}
\label{lemma-pull-back-of-E-by-transposition}
The following sheaves isomorphisms exist:
\begin{eqnarray*}
\E^*&\cong &\tau^*\E,
\\
\SheafExt^1(\E,\StructureSheaf{M\times M})&\cong &\StructureSheaf{\Delta}, 
\\
\SheafExt^{2n-2}(\E,\StructureSheaf{M\times M})&\cong &\StructureSheaf{\Delta}, 
\end{eqnarray*}
and $\SheafExt^i(\E,\StructureSheaf{M\times M})\cong 0,$ if $i$ does not belong to $\{0, 1, 2n-2\}$.
\end{lem}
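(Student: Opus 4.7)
The plan is to dualize the exact triangle $\E[1] \to \F \to \StructureSheaf{\Delta}$ coming from the cohomology sheaves of $\F$ in (\ref{coho-of-F}), and then to compare the result with $\tau^*\F$ via Lemma \ref{lemma-kernel-of-the-adjoint-of-F}. Applying $R\SheafHom(-, \StructureSheaf{M\times M})$ to this triangle, the key input is the Grothendieck duality computation
\begin{equation*}
R\SheafHom(\StructureSheaf{\Delta}, \StructureSheaf{M\times M}) \cong \StructureSheaf{\Delta}[-2n],
\end{equation*}
which uses that $M$ is holomorphic symplectic of dimension $2n$, so $\omega_M$ is trivial and hence $\det N_{\Delta/M\times M} \cong \det T_M \cong \StructureSheaf{\Delta}$.

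After shifting by $[2]$ this yields a triangle
\begin{equation*}
\StructureSheaf{\Delta}[2-2n] \to \F^\vee[2] \to \E^\vee[1].
\end{equation*}
By Lemma \ref{lemma-kernel-of-the-adjoint-of-F} we have $\F^\vee[2] \cong \tau^*\F$, whose cohomology sheaves are $\tau^*\E$ in degree $-1$ and $\StructureSheaf{\Delta}$ in degree $0$ (the diagonal being $\tau$-invariant), and vanish elsewhere. Reading off the long exact sequence of cohomology sheaves, then, is straightforward: since for $n \geq 2$ the nonvanishing degrees $\{-1, 0\}$ of $\F^\vee[2]$ and $\{2n-2\}$ of $\StructureSheaf{\Delta}[2-2n]$ are disjoint, the sheaves $\SheafExt^{i+1}(\E, \StructureSheaf{M\times M}) = \H^i(\E^\vee[1])$ receive contributions in only two separate ranges. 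In degrees $i = -1, 0$ we directly read off $\E^* \cong \tau^*\E$ and $\SheafExt^1(\E, \StructureSheaf{M\times M}) \cong \StructureSheaf{\Delta}$; in degree $i = 2n-3$ the connecting homomorphism from the shifted diagonal produces $\SheafExt^{2n-2}(\E, \StructureSheaf{M\times M}) \cong \StructureSheaf{\Delta}$; and in every other degree both outer terms of the sequence vanish, so the extension sheaves vanish as well.

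No substantial obstacle is anticipated. The main technical input is the self-duality of $\F$ recorded in Lemma \ref{lemma-kernel-of-the-adjoint-of-F}, which itself reduces to Grothendieck--Verdier duality for the smooth projection $\pi_{13}$ plus triviality of $\omega_{\pi_{13}}$; everything else is bookkeeping with the long exact sequence of cohomology sheaves. The only mild care is required at $n = 2$, where the degrees $2n-3 = 1$ and $0$ come close together but remain distinguishable because $\H^1(\F^\vee[2]) = 0$, so the analysis goes through uniformly for all $n \geq 2$.
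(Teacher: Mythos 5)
Your proof is correct and follows essentially the same route as the paper: dualize the triangle $\E[1]\to\F\to\StructureSheaf{\Delta}$, use $R\SheafHom(\StructureSheaf{\Delta},\StructureSheaf{M\times M})\cong\omega_\Delta[-2n]$ with $\omega_\Delta$ trivialized, invoke Lemma \ref{lemma-kernel-of-the-adjoint-of-F} to identify $\F^\vee[2]$ with $\tau^*\F$, and read off the long exact sequence of cohomology sheaves, which degenerates because the nonzero degrees of the two outer objects are disjoint for $n\geq 2$. The paper presents the same triangle in a rotated form and phrases the degeneration as the vanishing of the maps $\H^i(\alpha)$, but the argument is identical.
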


\begin{proof}
Dualizing the exact triangle
$
\E[1]\rightarrow\F\rightarrow \StructureSheaf{\Delta}\rightarrow \E[2],
$
we get the exact triangle
\[
\E^\vee[-2]\rightarrow \omega_{\Delta}[-2n]\rightarrow \F^\vee\rightarrow \E^\vee[-1].
\]
Trivializing $\omega_{\Delta}$, shifting by $2$, and applying the 
isomorphism (\ref{eq-pull-back-of-F-by-tau}), we get the following exact triangle.
\[
\E^\vee\rightarrow \StructureSheaf{\Delta}[2-2n]\RightArrowOf{\alpha} \tau^*\F\rightarrow \E^\vee[1].
\]
The sheaf homomorphisms  
$\H^i(\alpha):\H^i\left(\StructureSheaf{\Delta}[2-2n]\right)\rightarrow \H^i(\tau^*\F)$ vanish for all $i$,
since $n\geq 2$.
The long exact sequence of sheaf cohomology breaks into $\H^{-1}(\tau^*\F)\cong \H^{0}(\E^\vee)$, 
$\H^{0}(\tau^*\F)\cong \H^{1}(\E^\vee)$, and $\H^{2n-2}(\E^\vee)\cong \StructureSheaf{\Delta}$,
and $\H^i(\E^\vee)=0$, if $i$ does not belong to $\{0, 1, 2n-2\}$.
\end{proof}

The following proposition is used in the proof of \cite[Theorem 1.11]{torelli}.

\begin{prop}
\label{prop-sheaf-ext-E-E}
The sheaf $\SheafExt^i(\E,\E)$ comes with a decreasing filtration 
\[
F^0\SheafExt^i(\E,\E)\subset F^{-1}\SheafExt^i(\E,\E)\subset \cdots \subset
F^{2-2n}\SheafExt^i(\E,\E)=\SheafExt^i(\E,\E)
\]
with graded pieces $E^{p,q}_\infty:=F^{q}\SheafExt^{p+q}(\E,\E)/F^{q+1}\SheafExt^{p+q}(\E,\E)$ satisfying
\begin{enumerate}
\item
$E^{0,0}_\infty=F^0\SheafExt^0(\E,\E)\cong\E^*\otimes \E/\mbox{torsion},$
\\
$E^{1,-1}_\infty\cong \Omega^3_\Delta,$
\\
$E^{2n-2,2-2n}_\infty \cong  \Omega^{2n}_\Delta,$\\
and all other graded pieces of the filtration of $\SheafExt^0(\E,\E)$ vanish.
\item
We have the short exact sequence
\[
0\rightarrow \Omega^{2}_\Delta/(\sigma)\rightarrow 
\SheafExt^1(\E,\E)\rightarrow \Omega^{2n-1}_\Delta\rightarrow 0,
\]
where the subsheaf is $E^{1,0}_\infty$ and the quotient is $E^{2n-2,3-2n}_\infty$
and all other graded pieces  of the filtration of $\SheafExt^1(\E,\E)$ vanish.
\item
$\SheafExt^i(\E,\E)\cong \Omega^{2n-i}_\Delta$, for $2\leq i \leq 2n-3$.\\
$\SheafExt^{2n-2}(\E,\E)\cong \Omega^{2}_\Delta/(\sigma)$, and \\
$\SheafExt^i(\E,\E)=0$, for $i>2n-2$.
\item
\label{prop-item-Tor-E-E}
The torsion subsheaf of $\E^*\otimes \E$ is isomorphic to $\Omega^{4}_\Delta$.\\
$\SheafTor_j(\E^*,\E)\cong \Omega^{j+4}_\Delta$, for $j\geq 1$.
In particuler, $\SheafTor_j(\E^*,\E)$ vanishes for $j>2n-4$.
\end{enumerate}
\end{prop}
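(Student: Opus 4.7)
The plan is to compute $R\SheafHom(\E, \E) = \E^\vee \otimes^L \E$ via the hypertor spectral sequence
\[
E_2^{p,q} = \SheafTor_{-q}(\H^p(\E^\vee), \E) \Longrightarrow \SheafExt^{p+q}(\E, \E),
\]
whose only nonzero columns are $p \in \{0, 1, 2n-2\}$ by Lemma \ref{lemma-pull-back-of-E-by-transposition}. The columns at $p = 1$ and $p = 2n-2$, where $\H^p(\E^\vee) \cong \StructureSheaf{\Delta}$, are given directly by Proposition \ref{resofE}: the entry $E_2^{p,-j}$ is $\Omega^{j+2}_\Delta$ for $1 \leq j \leq 2n - 2$ and $\Omega^2_\Delta/(\sigma)$ for $j = 0$. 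After convergence, the column $p = 2n-2$ will account for $\SheafExt^i(\E, \E) \cong \Omega^{2n-i}_\Delta$ in the range $2 \leq i \leq 2n - 3$, for the $\Omega^{2n-1}_\Delta$-quotient of $\SheafExt^1(\E, \E)$, the $\Omega^{2n}_\Delta$-summand of $\SheafExt^0(\E, \E)$, and all of $\SheafExt^{2n-2}(\E, \E)\cong \Omega^2_\Delta/(\sigma)$.

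To handle the column $p = 0$, I would first establish item (iv) independently. Combining $\E^* \cong \tau^*\E$ (Lemma \ref{lemma-pull-back-of-E-by-transposition}) with $\F^\vee \cong \tau^*\F[-2]$ (Lemma \ref{lemma-kernel-of-the-adjoint-of-F}) gives $R\SheafHom(\E, \F) \cong \tau^*(\E \otimes^L \F^\vee)[-2]$. Applying $R\SheafHom(\E, \_)$ to the defining triangle $\E[1] \to \F \to \StructureSheaf{\Delta}$ yields
\[
R\SheafHom(\E, \E)[1] \to R\SheafHom(\E, \F) \to R\SheafHom(\E, \StructureSheaf{\Delta}),
\]
and by Grothendieck--Verdier adjunction the rightmost term equals $i_* R\SheafHom_\Delta(Li^*\E, \StructureSheaf{\Delta})$, whose cohomologies are wedge powers of $T_\Delta$ by Proposition \ref{resofE} (up to the quotient by $\sigma$ in degree zero). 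The analogous triangle obtained by tensoring with $\E^*$—where $\E^*\otimes^L \StructureSheaf{\Delta} = Li^*\E$ since $\tau|_\Delta = \id$—couples $\E^*\otimes^L \E$ to these computable terms, and matching cohomologies degree by degree pins down $\SheafTor_j(\E^*, \E) \cong \Omega^{j+4}_\Delta$ for $j \geq 1$ together with the torsion subsheaf $\Omega^4_\Delta$ of $\E^*\otimes\E$.

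With item (iv) in hand, the $p = 0$ column of the spectral sequence becomes $E_2^{0,-j} = \Omega^{j+4}_\Delta$ for $j \geq 1$, with the torsion of $\E^*\otimes\E$ at $j = 0$ contributing the $\Omega^3_\Delta$-subquotient in the filtration of $\SheafExt^0$ and the locally free part $\E^*\otimes\E/\mathrm{torsion}$ surviving as $E_\infty^{0,0}$. The vanishing $\SheafExt^i(\E, \E) = 0$ for $i < 0$ then forces the only admissible differentials—$d_{2n-3}$ from the columns $p = 0, 1$ into $p = 2n-2$—to be isomorphisms wherever the target is nonzero. A rank and support check makes these matches explicit and leaves precisely the graded pieces listed in (i)--(iii); the extension in (ii) between $\Omega^2_\Delta/(\sigma)$ and $\Omega^{2n-1}_\Delta$ is the filtration jump at $p+q = 1$.

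The main obstacle is item (iv): exhibiting $\SheafTor_j(\E^*, \E) \cong \Omega^{j+4}_\Delta$ rather than some more intricate extension of $\Omega^\bullet_\Delta$'s. The shift by $4$ reflects one cotangent factor from the Koszul resolution of $\Delta \subset M\times M$ (whose conormal is $\Omega^1_\Delta$) and one from the quotient by $\sigma$ in the description of $\E|_\Delta$; rigorously tracking how $\sigma$ interacts with the Koszul differentials and the two iterations of the triangle above—and checking that the $\SheafExt^1_\Delta$-terms of $R\SheafHom_\Delta(\Omega^2_\Delta/(\sigma), \StructureSheaf{\Delta})$ contribute exactly the missing $\Omega^3_\Delta$-jump in (i)—is where the bulk of the technical work will live.
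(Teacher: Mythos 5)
Your setup is the same as the paper's---the two-sided Koszul/hypertor spectral sequence $E_2^{p,q}=\SheafTor_{-q}(\SheafExt^p(\E,\StructureSheaf{M\times M}),\E)\Rightarrow\SheafExt^{p+q}(\E,\E)$ with nonzero columns only at $p\in\{0,1,2n-2\}$---but your account of how it degenerates does not work, and it hides the one idea that makes the proof go through. For this spectral sequence (filtration by the resolution index of $\E$, cohomology taken first in the dual direction) the differentials run $d_r\colon E_r^{p,q}\to E_r^{p-r+1,q+r}$, i.e.\ \emph{from} the columns $p=2n-2$ and $p=1$ \emph{toward} $p=0$, not "from the columns $p=0,1$ into $p=2n-2$" as you assert. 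Your picture is also internally inconsistent: if there were isomorphism differentials landing in the $p=2n-2$ column, that column could not survive intact to $E_\infty$ and supply the sheaves $\Omega^{2n-i}_\Delta$, which your first paragraph (correctly) needs it to do. With the correct direction, the differentials out of $p=2n-2$ are easily seen to vanish (the only nontrivial target is $\Omega^2_\Delta/(\sigma)$, hit from $\Omega^{2n}_\Delta$, and $H^0(\Omega^2_\Delta/(\sigma))=0$), so that column and the positions $(1,0)$, $(1,-1)$ all survive; the only differentials that matter are $d_2\colon E_2^{1,q}\to E_2^{0,q+2}$.

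This is precisely where the paper gets item (iv) for free, making your proposed independent dévissage---which you yourself flag as "the main obstacle" and "the bulk of the technical work," and which you do not carry out---unnecessary. Since $\SheafExt^{p+q}(\E,\E)=0$ for $p+q<0$, the map $d_2^{1,q}\colon\Omega^{2-q}_\Delta\to\SheafTor_{-q-2}(\E^*,\E)$ must be injective for $q\le-2$ and surjective for $q<-2$; hence it is an isomorphism for $q\le-3$, which \emph{computes} $\SheafTor_j(\E^*,\E)\cong\Omega^{j+4}_\Delta$ for $j\ge1$, while for $q=-2$ it injects $\Omega^4_\Delta$ into $\E^*\otimes\E$ as the torsion subsheaf, leaving $E_\infty^{0,0}\cong\E^*\otimes\E/\mbox{torsion}$. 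Relatedly, you misattribute the $\Omega^3_\Delta$ graded piece of $\SheafExt^0(\E,\E)$: it is $E_\infty^{1,-1}=\SheafTor_1(\StructureSheaf{\Delta},\E)$ from the $p=1$ column (Proposition \ref{resofE}), not a contribution of the torsion of $\E^*\otimes\E$ (that torsion is $\Omega^4_\Delta$ and is killed by $d_2$) nor anything extracted from $R\SheafHom_\Delta(\Omega^2_\Delta/(\sigma),\StructureSheaf{\Delta})$. Until the differential directions are fixed and either item (iv) is actually proved or derived by the forcing argument above, the proposal has a genuine gap.
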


\begin{proof}
Let $W_j\rightarrow W_{j+1}\rightarrow \cdots \rightarrow W_{-1}\rightarrow W_0\rightarrow \E$
be a locally free resolution of $\E$. Denote the locally free complex by $(W_\bullet,d)$
and let $(W^*_\bullet,d^*)$ be the dual complex. We get the double complex
\[
W^{p,q}:=W^*_{-p}\otimes W_q, \ \ \ p\geq 0, \  q\leq 0
\]
and the associated single complex 
$
K^k:=\oplus_{p+q=k}W^{p,q}
$
with differential $D$, which restricts to $W^{p,q}$ as
$(-1)^p(d^*\otimes 1)+(1\otimes d):W^{p,q}\rightarrow W^{p+1,q}\oplus W^{p,q+1}$
\cite[Ch. 4, Sec. 5]{griffiths-harris}.
Note that $(W_\bullet,d)$ is quasi-isomorphic to $\E$, $(W^*_\bullet,d^*)$ represents 
$\E^\vee:=R\SheafHom(\E,\StructureSheaf{M\times M})$, and 
$(K^\bullet,D)$ represents $R\SheafHom(\E,\E)$
in $D^b(M\times M)$. In particular, we have the isomorphism
\[
\H^i(K^\bullet,D) \cong \SheafExt^i(\E,\E).
\]

Consider the decreasing filtration $F^qK^i:=\oplus_{p+q'=i, q'\geq q} W^{p,q'}$ and denote the induced filtration on 
sheaf cohomology by $F^q\SheafExt^i(\E,\E).$ Set
$E^{p,q}_\infty:=F^q\SheafExt^{p+q}(\E,\E)/F^{q+1}\SheafExt^{p+q}(\E,\E)$.
We have a spectral sequence converging to $\H^i(K^\bullet,D)$, with
\begin{eqnarray*}
E_1^{p,q}&:=& \H^p((W_\bullet^*,d^*)\otimes W_q)\cong \SheafExt^p(\E,W_q)\cong 
\SheafExt^p(\E,\StructureSheaf{M\times M})\otimes W_q.
\\
E_2^{p,q}&\cong& \H^q\left(\SheafExt^p(\E,\StructureSheaf{M\times M})\otimes W_\bullet,1\otimes d\right)\cong
\SheafTor_{-q}\left(\SheafExt^p(\E,\StructureSheaf{M\times M}),\E\right).
\end{eqnarray*}

We have seen that $\SheafExt^p(\E,\StructureSheaf{M\times M})$ is isomorphic 
$\StructureSheaf{\Delta}$, for $p=1$ and $p=2n-2$, and it vanishes for all other positive values of $p$,
by Lemma \ref{lemma-pull-back-of-E-by-transposition}. 
Furthermore, we have the isomorphisms 
$\SheafTor_{-q}\left(\StructureSheaf{\Delta},\E\right)\cong \Omega^{2-q}_\Delta$, for $q<0$, and 
$\SheafTor_{0}\left(\StructureSheaf{\Delta},\E\right)\cong \Omega^{2}_\Delta/(\sigma)$,
by Proposition \ref{resofE}. 
We get the following table for the $E_2$ term of the spectral sequence.

\begin{tabular}{|c|c|c|c|c|}  \hline 
\hspace{1ex}
$q\backslash p$& 0 & 1 & $\cdots$ & 2n-2
\\
\hline
0 & $\E^*\otimes \E$ & $\Omega^{2}_\Delta/(\sigma)$ & \hspace{1ex} & $\Omega^{2}_\Delta/(\sigma)$
\\
\hline
-1 & $\SheafTor_1(\E^*,\E)$ & $\Omega^{3}_\Delta$ & \hspace{1ex} & $\Omega^{3}_\Delta$
\\
\hline
-2 & $\SheafTor_2(\E^*,\E)$ & $\Omega^{4}_\Delta$ & \hspace{1ex} & $\Omega^{4}_\Delta$
\\
\hline
\vdots &\hspace{1ex}  &\hspace{1ex}  &\hspace{1ex}  &\hspace{1ex}  
\\
\hline
2-2n & $\SheafTor_{2n-2}(\E^*,\E)$ & $\Omega^{2n}_\Delta$ & \hspace{1ex} & $\Omega^{2n}_\Delta$
\\
\hline
\end{tabular}\\

We show first that $E_\infty^{2n-2,q}=E_2^{2n-2,q}$, for all $q$.
For $j\geq 2$, the differential of the spectral sequence is $d_j:E_j^{2n-2,q}\rightarrow E_j^{2n-j-1,q+j}$.
The vanishing of the columns other than for $p=0,1,2n-2$ implies that 
$E^{2n-2,q}_2=E^{2n-2,q}_{2n-2}$.
The differential $d_{2n-2}: E_{2n-2}^{2n-2,q}\rightarrow E_{2n-2}^{1,q+2n-2}$ vanishes for all $q$. Indeed, for $q=2-2n$
the homomorphism is a section of $H^0(\Omega^{2}_\Delta/(\sigma))$, which vanishes.
For all other values of $q$ the differential vanishes since its target vanishes.

The above vanishing of 
$d_j:E_j^{p,q}\rightarrow E_j^{p-j+1,q+j}$, for $j\geq 2$ and $p\geq 2$, implies also that
$E_\infty^{1,-1}=E_2^{1,-1}$ and $E_\infty^{1,0}=E_2^{1,0}$.

The sheaves $E^{p,q}_\infty$ vanish for $p+q<0$, since $\SheafExt^{p+q}(\E,\E)$
vanishes for these values. Hence,
\[
d_2^{1,q}:E_2^{1,q}\rightarrow E_2^{0,q+2}
\]
is injective for $q\leq -2$ and surjective for $q<-2$. In particular, 
the homomorphism
\[
d_2:E_2^{1,-2}= \ \Omega^{4}_\Delta\rightarrow \E^*\otimes \E
\]
is injective and the sheaves $\SheafTor_{-q}(\E^*,\E)$
are as claimed.
\end{proof}

\begin{rem}
Let $\beta:Y\rightarrow M\times M$ be the blow-up centered along the diagonal.
We claim that the sheaf $\beta^*\E$ has a non-trivial torsion subsheaf.
This is seen as follows. The sheaf $\E^*$ is isomorphic to $\beta_*V$, for a locally free sheaf V over $Y$, by
\cite[Prop. 4.5]{markman-hodge}. 
$\E^*\otimes \E$ has a non-trivial torsion subsheaf, by Proposition \ref{prop-sheaf-ext-E-E} (\ref{prop-item-Tor-E-E}).
On the other hand, $\E^*\otimes \E\cong \beta_*(V\otimes \beta^*E)$, by the projection formula. Hence,
$V\otimes \beta^*E$ has a non-trivial torsion subsheaf. We conclude that so does $\beta^*\E$, as claimed.
\end{rem}
%
\section{A simple and rigid comonad in $D^b(M\times M)$}
\label{sec-a-simple-and-rigid-object}

%
\subsection{$\F$ is simple and rigid}
We keep the notation of section \ref{sec-faithful-functor}, so $(X,H)$ is a 
polarized $K3$ surface,
$M:=M_H(v)$ is a smooth and projective moduli space of $H$-stable sheaves on $X$, of dimension $2n$,
$n\geq 2$, $\U$ is a universal sheaf over $X\times M$, 
$\Phi_\U:D^b(X)\rightarrow D^b(M,\theta)$ is the integral functor with kernel $\U$, and 
$\Psi_\U:D^b(M,\theta)\rightarrow D^b(X)$ is its right adjoint. 
Consider the following object in $D^b(X\times M,\pi_M^*\theta^{-1})$.
\[
\V := \U^\vee\otimes \pi_X^*\omega_X[2].
\]
Let $\pi_{ij}$ be the projection from 
$M\times X\times M$ onto the product of the $i$-th and $j$-th factors.
The kernel of the endo-functor $\LL:=\Phi_\U\Psi_\U:D^b(M,\theta)\rightarrow D^b(M,\theta)$ is the object 
\begin{equation}
\label{eq-F}
\F \  = \ \pi_{13_*}\left(\pi_{12}^*\V\otimes\pi_{23}^*\U\right)
\end{equation}
of $D^b(M\times M,\pi_1^*\theta^{-1}\pi_2^*\theta)$.
We prove in this section the following statement.

\begin{lem}
\label{lemma-F-is-simple-and-rigid}
Assume that $M=X^{[n]}$ and $\U$ is the universal ideal sheaf or, more generally, 
that the morphism $\alpha$ of Theorem \ref{introduction-thm-A-is-direct-sum} is an isomorphism.
\begin{enumerate}
\item
\label{lemma-item-F-is-simple-and-rigid}
The following isomorphisms hold:
\begin{eqnarray*}\
\Hom(\F,\F)&\cong &\CC,
\\
\Hom(\F,\F[k])&=&0,
\end{eqnarray*}
for odd $k$.
\item
\label{lemma-item-higher-homs-from-F-to-itself}
More generally, $\Hom(\F,\F[k])$ is isomorphic to
\[
\oplus_{i=0}^{n-1} \left[HH^{k-2i}(X)\right]^{\oplus i+1} 
\ \ \ \oplus \ \ \ 
\oplus_{j=0}^{n-2} \left[HH^{k+2j+4-4n}(X)\right]^{\oplus j+1},
\]
for all integers $k$.
\end{enumerate}
\end{lem}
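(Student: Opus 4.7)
The plan is to reduce the computation of $\Hom_{M \times M}(\F, \F[k])$ to a Hochschild calculation on $X$, exploiting the splitting $\A \cong \bigoplus_{i=0}^{n-1} \Delta_*\ko_X[-2i]$ granted by our hypothesis and Theorem \ref{introduction-thm-A-is-direct-sum}. The key intermediate identification I would establish is
\[
\Hom_{M \times M}(\F, \F[k]) \;\cong\; \Hom_{X \times X}\bigl(\Delta_*\ko_X,\; \A \circ \A[k]\bigr).
\]
Granting this, part (\ref{lemma-item-higher-homs-from-F-to-itself}) follows by direct combinatorics, and part (\ref{lemma-item-F-is-simple-and-rigid}) is the specialization at $k = 0$ (only the $l=0$ summand contributes and yields $HH^0(X)=\CC$) and at odd $k$ (every $HH^{k-2l}(X)$ vanishes on a K3, since $H^0(T_X)=H^1(\ko_X)=0$ and $\wedge^2 T_X \cong \ko_X$).

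To prove the key identification I would start from $\F = \pi_{13*}(\pi_{12}^*\V \otimes \pi_{23}^*\U)$ and unfold adjunctions. Grothendieck duality gives $\pi_{13}^{\,!} = \pi_{13}^*[2]$ (the relative canonical $\pi_2^*\omega_X$ is trivial), so $\pi_{13}^* \dashv \pi_{13}^{\,!}$ rewrites $\Hom_{M\times M}(\F, \F[k])$ as a Hom on $M \times X \times M$ with target $\pi_{13}^*\F[k+2]$. Tensor-hom duality moves $\pi_{12}^*\V$ to the target as $\pi_{12}^*\V^\vee = \pi_{12}^*\U[-2]$ (using $\V = \U^\vee[2]$, up to transposition of factors), and $\pi_{23}^* \dashv \pi_{23*}$ then reduces the problem to a Hom on $X \times M$. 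A direct convolution computation identifies the resulting target as $\U \circ \A[k]$, the kernel of the integral functor $\Phi_\U \circ T \colon D^b(X) \to D^b(M,\theta)$. Translating morphisms of kernels to natural transformations of integral functors (Caldararu--Mukai, Prop.\ 5.1 of \cite{cal-mukai1}) and applying $\Phi_\U \dashv \Psi_\U$ once more yields
\[
\Hom_{X\times M}(\U,\; \U \circ \A[k]) \;=\; \mathrm{Nat}(\Phi_\U, \Phi_\U T[k]) \;\cong\; \mathrm{Nat}(\id_{D^b(X)},\; T^2[k]),
\]
which equals $\Hom_{X\times X}(\Delta_*\ko_X, \A \circ \A[k])$ by another application of Caldararu--Mukai (the kernel of $T^2$ is $\A \circ \A$).

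With the identification in hand, I would compute $\A \circ \A$ from the splitting: since convolution with $\Delta_*\ko_X$ acts as the identity on kernels,
\[
\A \circ \A \;\cong\; \bigoplus_{i,j=0}^{n-1} \Delta_*\ko_X[-2(i+j)] \;\cong\; \bigoplus_{l=0}^{2n-2} \Delta_*\ko_X[-2l]^{\oplus m(l)},
\]
with $m(l) = l+1$ for $0 \leq l \leq n-1$ and $m(l) = 2n-1-l$ for $n \leq l \leq 2n-2$. Taking $\Hom_{X\times X}(\Delta_*\ko_X, -\,[k])$ then yields $\bigoplus_l m(l)\, HH^{k-2l}(X)$. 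The two ranges of $l$ correspond exactly to the two direct sums in part (\ref{lemma-item-higher-homs-from-F-to-itself}): the first range gives the sum indexed by $i = l$ with multiplicity $i+1$, and the second range, under the substitution $l = 2n-2-j$ (so $m(l) = j+1$ and $k-2l = k+2j+4-4n$), gives the second sum.

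The principal technical difficulty lies in the adjunction and duality manipulations combined with the twisted-sheaf bookkeeping: $\U$ is $\theta^{-1}$-twisted on $X \times M$, $\V$ is $\theta$-twisted, and $\F$ carries the Brauer class $\pi_1^*\theta^{-1}\pi_2^*\theta$ on $M \times M$. Each step above must be carried out in the appropriate category of twisted coherent sheaves in the sense of \cite{Cal-thesis}, and the shift $[2]$ in $\pi_{13}^{\,!}$ must be carefully matched against the shift $[-2]$ arising from $\V^\vee = \U[-2]$ so that they cancel. Modulo this bookkeeping the argument is formal.
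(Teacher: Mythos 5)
Your proposal is correct and follows essentially the same route as the paper: both arguments reduce $\Hom_{M\times M}(\F,\F[k])$ by adjunction to $\Hom_{X\times X}(\StructureSheaf{\Delta_X},\A\circ\A[k])$ and then read off the answer from the total splitting $\A\cong\oplus_{i=0}^{n-1}\Delta_*\StructureSheaf{X}[-2i]$, with the same multiplicity count $m(l)$ on $(\lambda_n)^{\otimes 2}$. The only difference is packaging: the paper gets the key identification in one step from the adjunction $\Gamma\dashv\Gamma^\dagger_R$ for the exterior-product functor $\Gamma$ with kernel $\V\boxtimes\U$ (which sends $\StructureSheaf{\Delta_X}$ to $\F$ and whose composite $\Gamma^\dagger_R\Gamma$ is tensorization by $(\lambda_n)^{\otimes 2}$), whereas you unfold the same adjunction by hand via Grothendieck duality and tensor-hom on the triple product.
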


\begin{proof}
Part (\ref{lemma-item-F-is-simple-and-rigid}): Note that the Hochschild cohomology
$HH^i(X):=\Hom_{X\times X}(\StructureSheaf{\Delta_X},\StructureSheaf{\Delta_X}[i])$ 
vanishes for $i<0$, for $i>4$, and for odd $i$.
Part (\ref{lemma-item-F-is-simple-and-rigid}) follows from part (\ref{lemma-item-higher-homs-from-F-to-itself})
and the isomorphism
$HH^0(X):=\Hom_{X\times X}(\StructureSheaf{\Delta_X},\StructureSheaf{\Delta_X})\cong\CC$.

Part (\ref{lemma-item-higher-homs-from-F-to-itself}):
Let $\Phi_\V:D^b(X)\rightarrow D^b(M,\theta^{-1})$ be the integral functor with kernel $\V$
and let $\Psi_\V:D^b(M,\theta^{-1})\rightarrow D^b(X)$ be its right adjoint. 
We get  the object 
\[
\V\boxtimes\U :=
\pi_{13}^*\V\otimes \pi_{24}^*\U.
\]
in $D^b(X\times X\times M\times M,\pi_3^*\theta^{-1}\pi_4^*\theta)$. Let
\[
\Gamma \ : \ D^b(X\times X) \ \ \longrightarrow \ \ D^b(M\times M,\pi_1^*\theta^{-1}\pi_2^*\theta)
\]
be the integral functor with kernel $\V\boxtimes\U$ and let
$\Gamma^\dagger_R:D^b(M\times M,\pi_1^*\theta^{-1}\pi_2^*\theta)\rightarrow D^b(X\times X)$ be its right adjoint. 
Then $\Gamma^\dagger_R$ is the integral transform with kernel
\[
(\V\boxtimes \U)^\vee\otimes \pi_1^*\omega_X\otimes \pi_2^*\omega_X[4]
\cong \U\boxtimes\V.
\]
Note that $\Gamma$ is the cartesian product of the functors $\Phi_\W$ and $\Phi_\U$.
Similarly, $\Gamma^\dagger_R$ is the cartesian product of their right adjoints
$\Psi_\V$ and $\Psi_\U$.

Let $\A$ be the object of $D^b(X\times X)$ given in Equation (\ref{eq-kernel-A}).
We have the following isomorphisms.
\begin{eqnarray}
\label{eq-F-is-the-image-of-the-structure-sheaf-of-the-diagonal}
\Gamma\left(\StructureSheaf{\Delta_X}\right) & \cong & \F,
\\
\nonumber
\Gamma^\dagger_R\left(\StructureSheaf{\Delta_M}\right) & \cong & \A.
\end{eqnarray}


The composition $\Psi_\U\Phi_\U:D^b(X)\rightarrow D^b(X)$ has kernel $\A$. The kernel of the
composition $\Psi_\V\Phi_\V:D^b(X)\rightarrow D^b(X)$ is the pull-back of $\A$ via the involution of $X\times X$
interchanging the two factors. $\A$ is invariant under this pull-back, and so $\A$ is the kernel
of $\Psi_\V\Phi_\V$ as well. We conclude that the composite endo-functor
\[
\Gamma^\dagger_R\Gamma \ : \ D^b(X\times X) \ \ \longrightarrow \ \  D^b(X\times X)
\]
has kernel $\pi_{13}^*\A\otimes \pi_{24}^*\A$ in $D^b(X\times X\times X\times X)$. 
We get the following isomorphism.
\begin{equation}
\label{eq-right-adjoint-of-Gamma-composed-with-Gamma-takes-id-to-A-square}
\Gamma^\dagger_R\Gamma(\StructureSheaf{\Delta_X}) \ \ \ \cong \ \ \ \A\circ \A.
\end{equation}
The object 
$\A$ is  isomorphic to $\StructureSheaf{\Delta_X}\otimes_{\ComplexNumbers}\lambda_n$,
where $\lambda_n$ is an object of $D^b(\pt)$, by Theorem \ref{thm-A-is-direct-sum} or by the assumption that $\alpha$ is an isomorphism.
Hence, the endo-functor $\Gamma^\dagger_R\Gamma$ is isomorphic to the functor
of tensorization over $\ComplexNumbers$ with the object $(\lambda_n)^{\otimes 2}$
of $D^b(\pt)$.
Now
\[
(\lambda_n)^{\otimes 2}\ \ \ \cong \ \ \ 
\oplus_{i=0}^{n-1} \left(\StructureSheaf{\pt}[-2i]\right)^{\oplus i+1} 
\ \ \ \oplus \ \ \ 
\oplus_{j=0}^{n-2} \left(\StructureSheaf{\pt}[2j+4-4n]\right)^{\oplus j+1}.
\]
We get the isomorphisms:
\begin{eqnarray*}
\Hom_{M\times M}(\F,\F[k]) & \cong &
\Hom_{M\times M}(\Gamma(\StructureSheaf{\Delta_X}),\Gamma(\StructureSheaf{\Delta_X})[k]) 
\\
& \cong &
\Hom_{X\times X}\left(\StructureSheaf{\Delta_X},\Gamma^\dagger_R\Gamma(\StructureSheaf{\Delta_X})[k]\right) 
\\
& \cong &
\Hom_{X\times X}\left(\StructureSheaf{\Delta_X},\StructureSheaf{\Delta_X}\otimes_\CC (\lambda_n)^{\otimes 2}[k]\right).
 \end{eqnarray*}
Part (\ref{lemma-item-higher-homs-from-F-to-itself}) follows immediately from 
the isomorphisms above.
\end{proof}

%
\subsection{$\E$ is simple and rigid}
Let $\E$ be the sheaf cohomology $\H^{-1}(\F)$, where $\F$ is the object
of $D^b(M\times M)$ given in Equation (\ref{eq-F}). We get the exact triangle
\begin{equation}
\label{eq-exact-triangle-with-E-and-F}
\E[1]\RightArrowOf{\beta} \F \RightArrowOf{\epsilon} \StructureSheaf{\Delta_M}
\RightArrowOf{\gamma} \E[2],
\end{equation}
where $\epsilon$ is the morphism inducing the counit for the adjunction
$\Phi_\U\dashv \Psi_\U$. The following rigidity result is a crucial ingredient in the proof of Theorem 
\ref{thm-torelli}.

\begin{lem}
\label{lemma-Ext-1-E-E-vanishes}
Assume that $M=X^{[n]}$ and $\U$ is the universal ideal sheaf.
Then the sheaf $\E$ is simple and rigid. In other words, $\Hom(\E,\E)$ is one-dimensional
and $\Ext^1(\E,\E)$ vanishes.
\end{lem}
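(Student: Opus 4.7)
The plan is to compute $\Hom(\E,\E)$ and $\Ext^1(\E,\E)=\Hom(\E,\E[1])$ by applying the three functors $\Hom(\F,-)$, $\Hom(\StructureSheaf{\Delta_M},-)$, and $\Hom(-,\E)$ to the fundamental exact triangle
\[
\E[1]\to\F\to\StructureSheaf{\Delta_M}\to\E[2],
\]
and chasing the resulting long exact sequences. The main input is Lemma \ref{lemma-F-is-simple-and-rigid}, which gives $\Hom(\F,\F)\cong\CC$ and $\Hom(\F,\F[1])=0$. Two auxiliary identifications carry most of the weight: via the adjunction $\Gamma\dashv\Gamma^\dagger_R$ used in the proof of Lemma \ref{lemma-F-is-simple-and-rigid}, together with $\Gamma^\dagger_R(\StructureSheaf{\Delta_M})\cong\A\cong\StructureSheaf{\Delta_X}\otimes_\CC\lambda_n$, one has $\Hom(\F,\StructureSheaf{\Delta_M}[k])\cong\bigoplus_{i=0}^{n-1}HH^{k-2i}(X)$; dually (or by $\Gamma^\dagger_L\cong\Gamma^\dagger_R[4n-4]$) one gets $\Hom(\StructureSheaf{\Delta_M},\F[k])\cong\bigoplus_{i=0}^{n-1}HH^{k+4-4n+2i}(X)$, which vanishes for $0\le k\le 3$ as soon as $n\ge 3$. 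Finally $\Hom(\StructureSheaf{\Delta_M},\StructureSheaf{\Delta_M}[k])=HH^k(M)$, and for an IHS manifold $M$ of $K3^{[n]}$-type one has $HH^1(M)=HH^3(M)=0$ (by the Hodge decomposition and vanishing $h^{p,0}=0$ for $p$ odd).

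First I would feed these into $\Hom(\F,-)$ applied to the triangle. The key step here is that the connecting map $\epsilon_*\colon\Hom(\F,\F)\to\Hom(\F,\StructureSheaf{\Delta_M})$ is an isomorphism of one-dimensional spaces, because $\epsilon_*(\id_\F)=\epsilon$ is nonzero (otherwise $\F\cong\E[1]\oplus\StructureSheaf{\Delta_M}$, contradicting $\Hom(\F,\F)\cong\CC$). Combined with $\Hom(\F,\F[1])=\Hom(\F,\StructureSheaf{\Delta_M}[1])=0$, this yields $\Hom(\F,\E[k])=0$ for $k=0,1,2$. Next I apply $\Hom(\StructureSheaf{\Delta_M},-)$ to the triangle. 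Using $\Hom(\StructureSheaf{\Delta_M},\F[k])=0$ for $k\leq 1$ and $HH^1(M)=0$, the long exact sequence gives $\Hom(\StructureSheaf{\Delta_M},\E[2])\cong HH^0(M)=\CC$; the analogous piece at $k=3$, using $HH^3(M)=0$ and $\Hom(\StructureSheaf{\Delta_M},\F[k])=0$ for $k=2,3$ in the case $n\ge 3$, yields $\Hom(\StructureSheaf{\Delta_M},\E[3])=0$.

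Assembly is now immediate: the long exact sequence obtained from $\Hom(-,\E)$ on the triangle, together with the vanishings $\Hom(\F,\E[i])=0$ for $i=0,1,2$, collapses to the identifications $\Hom(\E,\E)\cong\Hom(\StructureSheaf{\Delta_M},\E[2])\cong\CC$ and $\Hom(\E,\E[1])\hookrightarrow\Hom(\StructureSheaf{\Delta_M},\E[3])$, so both desired statements follow once the latter target vanishes.

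The one point I expect to be the main obstacle is the edge case $n=2$, where $\Hom(\StructureSheaf{\Delta_M},\F[2])\cong\CC$ is nonzero rather than automatically vanishing, and $\Hom(\StructureSheaf{\Delta_M},\E[3])$ equals the kernel of a map $\CC\to HH^2(M)=\CC^{23}$. To finish one must identify this $\CC$ with the natural transformation $\id_{D^b(M)}\Rightarrow\F[2]$ coming from the class $t_M\in H^2(M,\StructureSheaf{M})$ (via the module structure on $\A$ studied in Section \ref{sec-Yoneda-Algebra}, where $t_M$ corresponds to the top power $t_M^{n-1}$ on the $X\times X$ side for $n=2$) and observe that its image under $\epsilon_*$ is the nonzero class $t_M\in HH^2(M)$. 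With that the map is injective, $\Hom(\StructureSheaf{\Delta_M},\E[3])=0$ also for $n=2$, and $\E$ is simple and rigid.
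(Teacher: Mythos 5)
Your argument reaches the right conclusion by a genuinely different route from the paper's. The paper also starts from Lemma \ref{lemma-F-is-simple-and-rigid}, but instead of computing maps \emph{into} $\E$ from $\F$ and $\StructureSheaf{\Delta_M}$, it bounds the maps \emph{out of} $\E$: since $\Gamma^\dagger_R$ (and hence $\Gamma^\dagger_L$) kills the connecting morphism $\gamma:\StructureSheaf{\Delta_M}\to\E[2]$, the restriction $\beta^*:\Hom(\F,\F[k])\to\Hom(\E[1],\F[k])$ is surjective for every $k$, so $\dim\Hom(\E[1],\F[k])\leq\dim\Hom(\F,\F[k])$. Feeding this into the long exact sequence obtained by applying $\Hom(\E,-)$ to the triangle, the flanking terms are $\Hom(\E,\StructureSheaf{\Delta_M}[j])$ with $j<0$, which vanish for trivial reasons since both objects are sheaves. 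That makes the paper's proof uniform in $n$ and avoids computing $\Hom(\StructureSheaf{\Delta_M},\E[k])$ altogether. Your route buys the explicit identification $\Hom(\E,\E)\cong\Hom(\StructureSheaf{\Delta_M},\E[2])\cong HH^0(M)$, at the cost of the computation of $\Gamma^\dagger_L(\StructureSheaf{\Delta_M})\cong\A[4n-4]$ and of the $n=2$ edge case. (A small indexing remark: the piece of the long exact sequence controlling $\Hom(\StructureSheaf{\Delta_M},\E[3])$ is $HH^1(M)\to\Hom(\StructureSheaf{\Delta_M},\E[3])\to\Hom(\StructureSheaf{\Delta_M},\F[2])\to HH^2(M)$, so what you need is $HH^1(M)=0$ and, for $n\geq 3$, $\Hom(\StructureSheaf{\Delta_M},\F[2])=0$; the groups $HH^3(M)$ and $\Hom(\StructureSheaf{\Delta_M},\F[3])$ do not enter.)

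The genuine gap is the case $n=2$, exactly where you flag it. There you must show that $\epsilon_*:\Hom(\StructureSheaf{\Delta_M},\F[2])\cong\CC\to HH^2(M)$ is injective, and the justification you sketch does not stand as written. There is no evident natural transformation $\id_{D^b(M)}\Rightarrow \LL[2]$ attached to $t_M$: the class $t_M$ acts on every object of $D^b(M\times M)$ by $t_M\cdot(-)$, producing morphisms $\F\to\F[2]$ and $\StructureSheaf{\Delta_M}\to\StructureSheaf{\Delta_M}[2]$, but not a morphism $\StructureSheaf{\Delta_M}\to\F[2]$. Moreover the $Y(\StructureSheaf{M})$-module structure on $\A$ from Section \ref{sec-Yoneda-Algebra} computes, via the \emph{right} adjoint of $\Gamma$, the groups $\Hom(\F,\StructureSheaf{\Delta_M}[\bullet])$ rather than $\Hom(\StructureSheaf{\Delta_M},\F[\bullet])$, so the "top power $t_M^{n-1}$" does not obviously produce the generator you need. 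The natural candidate generator is instead the morphism $\StructureSheaf{\Delta_M}\to\F[2n-2]$ obtained by dualizing $\epsilon$ and using $\F\cong\tau^*\F^\vee[2]$ (Lemma \ref{lemma-kernel-of-the-adjoint-of-F}), and the non-vanishing of its composition with $\epsilon[2]$ in $HH^2(M)$ is precisely the assertion that would require proof. The statement you need is in fact true and can be checked independently: by Serre duality $\Hom(\StructureSheaf{\Delta_M},\E[3])\cong\Ext^{4n-3}(\E,\StructureSheaf{\Delta_M})^*$, and for $n=2$ the isomorphism $R\SheafHom(\E,\Delta_{M*}\StructureSheaf{M})\cong\Delta_{M*}R\SheafHom(L\Delta_M^*\E,\StructureSheaf{M})$ together with Proposition \ref{resofE} reduces $\Ext^{5}(\E,\StructureSheaf{\Delta_M})$ to $H^5(M,(\Omega^2_M/\sigma)^*)$, $H^4(M,\Omega^1_M)$ and $H^3(M,\StructureSheaf{M})$, all of which vanish because $M$ is four-dimensional with no odd cohomology. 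With that substitution your argument closes; as written, the $n=2$ step is asserted rather than proved.
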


\begin{proof}
\underline{Step 1:}
The integral functor $\Gamma^\dagger_R:D^b(M\times M)\rightarrow D^b(X\times X)$ takes
the exact triangle (\ref{eq-exact-triangle-with-E-and-F})
to the exact triangle
\begin{equation}
\label{eq-Gamma-dagger-takes-exact-triangle-to-a-split-one}
\Gamma^\dagger_R(\E[1]) \LongRightArrowOf{\Gamma^\dagger_R(\beta)}
\A\circ\A \LongRightArrowOf{m} \A 
\LongRightArrowOf{\Gamma^\dagger_R(\gamma)}
\Gamma^\dagger_R(\E[1]),
\end{equation}
where $\A\circ\A$ is the convolution and 
$m:=\Gamma^\dagger_R(\epsilon)$ is the multiplication, by Equations 
(\ref{eq-F-is-the-image-of-the-structure-sheaf-of-the-diagonal}),
(\ref{eq-right-adjoint-of-Gamma-composed-with-Gamma-takes-id-to-A-square}), and 
the proof of Lemma
\ref{lemma-F-is-simple-and-rigid}.
Now $m$ has a right inverse given by
\[
\A\cong \A\circ\StructureSheaf{\Delta_X}\LongRightArrowOf{\eta\circ 1} \A\circ \A,
\]
where $\eta:\StructureSheaf{\Delta_X}\rightarrow \A$ induces the unit for the adjunction $\Phi_\U\dashv \Psi_\U$.
Thus, $\Gamma^\dagger_R(\gamma)=0$ and the exact triangle 
(\ref{eq-Gamma-dagger-takes-exact-triangle-to-a-split-one})
splits. The left adjoint $\Gamma^\dagger_L$ of $\Gamma$ is isomorphic to a shift of the right adjoint
$\Gamma^\dagger_R$, since $X$ and $M$ have trivial canonical line bundles.
We conclude that $\Gamma^\dagger_L(\gamma)=0$. In particular, the homomorphism
\[
\Gamma^\dagger_L(\beta)^* \ : \ 
\Hom(\Gamma^\dagger_L(\F),x)
\ \ \longrightarrow \ \ 
\Hom(\Gamma^\dagger_L(\E[1]),x)
\]
is surjective, for all objects $x$ of $D^b(X\times X)$.
Take $x=\StructureSheaf{\Delta_X}[k]$, apply the adjunction $\Gamma^\dagger_L\dashv \Gamma$,
and use the isomorphism $\Gamma(\StructureSheaf{\Delta_X})\cong \F$, to conclude that the homomorphism
\[
\Hom(\F,\F[k])\rightarrow \Hom(\E[1],\F[k])
\]
is surjective, for all $k$. We get the inequality
\begin{equation}
\label{eq-inequality-of-dimensions-of-Homs}
\dim\Hom(\E[1],\F[k]) \ \ \leq \ \ \dim \Hom(\F,\F[k]),
\end{equation}
for all $k$.

\underline{Step 2:}
Apply the functor $\Hom(\E,\bullet)$
to the exact triangle (\ref{eq-exact-triangle-with-E-and-F}). We get the long exact sequence
\[
\Hom(\E,\StructureSheaf{\Delta_{M}}[k-1])
\rightarrow
\Hom(\E,\E[k+1])
\rightarrow
\Hom(\E,\F[k])
\rightarrow 
\Hom(\E,\StructureSheaf{\Delta_M}[k]).
\]
When $k=-1$, the first and fourth terms vanish and so
$\Hom(\E,\E)\rightarrow \Hom(\E,\F[-1])$ is an isomorphism.
Now $\dim\Hom(\E,\F[-1])\leq \dim\Hom(\F,\F)$, by the inequality 
(\ref{eq-inequality-of-dimensions-of-Homs}), and
$\dim\Hom(\F,\F)=1$, by Lemma \ref{lemma-F-is-simple-and-rigid}.
We conclude that $\dim\Hom(\E,\E)=1$.

When $k=0$, the first term in the long exact sequence above vanishes.
The third term vanishes, by the inequality (\ref{eq-inequality-of-dimensions-of-Homs}) and the
vanishing of $\Hom(\F,\F[1])$ established in Lemma \ref{lemma-F-is-simple-and-rigid}.
We conclude that $\Hom(\E,\E[1])=0$.
\end{proof}

Assume that $M=X^{[n]}$ as above. 
Let $\EE:D^b(M)\rightarrow D^b(M)$ be the endo-functor with kernel $\E$ in $D^b(M\times M)$. 
Given a point $m\in M$, denote by $\CC_m$ the corresponding sky-scraper sheaf.

The following Lemma is used in \cite{torelli}.
\begin{lem}
The homomorphism 
$\kappa:\Hom(\CC_m,\CC_m[1])\rightarrow \Hom(\EE(\CC_m),\EE(\CC_m)[1])$, induced by $\EE$, 
is surjective, for all $m\in M$.
\end{lem}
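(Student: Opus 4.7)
The plan is to derive the surjectivity of $\kappa$ from the exact triangle
\[
\EE(\CC_m)[1]\xrightarrow{\beta_m}\LL(\CC_m)\xrightarrow{\epsilon_m}\CC_m\xrightarrow{\gamma_m}\EE(\CC_m)[2]
\]
in $D^b(M)$ obtained by evaluating the triangle (\ref{eq-exact-triangle-with-E-and-F}) at the sky-scraper sheaf $\CC_m$, combined with a Hom computation controlling $\LL(\CC_m)=\Phi_\U\Psi_\U(\CC_m)$ via the adjunction $\Phi_\U\dashv\Psi_\U$. Since $M=X^{[n]}$ and $\U$ is the universal ideal sheaf, one has $\Psi_\U(\CC_m)\cong R\SheafHom(\I_Z,\StructureSheaf{X})[2]$ for the length-$n$ subscheme $Z\subset X$ corresponding to $m$, and the adjunction yields
\[
\Hom(\LL(\CC_m),\CC_m[k])\;\cong\;\Hom(\Psi_\U(\CC_m),\Psi_\U(\CC_m)[k])\;\cong\;\Hom(\I_Z,\I_Z[k]).
\]
In particular $\Hom(\LL(\CC_m),\CC_m[1])\cong\Ext^1(\I_Z,\I_Z)\cong T_mM\cong\Hom(\CC_m,\CC_m[1])$, and one checks that the pullback by the counit $\epsilon_m^*$ realises this identification.

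Next, I would apply $\Hom(\EE(\CC_m),-)$ to the triangle. Using that $\EE(\CC_m)$ is (up to a shift) a coherent sheaf on $M$, the negative-degree group $\Hom(\EE(\CC_m),\CC_m[-1])$ vanishes, so the resulting long exact sequence takes the form
\[
0\to\Hom(\EE(\CC_m),\EE(\CC_m)[1])\xrightarrow{(\beta_m)_*}\Hom(\EE(\CC_m),\LL(\CC_m))\xrightarrow{(\epsilon_m)_*}\Hom(\EE(\CC_m),\CC_m).
\]
Thus $(\beta_m)_*$ is injective. Surjectivity of $\kappa$ then reduces to showing that for every $f\in\Hom(\EE(\CC_m),\EE(\CC_m)[1])$, the composition $\beta_m\circ f$ can be written as $\LL(g)\circ\beta_m$ for some $g\in\Hom(\CC_m,\CC_m[1])$; the naturality identity $\beta_m\circ\EE(g)=\LL(g)\circ\beta_m$ then forces $\beta_m\circ(f-\EE(g))=0$, and injectivity gives $f=\kappa(g)$.

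To produce the desired $g$, I would use the factorisation $\LL=\Phi_\U\Psi_\U$: applying $\Psi_\U$ to $f$ yields a morphism in $\Hom(\Psi_\U\EE(\CC_m),\Psi_\U(\CC_m)[1])$, which pairs via the counit with $\epsilon_m$ and the isomorphism $\Hom(\Psi_\U(\CC_m),\Psi_\U(\CC_m)[1])\cong T_mM$ to produce a candidate tangent vector $g\in\Hom(\CC_m,\CC_m[1])$. One then verifies naturality $\beta_m\circ f=\LL(g)\circ\beta_m$ by unwinding the adjunction diagrams, using the faithfulness of $\Phi_\U$ (Theorem \ref{thm-A-is-direct-sum}) to conclude that the identity holds on the level of morphisms in $D^b(M)$.

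The main obstacle is the verification of this naturality equation, which amounts to a diagram chase involving $\beta_m$, $\epsilon_m$, and the units/counits of the adjunction. I expect this to be handled by combining the rigidity $\Hom(\F,\F[k])=0$ for odd $k$ established in Lemma \ref{lemma-F-is-simple-and-rigid}, the simplicity and rigidity of $\E$ from Lemma \ref{lemma-Ext-1-E-E-vanishes}, and the explicit description of $\E$ along the diagonal from Proposition \ref{resofE}, which together ensure that the auxiliary Hom groups appearing in the extension/obstruction analysis either vanish or reduce to the expected tangent spaces to $M$.
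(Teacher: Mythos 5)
Your setup agrees with the paper's argument up to the halfway mark: the triangle $\EE(\CC_m)[1]\xrightarrow{\beta_m}\LL(\CC_m)\to\CC_m$, the vanishing of $\Hom(\EE(\CC_m),\CC_m[-1])$ (because $\EE(\CC_m)$ has cohomology sheaves only in non-positive degrees), the resulting injectivity of $\beta_{m,*}:\Hom(\EE(\CC_m),\EE(\CC_m)[1])\to\Hom(\EE(\CC_m)[1],\LL(\CC_m)[1])$, and the reduction of the lemma to writing $\beta_{m,*}(f)$ in the form $\LL(g)\circ\beta_m$ are all correct and are exactly what the paper does. The gap is in how you propose to produce $g$. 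As written, your construction does not typecheck: $\Psi_\U(f)$ lives in $\Hom(\Psi_\U\EE(\CC_m),\Psi_\U\EE(\CC_m)[1])$, not in $\Hom(\Psi_\U\EE(\CC_m),\Psi_\U(\CC_m)[1])$, and the ``pairing via the counit'' that is supposed to convert it into a tangent vector is never specified. More importantly, the decisive identity $\beta_{m,*}(f)=\LL(g)\circ\beta_m$ is precisely the content of the lemma, and you defer it to an unspecified diagram chase; the tools you name for that chase (rigidity of $\F$, simplicity and rigidity of $\E$, the restriction of $\E$ to the diagonal) are not the ones that make this step work.

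The missing input is the surjectivity of the pullback $\beta_m^*:\Hom(\LL(\CC_m),\LL(\CC_m)[1])\to\Hom(\EE(\CC_m)[1],\LL(\CC_m)[1])$. This is where the splitting of the unit $\StructureSheaf{\Delta_X}\to\A$ from Theorem \ref{thm-A-is-direct-sum} enters: applying the (left) adjoint of $\Phi_\U$ to the triangle $\E[1]\to\F\to\StructureSheaf{\Delta_M}$ splits it, so the connecting morphism dies under the adjoint and $\beta_m^*$ is surjective on morphisms into any object of the form $\Phi_\U(y)$ --- this is the argument of Step 1 of the proof of Lemma \ref{lemma-Ext-1-E-E-vanishes}, run pointwise. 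Combining this with the fact that $\tilde{\kappa}:\Hom(\CC_m,\CC_m[1])\to\Hom(\LL(\CC_m),\LL(\CC_m)[1])$ is an isomorphism (Theorem \ref{thm-Homs-for-objects-in-the-image-of-Phi-U}) and with the naturality $\beta_m\circ\EE(g)=\LL(g)\circ\beta_m$, one finds that $\beta_{m,*}\circ\kappa=\beta_m^*\circ\tilde{\kappa}$ is surjective onto \emph{all} of $\Hom(\EE(\CC_m)[1],\LL(\CC_m)[1])$; since $\beta_{m,*}$ is injective it is then an isomorphism, and $\kappa$ is surjective. Without the surjectivity of $\beta_m^*$ there is no a priori reason that $\beta_{m,*}(f)$ lies in the image of $\beta_m^*\circ\tilde{\kappa}$, so your argument cannot close as stated.
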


\begin{proof}
The homomorphism $\tilde{\kappa}:\Hom(\CC_m,\CC_m[1])\rightarrow \Hom(\LL(\CC_m),\LL(\CC_m)[1])$, induced by $\LL$,
is an isomorphism, by Theorem \ref{thm-Homs-for-objects-in-the-image-of-Phi-U}.
Set $\E_m:=\EE(\CC_m)$ and $\F_m:=\LL(\CC_m)$.
The exact triangle 
\[
\E[1]\RightArrowOf{\beta} \F\rightarrow \StructureSheaf{\Delta}\rightarrow \E[2]
\]
gives rise to the exact triangle 
\[
\CC_m[-1]\rightarrow \E_m[1]\RightArrowOf{\beta_m}\F_m\rightarrow \CC_m.
\]
The pullback homomorphism $\beta_m^*:\Hom(\F_m,\F_m[1])\rightarrow \Hom(\E_m[1],\F_m[1])$
is surjective, by an argument analogous to that in step 1 of the proof of Lemma \ref{lemma-Ext-1-E-E-vanishes}.
The kernel of the push-forward homomorphism 
$\beta_{m,*}:\Hom(\E_m[1],\E_m[2])\rightarrow \Hom(\E_m[1],\F_m[1])$
is the image of
$\Hom(\E_m[1],\CC_m)\rightarrow \Hom(\E_m[1],\E_m[2]).$
Now $\Hom(\E_m[1],\CC_m)$ clearly vanishes. Hence, $\beta_{m,*}$ is injective.

Given an element $\xi\in \Hom(\CC_m,\CC_m[1])$, we have the equality
$\beta_m\circ \EE(\xi)=\LL(\xi)\circ\beta_m$, since the homomorphism of kernels induces 
a natural transformation $\beta:\EE[1]\rightarrow \LL$.
We get the equality 
\[
\beta_m^*\circ \tilde{\kappa}=\beta_{m,*}\circ \kappa:\Hom(\CC_m,\CC_m[1])\rightarrow 
\Hom(\E_m[1],\F_m[1]).
\]
The homomorphism $\beta_m^*\circ \tilde{\kappa}$ is surjective, being a composition of such.
Thus, $\beta_{m,*}\circ \kappa$ is surjective. Hence $\beta_{m,*}$ is an isomorphism.
Thus, $\kappa$ is surjective.
\end{proof}

%
\section{A deformation of the derived category $D^b(X)$}
\label{Deformation-of-monad}

%
We prove Theorem \ref{deformability} in this section.
Although we talk of deforming categories, as will be seen presently, we shall only need
to work with deformations of certain fixed objects and morphisms
in a derived category. These are defined as follows.

\begin{defi}
Let $Y \to S$ be a flat family of spaces (varieties or analytic spaces), and $s$ a point 
of $S$. 
\begin{enumerate}
\item A {\em deformation} of a perfect complex $\E \in D^b(Y_s)$ 
is an $S$-perfect complex $\fE \in D^b(Y)$ together with
an isomorphism $\varphi: i^*\fE \stackrel{\cong}{\longrightarrow} \E$, where 
$i: Y_s\to Y$ is the closed immersion.
\item Given a morphism $f: \E_1 \to \E_2$ between perfect complexes and deformations
$(\fE_i, \varphi_i)$, a {\em compatible deformation of} $f$ is a morphism  $\ol{f}: \fE_1 \to \fE_2$
such that $f\circ \varphi_1 =  \varphi_2 \circ i^*\ol{f}$.
\end{enumerate}
\end{defi}

%
\subsection{Deformability of $\F$}
\label{subsec-deformability-of-the-monad}

Denote by ${\fM}_\Lambda$ the moduli space parametrizing marked pairs $(M,\eta)$ consisting
of a holomorphic symplectic manifold $M$, and an isometry $\eta:H^2(M,\Integers)\rightarrow \Lambda$. 
The moduli space ${\fM}_\Lambda$ is constructed in \cite{huybrechts-basic-results}.
Let $\phi: \widetilde{\fM}_\Lambda \to {\fM}_\Lambda$ be the forgetful map given by
$(M, \eta, \Az) \mapsto (M,\eta)$. 
Let $\widetilde{\fM}_\Lambda^0$ be the connected component of $\widetilde{\fM}_\Lambda$ 
in Theorem \ref{deformability}.
Let ${\fM}_\Lambda^0$ be the connected component containing $\phi(\widetilde{\fM}_\Lambda^0)$.
Then $\phi:\widetilde{\fM}_\Lambda^0\rightarrow {\fM}_\Lambda^0$ is surjective,
by \cite[Theorem 4.14]{torelli}.
The following is the key result from \cite{torelli} for the sequel; it is a direct 
consequence of Theorems 1.9(1), and 1.11(1), and Lemma 5.3 in that article.

\begin{thm}\label{uniqueness-of-bundle}
Let $X$ be a K3 surface with trivial Picard group. Then, the
fiber of the morphism $\phi: \widetilde{\fM}_\Lambda^0 \to {\fM}_\Lambda^0$ 
over $(X^{[n]}, \eta)$ consists of the single point
$(X^{[n]}, \eta, \Az)\in \widetilde{\fM}_\Lambda^0$, where $\Az$ or $\Az^*$  is the 
modular Azumaya algebra of the Hilbert scheme $X^{[n]}$ (Def. \ref{def-modular}). 
\end{thm}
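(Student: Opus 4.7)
The plan is to combine the global Torelli-type theorem for marked triples $(M,\eta,\Az)$ recalled in Theorem \ref{thm-torelli} with the rigidity of the modular Azumaya algebra established in Section \ref{sec-a-simple-and-rigid-object}. The argument naturally falls into two steps.

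First I would localize the fiber of $\phi$ using the period map. By Theorem \ref{thm-torelli}(1), the period map $\widetilde{P}$ is a local isomorphism, so the fiber of $\phi$ is discrete: two distinct triples $(X^{[n]},\eta,\Az)$ and $(X^{[n]},\eta,\Az')$ lying in the same component $\widetilde{\fM}^0_\Lambda$ are sent to the same period point, and the fiber question reduces to the generic injectivity of $\widetilde{P}$ on $\widetilde{\fM}^0_\Lambda$. Theorem \ref{thm-torelli}(2) handles trivial or cyclic-non-negative Picard groups; however, when $X$ has trivial Picard group, $\mathrm{Pic}(X^{[n]})$ is cyclic generated by the half-exceptional class $\delta$ of Beauville-Bogomolov-Fujiki degree $-2(n-1)<0$, so the version as stated in the excerpt is just short of what we need. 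The full Theorem 1.11(1) of \cite{torelli} extends injectivity to this negative-square cyclic Picard case, modulo the natural involution $\Az\leftrightarrow \Az^*$ on $\widetilde{\fM}_\Lambda$, which swaps two components that both sit over a common component of $\fM_\Lambda$.

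Second I would identify the distinguished representative in $\widetilde{\fM}^0_\Lambda$. The component $\widetilde{\fM}^0_\Lambda$ was chosen to contain a triple $(X^{[n]}_0,\eta_0,\Az_0)$ with $\Az_0$ the modular Azumaya algebra. Combining Lemma \ref{lemma-pull-back-of-E-by-transposition} (which gives $\E^*\cong \tau^*\E$) with Lemma \ref{lemma-Ext-1-E-E-vanishes} (which gives $\mathrm{Ext}^1(\E,\E)=0$, hence $\mathrm{Ext}^1(\Az_0,\Az_0)=0$ up to the trace component), we see that $\Az_0$ is infinitesimally rigid and that $\Az_0^*$ is the image of $\Az_0$ under the involution above. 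Lemma 5.3 of \cite{torelli} then verifies that any modular Azumaya algebra on $X^{[n]}\times X^{[n]}$ satisfying the numerical constraints and stability requirements of the moduli problem must be isomorphic to $\Az_0$ or $\Az_0^*$, and pins down which of the two actually lies in the chosen component $\widetilde{\fM}^0_\Lambda$.

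The main obstacle is the negative Beauville-Bogomolov-Fujiki degree of $\delta$, which prevents a direct application of the simpler form of the Torelli theorem stated as Theorem \ref{thm-torelli}(2). Overcoming it requires the sharper form Theorem 1.11(1) of \cite{torelli}, whose proof uses the involution $\Az \leftrightarrow \Az^*$ in an essential way to account for the ambiguity introduced by the reflection in $\delta$ (equivalently, by passage from $X^{[n]}$ to a birational model), and Lemma 5.3 to rule out any further candidates beyond $\Az_0$ and $\Az_0^*$.
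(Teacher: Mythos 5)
Your proposal is correct and follows essentially the same route as the paper: the paper gives no independent argument for this statement, asserting only that it is a direct consequence of Theorems 1.9(1) and 1.11(1) and Lemma 5.3 of \cite{torelli}, which are exactly the ingredients you identify. In particular you are right that Theorem \ref{thm-torelli}(2) as restated is insufficient here because $\Pic(X^{[n]})$ is cyclic generated by the class $\delta$ of negative Beauville--Bogomolov--Fujiki square, so the sharper Theorem 1.11(1) of \cite{torelli}, together with Lemma 5.3 there to resolve the $\Az\leftrightarrow\Az^*$ ambiguity, is what is actually being invoked.
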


The following is an immediate corollary of Theorems \ref{thm-torelli} and \ref{uniqueness-of-bundle}
and the density of Hilbert schemes in $\fM_\Lambda^0$ \cite{density}.

\begin{cor} 
\label{cor-modular-Azumaya-algebras-are-dense}
The subset ${\mathcal Hilb}$ of $\widetilde{\fM}_\Lambda^0$, 
given in Equation
(\ref{dense-subset-of-modular-Azumaya-algebras}), 
consisting of triples $(X^{[n]},\eta,\Az)$
where $X$ is a $K3$ surface with a trivial $\Pic(X)$, is dense in $\widetilde{\fM}_\Lambda^0$.
For each such triple, $\Az$ or $\Az^*$ is the modular Azumaya algebra over $X^{[n]}\times X^{[n]}$.
\end{cor}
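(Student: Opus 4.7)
The plan is to combine three ingredients: (i) the density of Hilbert schemes of K3 surfaces with trivial Picard group inside the component $\fM_\Lambda^0$ of the moduli of marked irreducible holomorphic symplectic manifolds of $K3^{[n]}$-type (this is the content of \cite{density}); (ii) the fact from Theorem \ref{thm-torelli}(1) that the period map $\widetilde{P}:\widetilde{\fM}_\Lambda\to\Omega$ is a surjective local analytic isomorphism, and similarly the period map $P:\fM_\Lambda\to\Omega$ is a local isomorphism; and (iii) the uniqueness statement provided by Theorem \ref{uniqueness-of-bundle}, which pins down the fiber of $\phi:\widetilde{\fM}_\Lambda^0\to\fM_\Lambda^0$ over a marked Hilbert scheme $(X^{[n]},\eta)$ with $\Pic(X)=0$ as consisting of the single point corresponding to the modular Azumaya algebra (or its dual).

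Concretely, fix a point $y_0=(M_0,\eta_0,\Az_0)\in\widetilde{\fM}_\Lambda^0$ and let $W$ be an arbitrary open neighborhood of $y_0$. I would first choose, via Theorem \ref{thm-torelli}(1), a contractible open neighborhood $W'\subset W$ of $y_0$ on which $\widetilde{P}$ restricts to an analytic isomorphism onto an open subset $U\subset\Omega$. Shrinking if necessary, we may arrange that $U$ is contained in the image of a neighborhood $V\subset \fM_\Lambda^0$ of $\phi(y_0)$ on which $P$ is also an isomorphism, so that $\phi$ restricts on $W'$ to an open embedding onto $V$. By the density result \cite{density} for $\fM_\Lambda^0$, inside $V$ we can find a point $(X^{[n]},\eta)$ with $\Pic(X)=0$. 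Let $\tilde{y}\in W'$ be the unique point with $\phi(\tilde{y})=(X^{[n]},\eta)$.

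Now apply Theorem \ref{uniqueness-of-bundle}: the fiber $\phi^{-1}(X^{[n]},\eta)\cap\widetilde{\fM}_\Lambda^0$ consists of the single point $(X^{[n]},\eta,\Az)$, where $\Az$ or $\Az^*$ is the modular Azumaya algebra. Hence $\tilde{y}=(X^{[n]},\eta,\Az)$ lies in $\mathcal{Hilb}$, and since $\tilde{y}\in W'\subset W$, this shows $\mathcal{Hilb}$ meets $W$. As $W$ was arbitrary, $\mathcal{Hilb}$ is dense in $\widetilde{\fM}_\Lambda^0$.

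The only delicate step is making sure that the open set $V\subset\fM_\Lambda^0$ of approximating marked Hilbert schemes, produced by the density in $\fM_\Lambda^0$, lies inside the image of $\widetilde{P}$-neighborhood of $y_0$, so that each approximating Hilbert scheme admits a lift to $\widetilde{\fM}_\Lambda^0$ in the chosen neighborhood $W$. This is handled by the local isomorphism property of $\widetilde{P}$ together with the surjectivity of $\phi$ onto $\fM_\Lambda^0$ (cited above from \cite[Theorem 4.14]{torelli}), which guarantees that near any marked pair $(M,\eta)$ the period-theoretic lifts are exhausted by actual points of $\widetilde{\fM}_\Lambda^0$. The remainder is a direct combination of the cited theorems.
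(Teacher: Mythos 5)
Your proof is correct and follows exactly the route the paper intends: the paper states the corollary as an immediate consequence of Theorem \ref{thm-torelli}, Theorem \ref{uniqueness-of-bundle}, and the density of Hilbert schemes in $\fM_\Lambda^0$ from \cite{density}, which are precisely your three ingredients. Your write-up merely makes explicit the routine topological step (that $\phi$ is open near any point, since $\widetilde{P}=P\circ\phi$ with both period maps local analytic isomorphisms), so that a nearby marked Hilbert scheme with trivial Picard group in $\fM_\Lambda^0$ lifts, via the single-point fiber of Theorem \ref{uniqueness-of-bundle}, to a point of ${\mathcal Hilb}$ in the given neighborhood.
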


Let $\MM^0$ be the universal family\footnote{While a universal family need not exist over the
moduli space of marked pairs of a general class of holomorphic symplectic manifolds, 
such a family does exist over $\fM_\Lambda^0$ as manifolds of $K3^{[n]}$-type have no 
nontrivial automorphisms  which act trivially on cohomology in degree 2.
This follows for Hilbert schemes by a result of Beauville \cite{beauville},
and consequently also for their deformations by 
\cite[Sec. 2]{hassett-tschinkel-lagrangian-planes}.} 
over the moduli space of marked pairs 
$\fM_\Lambda^0$. 
Set 
$\widetilde{\MM}^0:= \MM^0\times_{\fM_\Lambda^0} \widetilde{\fM}_\Lambda$.
There exists a universal Azumaya algebra $\fAz$ over 
$\widetilde{\MM}^0 \times_{\widetilde{\fM}_\Lambda^0} 
\widetilde{\MM}^0$ (see \cite[Section 4]{torelli}). 

\begin{prop}
\label{prop-construction-of-universal-object-F}
There exists a Zariski dense open subset $U\subset \widetilde{\fM}_\Lambda^0$, containing ${\mathcal Hilb}$, 
a universal twisted sheaf $\fE$ over
\[
\MM^2 \ \ := \ \ \left(\widetilde{\MM}^0 \times_{\widetilde{\fM}_\Lambda^0} 
\widetilde{\MM}^0\right)\times_{\widetilde{\fM}_\Lambda^0} U, 
\]
satisfying $\SheafEnd(\fE)\cong\fAz$, and an extension 
\begin{equation}
\label{eq-universal-extension-of-identity-by-E}
\fE[1]\rightarrow \fF\rightarrow \StructureSheaf{\Delta_{\MM}}
\end{equation}
of twisted sheaves over $\MM^2$, where  
$\MM:=\widetilde{\MM}^0\times_{\widetilde{\fM}_\Lambda^0} U$, and 
$\Delta_{\MM}$ is the image of $\MM$ via the diagonal embedding
$\Delta:\MM\rightarrow\MM^2$. 
This extension is non-split along every fiber of the projection $\Pi:\MM^2\rightarrow U$. 
Let $\widetilde{\Pi}:\MM^3\rightarrow U$ be the third fiber product of $\MM$ over $U$ and 
$\Pi_{ij}:\MM^3\rightarrow \MM^2$, 
$1\leq i<j\leq 3$, the natural projections. 
The Brauer class $\Theta$ of $\fE$ satisfies the equality
\begin{equation}
\label{eq-Brauer-class-allows-convolution}
\Pi_{12}^*(\Theta)\Pi_{23}^*(\Theta)=\Pi_{13}^*(\Theta).
\end{equation}
Consequently, both $\fF$ and the convolution $\fF\circ\fF$ are objects of $D^b(\MM^2,\Theta)$.
\end{prop}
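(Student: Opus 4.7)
The plan is to construct $\fE$, $\fF$, and the cocycle identity by hyperholomorphic deformation from the dense subset $\mathcal{Hilb}$, where the classical construction of Section \ref{sec-a-simple-and-rigid-object} applies directly. Theorem \ref{thm-torelli} already produces the universal Azumaya algebra $\fAz$ over $\widetilde{\MM}^0 \times_{\widetilde{\fM}_\Lambda^0} \widetilde{\MM}^0$ as a hyperholomorphic deformation of the modular Azumaya algebra $\Az_0$ on $X_0^{[n]} \times X_0^{[n]}$. First I would pass from $\fAz$ to the required twisted sheaf $\fE$: any coherent sheaf of Azumaya algebras admits, in the \'etale (or analytic) topology, a locally free rank-$(2n-2)$ module whose endomorphism algebra it is, and these local lifts glue to a $\Theta$-twisted sheaf $\fE$ whose Brauer class $\Theta \in H^2(\MM^2,\StructureSheaf{\MM^2}^*)$ is determined by $\fAz$. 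On fibers over $\mathcal{Hilb}$ this lift recovers the modular sheaf $\E$, so the locus $U$ on which the construction succeeds is Zariski dense in $\widetilde{\fM}_\Lambda^0$ and contains $\mathcal{Hilb}$.

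For the cocycle identity (\ref{eq-Brauer-class-allows-convolution}) I would exploit density of $\mathcal{Hilb}$. Over Hilbert scheme points the Brauer class restricts to $\pi_1^*\theta^{-1}\pi_2^*\theta$, where $\theta$ is the Brauer class of the quasi-universal sheaf on $X \times X^{[n]}$, so the cocycle identity on $\MM^3$ holds tautologically on fibers over $\mathcal{Hilb}$. I would then regard the obstruction $o := \Pi_{12}^*\Theta \cdot \Pi_{23}^*\Theta \cdot (\Pi_{13}^*\Theta)^{-1}$ as a class in $H^2(\MM^3,\StructureSheaf{\MM^3}^*)$ and shrink $U$ to the Zariski open subset where $o$ vanishes; by Corollary \ref{cor-modular-Azumaya-algebras-are-dense} this still contains the dense subset $\mathcal{Hilb}$. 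This yields the cocycle identity globally on $U$ and hence the well-definedness of $\fF$ and of the convolution $\fF \circ \fF$ as objects of $D^b(\MM^2,\Theta)$.

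The construction of the extension (\ref{eq-universal-extension-of-identity-by-E}) is the heart of the argument. The plan is to show that the relative extension sheaf $\SheafExt^2_\Pi(\StructureSheaf{\Delta_\MM}, \fE)$ is an invertible $\StructureSheaf{U}$-module, compatibly with base change to each fiber. By Lemmas \ref{lemma-F-is-simple-and-rigid} and \ref{lemma-Ext-1-E-E-vanishes} together with Propositions \ref{resofE} and \ref{prop-sheaf-ext-E-E}, the fiberwise space $\Ext^2(\StructureSheaf{\Delta_{M_u}}, \fE_u)$ over $\mathcal{Hilb}$ is one-dimensional and spanned by the class of $\F$. Via Grothendieck--Verdier duality applied to the diagonal embedding this calculation reduces to controlling $H^1$ of $\fE|_{\Delta_{\MM_u}}$, which fiberwise is the hyperholomorphic reflexive sheaf $\Omega^2/(\sigma)$ of Proposition \ref{resofE} with an isolated singularity at each point of $\Delta_{\MM_u}$. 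Conjecture \ref{vanishing}, applied to this sheaf, yields constancy of the relevant $h^1$ along twistor lines, and hence by Grauert's theorem the local freeness and base-change compatibility of the relative Ext sheaf. A nowhere-vanishing section of this invertible sheaf over a (possibly further shrunk) $U$ then defines $\fF$; non-splitness on every fiber is automatic, since the restricted extension class is a non-zero element of the one-dimensional fiberwise $\Ext^2$.

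The principal obstacle is exactly the step invoking Conjecture \ref{vanishing}. Because $\fE$ is reflexive but not locally free, with singularities along the diagonal, Verbitsky's classical deformation theorem for locally free hyperholomorphic bundles does not directly yield the constancy of cohomological invariants needed for base change. Once that constancy is granted by the conjecture, the remaining ingredients -- passing from the Azumaya algebra $\fAz$ to the twisted sheaf $\fE$, spreading the extension class from the dense locus $\mathcal{Hilb}$ to all of $U$, and verifying the cocycle identity and fiberwise non-splitness -- are standard arguments in deformation theory and in the theory of twisted sheaves.
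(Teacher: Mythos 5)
Your overall strategy (spread the construction from the dense locus ${\mathcal Hilb}$ by density and semicontinuity) matches the paper's, but two of your key steps have genuine gaps. First, the cocycle identity (\ref{eq-Brauer-class-allows-convolution}): there is no ``Zariski open subset of $U$ where $o$ vanishes.'' The obstruction $o:=\Pi_{12}^*\Theta\,\Pi_{23}^*\Theta\,\Pi_{13}^*(\Theta)^{-1}$ is a single class in $H^2(\MM^3,\StructureSheaf{}^*)$, not a section of a coherent sheaf, and its fiberwise vanishing over a dense set does not force it to vanish: it could live in the part of the Leray filtration coming from the base, or in $E^{1,1}$. The paper's Step 5 exists precisely to close this gap. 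It lifts $\Theta$ to $\mu_{2n-2}$-coefficients, computes the Leray spectral sequence of $\Pi$ (using that $R^1\Pi_*\mu_r=0$ because the fibers are simply connected, so $E_\infty^{1,1}=0$), normalizes the lift $\beta$ so that $\Delta^*\beta$ is trivial --- which kills the $H^2(U,\mu_r)$ ambiguity and pins down the image of $\beta$ in $[\Lambda/r\Lambda]^{\oplus 2}$ as $(-\lambda,\lambda)$ --- and only then does the tautological cancellation $(-\lambda,\lambda,0)+(0,-\lambda,\lambda)+(\lambda,0,-\lambda)=0$ give the identity. You cannot replace this by shrinking $U$.

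Second, your reduction for the extension class is miscalibrated, and your appeal to Conjecture \ref{vanishing} is both unnecessary and off-target. Grothendieck--Verdier duality gives $R\H om_\Pi(\ko_{\gd_\MM},\fE[2])\cong R\pi_*(\omega_\pi^*\otimes L\gd_\MM^*\fE[2-2n])$, so the relevant fiberwise group is $\mathbb{H}^{2-2n}(L\gd^*\fE)$, which is governed by the \emph{bottom} cohomology sheaf $\SheafTor_{2n-2}(\fE,\ko_{\gd})\cong\Omega^{2n}_{\gd}$ of Proposition \ref{resofE} (contributing $H^0(\omega_{\gd})\cong\ComplexNumbers$), not by $H^1$ of the degree-zero restriction $\fE\otimes\ko_{\gd}\cong\Omega^2_{\gd}/(\sigma)$. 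Once the fiberwise dimension is computed to be $1$ over ${\mathcal Hilb}$, upper semicontinuity of $x\mapsto\dim\mathbb{H}^0(L\gd^*\fE|_x[2-2n])$ together with density of ${\mathcal Hilb}$ already yields a dense open $U$ on which the relative $\H om$ sheaf is a line bundle; Conjecture \ref{vanishing} enters the paper only upstream, through Theorem \ref{thm-torelli}, to produce the universal family $\fAz$ in the first place. You also pass over the normalization of $\Theta$ along the diagonal (the paper's Steps 1--2): without adjusting the cocycle so that $\Delta^*\Theta$ is literally trivial, the extension of $\StructureSheaf{\Delta_\MM}$ by $\fE[1]$ does not make sense as a morphism of $\Theta$-twisted objects, and this normalization itself requires identifying $\Delta^*\fE$ with $\W\otimes\LB$ via Theorem \ref{uniqueness-of-bundle} and density.
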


\begin{proof}
Step 1: We show first that the Brauer class of $\fAz$ restricts as a trivial class to the diagonal 
$\Delta_{\widetilde{\MM}^0}\subset \widetilde{\MM}^0\times_{\widetilde{\fM}_\Lambda^0}\widetilde{\MM}^0$
over a Zariski dense open subset of $\widetilde{\fM}_\Lambda^0$.
Let $\mu_{2n-2}\subset\ComplexNumbers^*$ be the group of roots of unity of order dividing $2n-2$.
The exponential map 
\[
\exp\left(\frac{2\pi i (\bullet)}{2n-2}\right):\Integers \rightarrow \mu_{2n-2}
\] 
factors through an isomorphism $\Integers/(2n-2)\Integers\cong \mu_{2n-2}$. 
Given a marked pair $(M,\eta)$ in $\fM_\Lambda^0$ we get an isomorphism 
\[
\bar{\eta}:H^2(M\times M,\mu_{2n-2})\IsomRightArrow [\Lambda/(2n-2)\Lambda]^2
\] 
induced by the marking $\eta$.
The component $\widetilde{\fM}_\Lambda^0$ comes with a coset
\begin{equation}
\label{eq-Brauer-classes}
\tilde{\theta}=(\tilde{\theta}_1,\tilde{\theta}_2), 
\end{equation}
with $\tilde{\theta}_1=-\tilde{\theta}_2$ and $\tilde{\theta}_i$
 of order $2n-2$ in $\Lambda/(2n-2)\Lambda$, such that the Brauer class 
 of the Azumaya Algebra $\Az$ of a triple $(M,\eta,\Az)$ in this component is the image of
$\bar{\eta}^{-1}(\tilde{\theta})$ in $H^2(M\times M,\StructureSheaf{}^*)$, by construction 
\cite[Eq. (7.6)]{torelli}. The Brauer class $\Theta$ of $\fAz$ in 
$H^2(\widetilde{\MM}^0 \times_{\widetilde{\fM}_\Lambda^0} \widetilde{\MM}^0,\StructureSheaf{}^*)$, having order $2n-2$,
is the image of a topological class $\tilde{\Theta}$ in 
$H^2(\widetilde{\MM}^0 \times_{\widetilde{\fM}_\Lambda^0} \widetilde{\MM}^0,\mu_{2n-2})$. 
The restriction of $\tilde{\Theta}$ to the fibers $M\times M$ of the family over a marked pair $(M,\eta)$ is 
$\bar{\eta}^{-1}(\tilde{\theta})$.

Given an open analytic subset $U'$ of $\widetilde{\fM}_\Lambda^0$, over which the 
differential fibration $\pi:\widetilde{\MM}^0\rightarrow \widetilde{\fM}_\Lambda^0$ restricts as
a topologically trivial fibration, we get that the pullback $\Delta^*\tilde{\Theta}$ of the Brauer class to 
$\widetilde{\MM}^0$ is trivial over $U'$, by the vanishing of $\tilde{\theta}_1+\tilde{\theta}_2$ mentioned above.

There exists  a $\Theta$-twisted sheaf $\fE$ representing $\fAz$ for some \v{C}ech  $2$-cocycle $\theta$ of 
$\StructureSheaf{\widetilde{\MM}^0\times_{\widetilde{\fM}_\Lambda^0}\widetilde{\MM}^0}^*$
\cite[Section 2]{torelli}. Over the above open subset $U'$,
$\fE$ restricts to the diagonal with a trivial Brauer class. Hence, we may adjust the pull-back $\Delta^*(\Theta)$ by a 
$2$-coboundary, and adjust the gluing of $\Delta^*\fE$ over $U'$ to get an untwisted sheaf $W$. 
For every triple $(X^{[n]},\eta,\Az)$ in $U'$, 
where $\Pic(X)$ is trivial, the restriction of $\Az$ to $X^{[n]}\times X^{[n]}$ is the modular Azumaya algebra or its dual, 
by Theorem \ref{uniqueness-of-bundle}, 
and so 
$W|_{X^{[n]}}$ is isomorphic to $L\otimes [(\wedge^2T^*X^{[n]})/(\StructureSheaf{X^{[n]}}\cdot \sigma)]$, 
for some line-bundle $L$, by Proposition \ref{resofE} (use the isomorphism $\E^*\cong\tau^*\E$ of 
Lemma \ref{lemma-pull-back-of-E-by-transposition} for the dual of the modular Azumaya algebra).
Set $\W:=\wedge^2T^*_\pi/(L_0\pi^*R^0\pi_*\wedge^2T^*_\pi)$.
We conclude the isomorphism 
\begin{equation}
\label{eq-isomorphism-of-Azumaya-algebras-along-diagonal}
\SheafEnd(\W)\cong \SheafEnd(\Delta^*\fE),
\end{equation}
of Azumaya algebras over a Zariski dense open subset of $U'$,
by the density of Hilbert schemes of $K3$ surfaces with trivial Picard group 
(Corollary \ref{cor-modular-Azumaya-algebras-are-dense}) and the upper-semi-continuity theorem. Both sides of the isomorphism (\ref{eq-isomorphism-of-Azumaya-algebras-along-diagonal})
are defined globally over $\widetilde{\MM}^0$. Hence, the isomorphism holds over 
a Zariski dense open subset $U''$ of $\widetilde{\fM}_\Lambda^0$ containing the dense subset
${\mathcal Hilb}$  given in Equation (\ref{dense-subset-of-modular-Azumaya-algebras}).

The sheaf $\Delta^*\fE$ is $\Delta^*\Theta$-twisted. The left hand side of 
(\ref{eq-isomorphism-of-Azumaya-algebras-along-diagonal}) is an Azumaya algebra with a trivial Brauer class.
The isomorphism (\ref{eq-isomorphism-of-Azumaya-algebras-along-diagonal}) 
implies that the cocycle $\Delta^*\Theta$ is a coboundary over $U''$, $\Delta^*\Theta=\delta(\zeta)$.

Step 2: 
Let $\pi_1:\widetilde{\MM}^0\times_{\widetilde{\fM}_\Lambda^0}\widetilde{\MM}^0\rightarrow
\widetilde{\MM}^0$ be the projection to the first factor.
After a refinement of the open covering of 
$\widetilde{\MM}^0\times_{\widetilde{\fM}_\Lambda^0}\widetilde{\MM}^0$
we may change the gluing of $\fE$ over $U''$ via the cochain $\pi_1^*(\zeta)$, 
so that $\fE$ is a twisted sheaf with respect to a new cocycle, denoted again by $\Theta$, which
restricts to the trivial cocycle along the diagonal (with value $1$ along every triple intersection).
Then $\Delta^*\fE$ is an untwisted coherent sheaf, which is isomorphic to $\W\otimes\LB$, for some line bundle
$\LB$ over  the subset $U''$ of $\widetilde{\fM}_\Lambda^0$.

Step 3: Let $\pi:\MM\rightarrow U''$ be the restriction of the universal family from $\widetilde{\fM}_\Lambda^0$
to $U''$. Denote by 
$\Pi:\MM\times_{U''}\MM\rightarrow U''$ the projection from the fiber square.
We show next that the relative homomorphism sheaf 
\begin{equation}
\label{eq-inverse-of-relative-2-extension-line-bundle}
\H_j:=\H om_{\Pi}(\ko_{\gd_\MM}, \fE[j])
\end{equation}
is a line bundle over a Zariski dense open subset $U$ of
$U''$ containing ${\mathcal Hilb}$ for $j=2$, and it vanishes over $U$ for $j=0,1$. 
This $U$ would be  also Zariski dense and open in $\widetilde{\fM}_\Lambda^0$.
Local Grothendieck-Verdier duality yields the isomorphism
\[
R\SheafHom\left(R\gd_{\MM_*}\StructureSheaf{\MM},\fE[j]\right) \cong
R\gd_*\left[R\SheafHom(\StructureSheaf{\MM},\gd^!\fE[j])
\right].
\]
Applying the functor $R(\Pi)_*$ on both sides, using the vanishing of 
$R^i\gd_{\MM_*}\StructureSheaf{\MM}$ for $i>0$, we get the isomorphism
\begin{equation}\label{isom-1}
R\H om_{\Pi}(\ko_{\gd_\MM}, \fE[j]) \cong R\pi_*(\omega_\pi^*\otimes L\gd_\MM^*\fE[j-2n]).
\end{equation}
The relative dualizing sheaf $\omega_\pi$ of the morphism  $\pi$ is trivial along each fiber and is hence the pullback 
of a line bundle over $U''$.
Thus, the sheaf $\H om_{\Pi}(\ko_{\gd_\MM}, \fE[j])$ is isomorphic to the $0$-th direct
image of the complex  $L\gd_\MM^*\fE[j-2n]$, tensored by a line bundle. For $x\in U''$, denote by
$\fE|_x$ the restriction of $\fE$ to the Cartesian square of the fiber 
over $x$. The function $\phi_j: U''\to \ZZ$ given by
\[
x\mapsto \dim(\mathbb{H}^0(\MM_x, L\gd_{\MM_x}^*\fE|_x[j-2n]))
\] 
is upper semi-continuous by
\cite[Prop. 6.4]{Hart}. The value of $\phi_2$ on
triples  in ${\mathcal Hilb}$  is $1$, by  
the calculation of the torsion sheaf $\SheafTor_{2n-2}(\E,\Delta_*\StructureSheaf{X^{[n]}})$ in 
Proposition \ref{resofE}.
The set ${\mathcal Hilb}$ is dense in $\widetilde{\fM}_\Lambda^0$, by Corollary \ref{cor-modular-Azumaya-algebras-are-dense}.
It follows that $\phi\equiv 1$ on a dense open subset $U_2$ of $U''$. 
The sheaf $\H om_{\Pi}(\ko_{\gd_\MM}, \fE[2])$ is a line bundle over $U_2$, since $U''$ is integral. 
The vanishing of $\SheafTor_{2n-j}(\E,\Delta_*\StructureSheaf{X^{[n]}})$ in 
Proposition \ref{resofE}, for $j<2$, implies the claimed vanishing of $\H_j$ over a Zariski dense subset $U_j$ of $U''$
containing ${\mathcal Hilb}$, for $j=0, 1$. Set $U:=U_0\cap U_1\cap U_2$.

Step 4: Set $\H:=\H_2$, where $\H_2$ is  the line-bundle given in 
Equation (\ref{eq-inverse-of-relative-2-extension-line-bundle}). 
The vanishing of $\H_j$, for $j=0,1$, and a standard spectral sequence argument yield an isomorphism
$\Hom(\ko_{\gd_\MM}, \Pi^*(\H^{-1})\otimes \fE[2])\cong H^0\left(\H om_{\Pi}(\ko_{\gd_\MM}, \Pi^*(\H^{-1})\otimes \fE[2])\right)$.
The  right hand space is $H^0(U,\StructureSheaf{U})$, by the projection formula and the definition of $\H$. 
Choosing the constant section $1$ of the latter we get a tautological extension
\[
\Pi^*(\H^{-1})\otimes \fE[1]\rightarrow \fF\rightarrow \StructureSheaf{\Delta_{\MM}}.
\]
Replacing the twisted sheaf $\fE$ by $\pi^*(\H^{-1})\otimes \fE$ we get the desired extension
(\ref{eq-universal-extension-of-identity-by-E}). 

Step 5: We prove in this step the equality (\ref{eq-Brauer-class-allows-convolution}).
Set $r:=2n-2$. Let $F^pH^k(\MM^2,\mu_r)$ be the decreasing 
Leray filtration associated to the morphism $\Pi:\MM^2\rightarrow U$. 
Set $E_\infty^{p,q}:=F^pH^{p+q}(\MM^2,\mu_r)/F^{p+1}H^{p+q}(\MM^2,\mu_r)$.
We have the Leray spectral sequence converging to $E_\infty^{p,q}$ with $E_2$ terms of the form
$E_2^{p,q}:=H^p(U,R^q\Pi_*\mu_r)$ and differential $d_2:E_2^{p,q}\rightarrow E_2^{p+2,q-1}$.

The sheaf $R^0\Pi_*\mu_r$ is the trivial local system $\mu_r$ and 
the sheaf $R^1\Pi_*\mu_r$ vanishes, since the fibers of $\Pi$ are simply connected. 
The sheaf $R^2\Pi_*\mu_r$ is the direct sum $[\Lambda/r\Lambda]^{\oplus 2}$ 
of two copies of the trivial local system $[\Lambda/r\Lambda]$, since the markings provide such a trivialization. 
We conclude the following:
\begin{eqnarray*}
E_\infty^{2,0}&=&E_2^{2,0}\cong H^2(U,\mu_r)
\\
E_\infty^{1,1}&=&0,
\\
E_3^{0,2}&=& E_2^{0,2}\cong [\Lambda/r\Lambda]^{\oplus 2},
\\
E_3^{3,0}&=&E_2^{3,0}\cong H^3(U,\mu_r), 
\\
E_{\infty}^{0,2}&=& \ker\left[
d_3:E_3^{0,2}\rightarrow E_3^{3,0}
\right].
\end{eqnarray*}
The description of $E_\infty^{2,0}$ implies that the homomorphism
$\Pi^*:H^2(U,\mu_r)\rightarrow H^2(\MM^2,\mu_r)$ is injective. The analogous description of the graded summands of the Leray filtrations of $H^2(\MM^n,\mu_r)$ holds for the $n$-th fiber self-products $\MM^n$, $n\geq 1$, where the 
$E_\infty^{0,2}$ is naturally a subgroup of $[\Lambda/r\Lambda]^{\oplus n}$.

We have seen in Step 2 that $\Theta$ belongs to the kernel of 
$\Delta^*:H^2(\MM^2,\StructureSheaf{}^*)\rightarrow H^2(\MM,\StructureSheaf{}^*)$.
We have seen, in addition,  that there exists a lift $\widetilde{\Theta}\in H^2(\MM^2,\mu_r)$ of $\Theta$. 
Next we normalize this lift so that it restricts trivially to the diagonal.
The composition
\[
H^2(\MM,\mu_r)\rightarrow H^2(\MM,\mu_r)/F^1H^2(\MM,\mu_r)\hookrightarrow H^0(U,R^2\pi_*\mu_r)\rightarrow H^0(U,R^2\pi_*\StructureSheaf{\MM}^*)
\]
maps the class $\Delta^*(\widetilde{\Theta})$ to the trivial class. The rightmost homomorphism is injective, since the 
Picard group of a generic fiber of $\pi:\MM\rightarrow U$ is trivial. Consequently, the image of $\Delta^*(\widetilde{\Theta})$ 
in $H^0(U,R^2\pi_*\mu_r)$ is trivial and $\Delta^*(\widetilde{\Theta})$ belongs to $E_\infty^{2,0}$ with respect to the Leray filtration of $H^2(\MM,\mu_r)$.
Hence, there exists a class $\alpha$ in $H^2(U,\mu_r)$, such that
$\Delta^*(\widetilde{\Theta})=\pi^*(\alpha)$. 
The image of $\pi^*(\alpha)$ in $H^2(\MM,\StructureSheaf{}^*)$ is trivial, 
since such is the image of $\Delta^*(\widetilde{\Theta})$. The image of $\Pi^*(\alpha)$ in 
$H^2(\MM^2,\StructureSheaf{}^*)$ is trivial as well, since $\Pi$ factors through $\pi$.
Let $\beta$ be the class $\widetilde{\Theta}\Pi^*(\alpha^{-1})$ in $H^2(\MM^2,\mu_r)$.
Then $\Delta^*(\beta)$ is trivial in $H^2(\MM,\mu_r)$ and $\beta$ maps to the Brauer class $\Theta$ in $H^2(\MM^2,\StructureSheaf{}^*)$. 

The $E_\infty^{2,0}$ graded summands of the Leray filtrations of $H^2(\MM^n,\mu_r)$, $n\geq 1$, are all equal to
$H^2(U,\mu_r)$ and the $E_\infty^{1,1}$ terms all vanish. Hence, 
the kernel of $\Delta^*:H^2(\MM^2,\mu_r)\rightarrow H^2(\MM,\mu_r)$ maps {\em injectively} into the quotient $E_\infty^{0,2}$,
which is naturally a subgroup of $[\Lambda/r\Lambda]^{\oplus 2}$. 
Classes in the kernel map to classes of the form $(-\lambda,\lambda)$. Choose a class $\lambda\in \Lambda/r\Lambda$,
such that $\beta$ maps to $(-\lambda,\lambda)$. Similarly, the kernel of the pullback 
$H^2(\MM^3,\mu_r)\rightarrow H^2(\MM,\mu_r)$, via the diagonal embedding,
maps injectively into $[\Lambda/r\Lambda]^{\oplus 3}$. 
The class $\Pi_{12}^*(\beta)\Pi_{23}^*(\beta)\Pi_{13}^*(\beta^{-1})$ restricts trivially to the diagonal and maps to 
the class
\[
(-\lambda,\lambda,0)+(0,-\lambda,\lambda)+(\lambda,0,-\lambda)=(0,0,0)
\]
in $[\Lambda/r\Lambda]^{\oplus 3}$. Hence, the class $\Pi_{12}^*(\beta)\Pi_{23}^*(\beta)\Pi_{13}^*(\beta^{-1})$
is trivial in $H^2(\MM^3,\mu_r)$. The latter class maps to the class
$\Pi_{12}^*(\Theta)\Pi_{23}^*(\Theta)\Pi_{13}^*(\Theta^{-1})$ in $H^2(\MM^3,\StructureSheaf{})^*$.
Equality (\ref{eq-Brauer-class-allows-convolution}) follows.
This completes the proof of Proposition \ref{prop-construction-of-universal-object-F}.
\end{proof}

\begin{rem}
\label{rem-restriction-of-Brauer-class-over-contractible-V}
The description of the graded pieces of the Leray filtration of $H^2(\MM^2,\mu_r)$ is Step 5 of the proof above 
applies to the restriction $\MM^2_V$ of $\MM^2$ to a contractible open subset $V$ of $U$.
In that case we get that $E_\infty^{2,0}\cong H^2(V,\mu_r)$ vanishes as well, so does $E_\infty^{3,0}$, and
$H^2(\MM^2_V,\mu_r)=E_\infty^{0,2}\cong [\Lambda/r\Lambda]^{\oplus 2}$. 
Similarly, $H^2(\MM_V,\mu_r)\cong \Lambda/r\Lambda$. Consequently, 
the restriction $\widetilde{\Theta}_V$ of $\widetilde{\Theta}$ to $\MM^2_V$ is equal to 
$\pi_1^*(\tilde{\theta}^{-1})\pi_2^*(\tilde{\theta})$ for some class $\tilde{\theta}$ in $H^2(\MM_V,\mu_r)$.
Part \ref{thm-item-restriction-of-Brauer-class-over-contractible-V} of Theorem \ref{deformability} follows.
\end{rem}

Let $X$ be a $K3$ surface, $M:=X^{[n]}$, and $\delta:\F\rightarrow \F\circ\F$ the comultiplication 
of the comonad object (\ref{eq-the-comonad-object-that-deforms}).
Denote by $\pi_{ij}: M\times M \times M \to M\times M$ the projection onto the $ij$-th
factor.
The adjunction $\pi_{13}^*\dashv \pi_{13_*}$ yields the second isomorphism below.
\[
\Hom(\F,\F\circ\F)= \Hom(\F,\pi_{13_*}[\pi_{12}^*\F\otimes\pi_{23}^*\F])
\cong \Hom(\pi_{13}^*\F,\pi_{12}^*\F\otimes\pi_{23}^*\F).
\]
Denote the image of $\delta$ by 
$\tilde{\delta}:\pi_{13}^*\F\rightarrow \pi_{12}^*\F\otimes\pi_{23}^*\F$.
We get the natural morphism
\[
\pi_{13_*}(\tilde{\delta}) \ : \  \pi_{13_*}\pi_{13}^*\F \cong
\F\otimes_\ComplexNumbers Y(\StructureSheaf{M}) \ \ \longrightarrow \ \ 
\F\circ\F,
\]
where $Y(\ko_M)\in D^b(pt)$ is the Yoneda algebra of $M$.
Composing $\pi_{13_*}(\tilde{\delta})$ with the morphism 
$\id_\F\otimes \iota:\F\otimes_\ComplexNumbers \lambda_n\rightarrow \F\otimes_\ComplexNumbers Y(\StructureSheaf{M})$ (see (\ref{eq-composition-of-three-morphisms}))
we get the {\em natural} morphism
\begin{equation}
\label{eq-m-over-M-times-M}
{\mathfrak{m}}:\F\otimes \lambda_n \ \ \rightarrow \ \  \F\circ\F.
\end{equation}

\begin{lem}
\label{thm-m-is-an-isomorphism}
The morphism $\mathfrak{m}$, given in Equation (\ref{eq-m-over-M-times-M}), is an 
isomorphism, in the case where $M$ is the Hilbert scheme $X^{[n]}$, the 
universal sheaf $\U\in D^b(X\times X^{[n]})$ is the ideal sheaf of the 
universal subscheme, and $\F$ is the modular complex $\U \circ \U^\vee [2]$.
\end{lem}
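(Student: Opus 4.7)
The strategy is to identify $\mathfrak{m}$ with the natural isomorphism $\F \otimes \lambda_n \xrightarrow{\sim} \F \circ \F$ obtained by convolving the isomorphism $\alpha:\Delta_*\ko_X \otimes \lambda_n \to \A$ of Theorem~\ref{introduction-thm-A-is-direct-sum} (part \ref{thm-item-A-is-a-direct-sum-Hilbert-scheme-case}) with the universal kernels $\U$ and $\V$ on either side.

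First, I would realize $\F \circ \F$ as an iterated convolution on $M_1 \times X_1 \times M_2 \times X_2 \times M_3$ (with factors labelled for clarity): integrating first over the middle factor $M_2$ rewrites the convolution as
\[
\F\circ\F \ \cong \ (p_{14})_*\!\left(p_{12}^*\U \otimes p_{23}^*\A \otimes p_{34}^*\V\right)
\]
on $M_1 \times X_1 \times X_2 \times M_3$, since $\A = \pi_{13,*}(\pi_{12}^*\V\otimes\pi_{23}^*\U)$ by definition. In the same way, $\F\otimes\lambda_n$ is computed by the identical expression with $\A$ replaced by $\Delta_{X,*}\ko_X \otimes \lambda_n$. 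By the Hilbert scheme case of Theorem~\ref{introduction-thm-A-is-direct-sum}, $\alpha: \Delta_{X,*}\ko_X \otimes \lambda_n \to \A$ is an isomorphism, and convolving it with $\U$ on the left and $\V$ on the right yields an abstract isomorphism $\widetilde{\alpha}: \F\otimes\lambda_n \xrightarrow{\sim} \F\circ\F$ in $D^b(M\times M,\pi_1^*\theta^{-1}\pi_2^*\theta)$.

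Next, I would verify that $\widetilde{\alpha}$ coincides with $\mathfrak{m}$. Recall that $\alpha$ is defined in \eqref{eq-composition-of-three-morphisms} as the composition of the inclusion $\id\otimes\iota$, the unit $\eta\otimes\id$, and the Yoneda-algebra action $m:\A\otimes_\CC Y(\StructureSheaf{M})\to\A$ of the monad $\YY$ from \eqref{eq-Monad-YY}. Convolving this three-term factorization with $\U$ and $\V$: the unit $\eta$ becomes the comultiplication $\delta = \Phi_\U\eta\Psi_\U : \F \to \F\circ\F$ (equivalently, the adjoint morphism $\tilde\delta:\pi_{13}^*\F\to\pi_{12}^*\F\otimes\pi_{23}^*\F$ of the definition of $\mathfrak{m}$); the action $m$ of $Y(\StructureSheaf{M})$ on $\A$ becomes, via base change along $\pi_{13}$ on $M_1\times M_2\times M_3$, the natural action arising from the projection formula $\pi_{13,*}\pi_{13}^*\F \cong \F\otimes_\CC Y(\StructureSheaf{M})$; and the inclusion $\iota:\lambda_n\hookrightarrow Y(\StructureSheaf{M})$ plays the same role on both sides. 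Thus $\widetilde{\alpha} = \mathfrak{m}$, and the lemma follows.

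The main obstacle is the identification of Yoneda-algebra actions in the last paragraph. The action $m$ on $\A$ arises from pushforward along the projection $X\times M\times X \to X\times X$, while the action implicit in the construction of $\mathfrak{m}$ comes from pushforward along $M\times M\times M\to M\times M$. Checking that these are compatible under the convolution $\U\circ(-)\circ\V$ is a diagram chase using flat base change applied to the Cartesian square formed by projecting the quintuple product $M_1\times X_1\times M_2\times X_2\times M_3$ down to $M_1\times M_2\times M_3$ and to $M_1\times X_1\times X_2\times M_3$, together with the projection formula. Once this compatibility is established, the factorization of $\mathfrak{m}$ matches the factorization of $\widetilde\alpha$ term by term.
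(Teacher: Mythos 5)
Your proposal is correct and follows essentially the same route as the paper: the paper's proof likewise observes that $\mathfrak{m}$ is obtained from the composition $\alpha$ of Equation (\ref{eq-composition-of-three-morphisms}) by pre-convolution with $\U^\vee[2]$ and post-convolution with $\U$, invoking the compatibility of the monadic action with the algebra action from Construction \ref{cons-yoneda-monad}, and then concludes from Theorem \ref{thm-A-is-direct-sum}(\ref{thm-item-A-is-a-direct-sum-Hilbert-scheme-case}). Your write-up merely makes explicit the base-change/projection-formula bookkeeping on the quintuple product that the paper leaves implicit.
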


\begin{proof} The morphism $\mathfrak{m}$ is obtained from the sequence of morphisms 
(\ref{eq-composition-of-three-morphisms})
\begin{equation*}
\Delta_*\ko_X\otimes_\ComplexNumbers \lambda_n \LongRightArrowOf{\iota}
\Delta_*\ko_X\otimes_\ComplexNumbers Y(\StructureSheaf{M}) 
\LongRightArrowOf{\eta\otimes id} 
\A\otimes_\ComplexNumbers Y(\StructureSheaf{M}) 
\LongRightArrowOf{m}
\A.
\end{equation*}
by pre-convolution with $\U^\vee [2]$, and post-convolution with $\U$. Here we use the fact
discussed at the end of Construction \ref{cons-yoneda-monad}  that for a morphism 
$f:T\to S$, the monadic action of $f_*f^*$ on objects of the form $f_*(\G)$ is compatible
with the action of the algebra $f_*(\ko_T)$. 
The statement then follows immediately from Theorem
\ref{thm-A-is-direct-sum}, Part (\ref{thm-item-A-is-a-direct-sum-Hilbert-scheme-case}), which 
says that the composition of the morphisms in the display above is an isomorphism.
\end{proof}

Let $\MM$ and $U$ be as in Theorem \ref{deformability} and let
$\MM^n$ denote the $n$-th fiber self-product of $\MM$ over $U$. 
The convolution $\fF\circ \fF$ is defined relative to $U$, i.e., the tensor product is taken over $\MM^3$.
Let $\Pi:\MM^2\rightarrow U$ be the natural morphism.

\begin{lem}\label{ext-line-bundle-2}
\begin{enumerate}
\item\label{item-simple} 
The relative endomorphism sheaf $\H om_{\Pi} (\fF, \fF)$ is canonically isomorphic to the structure
sheaf over a dense open subset of $U$
containing the locus ${\mathcal Hilb}$ of Corollary \ref{cor-modular-Azumaya-algebras-are-dense}.
\item \label{item-line-bundle} The relative homomorphism sheaves 
$$
\left\{  \begin{array}{l} \H om_{\Pi} (\fF, \fF\circ \fF), \mbox{and}  \\
               \H om_{\Pi} (\fF, \fF\circ \fF\circ\fF)  
               \end{array} \right.
$$
are both line bundles on a dense open subset of $U$.
\end{enumerate}
\end{lem}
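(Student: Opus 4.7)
The plan is to apply upper semicontinuity together with cohomology-and-base-change to the relative derived Hom complexes on $U$, reducing every assertion to a fiberwise computation over the Hilbert-scheme locus ${\mathcal Hilb}$ of Corollary \ref{cor-modular-Azumaya-algebras-are-dense}. Since ${\mathcal Hilb}$ is Zariski dense in $U$, the openness of the loci where the fiber dimensions take prescribed values will automatically propagate the local structure from Hilbert-scheme fibers to a Zariski open neighborhood of ${\mathcal Hilb}$ in $U$.

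For part \ref{item-simple}, on a fiber $(X^{[n]},\eta,\Az)\in{\mathcal Hilb}$ one has $\dim\Hom(\F,\F)=1$ by Lemma \ref{lemma-F-is-simple-and-rigid}(\ref{lemma-item-F-is-simple-and-rigid}), and $\dim\Ext^1(\F,\F)=0$ follows from Lemma \ref{lemma-F-is-simple-and-rigid}(\ref{lemma-item-higher-homs-from-F-to-itself}) since every summand contributing to $\Hom(\F,\F[1])$ involves $HH^{\mathrm{odd}}(X)$, which vanishes on a $K3$ surface. The upper semicontinuity of the fiber dimensions $x\mapsto\dim\Hom(\fF|_x,\fF|_x)$ and $x\mapsto\dim\Ext^1(\fF|_x,\fF|_x)$ confines these to the values $1$ and $0$ on an open $U_1\subset U$ containing ${\mathcal Hilb}$; cohomology-and-base-change then identifies $\H om_\Pi(\fF,\fF)|_{U_1}$ with a line bundle whose formation commutes with base change. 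The global section provided by the identity endomorphism $\id_\fF$ restricts to a generator of every fiber, yielding the canonical trivialization $\H om_\Pi(\fF,\fF)|_{U_1}\cong\ko_{U_1}$.

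For part \ref{item-line-bundle} I would repeat the same procedure after computing the fiber dimensions on ${\mathcal Hilb}$. Lemma \ref{thm-m-is-an-isomorphism} shows that on such a fiber the natural morphism $\mathfrak{m}:\F\otimes_\CC\lambda_n\to\F\circ\F$ is an isomorphism; iterating and using that $\CC$-linear tensoring commutes with convolution yields $\F\circ\F\circ\F\cong\F\otimes_\CC\lambda_n\otimes_\CC\lambda_n$. Lemma \ref{lemma-F-is-simple-and-rigid}(\ref{lemma-item-higher-homs-from-F-to-itself}) then gives $\Hom(\F,\F[-2i])=0$ for $i\geq 1$ (every summand involves $HH^{<0}(X)=0$ or analogously vanishing negatively shifted groups) and $\Hom(\F,\F[\mathrm{odd}])=0$, so that the resulting K\"unneth-type decompositions collapse to $\Hom(\F,\F\circ\F)\cong\Hom(\F,\F\circ\F\circ\F)\cong\CC$ and $\Ext^1(\F,\F\circ\F)=\Ext^1(\F,\F\circ\F\circ\F)=0$. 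Upper semicontinuity combined with cohomology-and-base-change for $R\H om_\Pi(\fF,\fF\circ\fF)$ and $R\H om_\Pi(\fF,\fF\circ\fF\circ\fF)$ then delivers the line-bundle statements on a Zariski open neighborhood of ${\mathcal Hilb}$ in $U$.

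The main obstacle is the technical bookkeeping demanded by the twisted setting. To invoke cohomology-and-base-change I need $\fF\circ\fF$ and $\fF\circ\fF\circ\fF$ to be well-defined objects on $\MM^2$ and $R\H om_\Pi(\fF,\fF^{\circ k})$ to be a perfect complex on $U$ in the untwisted category. The first point is guaranteed by the cocycle identity (\ref{eq-Brauer-class-allows-convolution}) established in Proposition \ref{prop-construction-of-universal-object-F}, which arranges the $\Theta$-twistings on the three factors of $\MM^3$ so that the convolution makes sense and so that $R\H om_\Pi(\fF,\fF^{\circ k})$ is untwisted. The perfection of these relative $R\H om$-complexes follows from the $U$-perfectness of $\fF$, which is inherited from that of $\fE$ via the defining extension (\ref{eq-universal-extension-of-identity-by-E}), together with the smoothness and properness of $\Pi$. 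Once these ingredients are verified, the semicontinuity argument is entirely standard.
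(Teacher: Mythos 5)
Your proposal is correct and follows essentially the same route as the paper: reduce to the fiberwise computation over ${\mathcal Hilb}$ using the splitting $\F\circ\F\cong\oplus_{i=0}^{n-1}\F[-2i]$ from Lemma \ref{thm-m-is-an-isomorphism} and the vanishing $\Hom(\F,\F[k])=0$ for $k<0$ from Lemma \ref{lemma-F-is-simple-and-rigid}, then propagate by semicontinuity and base change to a dense open subset containing ${\mathcal Hilb}$, with the identity section trivializing $\H om_\Pi(\fF,\fF)$.
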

\begin{proof} We shall consider only the sheaf  $\H om_{\Pi} (\fF, \fF\circ \fF)$ of part 
(\ref{item-line-bundle}).
The other sheaves are shown to be line-bundles in the same way, and the sheaf $\H om_{\Pi} (\fF, \fF)$ clearly has a global non-vanishing identity section establishing its triviality.
For $y:=(M,\eta,\Az)\in U$, denote the restriction  $\fF|_{M\times M}$ more simply as
$\fF_y$. When $y$ belongs to ${\mathcal Hilb}$, i.e., when 
$M=X^{[n]}$ where $X$ is a $K3$ surface with trivial $\Pic(X)$, we have 
\[
\fF_y\circ \fF_y \cong \fF_y\oplus \fF_y[-2] \oplus \cdots \oplus \fF_y[2-2n],
\]
by Theorem \ref{uniqueness-of-bundle}
and Lemma \ref{thm-m-is-an-isomorphism} (note that when $\Az$ is the dual of the modular Azumaya algebra $\fF_y$
is the pullback of the modular complex by the automorphism $\tau$ of $M\times M$ interchanging the two factors, by lemma 
\ref{lemma-pull-back-of-E-by-transposition}). 
Furthermore, $(\fF\circ \fF)_y$ is isomorphic to 
$\fF_y\circ \fF_y$, by the base change theorem \cite[Proposition 6.3]{Hart}.
The locus $U'$ where the fiber-wise homorphisms
$\Hom (\fF_y, \fF_y\circ \fF_y)$ are one-dimensional is locally closed by semi-continuity 
\cite[Proposition 6.4]{Hart}. $U'$ contains every $y\in {\mathcal Hilb}$, since $\Hom(\fF_y,\fF_y[k])$ vanishes for 
the modular $\fF_y$ of a Hilbert scheme and for $k<0$, by 
Lemma \ref{lemma-F-is-simple-and-rigid}. 
$U'$ is a dense open subset of $U$, by Corollary \ref{cor-modular-Azumaya-algebras-are-dense}. 
\end{proof}

We will denote again by $U$ the open subset where the statement of Lemma \ref{ext-line-bundle-2}
holds. Similarly, we will continue to denote 
the universal family by $\pi:\MM\rightarrow U$ and its fiber square by $\Pi:\MM^2\rightarrow U$.
Denote the restriction to
$D^b(\MM^2, \Theta)$ of the extension constructed in Proposition
\ref{prop-construction-of-universal-object-F} by the same symbol $\fF$.
To define the structure of a monad object on $\fF$ extending the modular one on the Hilbert
schemes, we need to produce a counit $\fee$, and a comultiplication $\fdd$.
The former structure map is in fact given by the very definition of $\fF$ in triangle 
(\ref{eq-universal-extension-of-identity-by-E}):
$$
\fE[1] \longrightarrow \fF \stackrel{\fee}{\longrightarrow}  
\ko_{\gd_{\MM}}.
$$
We shall presently produce the other, and verify that the structure maps
satisfy the necessary compatibilities.

Any extension  $\fdd: \fF\to \fF\circ\fF$ of the comultiplication is required to
give a section of the two morphisms 
\begin{equation}\label{morphs-1}
\left.  \begin{array}{l} 
\fF\circ\fee:\fF\circ\fF {\longrightarrow} \fF, \;\mbox{and}, \\
\fee\circ\fF:\fF\circ\fF {\longrightarrow} \fF,
\end{array} \right.
\end{equation}
that is, $(\fF\circ\fee)\circ \fdd = (\fee\circ\fF)\circ \fdd = \id_{\fF}$.
Conversely, the morphisms (\ref{morphs-1}) admit sections over a dense open
subset of $U$ extending the modular comultiplication; we shall call them
$\fdd_1$ and $\fdd_2$, respectively. 
Indeed, consider the following pair of morphisms of sheaves over $U$
obtained from (\ref{morphs-1}):
$$
\xymatrix{
\H om_\Pi(\fF,\fF\circ\fF) \ar@<1ex>[rr]^{\fF\circ\fee} & & 
\ar@<1ex> @{<-}[ll]^{\fee\circ\fF} \H om_\Pi(\fF,\fF) \cong \ko_{U}
}
$$ 
These morphisms are isomorphisms over a dense open subset $U^0$ 
of $U$ containing ${\mathcal Hilb}$, 
by Corollary \ref{cor-modular-Azumaya-algebras-are-dense} and Lemma \ref{lemma-F-is-simple-and-rigid}.
Define $\fdd_1$ and $\fdd_2$ to be the pre-images of $\id_{\fF}$.

\medskip

\noindent {\it Proof of Theorem \ref{deformability} (\ref{deformability-of-monad}).} 
First suppose that there exists a comultiplication $\dd_x$ for $\fF|_x$ at some point
$x\in U^0$. Then, ${\fdd_1}|_x={\fdd_2}|_x=\dd_x$ 
since $\dd_x$ is the unique section of
both morphisms in Equation (\ref{morphs-1}), by the discussion above. 
To complete the proof, first note that the equality $\fdd_1=\fdd_2$ holds over $U^0$
by the fact that we have established it over a dense subset of it. Second, note that 
the coassociativity (\ref{comp-coass}) and counit laws (\ref{comp-counit})
amount to equalities of two sections of the line bundles 
$\H om_\Pi (\fF, \fF\circ \fF)$, respectively 
$\H om_\Pi (\fF, \fF \circ \fF\circ \fF)$, over $U^0$. These equalities
hold, again because they have been established for a dense subset of $U^0$. 
Finally we rename $U^0$ as $U$. 
\qed

\begin{rem}
\label{rem-F-convolved-with-F-splits}
Let $\Pi_{13}:\MM^3\rightarrow \MM^2$ be the projection on the first and third factors. Let $\overline{\lambda}_n$ be the object 
fitting in the exact triangle
\begin{equation}\label{eqn-lambda}
\overline{\lambda}_n\rightarrow R\Pi_{13_*}\StructureSheaf{\MM^3}\rightarrow R^{2n}\Pi_{13_*}\StructureSheaf{\MM^3}
\end{equation}
in $D^b(\MM^2)$, where the right morphism is well defined due to the vanishing of $R^{i}\Pi_{13_*}\StructureSheaf{\MM^3}$
for $i>2n$. 
The construction of the comultiplication $\fdd: \fF\to \fF\circ\fF$ enables us to extend  the
isomorphism $\mathfrak{m}$ in
Equation \ref{eq-m-over-M-times-M} to a morphism
\begin{equation}
\label{eq-splitting-m-of-self-convolution-of-fF}
\overline{\mathfrak{m}}:\fF\otimes_{\StructureSheaf{U}}
\overline{\lambda}_n
\rightarrow 
\fF\circ\fF
\end{equation}
of objects over $\MM^2$, which is an isomorphism over a dense open subset $W$ of $U$ containing 
${\mathcal Hilb}$, by Lemma \ref{thm-m-is-an-isomorphism} and the argument in the proof of Lemma
\ref{lem-A-splits-over-an-open-subset}. Again we rename $W$ as $U$. 
\end{rem}

\begin{lem}
\label{lemma-a-sufficient-condition-to-be-totally-split}
Let $w:=(M,\eta,\Az)$  be a point of the open set $W$ of Remark \ref{rem-F-convolved-with-F-splits}, where 
$M\cong M_H(v)$ is isomorphic to a moduli space of $H$-stable sheaves over some $K3$ surfaces $X$,
$\fE_w$ is the modular sheaf over $M\times M$ 
associated to a twisted universal sheaf $\U$ over $X\times M_H(v)$, and $\Az$ is the
modular Azumaya algebra $\SheafEnd(\fE_w)$.
Let $\A$ be the monad object over $X\times X$ associated to $\U$.
Then the morphism $\alpha:\Delta_*\StructureSheaf{X}\otimes_\ComplexNumbers\lambda_n\rightarrow \A$, given in 
Equation (\ref{eq-composition-of-three-morphisms}), 
is an isomorphism as well. 
\end{lem}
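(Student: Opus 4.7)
The plan is to leverage the direct relationship $\mathfrak{m}_w = \Xi_w(\alpha_w)$ between the two morphisms, where $\Xi_w : D^b(X_w \times X_w) \to D^b(M \times M, \pi_1^*\theta^{-1}\pi_2^*\theta)$ denotes the functor $K \mapsto \U \circ K \circ \U^\vee[2]$ of pre- and post-convolution by the (twisted) universal sheaf. This identification is precisely the one extracted in the proof of Lemma \ref{thm-m-is-an-isomorphism}: although stated there for the Hilbert scheme, the recipe defining $\alpha$ in (\ref{eq-composition-of-three-morphisms}) and the construction of $\mathfrak{m}$ in (\ref{eq-m-over-M-times-M}) go through verbatim in the general $M = M_H(v)$ setting whenever a (twisted) universal sheaf $\U$ is present. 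Since the hypothesis $w \in W$ guarantees that $\mathfrak{m}_w$ is an isomorphism, and since $\Xi_w$ is an exact functor between triangulated categories, the cone $C$ of $\alpha_w$ in $D^b(X_w \times X_w)$ satisfies $\Xi_w(C) = 0$. The task thus reduces to showing $\Xi_w$ is conservative on $D^b(X_w \times X_w)$.

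To establish this conservativity, I would interpret the vanishing $\Xi_w(C) = 0$ at the level of endofunctors: the Fourier-Mukai kernel $\Xi_w(C)$ on $M \times M$ represents the composition $\Phi_\U \circ \Phi_C \circ \Psi_\U$, where $\Phi_C$ is the integral transform on $D^b(X)$ with kernel $C$. Vanishing of this kernel is equivalent to vanishing of this endofunctor on every object of $D^b(M, \theta)$. Evaluating on $\Phi_\U(x)$ for an arbitrary $x \in D^b(X)$ gives $\Phi_\U(\Phi_C(T x)) = 0$, where $T := \Psi_\U \circ \Phi_\U$ has kernel $\A$. By Theorem~\ref{thm-A-is-direct-sum}(\ref{thm-item-structure-sheaf-is-a-direct-summand}), the functor $\Phi_\U$ is faithful, hence conservative (a faithful exact functor of triangulated categories detects zero objects, since $\Phi_\U(\id_y) = 0$ would force $\id_y = 0$). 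Consequently $\Phi_C(Tx) = 0$ for every $x \in D^b(X)$.

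To finish, I would invoke Theorem~\ref{thm-A-is-direct-sum}(\ref{thm-item-structure-sheaf-is-a-direct-summand}) a second time: the structure sheaf $\Delta_*\StructureSheaf{X}$ is a direct summand of the kernel $\A$, so $T \cong \id_{D^b(X)} \oplus T'$ for a complementary endofunctor $T'$, and $x$ is a direct summand of $Tx$ for every $x$. Therefore $\Phi_C(x)$ is a direct summand of $\Phi_C(Tx) = 0$, which forces $\Phi_C(x) = 0$ for every $x \in D^b(X)$. A Fourier-Mukai kernel between smooth projective varieties whose integral transform vanishes identically is itself zero, so $C = 0$ and $\alpha_w$ is an isomorphism, proving that $\A$ is totally split in the sense of Definition~\ref{def-totally-split-monad}.

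The main obstacle I anticipate is confirming the identification $\mathfrak{m}_w = \Xi_w(\alpha_w)$ at the point $w$: while the construction of $\overline{\mathfrak{m}}$ over $\MM^2$ in Remark~\ref{rem-F-convolved-with-F-splits} uses the deformed comultiplication $\fdd$, one must verify that at a point $w$ where a (twisted) universal sheaf exists, $\fdd|_w$ coincides with the canonical comultiplication induced by the adjunction $\Phi_\U \dashv \Psi_\U$. This matching is essentially forced by the construction of $\fdd$ in the proof of Theorem~\ref{deformability}(\ref{deformability-of-monad}), where $\fdd$ was characterized as the unique section of the counit-composition maps on a dense subset, and hence agrees with the classical adjunction comultiplication whenever the latter is available. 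Once this compatibility is confirmed, the conservativity argument above completes the proof.
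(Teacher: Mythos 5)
Your proposal is correct, and it rests on the same two pillars as the paper's proof: the identification $\overline{\mathfrak{m}}_w=\Gamma(\alpha)$ (where your $\Xi_w$ is the paper's product functor $\Gamma$ with kernel $\V\boxtimes\U$ from Section \ref{sec-a-simple-and-rigid-object}), and the split-faithfulness of $\Gamma$ coming from Theorem \ref{thm-A-is-direct-sum}(\ref{thm-item-structure-sheaf-is-a-direct-summand}). Where you diverge is in the concluding step. The paper applies the right adjoint directly to the morphism: since $\Gamma^\dagger_R\circ\Gamma\cong\id\oplus\Sigma$, one reads off $\Gamma^\dagger_R(\overline{\mathfrak{m}}_w)=\alpha\oplus\Sigma(\alpha)$, and an isomorphism that decomposes as a direct sum forces each summand, in particular $\alpha$, to be an isomorphism --- a one-line computation entirely at the level of kernels. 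You instead pass to the cone $C$ of $\alpha$ and prove that $\Gamma$ is conservative, via the chain: $\Gamma(C)=0$ implies the endofunctor $\Phi_\U\Phi_C\Psi_\U$ vanishes, faithfulness of $\Phi_\U$ kills $\Phi_C(Tx)$, the summand $x\subset Tx$ kills $\Phi_C(x)$ for all $x$, and the standard support lemma for Fourier--Mukai kernels (the same \cite[Lemma 5.3]{BM} argument used in Lemma \ref{lem-A-splits-over-an-open-subset}) kills $C$. This is valid but costs you two extra ingredients (conservativity from faithfulness, and the kernel-vanishing criterion) that the paper's direct-summand bookkeeping avoids; on the other hand your route makes explicit that $\Gamma$ reflects isomorphisms, which is a slightly stronger and reusable statement. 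Your flagged obstacle --- that $\fdd_w$ must agree with the adjunction comultiplication at a moduli-space point $w$ --- is genuine and is resolved exactly as you say, by the uniqueness of the section of $\fee\circ\fF$ established in the proof of Theorem \ref{deformability}(\ref{deformability-of-monad}); the paper leaves this implicit.
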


\begin{proof}
This follows from the fact that $\overline{\mathfrak{m}}_w$ is the image of $\alpha$
via the functor $\Gamma$, given in (\ref{eq-F-is-the-image-of-the-structure-sheaf-of-the-diagonal}), and the composition 
$\Gamma^\dagger_R\circ \Gamma$ of $\Gamma$ with its right adjoint admits the identity endo-functor of $D^b(X\times X)$
as a direct summand, by Theorem \ref{thm-A-is-direct-sum} (\ref{thm-item-structure-sheaf-is-a-direct-summand}). 
Denote by $\Sigma$ the endofunctor of $D^b(X\times X)$, such that $\Gamma^\dagger_R\circ \Gamma\cong \id_{D^b(X\times X)}\oplus \Sigma.$ We have the equalities
\[
\alpha\oplus \Sigma(\alpha)=(\Gamma^\dagger_R\circ \Gamma)(\alpha)=\Gamma^\dagger_R(\overline{\mathfrak{m}}_w)
\]
Then $\alpha\oplus \Sigma(\alpha)$ is an isomorphism, since $\overline{\mathfrak{m}}_w$ is. Hence, each of $\alpha$ and $\Sigma(\alpha)$ must be an isomorphism as well.
\end{proof}

\subsection{The triangulated structure on the category of comodules}
\label{subsec-the-triangulated-structure-on-the-category-of-comodules}

We give a proof of part \ref{thm-item-triangulated-structure} of Theorem 
\ref{deformability} here, namely, that the category $D^b(\MM_V,\theta)^{\ol{\LL}}$ constructed above 
carries a 2-triangulated structure. This amounts to verifying Balmer's separability
criterion \cite{bal1} for the comonad $\ol{\LL}$. We start by briefly recalling some
of the necessary background.

Let $\Psi: \catC\to \catD$ be a functor with left adjoint $\Phi$. We say that $\Psi$ is  
{\em separable} if the unit $\eta: \id_\catD \rightarrow \Psi\Phi$ has a 
natural retraction\footnote{Recall that $\xi$ is a {\em retraction} of $\eta$ if it is a left inverse of $\eta$, i.e., 
if $\xi\eta:\id_\catD\rightarrow \id_\catD$ is
the identity natural transformation. In this case $\eta$ is a {\em section} of $\xi$.} 
$\xi: \Psi\Phi\rightarrow \id_\catC$.

An additive category $\catC$ is said to be {\em suspended} if it is equipped 
with an auto-equivalence, called  {\em suspension},
$[1]: \catC \to \catC$ (which, for simplicity, is considered an 
isomorphism, $[1]^{-1}[1]=\id_\catC= [1][1]^{-1}$). For example, any triangulated category
is a suspended category. A functor between suspended
categories is called a {\em suspended functor} if it commutes with suspensions. In general,
a property $P$ of suspended categories or functors is {\em stably} $P$ if its definition 
respects the suspended structures involved. Thus, a suspended functor 
$\Phi: \catD\to \catC$ is {\em stably separable} if it is separable, and the splitting
$\xi$ commutes with suspension.

Let $\catC$ be a category, and $\com=\langle \gL, \ee, \dd\rangle$ a comonad on $\catC$.
Denote by $\catC^\com$ the category of comodules of $\com$. 
The forgetful functor
$F: \catC^\com \to \catC$ has a right adjoint, the {\em free comodule functor},
$G: \catC \to \catC^\com$, which is defined as $G(a):=(\gL a, \dd_a)$.
It is easy to see that if $\catC$ and $\gL$ are suspended, then so is $\catC^\com$, 
and the pair of functors $F$ and $G$ commute with suspension.

\begin{defi}\label{def-separability} 
A comonad $\com$ on a category $\catC$ is said to be a {\em separable comonad} 
if there exists a natural retraction
$\hat{\dd}:\Lambda^2\rightarrow\Lambda$ of the comultiplication 
$\dd:\Lambda\rightarrow \Lambda^2$ such that 
\begin{equation}\label{equivalent-cond-sep}
\hat{\dd}\gL\circ \gL\dd=\dd\circ\hat{\dd}=\gL\hat{\dd}\circ \dd\gL
\end{equation}
\cite[Def. 3.5]{bal1}.
If $\catC$ is suspended, $\com$ is said to be {\em stably separable} if
the various functors and natural morphisms in question respect the suspension.
\end{defi}

The following abridged form of Balmer's Main Theorem \cite{bal1} is all we need 
for our purposes here:

\begin{thm}[\cite{bal1}, Theorem 5.17]\label{balmer-triangulated}
Let $\catC$ be an idempotent-complete category with a triangulation of order $N\geq 2$, and let
$\com$ be a stably separable co-monad on $\catC$, such that $\gL: \catC\to\catC$  is exact
up to order $N$. Then,
the category of $\com$-comodules $\catC^\com$ admits a triangulation of order $N$ such
that, both the free comodule functor $G:\catC\to \catC^\com$ and the forgetful functor
$F:\catC^\com \to \catC$ are exact up to order $N$. In fact, each of these properties characterizes
the triangulation on the category $\catC^\com$.  
\end{thm}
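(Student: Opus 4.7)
The plan is to imitate Balmer's approach \cite{bal1} and build the triangulation on $\catC^\com$ by declaring a triple $(X,h_X)\to (Y,h_Y)\to (Z,h_Z)\to (X[1],h_X[1])$ to be distinguished precisely when its image under the forgetful functor $F:\catC^\com\to\catC$ is distinguished in $\catC$. With this definition, the rotation axiom and the closure under isomorphism follow at once from the corresponding axioms in $\catC$, because $F$ is faithful and commutes with suspension. The substantive content is the completion axiom and the coherence needed for higher axioms up to order $N$, and it is here that the stable separability of $\com$ enters in an essential way.

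First I would show that every morphism $f:(X,h_X)\to (Y,h_Y)$ of comodules admits a completion to a distinguished triangle in $\catC^\com$. Choose any completion $X\RightArrowOf{Ff} Y\RightArrowOf{g}Z\RightArrowOf{w}X[1]$ in $\catC$. Applying $\gL$ (which is exact up to order $N$) gives a distinguished triangle $\gL X\to \gL Y\to \gL Z\to \gL X[1]$, and the comodule axioms for $X,Y$ yield a morphism of triangles whose first two components are $h_X,h_Y$. By (TR3) in $\catC$, this extends to a candidate $\tilde h_Z:Z\to \gL Z$, but $\tilde h_Z$ is not yet canonical. The crucial step is to rectify $\tilde h_Z$ to an honest comodule structure $h_Z$ using the retraction $\hat{\dd}:\gL^2\to\gL$. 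Concretely, I would set $h_Z:=\hat{\dd}_Z\circ \gL\tilde h_Z\circ \tilde h_Z$ and use the separability identities (\ref{equivalent-cond-sep}) together with $\ee\circ\hat{\dd}=\ee\circ\ee=\ee\cdot\gL\ee$ (after normalization) to verify both $\dd_Z\circ h_Z=\gL h_Z\circ h_Z$ and $\ee_Z\circ h_Z=\id_Z$. Compatibility of $g$ and $w$ with $h_Y$ and $h_X[1]$ is then forced by the same diagram chase.

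Next I would address (TR3) for comodules. Given a commuting square of comodule morphisms whose rows are distinguished triangles constructed as above, (TR3) in $\catC$ produces a filler $\varphi:Z\to Z'$. To see that $\varphi$ is a morphism of comodules, I would compose with $\dd_{Z'}$ and $\gL\varphi$ and apply $\hat{\dd}$ to both sides of the candidate equality $h_{Z'}\circ\varphi=\gL\varphi\circ h_Z$; the separability identities together with faithfulness of $\gL$ on the image of $\hat{\dd}$ reduce this to the commutativity already present in the underlying triangulation of $\catC$. The uniqueness statement (characterization of the triangulation by exactness of $F$ or of $G$) will then follow: since $F$ is faithful and reflects triangles, any triangulation of $\catC^\com$ for which $F$ is $N$-exact must have the same distinguished triangles as ours; and exactness of $G$ together with the adjunction $F\dashv G$ determines triangles by representability.

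The hard part will be the coherence needed for the octahedral-type axioms up to order $N$. For $N=2$ this is vacuous, which is the only case I need for Theorem \ref{deformability}(\ref{thm-item-triangulated-structure}); for higher $N$ one has to show that given a composable pair of comodule morphisms, all three cones in the associated octahedron carry canonically compatible comodule structures and that the connecting morphisms lift to $\catC^\com$. The main technical point is that the rectification $\hat{\dd}\circ\gL\tilde h\circ\tilde h$ is compatible with octahedral filler morphisms, which reduces via (\ref{equivalent-cond-sep}) to a purely diagrammatic check in $\catC$. Idempotent-completeness of $\catC$ is invoked to ensure that the candidate comodule objects constructed as images of idempotents built from $\hat{\dd}$ exist as genuine objects of $\catC^\com$. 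Once this coherence is established, the triangulation on $\catC^\com$ of order $N$ is produced, and $F$ and $G$ are $N$-exact by construction, completing the proof.
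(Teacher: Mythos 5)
This theorem is quoted from \cite[Theorem 5.17]{bal1}; the paper gives no proof of it, so your attempt has to be measured against Balmer's own argument. Your route is essentially his: declare a triangle of comodules distinguished when its image under the forgetful functor is distinguished in $\catC$, and use the separability retraction $\hat{\dd}$ to turn the non-canonical map $\tilde h_Z$ supplied by (TR3) into an honest comodule structure on the cone. The rectification $h_Z:=\hat{\dd}_Z\circ\gL\tilde h_Z\circ\tilde h_Z$ is exactly the dual of Balmer's formula, and checking that $g$ and $w$ remain comodule morphisms for the corrected structure is a correct chase with (\ref{equivalent-cond-sep}) and naturality. Two of your side claims are shakier than you suggest: the (TR3) filler $\tilde h_Z$ need not satisfy $\ee_Z\circ\tilde h_Z=\id$, so counitality of $h_Z$ is not immediate, and the identity ``$\ee\circ\hat{\dd}=\ee\cdot\gL\ee$'' is not an axiom of a separable comonad and does not follow from (\ref{equivalent-cond-sep}); the verification of the counit condition is a separate and more delicate step in Balmer's argument.

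The genuine error is in the morphism axiom. A filler $\varphi:Z\to Z'$ produced by (TR3) in $\catC$ is \emph{not} in general a comodule morphism, and no manipulation by $\dd$, $\gL$ and $\hat{\dd}$ of ``both sides'' of the desired equality will prove that it is --- ``faithfulness of $\gL$ on the image of $\hat{\dd}$'' is not a property you have established, and you cannot verify an identity that is simply false for a bad choice of filler. What separability actually provides is the ability to \emph{replace} $\varphi$ by its average $\tilde\varphi:=r_{Z'}\circ\gL\varphi\circ h_Z$, where $r_{Z'}:=\ee_{Z'}\circ\hat{\dd}_{Z'}\circ\gL h_{Z'}$ is the canonical comodule retraction of $h_{Z'}$ (one has $r_{Z'}\circ h_{Z'}=\id$ by coassociativity of $h_{Z'}$ and $\hat{\dd}\circ\dd=\id$, and $r_{Z'}$ is a comodule morphism by (\ref{equivalent-cond-sep})). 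One then checks that $\tilde\varphi$ is a comodule morphism and that it still fills both squares, the latter because all the other arrows involved are already comodule morphisms. This averaging device, not a verification that the given filler already works, is the mechanism running through Balmer's Section 5, including the higher coherences for $N\geq 3$ that you defer to ``a purely diagrammatic check''; those checks occupy most of his proof and are where idempotent-completeness is actually consumed.
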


\begin{rem} Rather than go into the specifics of $N$-triangulations, we simply observe  
what is relevant for us (see \cite[Section 5]{bal1}): A 2-triangulated
category satisfies all the axioms of a triangulated category, except the octahedral axiom, while
a 3-triangulated category is also a triangulated category in the sense of Verdier (but not 
{\it vice-versa}). In our intended application of the above result, $\catC$ will be $D^b(\MM_V,\theta)^{\ol{\LL}}$,
the derived category of an abelian category, which, as such, is
in fact canonically $N$-triangulable for all $N\geq 2$ \cite[Corollaire, p. 18]{Malt}. 
While the comonad $\ol{\mathbb{L}}$ is certainly 2-exact, 
it is not clear at all that it is $N$-exact for this structure when $N>2$. 
We expect this is to be true, at least over algebraic fibers $\MM_u$ of $\MM_V$, but are unable to 
prove this at the moment.
\end{rem}

\begin{lem}\label{retraction-criterion}
Let $\com:=(\gL,\epsilon,\delta)$ be a stable comonad on a suspended category
$\catC$ realized by a stable adjunction $\Phi:\catD \rightleftarrows 
\catC :\Psi$. If the functor $\Psi$ is stably separable,
then the comonad $\com$ is stably separable. 
\end{lem}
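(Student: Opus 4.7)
The plan is straightforward: we lift the natural retraction on $\mathcal{D}$ to a natural retraction on $\mathcal{C}$ via the adjunction. Given that $\Psi$ is stably separable, we have a stable natural retraction $\xi: \Psi\Phi \to \id_{\mathcal{D}}$ of the unit $\eta: \id_{\mathcal{D}} \to \Psi\Phi$, i.e.\ $\xi\circ \eta = \id_{\id_{\mathcal{D}}}$ (and this compatibility commutes with suspension). I would define
\[
\hat{\delta} \ := \ \Phi\,\xi\,\Psi \ : \ L^2 \ = \ \Phi\Psi\Phi\Psi \ \longrightarrow \ \Phi\Psi \ = \ L.
\]
Since $\delta = \Phi\eta\Psi$, one has $\hat{\delta}\circ\delta = \Phi(\xi\circ\eta)\Psi = \id_L$, so $\hat{\delta}$ is a natural retraction of $\delta$. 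Stability of $\hat{\delta}$ is immediate, since the adjunction $\Phi\dashv\Psi$ is stable and $\xi$ is stable.

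The main content is verifying the two compatibility identities of Definition \ref{def-separability}, namely
\[
\hat{\delta}L\circ L\delta \ = \ \delta\circ\hat{\delta} \ = \ L\hat{\delta}\circ \delta L.
\]
For the left equality, one computes
\[
\hat{\delta}L\circ L\delta \ = \ (\Phi\,\xi\,\Psi)\,\Phi\Psi \,\circ\, \Phi\Psi\,(\Phi\eta\Psi) \ = \ \Phi\bigl(\xi\Psi\Phi \,\circ\, \Psi\Phi\eta\bigr)\Psi,
\]
and applying the naturality of $\xi: \Psi\Phi \Rightarrow \id_{\mathcal{D}}$ to the morphism $\eta: \id_{\mathcal{D}} \to \Psi\Phi$ gives $\xi\Psi\Phi \circ \Psi\Phi\eta = \eta\circ\xi$, so the expression equals $\Phi(\eta\circ\xi)\Psi = \delta\circ\hat{\delta}$. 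Symmetrically, for the right equality, naturality of $\eta: \id_{\mathcal{D}} \Rightarrow \Psi\Phi$ applied to $\xi: \Psi\Phi \to \id_{\mathcal{D}}$ gives $\Psi\Phi\xi\circ \eta\Psi\Phi = \eta\circ\xi$, yielding $L\hat{\delta}\circ\delta L = \delta\circ\hat{\delta}$.

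There is no serious obstacle here; the argument is a purely formal manipulation of horizontal and vertical composites of natural transformations. The only point requiring care is keeping track of which occurrence of $\Psi\Phi$ in $\Psi\Phi\Psi\Phi$ the natural transformation $\xi$ (respectively $\eta$) is acting on, but in both identities the correct placement is forced by the naturality square applied to a $1$-cell that is itself $\xi$ or $\eta$. Stability of the two compatibility identities follows from the stability of the natural transformations involved. This completes the plan.
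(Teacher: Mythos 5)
Your proof is correct and follows essentially the same route as the paper: both define the retraction as $\hat{\delta}=\Phi\xi\Psi$ and reduce each compatibility identity in (\ref{equivalent-cond-sep}) to the naturality square of $\xi$ (resp.\ $\eta$) evaluated on the component of $\eta$ (resp.\ $\xi$), giving $\xi\Psi\Phi\circ\Psi\Phi\eta=\eta\circ\xi=\Psi\Phi\xi\circ\eta\Psi\Phi$. The only difference is cosmetic: the paper writes out one of the two identities and declares the other symmetric, while you verify both (and the retraction property $\hat{\delta}\circ\delta=\id_L$) explicitly.
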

\begin{proof}
Note first that $\gL:=\Phi\Psi$, $\epsilon:\gL\rightarrow \id_\catC$ is the counit, and $\dd=\Phi\eta \Psi$.
The statement essentially follows from the proof in  \cite[\S 2.9]{hopf}. 
We recall the easy details for the sake of completeness:
Given a retraction $\xi$ of  $\eta: \id_\catD \to \Psi\Phi$, we obtain a retraction of $\dd=\Phi\eta \Psi$
by setting $\hat{\dd}=\Phi\xi \Psi$. We will prove only 
the condition $\hat{\dd}\gL \circ \gL \dd=\dd\circ \hat{\dd}$, as 
the other equality in (\ref{equivalent-cond-sep}) follows in exactly the same way.

The equality $\hat{\dd}\gL \circ \gL \dd=\dd\circ \hat{\dd}$ translates to
$
(\Phi\xi\Psi\Phi\Psi)\circ(\Phi\Psi\Phi\eta\Psi)=(\Phi\eta\Psi)\circ(\Phi\xi\Psi),
$
which in turn would follow from $(\xi\Psi\Phi)\circ(\Psi\Phi\eta)=\eta\circ\xi$. The latter states,
for every object $x$ of $\catD$, 
the commutativity of the diagram:
\[
\xymatrix{
(\Psi\Phi)(x) \ar[r]^-{(\Psi\Phi)\eta_x} 
\ar[d]^{\xi_x}
& 
(\Psi\Phi)(\Psi\Phi)(x)
\ar[d]^{(\xi(\Psi\Phi))_x}
\\
x \ar[r]^-{\eta_x} & (\Psi\Phi)(x),
}
\]
which follows from the naturality of $\xi$ for the arrows $\eta_x$.
\hide{
is a consequence of the interchange law (\cite[\S II.5]{working}: it implies that the
the second diagram below for $\hat{\dd}\gL \circ \gL \dd$  reduces to the first diagram
corresponding to $\dd\circ \hat{\dd}$. 
$$\xymatrix{
\catC \ar[r]^\Psi & \catD \ar@/^2pc/[rr]^{\Psi\Phi}_{\Downarrow \xi} 
\ar[rr]^{\mathbb{1}_\catD}_{\Downarrow \eta} \ar@/_2pc/[rr]_{\Psi\Phi}
& &  \catD \ar[r]^{\Phi} & \catC
}$$

$$\xymatrix{
\catC \ar[r]^\Psi & \catD \ar@/^1pc/[rr]^{\Psi\Phi}  \ar@/_1pc/[rr]_{\mathbb{1}_\catD}
&  \Downarrow \xi&  \catD \ar@/_1pc/[rr]_{\Psi\Phi}  \ar@/^1pc/[rr]^{\mathbb{1}_\catD}
& \Downarrow \eta &\catD \ar[r]^{\Phi}  & \catC
}$$
}
\end{proof}

\bigskip


\medskip

\noindent{\em Proof of Theorem \ref{deformability}(\ref{thm-item-triangulated-structure})}.  
Fix a contractible Stein open subset $V$ of $U$ as in the statement of the Theorem.
We shall work over $V$, but will continue
to use the same notation as above for the restrictions of the various morphisms
appearing in parts (\ref{deformability-of-monad}), and 
(\ref{thm-item-restriction-of-Brauer-class-over-contractible-V}) of this result.

To show that $\fdbp{\MM_V,\theta}$ carries a natural 2-triangulated structure as in the statement,
it suffices to produce a retraction $\ol{\hat{\dd}}: \fF_V\circ\fF_V\to \fF_V$ of the comultiplication
$\fdd:\fF_V \to  \fF_V\circ\fF_V$ by  Theorem \ref{balmer-triangulated}, 
such that the following two diagrams commute:
\begin{equation}\label{separability}
\xymatrix{
 \ar[d]^{\fdd\circ\fF} \fF\circ\fF\ar@{-->}[r]^{\ol{\hat{\dd}}} & \ar[d]^{\fdd} \fF
 & &  \ar[d]^{\fF \circ \fdd} \fF\circ\fF\ar@{-->}[r]^{\ol{\hat{\dd}}} & \ar[d]^{\fdd} \fF\\
  \fF\circ\fF\circ\fF \ar@{-->}[r]_{\fF\circ\ol{\hat{\dd}}} & \fF\circ\fF 
  & & \fF\circ\fF\circ\fF \ar@{-->}[r]_{\ol{\hat{\dd}}\circ \fF} & \fF\circ\fF } 
\end{equation}

Consider the triangle (\ref{eqn-lambda}). The object $R\Pi_{13_*}\StructureSheaf{\MM_V^3}$
is canonically isomorphic to $\Pi^*R\pi_*\ko_{\MM_V}$ by base-change, where $\pi$ and $\Pi$ are the 
structure morphisms $\MM_V\to V$ and $\MM_V^2\to V$, respectively. As all extensions of line bundles vanish over $V$,
this object is canonically split:
$R\Pi_{13_*}\StructureSheaf{\MM_V^3} \cong \ko_{\MM^2_V}\otimes_{\ko_V}Y(\ko_{\MM_V})$, where $Y(\ko_{\MM_V}):=\oplus_{i=0}^{n}R^{2i}\pi_*\StructureSheaf{\MM_V}$.
Define $\ol{\hat{\dd}}$ to be the composition 
$\fF_V\circ\fF_V \stackrel{\ol{\mathfrak{m}}^{-1}}{\to} \fF_V\otimes_{\ko_V}\ol{\lambda}_n \to \fF_V$,
where $\ol{\mathfrak{m}}$ is given in Equation (\ref{eq-splitting-m-of-self-convolution-of-fF}) and 
the second arrow is induced by the splitting above.
We first observe that with this definition of $\ol{\hat{\dd}}$, the diagrams 
(\ref{separability}) commute when restricted to points corresponding to Hilbert schemes with 
their modular complexes. Indeed, in this case, the morphism $\mathfrak{m}$ is nothing
but the morphism on kernels corresponding to the following retraction of the comultiplication
$$
\Phi\Psi\Phi\Psi=\gL^2 \stackrel{\Phi\xi\Psi}{\longrightarrow} \Phi\Psi=\gL.
$$
Here $(\Phi, \Psi)$ is the adjoint pair $\Phi: D^b(X)\rightleftarrows D^b(X^{[n]}) :\Psi$
realizing our comonad $\com$, and 
$\xi:\Psi\Phi\rightarrow \id_{D^b(X)}$ is the retraction of $\eta: \mathbb{1}_{D^b(X)} \to \Psi\Phi$ given by 
Thoerem \ref{thm-A-is-direct-sum}, Part (\ref{thm-item-A-is-a-direct-sum-Hilbert-scheme-case}).
Thus (the restrictions of) the two diagrams (\ref{separability}) 
commute by Lemma \ref{retraction-criterion}.

Consider the sheaf $\H om_{\Pi}(\fF\circ\fF_, \fF\circ\fF)$. It follows from Lemma
\ref{lemma-F-is-simple-and-rigid}, and semi-continuity \cite[Proposition 6.4]{Hart} that
this sheaf is a vector bundle on a dense open subset of $U$ containing ${\mathcal Hilb}$. 
As above, we denote this set by $U$ also. 
The latter  is the open subset of Theorem \ref{deformability}.
Set
$d=\fdd\circ\ol{\hat{\dd}}-({\fF_V}\circ\ol{\hat{\dd}})\circ(\fdd\circ{\fF_V})$;  we note that $d_x=0$ at points
$x\in {\mathcal Hilb}\cap V$. Therefore, since this locus is dense, 
$d=0$ on $V$. The commutativity of the left diagram above follows. The argument establishing the commutativity of the right diagram is similar.
\qed

%
\subsection{Monodromy invariance}
\label{subsec-monodromy-invariance}
We prove part 
\ref{thm-item-monodromy-invariance} of Theorem \ref{deformability} in this section. 
We first recall the monodromy action on the moduli space 
$\widetilde{\fM}_\Lambda^0$ and observe that the open subset $U$ in Theorem \ref{deformability}
may be chosen to be monodromy invariant.
We then use a density theorem of Verbitsky to deduce the stated property of $U$.

The isometry group $O(\Lambda)$ acts on the moduli space of marked pairs $\fM_\Lambda$ as follows.
An element $g\in O(\Lambda)$ acts on a marked pair $(M,\eta)$ by $g(M,\eta)=(M,g\eta)$.
Let $\fM^0_\Lambda$ be a connected component of $\fM_\Lambda$ of marked pairs of $K3^{[n]}$-type.
Denote by $G\subset O(\Lambda)$ the subgroup which send $\fM^0_\Lambda$ to itself. 
The subgroup $G$ is related to the monodromy subgroup $Mon^2(M)$ of the isometry
group of the second integral cohomology a manifold $M$ of $K3^{[n]}$-type
as follows. Given a pair $(M,\eta)$ in $\fM_\Lambda^0$, we have the equality
\[
G=\eta\circ Mon^2(M)\circ \eta^{-1}.
\]
Given an element $u\in\Lambda$ satisfying $(u,u)=\pm 2$, let $R_u$ be the reflection given by 
$R_u(x):=x-\frac{2(u,x)}{(u,u)}u.$
Set 
\[
\rho_u :=\left\{
\begin{array}{rcl}
R_u & \mbox{if} & (u,u)=-2
\\
-R_u& \mbox{if} & (u,u)=2.
\end{array}
\right.
\]
The group $G$ is the subgroup of $O(\Lambda)$ generated by $\{\rho_u \ : \ (u,u)=\pm 2\}$,
by \cite[Theorem 1.2]{markman-constraints}.
There exists a character 
\[
\cov:G\rightarrow \Integers/2\Integers
\] 
satisfying $\cov(\rho_u)=\left\{
\begin{array}{rcl}
0 & \mbox{if} & (u,u)=-2
\\
1 & \mbox{if} & (u,u)=2.
\end{array}
\right.
$
(see \cite[Sec. 4.1]{markman-monodromy-I}).
The unordered pair of cosets
$\{\tilde{\theta}_1,-\tilde{\theta}_1\}$ in $\Lambda/(2n-2)\Lambda$, given in Equation (\ref{eq-Brauer-classes}),
is $G$-invariant and $g(\tilde{\theta}_1)=(-1)^{\cov(g)}\tilde{\theta}_1$, by 
\cite[Lemma 7.3]{markman-hodge}. Let $\widetilde{\fM}^0_\Lambda$
be a connected component of $\widetilde{\fM}_\Lambda$ containing a triple
$(X_0^{[n]},\eta_0,\Az_0)$, where $\Az_0$ is the modular Azumaya algebra over the cartesian square 
$X_0^{[n]}\times X_0^{[n]}$ of the Hilbert scheme of a $K3$ surface $X_0$.

\begin{thm}
\label{thm-monodromy-invariance-of-U}
\begin{enumerate}
\item
\label{thm-item-G-action}
The $G$-action on $\widetilde{\fM}_\Lambda$, given by
\[
g(M,\eta,\Az)= (M,g\eta,\Az^{(*^{\cov(g)})})
\]
maps the connected component $\widetilde{\fM}^0_\Lambda$ to itself, where 
$\Az^{(*^{\cov(g)})}$ is $\Az$, if $cov(g)=0$, and $\Az^*$, if $\cov(g)=1$.
\item
\label{thm-item-U-is-G-invariant}
The open subset $U$ of $\widetilde{\fM}^0_\Lambda$, in Theorem \ref{deformability},
may be chosen to be invariant with respect to the above action of $G$.
\end{enumerate}
\end{thm}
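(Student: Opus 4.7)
The plan is to establish part (\ref{thm-item-G-action}) first by verifying the twisted action at a Hilbert scheme base point and then propagating by density. Fix a base point $y_0 = (X_0^{[n]}, \eta_0, \Az_0) \in \widetilde{\fM}^0_\Lambda$ with $\Pic(X_0) = 0$ and $\Az_0$ the modular Azumaya algebra. For $g \in G$, the pair $(X_0^{[n]}, g\eta_0)$ lies in $\fM^0_\Lambda$ by the very definition of $G$ as the stabilizer of that component. Since the forgetful morphism $\phi: \widetilde{\fM}_\Lambda^0 \to \fM_\Lambda^0$ has a single fiber point over each Hilbert scheme with trivial Picard group (Theorem \ref{uniqueness-of-bundle}), the task reduces to identifying which of $\Az_0$ or $\Az_0^*$ represents the modular Azumaya algebra of the marked pair $(X_0^{[n]}, g\eta_0)$ inside $\widetilde{\fM}_\Lambda^0$. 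The Brauer class of the modular Azumaya algebra on a Hilbert scheme component is, by construction, the image in $H^2(M\times M, \StructureSheaf{}^*)$ of $\bar{\eta}^{-1}(\tilde\theta_1, -\tilde\theta_1)$; replacing $\eta_0$ by $g\eta_0$ conjugates this cohomology class by $g^{-1}$, which transforms $\tilde\theta_1$ into $(-1)^{\cov(g)}\tilde\theta_1$ by the cited identity from \cite{markman-hodge}. Since dualizing an Azumaya algebra inverts its Brauer class, this sign change corresponds exactly to taking $\Az_0^*$ precisely when $\cov(g) = 1$. Thus $g\cdot y_0 \in \widetilde{\fM}^0_\Lambda$, and by the density of $\mathcal{Hilb}$ in $\widetilde{\fM}^0_\Lambda$ (Corollary \ref{cor-modular-Azumaya-algebras-are-dense}) together with the continuity of the action, the whole component is preserved.

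For part (\ref{thm-item-U-is-G-invariant}), the strategy is to observe that every condition used to shrink $U$ in Proposition \ref{prop-construction-of-universal-object-F}, Lemma \ref{ext-line-bundle-2}, and Remark \ref{rem-F-convolved-with-F-splits} is intrinsic to the isomorphism class of the pair $(M, \fAz)$ and insensitive to both the marking $\eta$ and the replacement of $\fAz$ by its dual. These conditions amount to the requirement that certain relative $\SheafHom$- and $\SheafExt$-sheaves associated to a twisted sheaf $\fE$ with $\SheafEnd(\fE) = \fAz$ have the expected fiberwise dimensions, and that the splitting morphism $\overline{\mathfrak{m}}$ of Remark \ref{rem-F-convolved-with-F-splits} is an isomorphism. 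The isomorphism $\E^* \cong \tau^*\E$ of Lemma \ref{lemma-pull-back-of-E-by-transposition} (together with the $\tau$-equivariance $\tau^*\F \cong \F^\vee[2]$ of Lemma \ref{lemma-kernel-of-the-adjoint-of-F}) shows that replacing $\fE$ by its dual pulls back the family of relative cohomology sheaves via the involution of $\MM^2$ exchanging the two factors, and in particular preserves the semi-continuous numerical conditions. Consequently the complement of $U$ inside $\widetilde{\fM}^0_\Lambda$ is already a $G$-invariant analytic subset, and $U$ itself may be taken $G$-invariant; should any condition used in the construction pick out a non-invariant locus, one may replace $U$ by the union of its $G$-translates, which remains Zariski open, dense, and still contains $\mathcal{Hilb}$ by part (\ref{thm-item-G-action}).

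The main obstacle will be the careful bookkeeping in part (\ref{thm-item-G-action}), namely tracking how the isometry $g$ acts on the coset $\{\pm\tilde\theta_1\}$ through the change of marking, and reconciling this with the dualization operation on Azumaya algebras so that the twisted action $\Az \mapsto \Az^{(*^{\cov(g)})}$ really produces a point in the same connected component $\widetilde{\fM}^0_\Lambda$ rather than in a different one. Once this compatibility is established at a single Hilbert scheme base point, both the surjectivity of $\phi$ onto $\fM^0_\Lambda$ and the density of $\mathcal{Hilb}$ allow propagation to the entire component, and part (\ref{thm-item-U-is-G-invariant}) is then a formal consequence of the intrinsic nature of the conditions defining $U$.
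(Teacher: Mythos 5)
Your treatment of part (\ref{thm-item-G-action}) does more work than the paper, which simply cites \cite[Theorem 1.11]{torelli} and notes that the proof is contained there. Your reconstruction is in the right spirit (identify the unique point of $\widetilde{\fM}^0_\Lambda$ over $(X_0^{[n]},g\eta_0)$ via Theorem \ref{uniqueness-of-bundle}, pin down the Azumaya algebra by its Brauer class, then spread out by connectedness), but two points are left implicit: first, that the twisted assignment $(M,\eta,\Az)\mapsto (M,g\eta,\Az^{(*^{\cov(g)})})$ is a well-defined homeomorphism of $\widetilde{\fM}_\Lambda$ at all, i.e.\ that the dual of an admissible (stable, rigid, correct numerical invariants) Azumaya algebra is again admissible; second, the Brauer-class comparison degenerates when $2n-2=2$, since then $\tilde{\theta}_1=-\tilde{\theta}_1$ and $\Az_0$, $\Az_0^*$ have the same Brauer class, so that step no longer selects between them. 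These are exactly the issues absorbed by the citation.

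The real gap is in part (\ref{thm-item-U-is-G-invariant}). Your primary claim --- that every condition cutting out $U$ is invariant under $\Az\mapsto\Az^*$ because $\E^*\cong\tau^*\E$ --- may well be true, but you do not verify it for the conditions that actually define $U$ (the line-bundle/vanishing conditions on $\H om_\Pi(\ko_{\gd_\MM},\fE[j])$, the ranks of $\H om_\Pi(\fF,\fF\circ\fF)$, the invertibility of $\overline{\mathfrak{m}}$), and in any case $U$ is not the maximal locus where these conditions hold, only some open set produced by repeated shrinking, so invariance of the conditions does not by itself make $U$ invariant. Your fallback --- replace $U$ by the union of \emph{all} $G$-translates --- is then circular: for $g$ with $\cov(g)=1$ the points of $g(U)\setminus U$ carry the \emph{dual} Azumaya algebra, and the constructions of Theorem \ref{deformability} are only known to work there if the duality-invariance you were trying to avoid checking actually holds. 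The paper's argument sidesteps all of this: since elements of $\ker(\cov)$ change only the marking and leave the isomorphism class of $\Az$ untouched, one may freely replace $U$ by $\bigcup_{g\in\ker(\cov)}g(U)$ and assume $U$ is $\ker(\cov)$-invariant; then for a single $g$ with $\cov(g)=1$ one passes to the \emph{intersection} $U\cap g(U)$, which is $G$-invariant (using normality of $\ker(\cov)$ and $g^2\in\ker(\cov)$), Zariski open, and still contains the $G$-invariant dense subset ${\mathcal Hilb}$ --- no duality-invariance of the defining conditions is ever needed. You should replace the union in your fallback by this union-then-intersection device.
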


\begin{proof}
\ref{thm-item-G-action}) This statement is a version of \cite[Theorem 1.11]{torelli}
and its proof is included in the proof of that Theorem. 

\ref{thm-item-U-is-G-invariant}) 
All the properties that points of $U$ were required to satisfy depend on the 
isomorphism class of the Azumaya algebra. Hence, $U$ may be 
enlarged replacing it by the union of all translates $g(U)$, for all $g$ in the kernel of $\cov$.
We may thus assume that $U$ is $\ker(\cov)$-invariant. 

Choose an element $g\in G$ with $\cov(g)=1$. 
The dense subset ${\mathcal Hilb}$ is $G$-invariant, by definition.  
Hence, ${\mathcal Hilb}$ is contained in $U\cap g(U)$. The latter is $G$-invariant. 
\end{proof}

\begin{example}
Let $D\subset X^{[n]}$ be the divisor of non-reduced subschemes,  $d\in H^2(X^{[n]},\Integers)$
the class of $D$, and $R_d:H^2(X^{[n]},\Integers)\rightarrow H^2(X^{[n]},\Integers)$
the reflection given by $R_d(x)=x-\frac{2(x,d)}{(d,d)}$. Then $R_d$ is a monodromy operator, by
\cite[Cor. 1.8]{markman-monodromy-I}, and a Hodge isometry. We have $\cov(R_d)=1$, by
\cite[Lemma 4.10(4)]{markman-monodromy-I} ($R_d$
is the image of the duality operator $v\mapsto v^\vee$ via the homomorphism $f$ in that Lemma).
Let $\Az$ be the modular Azumaya algebra
over $X^{[n]}\times X^{[n]}$ and $\eta$ a marking for $X^{[n]}$. Then the triples 
$(X^{[n]},\eta,\Az)$ and $(X^{[n]},\eta R_d,\Az^*)$ belong to the same connected component 
$\widetilde{\fM}^0_\Lambda$ and have the same period.  Hence, the two triples are
inseparable points in moduli.
\end{example}

\begin{proof}[Proof of part \ref{thm-item-monodromy-invariance} of Theorem \ref{deformability}]
$U$ is a non-empty open $G$-invariant subset of $\widetilde{\fM}^0_\Lambda$,
by Theorem \ref{thm-monodromy-invariance-of-U}.
Hence, its image $\phi(U)$ in $\fM^0_\Lambda$ 
is a $G$-invariant non-empty open subset. The main result of \cite[Theorem 4.11]{verbitsky-ergodicity}
states that the $G$-orbit of a marked hyperk\"{a}hler manifold $(M,\eta)$ with second Betti number $b_2(M)\geq 5$ and Picard rank
$\leq b_2(M)-3$ is dense in 
its connected component $\fM^0_\Lambda$.
Hence, $\phi(U)$ contains $(M,\eta)$, for every marking $\eta$, such that $(M,\eta)$ belongs to the image
$\fM^0_\Lambda$ of $\widetilde{\fM}^0_\Lambda$ in the moduli space of marked pairs.
\end{proof}

The following is a sufficient condition for a triple to belong to the open subset $U$ mentioned in Theorem \ref{deformability}.

\begin{lem}
\label{lemma-a-sufficient-condition-to-belong-to-U}
The $G$-orbit of a point $(M,\eta,\Az)$ of \ $\widetilde{\fM}^0_\Lambda$ is dense in $\widetilde{\fM}^0_\Lambda$,
and is thus contained in the open subset $U$ of Theorem \ref{deformability} supporting the deformation  
$\langle \fF, \fee, \fdd \rangle$ of comonad objects,
as well as in the open subset $W$ of Remark \ref{rem-F-convolved-with-F-splits} 
where the the square $\fF\circ\fF$ is isomorphic to a direct sum of shifts of $\fF$, 
provided the following conditions are satisfied.
\begin{enumerate}
\item
\label{lemma-condition-item-non-maximal-rank}
The rank of $\Pic(M)$ is $\leq 20$.
\item
\label{lemma-condition-item-A-is-maximally-twisted}
The order of the Brauer class of $\Az$, which is the image in $H^2(M\times M,\StructureSheaf{}^*)$
of $\eta^{-1}(\tilde{\theta})$,
 is $2n-2$. Here $\tilde{\theta}$ is the class given in (\ref{eq-Brauer-classes}).
\end{enumerate}
\end{lem}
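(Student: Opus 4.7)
The plan is to promote Verbitsky's ergodicity-type density of $G$-orbits from the moduli space $\fM^0_\Lambda$ of marked irreducible holomorphic symplectic manifolds to the moduli space $\widetilde{\fM}^0_\Lambda$ of triples, using both the density of the subset ${\mathcal Hilb}$ (Corollary \ref{cor-modular-Azumaya-algebras-are-dense}) and the uniqueness of Azumaya algebras over Hilbert schemes of K3 surfaces with trivial Picard group (Theorem \ref{uniqueness-of-bundle}).

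First I would invoke \cite[Theorem 4.11]{verbitsky-ergodicity}: for a manifold of $K3^{[n]}$-type one has $b_2 = 23$, so condition (\ref{lemma-condition-item-non-maximal-rank}) amounts to $\rank\Pic(M)\leq b_2(M)-3$, which is precisely Verbitsky's hypothesis. It follows that the orbit $G\cdot(M,\eta)$ is dense in the connected component of $\fM^0_\Lambda$ containing $(M,\eta)$. Since the period maps $\widetilde{P}$ and $P$ are local analytic isomorphisms with $\widetilde{P} = P\circ\phi$, the orbit of periods $G\cdot \widetilde{P}(M,\eta,\Az)$ is consequently dense in the period domain.

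Next, fix any non-empty open subset $V\subset \widetilde{\fM}^0_\Lambda$. By Corollary \ref{cor-modular-Azumaya-algebras-are-dense}, there exists a point $p_0 = (X^{[n]}_0,\eta_0,\Az_0)\in V\cap{\mathcal Hilb}$ with $\Pic(X_0)=0$. Because $\widetilde{P}$ is a local analytic isomorphism, I shrink $V$ to an open neighborhood $W$ of $p_0$ on which $\widetilde{P}|_W$ is a homeomorphism onto an open subset $W''$ of the period domain. By Theorem \ref{uniqueness-of-bundle}, $p_0$ is the unique preimage of $\widetilde{P}(p_0)$ in $\widetilde{\fM}^0_\Lambda$; shrinking $W''$ further one can arrange that $\widetilde{P}^{-1}(W'') = W$, since otherwise points in a competing sheet of $\widetilde{P}^{-1}(W'')$ would accumulate to a second preimage of $\widetilde{P}(p_0)$, contradicting uniqueness. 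Using density of the orbit of periods, I find $g\in G$ with $\widetilde{P}(g(M,\eta,\Az)) = g\widetilde{P}(M,\eta,\Az)\in W''$, so $g(M,\eta,\Az)\in W\subset V$, producing the desired element of $V\cap G\cdot(M,\eta,\Az)$. Finally, once density of $G\cdot(M,\eta,\Az)$ in $\widetilde{\fM}^0_\Lambda$ is established, the orbit is forced into the $G$-invariant open sets $U$ and $W$ of Theorem \ref{deformability} and Remark \ref{rem-F-convolved-with-F-splits} (Theorem \ref{thm-monodromy-invariance-of-U}).

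The main obstacle is controlling the non-Hausdorff behavior of $\widetilde{\fM}^0_\Lambda$ in the step $\widetilde{P}^{-1}(W'') = W$: a priori a point with unique preimage under $\widetilde{P}$ need not admit such a base neighborhood in a non-Hausdorff space. Here condition (\ref{lemma-condition-item-A-is-maximally-twisted}) enters crucially: the Brauer class of $\Az$ having exact order $2n-2$ is the numerical invariant characterizing the component $\widetilde{\fM}^0_\Lambda$ (cf.\ (\ref{eq-Brauer-classes})), so that the $G$-action—which via the character $\cov$ interchanges $\Az$ and $\Az^*$—remains within $\widetilde{\fM}^0_\Lambda$ and reaches all sheets of $\phi$ near the Hilbert scheme locus, where by Theorem \ref{uniqueness-of-bundle} those sheets collapse into a single preimage, making the shrinking argument rigorous.
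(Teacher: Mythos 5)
There is a genuine gap, and it sits exactly where you flagged it: the step $\widetilde{P}^{-1}(W'')=W$. In a non-Hausdorff manifold on which $\widetilde{P}$ is only a local analytic isomorphism, a point with a unique preimage need \emph{not} have a fundamental system of neighborhoods of the form ``the full preimage of an open set in the period domain.'' Your accumulation argument fails: extra sheets of $\widetilde{P}^{-1}(W'')$ can persist for every shrinking of $W''$ without converging to a second preimage of $\widetilde{P}(p_0)$ (they may simply fail to converge at all). The closing appeal to condition (\ref{lemma-condition-item-A-is-maximally-twisted}) does not repair this, because you never extract its actual content; moreover Theorem \ref{uniqueness-of-bundle} gives uniqueness of the fiber of $\phi$ only over Hilbert schemes of $K3$ surfaces with \emph{trivial} Picard group, whereas what is needed is a uniqueness statement at the arbitrary point $(M,\eta)$ you are trying to move into $V$. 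As written, your argument only shows that \emph{some} point of $\widetilde{\fM}^0_\Lambda$ lying over $g(M,\eta)$ lands in $V$, not that $g(M,\eta,\Az)$ itself does.

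The paper's mechanism is different and is the missing ingredient. Condition (\ref{lemma-condition-item-A-is-maximally-twisted}) (order of the Brauer class equal to the rank $2n-2$) implies, by \cite[Prop.~7.8]{markman-hodge}, that any Azumaya algebra $\Az'$ with $(M,\eta,\Az')\in\widetilde{\fM}^0_\Lambda$ is slope-stable with respect to \emph{every} K\"ahler class on $M\times M$; by \cite[Lemma~5.3]{torelli} this forces $\Az'\cong\Az$, i.e.\ the fiber $\phi^{-1}(\phi(M,\eta,\Az))$ is a \emph{single point}. Condition (\ref{lemma-condition-item-non-maximal-rank}) plus Theorem \ref{deformability}~(\ref{thm-item-monodromy-invariance}) (itself proved via the Verbitsky density you invoke) guarantees this fiber meets $U$; combined with the single-point statement one gets $(M,\eta,\Az)\in U$ directly, and the $G$-invariance of $U$ and $W$ (Theorem \ref{thm-monodromy-invariance-of-U}) finishes the containment. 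The density claim then also follows cleanly: $\phi$ is an open map, the orbit of $(M,\eta)$ is dense downstairs, and single-point fibers over the orbit (condition (\ref{lemma-condition-item-A-is-maximally-twisted}) being $G$-invariant) force $g(M,\eta,\Az)$ itself into any given open set. You should replace your topological shrinking argument by this fiber-is-a-point argument.
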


\begin{proof}
Condition (\ref{lemma-condition-item-non-maximal-rank}) implies that the fiber
$\phi^{-1}(\phi(M,\eta,\Az))$ intersects $U$, by 
Theorem \ref{deformability} \nolinebreak (\ref{thm-item-monodromy-invariance}).
Condition (\ref{lemma-condition-item-A-is-maximally-twisted}) implies that the fiber 
$\phi^{-1}(\phi(M,\eta,\Az))$ consists of a single point.
Indeed, the condition implies that the Azumaya algebra $\Az'$ of a triple $(M,\eta,\Az')$ in this fiber 
is slope stable with respect to every K\"{a}hler class on $M\times M$, by
\cite[Prop. 7.8]{markman-hodge}. It follows that 
$\Az$ is isomorphic to $\Az'$, by \cite[Lemma 5.3]{torelli}.
%
\end{proof}

\begin{rem}
\label{rem-modular-Azumaya-algebra-belongs-to-moduli}
Let $X$ be a $K3$ surface, $v$ a primitive algebraic Mukai vector, and $H$ a $v$-generic polarization, such that the dimension of $M:=M_H(v)$ is $\geq 4$. Denote by $\Az$ the modular Azumaya algebra over $M\times M$ (Definition \ref{def-modular}). 
There exists a marking $\eta$ for $M$, such that $(M,\eta,\Az)$ belongs to $\widetilde{\fM}^0_\Lambda$, if and only if $\Az$ is
$\pi_1^*\omega+\pi_2^*\omega$ slope-stable, as an Azumaya algebra, with respect to some K\"{a}hler class $\omega$ on $M$.
The above slope-stability of $\Az$ with respect to every K\"{a}hler class on $M$ is known when the order of the Brauer class of $\Az$ is equal to the rank $2n-2$ of $\Az$, by \cite[Prop. 7.8]{markman-hodge}, as well as when  $\Pic(X)$ is trivial and $v=(1,0,1-n)$ is the Mukai vector 
of the ideal sheaf of a length $n$ subscheme, so that $M=X^{[n]}$, by \cite{markman-stability}. 
Stability of the modular Azumaya algebra over $X^{[n]}\times X^{[n]}$ with respect to some K\"{a}hler class on $X^{[n]}$
is known whenever the rank of $\Pic(X)$ is  less than $20$, by \cite{markman-stability}.
Stability being a Zariski open condition, membership of $(M_H(v),\eta,\Az)$ in $\widetilde{\fM}^0_\Lambda$ follows for the generic member of a family of such moduli spaces over an irreducible base, once known for some fiber.
\end{rem}

\begin{cor}
\label{cor-monad-of-very-twisted-moduli-spaces-are-totally-split}
Let $X$ be a $K3$ surface of Picard rank $\leq 19$, $v$ a primitive algebraic Mukai vector, $H$ a $v$-generic polarization, such that the dimension of $M_H(v)$ is $\geq 4$. Assume further that 
\begin{equation}
\label{eq-gcd-equal-rank}
\gcd\{(u,v) \ : \ u\in H^*(X,\Integers) \ \mbox{and} \ c_1(u)\in H^{1,1}(X)\} \ = \  (v,v).
\end{equation}
Then the morphism $\alpha$ given in Equation (\ref{eq-composition-of-three-morphisms}) is an isomorphism (so the monad object $\A$ is totally split).
\end{cor}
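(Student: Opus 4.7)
The plan is to deduce the corollary by combining Lemma \ref{lemma-a-sufficient-condition-to-belong-to-U} and Lemma \ref{lemma-a-sufficient-condition-to-be-totally-split}. First I would produce a marking $\eta$ for $M:=M_H(v)$ so that the triple $(M,\eta,\Az)$, with $\Az$ the modular Azumaya algebra, defines a point of $\widetilde{\fM}^0_\Lambda$. According to Remark \ref{rem-modular-Azumaya-algebra-belongs-to-moduli}, this reduces to verifying that $\Az$ is slope-stable with respect to some K\"ahler class on $M\times M$; and by the already cited \cite[Prop. 7.8]{markman-hodge}, such stability is automatic once the order of the Brauer class of $\Az$ equals the rank $2n-2$ of $\Az$. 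Once membership in $\widetilde{\fM}^0_\Lambda$ is established, it suffices to check the two hypotheses of Lemma \ref{lemma-a-sufficient-condition-to-belong-to-U}: a bound on the Picard rank and a maximality condition on the order of the Brauer class.

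The Picard condition is the easy half. By Theorem \ref{Yoshioka-main} the Mukai homomorphism (\ref{mukai-hom}) is a Hodge isometry between $v^\perp$ and $H^2(M,\Integers)$, so $\Pic(M)$ is identified with the intersection of $v^\perp$ with the algebraic part of the extended Mukai lattice $\widetilde{H}(X,\Integers)$. The algebraic part has rank $\rho(X)+2\leq 21$, and since $v$ is itself a primitive algebraic class, $v^\perp$ cuts one dimension away, leaving $\rho(M)\leq \rho(X)+1\leq 20$, as required by condition (\ref{lemma-condition-item-non-maximal-rank}) of Lemma \ref{lemma-a-sufficient-condition-to-belong-to-U}.

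The Brauer-order condition is the main obstacle and is where hypothesis (\ref{eq-gcd-equal-rank}) is consumed. Here I would invoke Mukai's computation of the obstruction class for the universal family: the Brauer class $\theta\in Br(M)$ of the twisted universal sheaf $\U$ on $X\times M$ has order equal to
\[
\gcd\{(u,v) \ : \ u\in H^*(X,\Integers),\ c_1(u)\in H^{1,1}(X)\}.
\]
Hypothesis (\ref{eq-gcd-equal-rank}) thus forces $\mathrm{ord}(\theta)=(v,v)=2n-2$. Since the modular sheaf $\E$ is a $\pi_1^*\theta^{-1}\pi_2^*\theta$-twisted sheaf on $M\times M$, and restriction of this class to a fiber $\{m\}\times M$ returns $\theta$ itself, the order of the Brauer class of $\Az=\SheafEnd(\E)$ is also exactly $2n-2$. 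This verifies condition (\ref{lemma-condition-item-A-is-maximally-twisted}) of Lemma \ref{lemma-a-sufficient-condition-to-belong-to-U} and simultaneously fulfills the slope-stability requirement invoked at the outset, so the initial membership in $\widetilde{\fM}^0_\Lambda$ is indeed granted.

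Putting the pieces together, Lemma \ref{lemma-a-sufficient-condition-to-belong-to-U} places $(M,\eta,\Az)$ in the open subset $W$ of Remark \ref{rem-F-convolved-with-F-splits}. The corollary then follows by applying Lemma \ref{lemma-a-sufficient-condition-to-be-totally-split} to this point: the morphism $\alpha$ of Equation (\ref{eq-composition-of-three-morphisms}) associated to the twisted universal sheaf $\U$ over $X\times M_H(v)$ is an isomorphism, so $\A$ is totally split in the sense of Definition \ref{def-totally-split-monad}.
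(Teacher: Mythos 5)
Your proposal is correct and follows essentially the same route as the paper: verify the Picard-rank bound, use hypothesis (\ref{eq-gcd-equal-rank}) to show the order of the Brauer class of the modular Azumaya algebra equals its rank $2n-2$, deduce slope-stability from \cite[Prop. 7.8]{markman-hodge}, and then chain Lemma \ref{lemma-a-sufficient-condition-to-belong-to-U} into Lemma \ref{lemma-a-sufficient-condition-to-be-totally-split}. The only cosmetic difference is that the paper cites \cite[Lemma 7.3]{markman-hodge} directly for the order of the Brauer class of $\Az$ on $M\times M$, whereas you derive it by restricting $\pi_1^*\theta^{-1}\pi_2^*\theta$ to a fiber $\{m\}\times M$; both are fine.
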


\begin{proof}
The Picard rank of $M_H(v)$ is $\leq 20$. 
The order of the Brauer class of the modular Azumaya algebra $\Az$ over $M_H(v)\times M_H(v)$ is 
equal to the left hand side of Equation (\ref{eq-gcd-equal-rank}), 
by \cite[Lemma 7.3]{markman-hodge}. The rank of the Azumaya algebra $\Az$ is equal to the right hand side of Equation (\ref{eq-gcd-equal-rank}). 
Thus $\Az$ is slope-stable, as an Azumaya algebra, 
with respect to every K\"{a}her class, by \cite[Prop. 7.8]{markman-hodge}. 
Hence, there exists a marking $\eta$, such that the triple $(M_H(v),\eta,\Az)$ corresponds to a point in the open set $U$ of Theorem \ref{deformability}, by Lemma \ref{lemma-a-sufficient-condition-to-belong-to-U}. The assertion now follows from Lemma \ref{lemma-a-sufficient-condition-to-be-totally-split}.
\end{proof}

\hide{
Condition (\ref{lemma-condition-item-unique-birational-model}) of Lemma \ref{lemma-a-sufficient-condition-to-belong-to-U},
requiring uniqueness of the bimeromorphic model, 
is probably not needed (see Remark \ref{rem-M-rather-than-M-prime}). 
When $M$ is projective,  the condition that $M$ has a unique birational model is equivalent to the condition
that the ample cone of $M$ is equal to the interior of its movable cone.
The movable cone of $M_H(v)$ is calculated in \cite[Sec. 9]{markman-hodge}
and the ample cone is calculated in \cite{bayer-macri}.

\begin{example}
Let $r, s$ be even integers with $r$ positive.
Let $X$ be a $K3$ surface with a cyclic Picard group generated by an ample line bundle $H$
of degree $2+2rs$. Set $v=(r,c_1(H),s)$. Then $(v,v)=2$ and $M_H(v)$ 
is $4$-dimensional and satisfies conditions
(\ref{lemma-condition-item-non-maximal-rank}) and (\ref{lemma-condition-item-A-is-maximally-twisted}) above.
If the orthogonal complement $v^\perp$ in the Mukai lattice does not contain any primitive class $\rho$ 
of self-intersection $-10$, such that $(\rho,u)$ is even for all $u\in v^\perp$, 
then Condition (\ref{lemma-condition-item-unique-birational-model}) is satisfied as well 
\cite{hassett-tschinkel}. 
\end{example}
}

%
\subsection{A $K3$ category}
\label{subsection-k3-category}

Let $M$ be an irreducible holomorphic symplectic manifold of $K3^{[n]}$-type admitting
a deformed comonad structure $\LL:=(L,\epsilon,\delta)$ 
constructed above. 

\begin{prop}
The shift by $[2]$ is a Serre functor for the category $D^b(M,\theta)^\LL$
over a dense $G$-invariant open subset, containing ${\mathcal Hilb}$,  of the moduli space $\widetilde{\fM}^0_\Lambda$ of triples. In other words,
given objects  $a$ and $b$ of $D^b(M,\theta)^\LL$,
there exists a natural isomorphism 
\begin{equation}
\label{eq-serre-duality-in-the-comonad-category}
\Hom(a,b) \IsomRightArrow \Hom(b,a[2])^*.
\end{equation}
\end{prop}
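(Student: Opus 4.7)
The plan is to first verify the result on the Hilbert-scheme locus ${\mathcal Hilb}$ by transport of structure across the comparison equivalence, and then deform the Serre pairing thus obtained over the rest of the moduli space $\widetilde{\fM}^0_\Lambda$.

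I begin by observing that shift by $[2]$ manifestly preserves the comodule category: since $L$ is an additive triangulated endofunctor, $L(a[2]) = L(a)[2]$ canonically, and a comodule $(a,h)$ determines a comodule $(a[2],h[2])$; morphisms of comodules are stable under shift. Hence $[2]$ is an autoequivalence of $D^b(M,\theta)^\LL$ and the statement makes sense.

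At a point $(X_0^{[n]},\eta_0,\Az_0)\in {\mathcal Hilb}$, Proposition \ref{prop-Phi-hat-is-an-equivalence} identifies $D^b(X_0^{[n]},\theta)^\LL$ with $D^b(X_0)$ via the comparison functor $\hat{\Phi}$. Because $X_0$ is a $K3$ surface, $[2]$ is a Serre functor on $D^b(X_0)$; as $\hat{\Phi}$ is triangulated and commutes with shifts, it transports this Serre structure to the comodule category, yielding (\ref{eq-serre-duality-in-the-comonad-category}) on every fiber over ${\mathcal Hilb}$. This part is really just an application of already-proven results.

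For a general point $(M,\eta,\Az)\in U$ there is no underlying $K3$ surface, so I would instead propagate the Serre pairing across the deformation. The idea is to assemble, over a contractible Stein $V\subset U$, a global pairing between the relative $\mathrm{Hom}$-sheaves
\[
\pi_*\mathscr{H}om_{\MM_V,\theta}((a,h),(b,h'))\quad\text{and}\quad \pi_*\mathscr{H}om_{\MM_V,\theta}((b,h'),(a[2],h[2]))
\]
of any two families of comodules. Such a pairing is built from (i) composition, (ii) the ambient fiberwise Serre pairing $\mathrm{Hom}(a,a[2n])\to \CC$ on $D^b(\MM_u,\theta_u)$, and (iii) a ``trace from $[2]$ to $[2n]$'' in the comodule direction, which one constructs from the $(n-1)$-fold iterated comultiplication $\fdd$: using the splitting $\overline{\mathfrak{m}}$ of (\ref{eq-splitting-m-of-self-convolution-of-fF}) and Remark \ref{rem-F-convolved-with-F-splits}, the iterated composition $\fF\to \fF^{\circ n}$ singles out, via the summand $\StructureSheaf{\MM^2}[2-2n]$ of the higher $\overline{\lambda}_n^{\otimes(n-1)}$, a canonical morphism $\fF\to \StructureSheaf{\Delta_{\MM}}[2n-2]$ deforming the class $t_M^{n-1}\circ\fee$ in the Hilbert case. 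Pairing this with ambient Serre duality produces, in each fiber, a candidate Serre pairing on the comodule category.

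The hard part is non-degeneracy of this pairing at every point of $U$. I would verify it by semi-continuity, in the style of Lemma \ref{ext-line-bundle-2}: non-degeneracy is an open condition on the base, and at points of ${\mathcal Hilb}$ it holds by the first step, since the pairing there is identified with the $K3$ Serre pairing on $D^b(X)$ via $\hat{\Phi}$. Since ${\mathcal Hilb}$ is dense in $\widetilde{\fM}^0_\Lambda$ (Corollary \ref{cor-modular-Azumaya-algebras-are-dense}), there is a dense open $G$-invariant subset of $U$ over which the pairing is non-degenerate on morphism spaces between cofree comodules $G(c)=(Lc,\fdd_c)$, and (by the triangulated version of Beck's theorem already invoked in the proof of Proposition \ref{prop-Phi-hat-is-an-equivalence}) every comodule is a retract of an iterated cone of cofree ones, so the pairing remains non-degenerate for arbitrary comodule pairs. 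Renaming this dense open set $U$ completes the proof. The principal technical difficulty is keeping track of the compatibility of the candidate pairing with the comodule constraints fiberwise; this is handled by the fact that the trace-like morphism $\fF\to \StructureSheaf{\Delta_{\MM}}[2n-2]$ itself extends globally as a morphism in $D^b(\MM^2,\Theta)$, built from the comonad data $(\fF,\fee,\fdd)$ that was constructed in $\S\ref{subsec-deformability-of-the-monad}$.
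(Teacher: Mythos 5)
There is a genuine gap in the second half of your argument. Your strategy is to build a candidate Serre pairing over the base and then propagate non-degeneracy from ${\mathcal Hilb}$ by semicontinuity. But the Serre-functor property is a statement quantified over \emph{all pairs of objects} of the fiber category $D^b(\MM_u,\theta_u)^{\overline{\LL}_u}$, and at a general point $u\in U$ these objects do not deform to the Hilbert locus -- only the comonad datum $\langle\fF,\fee,\fdd\rangle$ does. Semicontinuity, as in Lemma \ref{ext-line-bundle-2}, applies to relative $\mathscr{H}om$-sheaves of \emph{fixed families} of objects over the base; it cannot certify non-degeneracy for an arbitrary pair of comodules that exists only in a single fiber. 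Your reduction to cofree comodules $G(c)=(Lc,\fdd_c)$ does not repair this, because $c$ is itself an arbitrary object of $D^b(\MM_u,\theta_u)$ with no family extending it. (A smaller inaccuracy: by Balmer's Proposition 2.10 every comodule is a direct summand of a single cofree one, i.e.\ the comodule category is the idempotent completion of the Kleisli category; ``retract of an iterated cone of cofree ones'' is not the statement you need, and extending a Serre pairing across retracts requires explicit compatibility with idempotents, which you do not address.)

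The paper's proof avoids deformation of the pairing entirely and is pointwise and formal. The only fact that is spread from ${\mathcal Hilb}$ by density is the self-duality of the kernel, $\F\cong\tau^*\F^\vee[2]$ (Lemma \ref{lemma-kernel-of-the-adjoint-of-F}), a statement about a single deformable object whose $\Hom$-sheaf is a line bundle. This gives $L\dashv L[2n-2]$, hence $\hat{L}[2n-2]\dashv F'$ on the free-comodule subcategory, and then
\[
\Hom(\hat{L}x,\hat{L}y)\cong\Hom(Lx,y)\cong\Hom(y,Lx[2n])^*\cong\Hom(\hat{L}y,\hat{L}x[2])^*
\]
follows from the adjunctions together with ambient Serre duality on the $2n$-dimensional $M$, which holds at \emph{every} point with no semicontinuity needed; the extension to all comodules is the idempotent-completion bookkeeping. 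Your ingredients (the trace $L\to\id[2n-2]$ and ambient Serre duality) are essentially the counit of the adjunction $L\dashv L[2n-2]$, so the repair is to package them as that adjunction and argue formally in each fiber, rather than to deform a pairing and invoke openness of non-degeneracy.
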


\begin{proof}
Let 
$\hat{L}:D^b(M,\theta)\rightarrow D^b(M,\theta)^\LL$ be the natural functor taking an object $x$ of $D^b(M,\theta)$
to $(L(x),\delta_x:L(x)\rightarrow L^2(x))$. The full subcategory of $D^b(M,\theta)^\LL$
with objects of the form $\hat{L}(x)$, for an object $x$ of $D^b(M,\theta)$, 
will be denoted by $D^b(M,\theta)^{\LL}_{fr}$.
We consider first the case $a,b\in D^b(M,\theta)^{\LL}_{fr}$, with
$a=\hat{L}(x)$ and $b=\hat{L}(y)$ for objects $x, y$ of $D^b(M)$.
Denote by $F:D^b(M,\theta)^{\LL}\rightarrow D^b(M,\theta)$ the forgetful functor. 
Then $\hat{L}$ is the right adjoint of $F$.
The right adjoint of the functor $L$ is isomorphic to $L[2n-2]$ over the dense  subset 
of the moduli space of triples consisting of Hilbert schemes, by Lemma 
\ref{lemma-kernel-of-the-adjoint-of-F}. The kernel $\F$ of the functor $L$ has a one dimensional space
$\Hom(\F,\F)$ if $M$ is a Hilbert scheme, and so the
set over which the isomorphism of Lemma \ref{lemma-kernel-of-the-adjoint-of-F} holds is a dense open subset. 
We omit the proof of the latter statement, which  is similar to the proof of Lemma \ref{ext-line-bundle-2}, followed
by that of Lemma \ref{lem-A-splits-over-an-open-subset}.
Over this open set we get 
\[
\Hom(x,F\hat{L}(y)[2n-2])=\Hom(x,L(y)[2n-2]) \cong \Hom(L(x),y) =\Hom(F\hat{L}(x),y)\cong \Hom(\hat{L}(x),\hat{L}(y)),
\]
where the equalities above follow from the equality $F\hat{L}=L$ and the isomorphisms are due to the adjunctions
$L\dashv L[2n-2]$ and $F\dashv \hat{L}$. We conclude that $\hat{L}[2n-2]$ is a left adjoint to the
restriction $F':D^b(M,\theta)^{\LL}_{fr}\rightarrow  D^b(M,\theta)$ of the forgetful functor $F$ to the subcategory 
$D^b(M,\theta)^{\LL}_{fr}$ of $D^b(M,\theta)^{\LL}$. 

We get the bi-functorial isomorphisms
\begin{equation}
\label{eq-bi-functorial-isomorphism}
\Hom(\hat{L}(x),\hat{L}(y))  \cong  \Hom(L(x),y) \cong \Hom(y,L(x)[2n])^*
\cong \Hom(\hat{L}(y),\hat{L}(x)[2])^*.
\end{equation}
The first isomorphism is due to the adjunction $F\dashv \hat{L}$ and the equality $F\hat{L}=L$.
The second is Serre Duality for $D^b(M,\theta)$. 
The last is due to the adjunction $\hat{L}[2n-2]\dashv F'$. Thus,  $D^b(M,\theta)^{\LL}_{fr}$ is a 
$K3$ category.

Denote by $\sigma_{x,y}:\Hom(\hat{L}(x),\hat{L}(y))\rightarrow \Hom(\hat{L}(y),\hat{L}(x)[2])^*$
the isomorphism given in Equation (\ref{eq-bi-functorial-isomorphism}).
Let $e:\hat{L}(w)\rightarrow \hat{L}(x)$ be a morphism. Bi-functoriality yields the commutative diagram
\[
\xymatrix{
\Hom(\hat{L}(x),\hat{L}(y)) \ar[r]^{\sigma_{x,y}} \ar[d]^{e^*} &
\Hom(\hat{L}(y),\hat{L}(x)[2])^* \ar[d]^{(e_*)^*}
\\
\Hom(\hat{L}(w),\hat{L}(y))\ar[r]^{\sigma_{w,y}}  &
\Hom(\hat{L}(y),\hat{L}(w)[2])^*,
}
\]
and the equality 
$
\sigma_{w,y}e^*=(e_*)^*\sigma_{x,y}.
$
Similarly, given a morphism $f:\hat{L}(y)\rightarrow \hat{L}(z)$, we get the analogous equality
\[
\sigma_{x,z} f_*=(f^*)^*\sigma_{x,y}
\]
of homomorphisms from $\Hom(\hat{L}(x),\hat{L}(y))$ to $\Hom(\hat{L}(z),\hat{L}(x)[2])$.
If $w=x$ and $y=z$ the two equalities above yield the equality
\begin{equation}
\label{eq-sigma-commutes-with-idempotents}
\sigma_{x,y}(e^*f_*) = (e_*)^*(f^*)^*\sigma_{x,y}.
\end{equation}

The comonad category $D^b(M,\theta)^\LL$ is the idempotent completion of the category 
of free comodules $D^b(M,\theta)^{\LL}_{fr}$. This follows from \cite[Prop. 2.10]{bal1}
and triangulated Barr-Beck  \cite{bb,elagin} (also see \cite{bal2}). 
Objects of the idempotent completion
are pairs $(\hat{L}(x),e)$, where $e\in \Hom(\hat{L}(x),\hat{L}(x))$ is an idempotent.
A morphism in $\Hom((\hat{L}(x),e),(\hat{L}(y),f))$ is a morphism $g:\hat{L}(x)\rightarrow \hat{L}(y)$
satisfying $fg=g=ge$. In other words, $\Hom((\hat{L}(x),e),(\hat{L}(y),f))$ is the image of 
the idempotent endomorphism $e^*f_*$ of $\Hom(\hat{L}(x),\hat{L}(y))$. 

Set $e_1:=e^*f_*$, $e_2:=e^*(1-f)_*$, $e_3:=(1-e)^*f_*$, and $e_4:=(1-e)^*(1-f)_*.$
These are commuting idempotent endomorphisms of $\Hom(\hat{L}(x),\hat{L}(y))$ satisfying 
$e_ie_j=0$, if $i\neq j$, and $\sum_{i=1}^4e_i=1$. 
Set $\tilde{e}_1:=(e_*)^*(f^*)^*$, $\tilde{e}_2:=(e_*)^*((1-f)^*)^*$, $\tilde{e}_3:=((1-e)_*)^*(f^*)^*$, 
$\tilde{e}_4:=((1-e)_*)^*((1-f)^*)^*$. These are commuting idempotent endomorphisms of 
$\Hom(\hat{L}(y),\hat{L}(x)[2])^*$ satisfying 
$\tilde{e}_i\tilde{e}_j=0$, if $i\neq j$, and $\sum_{i=1}^4\tilde{e}_i=1$. 
We get the decomposition $\sigma_{x,y}=\sum_{i=1}^4\sum_{j=1}^4\tilde{e}_j\sigma_{x,y}e_i$.
Equation (\ref{eq-sigma-commutes-with-idempotents}) implies that $\sigma_{x,y}e_i=\tilde{e}_i\sigma_{x,y}$,
$1\leq i \leq 4$ (note that the equation holds with $e$ replaced by $1-e$ or $f$ replaced by $1-f$). 
Hence, $\tilde{e}_j\sigma_{x,y}e_i=0$, if $i\neq j$. Consequently, 
$\sigma_{x,y}$ maps the image of $e_i$ isomorphically onto the image of $\tilde{e}_i$, for $1\leq i\leq 4$.
Considering the case $i=1$ we conclude that $\sigma_{x,y}$ maps 
$\Hom((\hat{L}(x),e),(\hat{L}(y),f))=Im(e_1)$ isomorphically onto $Im(\tilde{e}_1)$.
Now $Im(\tilde{e}_1)$ maps isomorphically onto 
$\Hom((\hat{L}(y),f),(\hat{L}(x)[2],e))^*$ via
the natural homomorphism 
$\Hom(\hat{L}(y),\hat{L}(x)[2])^*\rightarrow \Hom((\hat{L}(y),f),(\hat{L}(x)[2],e))^*$. 
We thus obtain the desired isomorphism (\ref{eq-serre-duality-in-the-comonad-category}).
\hide{It is known that the idempotent
completion of a triangulated category with a Serre functor carries a Serre functor \cite{idem-serre}.
This suffices to conclude that $D^b(M)^\LL$ is a $K3$ category as follows:
First, if $S: D^b(M)^\LL\to D^b(M)^\LL$ is the Serre functor, then 
for $a,b\in D^b(M)^{\LL}_{fr}$, we have functorial isomorphisms
$$
\Hom (b,a[2])^*\cong \Hom(a,b) \cong \Hom(b,Sa)^*.
$$
The Yoneda Lemma implies that $S|_{D^b(M)^{\LL}_{fr}}=[2]$.
Next, the inclusion 
$D^b(M)^\LL_{fr}\hookrightarrow D^b(M)^\LL$
induces an equivalence between the following functor categories \cite[Prop. 1.3]{bs}:
$$
\Hom_{add}(D^b(M)^\LL, D^b(M)^\LL)\stackrel{\cong}{\longrightarrow}
\Hom_{add}(D^b(M)^\LL_{fr}, D^b(M)^\LL).
$$
As both the endofunctors $S$ and $[2]$ of  $D^b(M)^\LL$ restrict to $[2]$ on 
$D^b(M)^\LL_{fr}$, we have that $S$ and $[2]$ are naturally isomorphic. This 
finishes the proof.
}
\end{proof}

%

\section{Comparison with Toda's Category}
\label{Toda-comp}

The first order deformations of the category of coherent sheaves $Coh(S)$ on a smooth,
projective variety $S$ are parametrized by its degree $2$ Hochschild cohomology 
$HH^2(S)$. This has an interesting interpretation via the HKR-isomorphism,
$$I^*: HT^2(S) = H^0(\wedge^2 T_S)\oplus H^1(T_S) \oplus H^2(\ko_S) \stackrel{\cong}{\to} HH^2(S),$$ 
namely, the general deformation may be understood as composed of
non-commutative, complex and ``gerby'' parts corresponding to the three summands.

Given a class $\eta \in HT^2(S)$, Toda gave an explicit
construction (\cite{toda-deformations}) of the corresponding infinitesimal deformation $Coh(S,\eta)$: Let $\eta_{0,2}$
be the component of $\eta$ in $H^2(\ko_S)$. Then $Coh(S,\eta)$ is the 
$\CC[\eee]/(\eee^2)$-linear abelian category of $\eta_{0,2}$-twisted 
coherent sheaves of modules over a deformed ``structure sheaf'' $\ko_S^\eta$ of
noncommutative $\CC[\eee]/(\eee^2)$-algebras. Further,
he proved that these deformations behave functorially under Fourier-Mukai transformations:

\begin{thm}[\cite{toda-deformations}]\label{toda-main}
Let $S$ and $Y$ be smooth, projective varieties, and suppose that there 
is a Fourier-Mukai equivalence $\Phi: D^b(S) \to D^b(Y)$. If 
$\phi: HH^*(S) \stackrel{\sim}{\to} HH^*(Y)$ is the induced map on cohomology, 
there is an equivalence $\Phi^\dag : D^b(Coh(S,\eta)) \to D^b(Coh(Y, \phi(\eta)))$ such
that the following diagram commutes (up to natural isomorphisms of functors):
$$\xymatrix{
\ar[d]_{\Phi}D^b(S)\ar[r]^{i_*\;\;\;\;\;\;\;} & \ar[d]^{\Phi^\dag} D^b(Coh(S,\eta))\ar[r]^{\;\;\;\;\;\;\;i^*} & \ar[d]^{\Phi^-}D^-(S)\\
D^b(Y) \ar[r]^{i_*\;\;\;\;\;\;\;} & D^b(Coh(Y,\phi(\eta))) \ar[r]^{\;\;\;\;\;\;\;i^*} & D^-(Y)  }
$$
\end{thm}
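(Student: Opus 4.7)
The plan is to deform the Fourier--Mukai kernel of $\Phi$ along a carefully chosen direction on $S \times Y$, and to use the resulting deformed kernel as the kernel of $\Phi^{\dag}$. Let $\mathcal{P} \in D^b(S \times Y)$ be the kernel of the equivalence $\Phi$, and let $\mathcal{Q}$ be the kernel of a quasi-inverse $\Phi^{-1}$, so that $\mathcal{P} \circ \mathcal{Q} \cong \ko_{\gd_Y}$ and $\mathcal{Q} \circ \mathcal{P} \cong \ko_{\gd_S}$. The induced map $\phi$ on Hochschild cohomology can be characterized by the following property, which is the key input for the proof: for any class $\eta \in HH^2(S)$ and any object $F \in D^b(S)$, the Atiyah-class obstruction $\eta \cdot at(F) \in \Ext^2(F,F)$ corresponds, under the natural isomorphism induced by $\Phi$, to $\phi(\eta) \cdot at(\Phi(F)) \in \Ext^2(\Phi(F),\Phi(F))$. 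Applying this at the level of kernels via Kuennet and the fact that $\pi_S^*$ and $\pi_Y^*$ split $HH^2$ of the product additively into contributions from each factor (together with an off-diagonal term that does not affect the argument), one obtains that the class
\[
\tau \ := \ \pi_Y^*\phi(\eta) \ - \ \pi_S^*\eta \ \in \ HH^2(S \times Y)
\]
annihilates $at(\mathcal{P})$, i.e., $\tau \cdot at(\mathcal{P}) = 0$ in $\Ext^2(\mathcal{P},\mathcal{P})$.

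Next, by the vanishing just described, the theorem of Lowen quoted earlier in the introduction guarantees that $\mathcal{P}$ lifts to an object $\mathcal{P}^{\dag} \in D^b\bigl(Coh(S \times Y, \tau)\bigr)$. Because Toda's construction is compatible with products and with the natural projections, one may identify $Coh(S \times Y, \tau)$ with the abelian category of $(Coh(S,\eta), Coh(Y,\phi(\eta)))$-bimodules in an appropriate sense; concretely, $\tau$ is exactly the class on $S \times Y$ produced by pulling back $\phi(\eta)$ from $Y$ and the ``opposite'' of $\eta$ from $S$. Define $\Phi^{\dag}$ to be convolution with $\mathcal{P}^{\dag}$.

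To show $\Phi^{\dag}$ is an equivalence, I will apply the same deformation procedure to the adjoint kernel $\mathcal{Q}$ using the inverse direction, producing a lift $\mathcal{Q}^{\dag}$, and then observe that the convolutions $\mathcal{Q}^{\dag} \circ \mathcal{P}^{\dag}$ and $\mathcal{P}^{\dag} \circ \mathcal{Q}^{\dag}$ are first-order deformations of $\ko_{\gd_S}$ and $\ko_{\gd_Y}$, respectively. Since the diagonal structure sheaf is simple and rigid in the relevant degree, its deformation along a direction that restricts trivially to the diagonal is uniquely the ``identity'' deformed object $\ko_{\gd_S}^{\eta}$ (the structure sheaf of the diagonal in the deformed category); one checks this cleanly by comparing obstruction classes and applying the five-lemma to the defining triangles of the deformations. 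Hence $\Phi^{\dag}$ and its analogue for $\mathcal{Q}^{\dag}$ are mutually quasi-inverse equivalences.

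Finally, the compatibility with $i_*$ and $i^*$ is essentially built into the construction. The pushforward $i_*$ is the inclusion of the central fiber of the first-order deformation: on kernels it sends $\mathcal{P}$ to the class of $\mathcal{P}$ regarded (trivially) as a complex on the deformed product; the deformed kernel $\mathcal{P}^{\dag}$ restricts along $i^*$ back to $\mathcal{P}$ by construction. Together these yield the commutativity of both squares up to the natural isomorphism furnished by the choice of the lift $\mathcal{P}^{\dag}$, completing the proof. The hardest step is the Hochschild-theoretic input in the first paragraph: verifying that $\tau \cdot at(\mathcal{P}) = 0$ is really equivalent to, and follows from, the defining property of $\phi$ as ``conjugation by $\Phi$'' on Hochschild cohomology; all other steps are deformation-theoretic bookkeeping once this vanishing is in hand.
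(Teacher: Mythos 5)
This statement is quoted from Toda's paper and is not proved here; the closest the paper comes is Remark \ref{rem-comment-on-toad-main} and the analogous computation in Lemma \ref{lem-U-deforms}. Your overall strategy (deform the kernel $\mathcal{P}$ on $S\times Y$ via the Toda--Lowen criterion and convolve with the lift) is indeed Toda's, but there is a concrete error in the key step. The correct deformation direction is $-\pi_S^*\check{\eta}+\pi_Y^*\phi(\eta)$, not $\tau=-\pi_S^*\eta+\pi_Y^*\phi(\eta)$: the involution $\check{(\,\cdot\,)}$, which acts by $(-1)^q$ on $H^p(\wedge^q T_S)$ and corresponds under HKR to the transposition of the two factors of $S\times S$, is forced by the commutative diagram relating $\times\exp a(\mathcal{P})$ to $\pi_S^*$ (Toda's Lemma 5.8, reproduced in the proof of Lemma \ref{lem-U-deforms}): one gets $(-\pi_S^*\check{\eta}+\pi_Y^*\phi(\eta))\cdot\exp a(\mathcal{P})=-\mathcal{P}\circ I^*_S(\eta)+\phi(I_S^*(\eta))\circ\mathcal{P}=0$, whereas with your $\tau$ the first term becomes $-\mathcal{P}\circ I^*_S(\check{\eta})$ and the obstruction does \emph{not} vanish whenever $\eta$ has a nonzero component in $H^1(T_S)$. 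Geometrically, the complex-structure part of the deformation of $S$ must go in the \emph{same} direction on the factor $S$ of $S\times Y$, while only the Poisson and gerby parts are reversed; your $\tau$ reverses all three, so $\mathcal{P}$ simply does not lift in that direction.

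A secondary gap: your argument that $\mathcal{Q}^\dag\circ\mathcal{P}^\dag$ is uniquely the deformed diagonal rests on $\ko_{\gd_S}$ being rigid, i.e.\ on $\Ext^1_{S\times S}(\ko_{\gd_S},\ko_{\gd_S})=HH^1(S)=H^0(T_S)\oplus H^1(\ko_S)$ vanishing. For the arbitrary smooth projective $S$ in the statement this fails (e.g.\ for abelian varieties), so the identification of the convolution with $\ko_{\gd_S}^{\eta}$ needs an argument that does not invoke uniqueness of the lift. In the application the paper cares about ($K3$ surfaces and their moduli spaces) this is harmless, but as a proof of the theorem as stated it is incomplete.
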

\noindent The notation $D^-$ refers to the derived category of {\em bounded above} complexes 
of coherent sheaves, while $i_*$ and $i^*$ stand for restriction and extension of scalars
(see \cite[Section 4]{toda-deformations}), respectively.

In this section, $M$ will denote a moduli space $M_H(v)$.
Our goal is to prove that the category of comodules constructed in the previous 
section via deformations of $M$ agrees infinitesimally with Toda's category.
In other words, for every direction 
in the 21-parameter space that $\F$ deforms, there is a 
class $\eta\in HT^2(X)$ for our K3 surface $X$ such that 
the comodule category over the dual numbers is exact equivalent to $D^b(Coh(X,\eta))$. 
This is Theorem \ref{thm-comparison-with-toda}. It is proven under the following assumption, which will remain 
in force sections \S\ref{subsection-map-on-tangent-spaces} and \S\ref{subsection-comparison}.
\begin{assumption}\label{assumption-in-compatibility-section}
The moduli space $M=M_H(v)$ is a {\em fine} moduli space on an algebraic $K3$
surface $X$, supporting a universal
family $\U$, such that the kernel $\U^\vee[2]\circ \U$ is totally-split (as in Definition 
\ref{def-totally-split-monad}). Further, there exists a triple $(M,\eta,\Az)\in \widetilde{\fM}_\Lambda^0$
such that $\Az$ is the modular Azumaya algebra of the moduli space $M_H(v)$ (see Remark 
\ref{rem-modular-Azumaya-algebra-belongs-to-moduli}).
\end{assumption}

\begin{rem} 
The assumption of algebraicity on $X$ is in order to use the results of \cite{toda-deformations},
which are stated in the context of smooth and projective varieties. One expects these results to hold
over proper analytic manifolds also, but this generalization does not appear in the literature.
\end{rem}

\subsection{Hochschild cohomology and deformations}
Let $\Phi: D^b(S) \to D^b(Y)$ be a Fourier-Mukai equivalence; write 
$\P\in D^b(S\times Y)$ for its kernel and $\Q\in D^b(Y\times S)$ for that of its 
inverse. We have an isomorphism $\phi: HH^i(S) \to HH^i(Y)$ defined as
\begin{equation}\label{mapHH}
(\ko_{\gd_S}\stackrel{\eta}{\longrightarrow}\ko_{\gd_S}[i])\mapsto 
(\ko_{\gd_Y}=\P\circ\ko_{\gd_S}\circ\Q
\stackrel{\P\circ\eta\circ\Q}{\longrightarrow}\P\circ\ko_{\gd_S}[i]\circ\Q=\ko_{\gd_Y}[i])
\end{equation}
In particular, for $i=2$, this defines a bijective correspondence between the infinitesimal
deformations of $Coh(S)$ and those of $Coh(Y)$. Note, however, that this makes use
of the fact that $\Phi$ is invertible. In fact, one does not have
functoriality for Hochschild cohomology under general integral transforms.

For later use, we state a criterion of Toda and Lowen for when an object in the
derived category can be deformed to first order. First recall the construction
of the Atiyah class $a(\G)\in \Hom(\G, \G\otimes\Go_S[1])$ for any object $\G\in D^b(S)$:
Regarding the sequence 
$$
0 \to \I_{\gd_S}/\I_{\gd_S}^2 \to \ko_{S\times S}/\I_{\gd_S}^2 \to \ko_{\gd_S}\to 0
$$
as a sequence of Fourier-Mukai kernels, and taking integral 
transforms of $\G$ accordingly, we obtain the triangle 
\begin{equation}\label{atiyah}
\G\otimes \Go_S \to \pi_{2,*}(\pi_1^*(\G)\otimes \ko_{S\times S}/\I_{\gd_S}^2) \to \G. 
\end{equation}
Then, $a(\G)$ is the extension class of (\ref{atiyah}). 

\begin{thm}{\rm (\cite[Prop. 6.1]{toda-deformations}; \cite[Thm. 1.1]{lowen})}
\label{criterion}
Let $\G\in D^b(S)$ and $u\in HT^2(S)$. There exists a perfect object $\fG_1 \in D^b(S,u)$ such that
$i^*\fG_1\cong \G$ if and only if $u\cdot \exp a(\G)=0$ in $\Hom(\G,\G[2])$.
\end{thm}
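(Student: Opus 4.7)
The plan is to reduce the statement to a computation in Hochschild cohomology. First, we use the modified HKR isomorphism $\widetilde{I}^* : HT^2(S) \stackrel{\sim}{\to} HH^2(S)$ of Section~\ref{HH-coho} to transport $u$ to a class $\tilde{u} \in HH^2(S) = \Ext^2_{S\times S}(\ko_{\gd_S}, \ko_{\gd_S})$. The key technical ingredient is the identification of $\exp a(\G) \in \bigoplus_i \Hom(\G, \G \otimes \Omega^i_S[i])$ with the class corresponding under HKR to the tautological morphism
\[
\mu_\G \ : \ HH^*(S) \longrightarrow \Ext^*(\G, \G),
\]
defined by convolving a class $\nu: \ko_{\gd_S} \to \ko_{\gd_S}[*]$ with $\G$, i.e.\ as the image under the integral transform with kernel $\ko_{\gd_S}$ acting on $\G$. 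Concretely, iterating the Atiyah extension (\ref{atiyah}) yields a morphism $\G \to \G \otimes \gd_S^*\ko_{\gd_S}$; composing with $\widetilde{I}_S$ recovers $\exp a(\G)$. The equality $\mu_\G(\tilde u) = u\cdot \exp a(\G)$ then holds after application of $\widetilde{I}^*$ by the standard compatibility of HKR with the module structure of Hochschild homology over Hochschild cohomology.

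Next I would identify $\mu_\G(\tilde u)$ as the obstruction to the first-order deformation of $\G$ in $D^b(S,u)$. For the forward implication, suppose $\fG_1 \in D^b(S,u)$ with $i^*\fG_1 \cong \G$. Using Toda's explicit model of $Coh(S,u)$ as $\gamma$-twisted sheaves of modules over a first-order non-commutative deformation $\ko_S^u$ of the structure sheaf (where $u = \alpha + \beta + \gamma$ under the HKR decomposition), the existence of $\fG_1$ means that the defining 2-cocycle of $\ko_S^u$, when tensored with $\G$, yields a trivial cohomology class, which by direct Čech computation is precisely $\mu_\G(\tilde u)$. For the reverse implication, assume $\mu_\G(\tilde u) = 0$. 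Working on a Čech cover that trivializes $\G$ up to quasi-isomorphism and the gerby class $\gamma$, the vanishing produces coherent local lifts of the transition maps and structure sheaf action; one patches these into a global object $\fG_1$ using the equivalence between the obstruction class and the explicit Čech obstruction.

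The main obstacle is step 1—the careful verification that $\exp a(\G)$ is literally the image of the identity of $HH^*(S)$ under $\mu_\G$ and that the Yoneda module structure on $\Ext^*(\G,\G)$ matches the cup product in Hochschild cohomology under HKR. This requires care because the HKR isomorphism is only a quasi-isomorphism of complexes, not of algebras, and one must use the Kontsevich/C\u ald\u araru correction by $\sqrt{td_S}$ in (\ref{eq-normalized-HKR-isomorphisms}) to ensure compatibility with composition and with the $\exp$ of the Atiyah class. The obstruction-theoretic step (third paragraph) is essentially formal once this identification is in place, following the strategy of \cite{lowen} for the commutative case and of \cite{toda-deformations} for the non-commutative and gerby extensions.
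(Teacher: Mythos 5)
This theorem is quoted from Toda and Lowen rather than proved in the paper; the paper's own justification is Remark \ref{rem-obstruction-thry}, which assembles it from Lowen's identification of the obstruction class with the characteristic morphism $\chi_\G$, Toda's construction of the lift when that class vanishes, and Caldararu's compatibility $\chi_\G\circ I^* = \,\cdot\exp a(\G)$. Your proposal reconstructs exactly this decomposition and is therefore essentially the same approach; the only point to watch is that the compatibility diagram is stated for the unmodified HKR isomorphism $I^*$, not the $\sqrt{td_S}$-corrected $\widetilde{I}^*$ you invoke.
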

\noindent Here $\exp a(\G)=\sum a^i(\G)$, the summand $a^i(\G)\in \Hom(\G,\G\otimes\Go^i[i])$
being the $i$-fold composition of $a(\G)$ with itself, followed by anti-symmetrization.

\begin{rem}\label{rem-obstruction-thry}
The sufficiency of the vanishing of  $u\cdot \exp a(\G)$ for the deformability of $\G$ was first proven
by Toda, {\it op. cit.}. The necessity of this condition in this criterion follows from Lowen's work, who proved
that the obstruction to the deformability of $\G$ is precisely the image of $u$ under the {\em characteristic
morphism}:
$$
HH^2(S) \stackrel{\chi_\G}{\longrightarrow} Hom_S(\G,\G[2]).
$$
Regard the elements of 
$HH^2(S)$ as natural transformations between the functors $\mathbb{1}_{D^b(S)}$ and $[2]$.
Evaluating them on $\G$ defines $\chi_\G$. The statement of the criterion can then be deduced from 
the commutativity of the following diagram (see \cite[Proposition 4.5]{{cal-mukai2}}):
$$
\xymatrix{
\ar[d]_{I^*}HT^2(S) \ar[rr]^{\cdot\exp a(\G)\;\;\;\;\;\;\;\;\;} & & \Hom_S(\G,\G[2]). \\
HH^2(S) \ar[urr]_{\chi_\G} & &
}
$$
\end{rem}

\begin{rem}\label{rem-comment-on-toad-main}
Theorem \ref{criterion} is the main component of the proof Theorem \ref{toda-main}. Using it, 
\cite{toda-deformations} proves that that the kernel of the Fourier-Makai 
transform $\Phi: D^b(S)\to D^b(Y)$ deforms to the derived category of the first-order 
deformation of $Coh(S\times Y)$ in the direction $p_S^*(-\check{\eta})+p_Y^*(\phi(\eta))$.
Here $(\check{\_})$ denotes the action of transposition of factors on Hochschild cohomology.  
\end{rem}

\begin{example}\label{example-diagonal}
The identity functor $D^b(S)\to D^b(S)$ is interesting in light of Theorem \ref{toda-main}.
Its kernel $\ko_{\Delta}$ must deform along every direction 
$p_1^*(-\check{\eta})+ p_2^*(\eta)$ by Remark \ref{rem-comment-on-toad-main}.
It is possible to explicitly define a canonical deformation, 
the ``structure sheaf of the deformed diagonal'' $\ko_{\ol{\Delta}}$. In keeping with the following
sections, rather than $\eta$, we prefer to work with its image $u=I^2(\eta)\in HT^2(S)$ under 
the HKR isomorphism. Also, we assume that $u=(\gamma, 0,0)$,  with $\gamma \in H^0(\bigwedge^2TM)$,
which is the only somewhat subtle case.

The theory of quasi-coherent sheaves on first-order noncommutative deformations mirrors that
for schemes \cite[\S3,4]{toda-deformations}. In particular, over any affine $U$, such a sheaf $\M$ is 
determined by its sections $\M(U)$, and its sections over a principal affine $U_f$ are
precisely the localization $\M(U)_f$ \cite[Lemma 3.1, Def. 4.1]{toda-deformations}.

Fix an ample line bundle $\LB$ on $S$; for any section $f\in \Gamma(S,\LB^n)$, $n\in\ZZ$, let $S_f$
be the affine open where $f$ does not vanish. Consider the affine open covering $\C'$ of $S$ given by
$\{S_f: f\in \Gamma(S,\LB^n ), \; n\in \ZZ\}$. Note that $\C'$ is closed under intersection, and that 
given any two elements of $\C'$, their intersection is a principal affine in each of them. Consider the
affine open covering $\C:=\{U\times V: U,V\in\C'\}$ of $S\times S$.

Let $D$ stand for the dual numbers over $\CC$. Denote the class 
$-p_1^*(\g)+p_2^*\g$ by $-\g\boxplus\g$. Note that given $U\times V\in\C$,
$\ko_{S\times S}^{-{\g}\boxplus \g}(U\times V)=
\ko_S^{-{\g}}(U)\otimes_D \ko_S^\g(V)$.
For each $U\times V\in\C$, let $\A_{U\times V}$ be the
$D$-flat coherent $\ko_{U\times V}^{-{\g}\boxplus \g}$ module whose
sections over $(U\times V)$ are $\ko_S^\g(U\cap V)$. The right 
$\ko_S^{-{\g}}(U)\otimes_D \ko_S^\g(V)$-module structure of 
$\ko_S^\g(U\cap V)$ is given by restriction to $U\times V$, followed by left and
right multiplication in the ring  $\ko_S^\g(U\cap V)$. We claim that the coherent sheaves
$\A_{U\times V}$ glue to give a coherent sheaf $\ko_{\ol{\Delta}}$ over $S\times S$.
It suffices to check that for any $U\times V\in \C'$, and principal affine subsets $U_g\subset U$,
$V_h\subset V$, there is a canonical identification between the modules 
$\A_{U\times V}(U_g\times V_h)$ and $\A_{U_g\times V_h}(U_g\times V_h)$. The former is
the localization with respect to the left $\ko_S^\g(U\cap V)$, right $\ko_S^\g(U\cap V)$ bimodule
structure, $\ko_S^\g(U\cap V)_{g\otimes h}$. The latter is $\ko_S^\g(U\cap V)_{gh}$, the
localization being with respect to the right  $\ko_S^\g(U\cap V)$-module structure. Given any two
local sections $a,b$ of $\ko_S\otimes_\CC D$, $a*_\g b=b*_{-\g} a$, so these two localizations are equal.

\end{example}

\subsection{A map on tangent spaces}\label{subsection-map-on-tangent-spaces}
We wish to compare the Hochschild cohomologies
of $X$ and $M_H(v)$, but as the functor $\Phi_\U: D^b(X) \to D^b(M_H(v)$
is not an equivalence, we do not {\it a priori} have a homomorphism
$\phi: HH^*(X) \to HH^*(M)$. Nevertheless, the natural construction 
(\ref {mapHH}) can be modified to give a workable
map between the degree 2 components of these groups:

\medskip 

\begin{cons} Write $\V\in D^b(M\times X)$ for $\U^\vee[2]$, 
the kernel of the adjoint to $\Phi_\U$. As in (\ref{mapHH}), given $\eta\in HH^2(X)$,
one obtains a map:
$$
\F=\U\circ\ko_{\gd_X}\circ\V \stackrel{\U\circ\eta\circ\V}{\longrightarrow} 
\U\circ\ko_{\gd_X}[2]\circ\V=\F[2].
$$ 
Applying the functor $\Hom_{M^2(v)}(\;\_\;,\ko_{\gd_{M}}[2])$ to the triangle
$\E[1]\to\F\stackrel{\ee}{\to}\ko_{\gd_{M}}$, we obtain the sequence
$$
\Hom(\E,\ko_{\gd_{M}}) \to \Hom(\ko_{\gd_{M}},\ko_{\gd_{M}}[2]) \to
\Hom(\F, \ko_{\gd_{M}}[2]) \to \Hom(\E,\ko_{\gd_{M}}[1]).
$$
The first and last groups are $(0)$. Assuming this for the
moment, set $\phi^{HH}(\eta)$ to be the unique lift of 
$\ee[2](\U\circ\eta\circ\V)\in \Hom_{M^2(v)}(\F, \ko_{\gd_{M}}[2])$ in 
$\Hom(\ko_{\gd_{M}},\ko_{\gd_{M}}[2])$. This defines the desired map
\begin{equation}
\label{eq-phi-HH}
\phi^{HH}: HH^2(X) \to HH^2(M).
\end{equation} 

We prove the claim using the spectral sequence
$$
E_2^{p,q}= \Hom^p(\fH^{-q}(\gd_{M}^*\E), \ko_{M})\Longrightarrow
\Hom^{p+q}(\E,\ko_{\gd_{M}})
$$
and Proposition \ref{resofE}. Observe that 
$\H^{-1}(\Delta_M^*(\E),\StructureSheaf{M}) \cong \SheafTor_1(\E,\StructureSheaf{\Delta_M})$ vanishes
by the latter result. Hence, $E_2^{0,1}$ vanishes.
The question reduces to proving the vanishing of the
terms
$$ E_2^{i,0}:=\Hom_{M}^i(\Go^2_{M}/\ko_{M}\cdot\sigma,\ko_{M})$$
for $i=0,1$. This follows from the facts that the sheaf $(\Go^2_{M}/\ko_{M}\cdot\sigma)$ is 
a self-dual direct summand of $\Omega^2_M$, and that $H^i(\Omega^2_M/\ko_{M}\cdot \sigma)$ vanishes for $ i=0,1$.

We can say this slightly differently: There are natural maps
\begin{equation}\label{eq-alt-phi-HH}\xymatrix{
HH^2(X) \ar[r]^{\U\circ\_\hspace{1cm}} & \Hom_{X\times M}(\U,\U[2]) & \ar[l]_{\hspace{1cm}\_\circ \U} HH^2(M),
}
\end{equation} 
the left map is an injection, while the right is an isomoprhism. The map $\phi^{HH}$ is the one obtained by
composing the first with the inverse of the second.
\end{cons}

Hochschild homology is naturally a module over Hochschild cohomology, the action being composition.
This structure gives rise to the homomorphisms
\begin{eqnarray*}
m_X & :& HH^2(X)\rightarrow \Hom(HH_0(X),HH_{-2}(X))
\\
m_M & : & HH^2(M)\rightarrow \Hom(HH_0(M),HH_{-2}(M)).
\end{eqnarray*}
We note that $m_X$ is an isomorphism and $m_M$ is injective.
By functoriality of Hochschild homology, we also have the maps
\begin{eqnarray*}
\Phi_*:=\Phi_{\U_*}&:&HH_{*}(X)\rightarrow HH_{*}(M), \ \mbox{and}
\\
\Psi_*:=\Psi_{\U_*}&:&HH_{*}(M)\rightarrow HH_{*}(X).
\end{eqnarray*}

The map $\phi^{HH}$ intertwines the Hochschild module structures via $\Phi_*$ and $\Psi_*$ :

\begin{lem}
\label{lemma-image-of-phi-HH-commutes-with-Phi-Psi}
The following diagram is commutative for every $\lambda\in HH^2(X)$.
\[
\xymatrix{
HH_i(M)\ar[r]^{\Psi_*}\ar[d]_{m_M(\phi^{HH}(\lambda))}
&
HH_i(X)\ar[r]^{\Phi_*}\ar[d]^{m_X(\lambda)}
&
HH_i(M)
\ar[d]^{m_M(\phi^{HH}(\lambda))}
\\
HH_{i-2}(M)\ar[r]^{\Psi_*}
&
HH_{i-2}(X)\ar[r]^{\Phi_*}
&
HH_{i-2}(M)
}
\]
\end{lem}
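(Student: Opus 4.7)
The plan is to verify both squares by working at the level of kernels in $D^b(X\times X)$ and $D^b(M\times M)$, using the alternative characterization of $\phi^{HH}$ provided by (\ref{eq-alt-phi-HH}). Recall that $\lambda\in HH^2(X)$ is a morphism $\lambda:\ko_{\gd_X}\to\ko_{\gd_X}[2]$, an element $\nu\in HH_i(X)$ is a morphism $\nu:S_{\gd_X}^{-1}[i]\to\ko_{\gd_X}$, and the module action $\lambda\cdot\nu$ is the composition $\lambda\circ\nu$ (viewed with the appropriate shift). By construction, $\phi^{HH}(\lambda)$ is the unique class in $HH^2(M)$ satisfying
\[
\phi^{HH}(\lambda)\circ\U \;=\; \U\circ\lambda \qquad\text{in } \Hom_{X\times M}(\U,\U[2]),
\]
and the companion identity $\phi^{HH}(\lambda)\circ\V=\V\circ\lambda$ in $\Hom_{M\times X}(\V,\V[2])$ follows from the analogous characterization with $\V$ in place of $\U$. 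Convolving the first identity with $\V$ on the left yields the derived identity $\V\circ\phi^{HH}(\lambda)\circ\U=\A\circ\lambda$ in $\Hom_{X\times X}(\A,\A[2])$.

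To prove the right square, I dualize via Serre duality and work with the map $\Phi^\dagger:HH_{i-2}(M)^\vee\to HH_{i-2}(X)^\vee$ of (\ref{HH-functoriality}), whose transpose is $\Phi_*$. For every $\tilde\rho\in HH_{i-2}(M)^\vee$, the self-adjointness of the $HH^*$-action with respect to the Serre duality pairing on $X\times X$ and $M\times M$ reduces the desired equality to
\[
\Phi^\dagger(\tilde\rho[2]\circ\phi^{HH}(\lambda)) \;=\; \Phi^\dagger(\tilde\rho)[2]\circ\lambda \qquad\text{in } HH_i(X)^\vee.
\]
Expanding the left-hand side via (\ref{HH-functoriality}) as the composition
\[
\ko_{\gd_X}\xrightarrow{\eta}\A\xrightarrow{\V\circ(\tilde\rho[2]\circ\phi^{HH}(\lambda))\circ\U}\V\circ S_{\gd_M}[i]\circ\U=S_{\gd_X}[i]\circ\LB\circ\U\xrightarrow{\epsilon}S_{\gd_X}[i],
\]
the middle arrow factors as $\A\xrightarrow{\V\circ\phi^{HH}(\lambda)\circ\U}\A[2]\xrightarrow{\V\circ\tilde\rho[2]\circ\U}\V\circ S_{\gd_M}[i]\circ\U$. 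Using $\V\circ\phi^{HH}(\lambda)\circ\U=\A\circ\lambda$, the naturality of the unit $\eta$ then allows me to rewrite $\ko_{\gd_X}\xrightarrow{\eta}\A\xrightarrow{\A\circ\lambda}\A[2]$ as $\ko_{\gd_X}\xrightarrow{\lambda}\ko_{\gd_X}[2]\xrightarrow{\eta[2]}\A[2]$. Substituting this into the chain yields exactly $\Phi^\dagger(\tilde\rho)[2]\circ\lambda$, as required.

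The left square is treated by the parallel argument with $\Psi_*$ in place of $\Phi_*$, where the relevant adjunction is $\Psi\dashv\Phi[2n-2]$ and the companion identity $\V\circ\lambda=\phi^{HH}(\lambda)\circ\V$ plays the role that $\U\circ\lambda=\phi^{HH}(\lambda)\circ\U$ played in the right square. The main technical obstacle throughout is the careful bookkeeping of Serre functors and their interaction with the convolutions $\A=\V\circ\U$ and $\F=\U\circ\V$, along with the units and counits of the two adjunctions involved. Once these identifications are in place, both squares collapse to the single observation, encoded in (\ref{eq-alt-phi-HH}), that $\phi^{HH}(\lambda)$ is by construction the unique class in $HH^2(M)$ whose action agrees with that of $\lambda$ after convolution with the kernel $\U$.
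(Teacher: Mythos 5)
Your argument is correct and is essentially the paper's: the paper proves both squares by citing [AT, Prop.~6.1] for the statement that a pair $(\lambda_X,\lambda_M)$ with $\U\circ\lambda_X=\lambda_M\circ\U$ is intertwined by $\Phi_*$ (and likewise for $\Psi_\U$), and your kernel-level computation --- factoring the middle arrow of (\ref{HH-functoriality}), using $\V\circ\phi^{HH}(\lambda)\circ\U=\A\circ\lambda$, and sliding $\lambda$ past the unit $\eta$ --- is precisely the proof of that cited proposition. The one step you assert rather than prove, the companion identity $\lambda\circ\V=\V\circ\phi^{HH}(\lambda)$ needed for the left square, is passed over with the same brevity in the paper when it says the [AT] statement ``applies also to the functor $\Psi_\U$.''
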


\begin{proof}
Let $\lambda_X$ be a class in $HH^2(X)$ and $\lambda_M$ a class in $HH^2(M)$
satisfying the equality $\U\circ \lambda_X=\lambda_M\circ \U$ in $\Hom_{X\times M}(\U,\U[2])$. 
Then the following diagram commutes
\[
\xymatrix{
HH_i(X) \ar[r]^-{\Phi_*} \ar[d]_{m_X(\lambda_X)} &
HH_i(M) 
\ar[d]^{m_M(\lambda_M)}
\\
HH_{i-2}(X) \ar[r]_-{\Phi_*} &
HH_{i-2}(M), 
}
\]
by the proof of  [AT, Prop. 6.1]. The above statement holds and its proof applies without the assumption
that the right map in diagram (\ref{eq-alt-phi-HH}) is an isomorphism. 
The statement thus applies also to the functor $\Psi_\U$. Apply the statement 
with $\lambda_X=\lambda$ and $\lambda_M=\phi^{HH}(\lambda)$
to verify the commutativity of both squares in the statement of the Lemma.
\end{proof}

The endomorphism $\Phi_*\Psi_*$ of $HH_*(M)$ is self-adjoint with respect to the Mukai pairing.
It satisfies $(\Phi_*\Psi_*)^2=n\Phi_*\Psi_*$.
Indeed, the kernel of $\Psi_\U\circ\Phi_\U$ is the direct sum $\oplus_{i=0}^{n-1}\ko_\Delta[-2i]$ by
Assumption \ref{assumption-in-compatibility-section}, and
$\Psi_\U(\Phi_\U(x))=x\oplus x[-2]\oplus \cdots \oplus x[2-2n]$ for any object $x$ in $D^b(X)$. Hence,
$\Psi_*\Phi_*$ is multiplication by $n$. 
The subspace $\mbox{Im}(\Phi_*)$ is the eigenspace of $\Phi_*\Psi_*$ with eigenvalue $n$ and the subspace $\ker(\Psi_*)$
is the eigenspace with eigenvalue $0$. We get the orthogonal direct sum decomposition
\[
HH_*(M) \ = \ \mbox{Im}(\Phi_*) \oplus \ker(\Psi_*).
\]

Let $E$ be an $H$-stable sheaf on $X$ with Mukai vector $v\in HH_0(X)$. 
Let $\alpha\in HH_0(M)$ be the Mukai vector of $\Phi_\U(E^\vee[2])$
and let $\beta\in HH_0(M)$ be the Mukai vector of the sky-scraper sheaf of a point. 
Given a subset $\Sigma$ of $HH_0(M)$, 
denote by 
$
{\rm ann}(\Sigma)
$ 
the subspace of 
$HH^2(M)$ 
consisting of classes $\xi$, such that $m_M(\xi)(c)=0$, for all $c\in \Sigma$. We use the analogous notation 
for subsets of $HH_0(X)$. 

\begin{lem}\label{lemma-image}
\begin{enumerate}
\item
\label{lemma-item-image-commutes}
The image of $\phi^{HH}$ is equal to the subspace of $HH^2(M)$ consisting of classes $\xi$, such that $m_M(\xi)$ 
commutes with $\Phi_*\Psi_*$.
\item
\label{lemma-item-image-of-ann-v-dual}
The following equality of subspaces of $HH^2(M)$ holds:
\begin{equation}
\label{eq-image-of-ann-v-dual}
\phi^{HH}({\rm ann}(v^\vee)) \ = \
{\rm ann}\{\alpha,\beta\}.
\end{equation}
\item
\label{lemma-item-image-of-phi-HH-in-HT}
The normalized HKR isomorphism maps the subspace $\phi^{HH}({\rm ann}(v^\vee))$ of $HH^2(M)$ into the 
direct sum $H^1(TM) \oplus H^2(\StructureSheaf{M})$  in $HT^2(M)$.
The image  is equal to the subspace
\begin{equation}
\label{eq-image-of-phi-HH-in-HT}
\mho:=\{(\xi,\theta) \ : \ \xi\in H^1(TM), \ \theta\in H^2(\StructureSheaf{M}), \ \mbox{and} \  \ 
\xi\cdot c_1(\alpha)+(2-2n)\theta=0\}.
\end{equation}
\end{enumerate}
\end{lem}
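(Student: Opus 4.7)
The plan rests on three ingredients already at hand: the intertwining diagram of Lemma~\ref{lemma-image-of-phi-HH-commutes-with-Phi-Psi}, the eigenspace decomposition $HH_\ast(M) = \Im(\Phi_\ast) \oplus \ker(\Psi_\ast)$ coming from the idempotent $\tfrac{1}{n}\Phi_\ast\Psi_\ast$ (which exists because $(\Phi_\ast\Psi_\ast)^2=n\Phi_\ast\Psi_\ast$), and the normalized HKR isomorphism. Two preliminary identifications will be central: since $\U^\vee|_{X\times\{[E]\}}=E^\vee$, one has $\Psi_\U(\ko_{[E]})\cong E^\vee[2]$, so $\Psi_\ast(\beta)=v^\vee$; symmetrically $\alpha = \Phi_\ast(v^\vee)$.

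For part~(1), the forward inclusion is immediate by composing the two squares of Lemma~\ref{lemma-image-of-phi-HH-commutes-with-Phi-Psi}: $m_M(\phi^{HH}(\lambda))\,\Phi_\ast\Psi_\ast = \Phi_\ast\, m_X(\lambda)\,\Psi_\ast = \Phi_\ast\Psi_\ast\,m_M(\phi^{HH}(\lambda))$. For the reverse, I would use the description of $\phi^{HH}$ through diagram~(\ref{eq-alt-phi-HH}): under the right-hand isomorphism $HH^2(M)\cong \Hom_{X\times M}(\U,\U[2])$, a class $\xi$ corresponds to $\xi\circ\U$, and lies in the image of $\phi^{HH}$ iff $\xi\circ\U=\U\circ\lambda$ for some $\lambda\in HH^2(X)$. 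The centralizer of $\Phi_\ast\Psi_\ast$ respects the eigenspace decomposition, and a dimension count using the injectivity of $HH^2(X)\hookrightarrow \Hom(\U,\U[2])$ matches this centralizer with the image.

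For part~(2), given $\lambda\in\mathrm{ann}(v^\vee)$, the right square of Lemma~\ref{lemma-image-of-phi-HH-commutes-with-Phi-Psi} immediately yields
\begin{equation*}
m_M(\phi^{HH}(\lambda))(\alpha)=\Phi_\ast(m_X(\lambda)(v^\vee))=0,
\end{equation*}
while the companion vanishing $m_M(\phi^{HH}(\lambda))(\beta)=0$ will be established as part of~(3). The reverse inclusion is symmetric: the $\beta$-annihilation together with~(3) forces $\xi\in H^1(T_M)\oplus H^2(\StructureSheaf{M})=\Im(\phi^{HH})$; writing $\xi=\phi^{HH}(\lambda)$ and applying $\Psi_\ast$ to $m_M(\xi)(\alpha)=0$ gives $n\,m_X(\lambda)(v^\vee)=0$, placing $\lambda$ in $\mathrm{ann}(v^\vee)$.

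The real work is part~(3). I first establish that $\phi^{HH}(HH^2(X))\subseteq H^1(T_M)\oplus H^2(\StructureSheaf{M})$, i.e.\ the bivector component vanishes. Under normalized HKR, $\beta$ is the point class in $H^{2n}(\Omega^{2n}_M)$; the actions on $[\mathrm{pt}]$ of the summands $H^1(T_M)$ and $H^2(\StructureSheaf{M})$ vanish for dimension reasons, while the action of the bivector part $\gamma\in H^0(\wedge^2 T_M)$ is the contraction $\gamma\cdot[\mathrm{pt}]\in H^{2n}(\Omega^{2n-2}_M)$, pairing nondegenerately with $H^0(\wedge^2 T_M)$ under Serre duality. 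Hence $m_M(\xi)(\beta)=0$ iff the bivector part of $\xi$ vanishes. For $\xi=\phi^{HH}(\lambda)$ the left square of Lemma~\ref{lemma-image-of-phi-HH-commutes-with-Phi-Psi} rewrites $\Psi_\ast(m_M(\xi)(\beta))=m_X(\lambda)(v^\vee)\in H^2(\StructureSheaf{X})$; combining with the Mukai Hodge isometry (Theorem~\ref{Yoshioka-main}) controlling $\Phi_\ast$ on the $(0,2)$-parts and the dimension count $1+h^1(T_X)+1=22=h^{1,1}(M)+1$ valid for $M$ of $K3^{[n]}$-type, I conclude that $\phi^{HH}$ is an isomorphism onto $H^1(T_M)\oplus H^2(\StructureSheaf{M})$. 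For the cut-out equation, write $\xi=(0,\xi',\theta)$; the condition $m_M(\xi)(\alpha)=\xi'\cdot\alpha+\theta\cdot\alpha=0$ in $HH_{-2}(M)$ would {\it a priori} be a condition in many summands, but since $m_M(\phi^{HH}(\lambda))(\alpha)=\Phi_\ast(m_X(\lambda)(v^\vee))$ lies in the one-dimensional subspace $\Phi_\ast(HH_{-2}(X))=\CC\cdot t_M\subseteq H^2(\StructureSheaf{M})$, it collapses to a single scalar equation on the $H^2(\StructureSheaf{M})$-component. Computing via HKR: the rank of $\alpha$ is $-(v,v^\vee)=2-2n$ (matching the rank of the modular sheaf $\E$ from Proposition~\ref{resofE}), so $\theta\cdot\alpha_0$ contributes $(2-2n)\theta$, while $\xi'\cdot\alpha_1$ contributes $\xi'\cdot c_1(\alpha)$; together this produces precisely $\xi'\cdot c_1(\alpha)+(2-2n)\theta=0$ defining $\mho$. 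The main technical hurdle is tracking the HKR normalizations and Mukai-pairing signs carefully enough that the coefficient comes out exactly as $(2-2n)$ and that the $\beta$-vanishing argument used in part~(3) does not surreptitiously depend on part~(2).
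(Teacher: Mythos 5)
Your proposal has a genuine error at its core: the claim that $\phi^{HH}(HH^2(X))\subseteq H^1(TM)\oplus H^2(\StructureSheaf{M})$, i.e.\ that the whole image of $\phi^{HH}$ lies in ${\rm ann}(\beta)$. This is false, and your own computation refutes it: you correctly note that $\Psi_*\bigl(m_M(\phi^{HH}(\lambda))(\beta)\bigr)=m_X(\lambda)(v^\vee)$, which is nonzero for any $\lambda\notin{\rm ann}(v^\vee)$; hence $m_M(\phi^{HH}(\lambda))(\beta)\neq 0$ and, by your own HKR computation of the action on the point class, $\phi^{HH}(\lambda)$ has a nonzero bivector component. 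Both ${\rm Im}(\phi^{HH})$ and ${\rm ann}(\beta)$ are $22$-dimensional hyperplanes in the $23$-dimensional $HH^2(M)$, so your inclusion would force them to be equal, whereas they are in fact \emph{distinct} hyperplanes --- and this distinctness is essential to the correct argument. The error propagates: in the reverse inclusion of part (2) you write ``$\xi\in H^1(TM)\oplus H^2(\StructureSheaf{M})={\rm Im}(\phi^{HH})$'' to produce $\lambda$ with $\xi=\phi^{HH}(\lambda)$, which is unjustified; and in the forward inclusion of (2) the vanishing $m_M(\phi^{HH}(\lambda))(\beta)=0$ for $\lambda\in{\rm ann}(v^\vee)$ cannot be extracted from $\Psi_*\bigl(m_M(\xi)(\beta)\bigr)=0$, because $\Psi_*:HH_{-2}(M)\to HH_{-2}(X)\cong H^2(\StructureSheaf{X})$ has a huge kernel. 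The circularity you flag at the end is therefore not a bookkeeping issue but a real obstruction to your plan. Part (1)'s reverse inclusion is also incomplete: a ``dimension count'' requires an upper bound on the centralizer of $\Phi_*\Psi_*$, i.e.\ the exhibition of at least one class whose action fails to commute, which you never produce.

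The paper's route around all of this is the following. Since $\Psi_*(\alpha-n\beta)=0$ and $c_1(\alpha)\neq 0$, one picks $\xi\in H^1(TM)$ with $\xi\cdot c_1(\alpha)\neq 0$, so $\xi\in{\rm ann}(\beta)\setminus{\rm ann}(\alpha)$; if $m_M(\xi)$ commuted with $\Phi_*\Psi_*$ it would preserve the eigenspace decomposition $HH_*(M)={\rm Im}(\Phi_*)\oplus\ker(\Psi_*)$, and then $m_M(\xi)(\alpha)=m_M(\xi)(\alpha-n\beta)$ would lie in ${\rm Im}(\Phi_*)\cap\ker(\Psi_*)=0$, a contradiction. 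This simultaneously finishes part (1) (the centralizer is a proper subspace containing the codimension-one image) and shows that any $\phi^{HH}(\lambda)$ annihilating $\beta$ also annihilates $\alpha$, whence ${\rm Im}(\phi^{HH})\cap{\rm ann}(\beta)={\rm ann}\{\alpha,\beta\}$. Combining this with the direct identity $\phi^{HH}({\rm ann}(v^\vee))={\rm Im}(\phi^{HH})\cap{\rm ann}(\alpha)$ (right square of Lemma \ref{lemma-image-of-phi-HH-commutes-with-Phi-Psi} plus injectivity of $\Phi_*$) and an elementary three-hyperplane lemma for the pairwise distinct hyperplanes ${\rm Im}(\phi^{HH})$, ${\rm ann}(\alpha)$, ${\rm ann}(\beta)$ yields part (2). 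Only then does one read off part (3), using that under HKR ${\rm ann}(\beta)$ is exactly $H^1(TM)\oplus H^2(\StructureSheaf{M})$ and ${\rm ann}(\alpha)$ lies in the hyperplane $\pi\cdot\alpha_2+\xi\cdot c_1(\alpha)+(2-2n)\theta=0$; your computation of the coefficient $(2-2n)={\rm rank}(\alpha)$ in the cut-out equation is the one part of your part (3) that survives.
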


\begin{proof}
(\ref{lemma-item-image-commutes}) 
$\Phi_*\Psi_*$
commutes with $m_M(\phi^{HH}(\lambda))$, for all $\lambda\in HH^2(X)$, 
by Lemma \ref{lemma-image-of-phi-HH-commutes-with-Phi-Psi}.
We know that $\phi^{HH}$ is injective and that its image has codimension $1$.
Hence 
it remains to exhibit classes $\xi$ of $HH^2(M)$, which do not commute with $\Phi_*\Psi_*$.

$\Psi_\U$ maps the sky-scraper sheaf 
of the point $[E]$ corresponding to the isomorphism class of the sheaf $E$ to the object
$E^\vee[2]$ in $D^b(X)$. Let $e$ be the class of $E^\vee[2]$ in $HH_0(X)$.
Then $\alpha=\Phi_*(e)$, $\Psi_*(\alpha)=ne$, and $\Psi_*(\beta)=e.$ Hence,
\[
\Psi_*(\alpha-n\beta)=0.
\]

The class $c_1(\alpha)$ does not vanish \cite[Lemma 7.2]{markman-hodge}. Hence, there exists 
a class $\xi$ of $H^1(TM)$, such that $\xi\cdot c_1(\alpha)$ is a non-zero class in 
$H^2(M,\StructureSheaf{M})\subset H\Omega_{-2}(M)$. Considering $\xi$ as a class in $HH^2(M)$, via the HKR isomorphism,
then $\xi$ belongs to ${\rm ann}(\beta)$, but it does not belong to ${\rm ann}(\alpha)$. Hence, $m_M(\xi)(\alpha-n\beta)\neq 0.$
Assume that $m_M(\xi)$ commutes with $\Phi_*\Psi_*$. Then ${\rm Im}(\Phi_*)$ and $\ker(\Psi_*)$ are invariant with respect to 
$m_M(\xi)$. In addition, we have
\[
m_M(\xi)(\alpha-n\beta)=
m_M(\xi)(\alpha)
\]
The left hand side belongs to $\ker(\Psi_*)$, since $\alpha-n\beta$ does, and the right hand side belongs to the image of $\Phi_*$,
since $\alpha$ does.
Hence, $m_M(\xi)(\alpha)$ vanishes and $\xi$ belongs to ${\rm ann}(\alpha)$. A contradiction.
Hence, $m_M(\xi)$ does not commute with $\Phi_*\Psi_*$. 

(\ref{lemma-item-image-of-ann-v-dual})
Let $\lambda\in HH^2(X)$ be a class, such that $\phi^{HH}(\lambda)$ belongs to ${\rm ann}(\beta)$.
Then $\phi^{HH}(\lambda)$ commutes with $\Phi_*\Psi_*$ and so $\phi^{HH}(\lambda)$ belongs also to ${\rm ann}(\alpha)$,
as shown in the proof of part (\ref{lemma-item-image-commutes}). Hence, 
the intersection ${\rm Im}(\phi^{HH})\cap {\rm ann}(\beta)$ is contained ${\rm ann}\{\alpha, \beta\}$.
Note that both ${\rm ann}(\beta)$ and ${\rm Im}(\phi^{HH})$ are hyperplanes in $HH^2(M)$. 
We have seen above that ${\rm ann}(\beta)$ is not contained in ${\rm ann}(\alpha)$.
We conclude the equality
\begin{equation}
\label{eq-ann-alpha-beta-as-intersection-of-image-with-ann-beta}
{\rm Im}(\phi^{HH})\cap {\rm ann}(\beta) = {\rm ann}\{\alpha, \beta\}.
\end{equation}
Furthermore, 
the hyperplanes ${\rm Im}(\phi^{HH})$ and ${\rm ann}(\beta)$ are distinct and 
${\rm ann}\{\alpha, \beta\}$ has codimension $2$ in $HH^2(M)$. 
The equality 
\[
\phi^{HH}({\rm ann}(v^\vee)) \ = \ {\rm Im}(\phi^{HH})\cap {\rm ann}(\alpha), 
\]
follows from the commutativity of the right square in Lemma \ref{lemma-image-of-phi-HH-commutes-with-Phi-Psi}
and the injectivity of $\Phi_*$. 
Hence, if  ${\rm ann}(\alpha)$ is a hyperplane in $HH^2(M)$, then 
the hyperplanes ${\rm ann}(\alpha)$ and ${\rm Im}(\phi^{HH})$ are distinct.
We conclude that either ${\rm ann}(\alpha)$ is contained in ${\rm ann}(\beta)$, or 
${\rm ann}(\alpha)$ is a hyperplane and the three hyperplanes ${\rm ann}(\alpha)$, ${\rm ann}(\beta)$,
and ${\rm Im}(\phi^{HH})$ are distinct. 

If $H_1$, $H_2$, $H_3$
are three distinct hyperplanes  in a vector space, such that $H_1\cap H_2$ and $H_2\cap H_3$
are equal to the same subspace $W$, then $H_1\cap H_3=W$. 
The equality
\[
 {\rm Im}(\phi^{HH})\cap {\rm ann}(\alpha) \ = \ {\rm ann}\{\alpha, \beta\}
\]
thus follows from (\ref{eq-ann-alpha-beta-as-intersection-of-image-with-ann-beta}). 
The above equality is clear if ${\rm ann}(\alpha)$ is contained in ${\rm ann}(\beta)$.
The equality (\ref{eq-image-of-ann-v-dual}) follows from the last two displayed equalities.

(\ref{lemma-item-image-of-phi-HH-in-HT}) 
The HKR isomorphism maps ${\rm ann}(\alpha)$ into 
the hyperplane in $HT^2(M)$, consisting of classes $(\pi,\xi,\theta)$,
$\pi\in H^0(\wedge^2TM), \xi\in H^1(TM), \ \theta\in H^2(\StructureSheaf{M}),$ satisfying
\[
\pi\cdot \alpha_2+\xi\cdot c_1(\alpha)+(2-2n)\theta=0,
\]
where $\alpha_2$ is the graded summand in $H^2(\Omega_M^2)$ of the 
image via the HKR isomorphism of the Mukai vector $\alpha$, 
since the rank of $\alpha$ is $2-2n$. Indeed the latter hyperplane is the kernel of the composition
$HT^2(M)\rightarrow HH_{-2}(M)\rightarrow H^2(\StructureSheaf{M})$
of pairing with $\alpha$ followed by projection on the direct summand $H^2(\StructureSheaf{M})$.
(We are using here the proof of Calduraru's conjecture about the isomorphism of harmonic 
and Hochschild strucures \cite{CRVdB}.)
The HKR isomorphism maps ${\rm ann}(\beta)$ onto the direct sum  $H^1(M,TM)\oplus H^2(M,\StructureSheaf{M})$.
The statement now follows from part (\ref{lemma-item-image-of-ann-v-dual}).
\end{proof}

\subsection{The comparison}\label{subsection-comparison}
Given $u\in HT^2(X)$, it will be convenient to adopt the simpler notation 
$D^b(X, u)$ for the deformed category $D^b(Coh(X,u))$; a similar notation is
used for the corresponding categories on $M$.
Let $\phi^T: HT^2(X) \to HT^2(M)$ denote the conjugate
$(I^2_{M})^{-1}\circ\phi^{HH}\circ I^2_X$. Also let $\check{\_}: HT^*(X)\to HT^*(X)$
be the operation which on a homogeneous $t\in H^p(\wedge^qT_X)$ is defined as
$\check{t}:=(-1)^qt$. For $u\in HT^*(X)$, and $w\in HT^2(M)$, the class 
$\pi_X^*u+\pi^*_{M}w\in HT^2(X\times M)$ will be denoted by $u\boxplus w$; the same
notation will be followed for the other Cartesian products, such as $M\times M$, that appear.

\begin{lem}\label{lem-U-deforms}
Let $u\in HT^2(X)$, and $w=\phi^T(u)\in HT^2(M)$.
There is a perfect object $\fU_1\in D^b(X\times M,-\check{u}\boxplus w)$
whose derived restriction $i^*(\fU_1)$ is isomorphic to $\U$.
\end{lem}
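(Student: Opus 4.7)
The plan is to apply the Lowen--Toda deformability criterion (Theorem \ref{criterion}, interpreted via Remark \ref{rem-obstruction-thry}) to the object $\U \in D^b(X \times M)$ in the direction $-\check{u}\boxplus w$. Since $\U$ is a sheaf (in particular, a perfect object) and the deformation direction is specified, the criterion reduces the existence of the first-order extension $\fU_1$ to the vanishing of the characteristic class
\[
\chi_\U\!\left((I^2_{X\times M})(-\check{u}\boxplus w)\right) \ \in \ \Hom_{X\times M}(\U,\U[2]).
\]
Thus the entire lemma is the statement that this obstruction vanishes.

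First I would decompose the obstruction using the K\"unneth decomposition of $HH^2(X\times M)$ into the pure-$X$, mixed, and pure-$M$ summands. The class $-\check{u}\boxplus w$ arising from $HT^2$ only contributes on the pure-$X$ and pure-$M$ summands (there is no mixed contribution because $u$ and $w$ live on the individual factors), so
\[
\chi_\U\!\left((I^2_{X\times M})(-\check{u}\boxplus w)\right) \ = \
\chi_\U\!\left((I^2_{X})(-\check{u})\boxplus 0\right) \ + \ \chi_\U\!\left(0\boxplus (I^2_{M})(w)\right).
\]
Next I would identify each summand in terms of Yoneda composition with the natural transformations arising from Hochschild cohomology. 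Viewing $\U$ simultaneously as the kernel of $\Phi_\U : D^b(X)\to D^b(M)$ and as an object of $D^b(X\times M)$, and unwinding the definition of the characteristic morphism (i.e.\ evaluating a Hochschild $2$-class, regarded as a natural transformation $\mathrm{id}\to\mathrm{id}[2]$, on $\U$), one obtains
\[
\chi_\U\bigl((I^2_X)(\check{u}) \boxplus 0\bigr) \ = \ \U \circ I^2_X(u), \qquad
\chi_\U\bigl(0\boxplus (I^2_M)(w)\bigr) \ = \ I^2_M(w)\circ \U,
\]
where the $\check{\phantom{u}}$-twist on the $X$-factor precisely compensates for the sign that arises when converting from the naive ``pullback'' action of $HH^*(X)$ on $\U$ to the Yoneda composition with $\U$ as a kernel. (This is the same sign convention used implicitly in Toda's Theorem \ref{toda-main} and discussed in Remark \ref{rem-comment-on-toad-main}.)

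With these identifications in hand, the vanishing is immediate from the defining property of $\phi^{HH}$ in~(\ref{eq-alt-phi-HH}): by construction, $\phi^{HH}(\lambda)\circ \U = \U \circ \lambda$ for every $\lambda\in HH^2(X)$, where the equality is taken in $\Hom_{X\times M}(\U,\U[2])$, and $\phi^T$ is obtained by conjugating $\phi^{HH}$ with the HKR isomorphisms. Hence, with $\lambda := I^2_X(u)$ and $I^2_M(w) = \phi^{HH}(\lambda)$, one gets
\[
\chi_\U\!\left((I^2_{X\times M})(-\check{u}\boxplus w)\right) \ = \
-\U\circ \lambda + \phi^{HH}(\lambda)\circ \U \ = \ 0,
\]
which completes the argument.

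The main obstacle will be pinning down the sign convention in the formula $\chi_\U\bigl((I^2_X)(\check{u})\boxplus 0\bigr) = \U\circ I^2_X(u)$: one must check that the pure-$X$ piece of the characteristic morphism on $\U\in D^b(X\times M)$, when written as Yoneda composition with $\U$ regarded as a kernel, produces exactly the $\check{\phantom{u}}$-twist needed to match~(\ref{eq-alt-phi-HH}). This is essentially a book-keeping exercise following the proof of Toda's functoriality theorem (Theorem \ref{toda-main}), but it is the only nontrivial step; once it is in place, everything else reduces to the definition of $\phi^{HH}$.
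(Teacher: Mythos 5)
Your proposal is correct and follows essentially the same route as the paper: reduce to the vanishing of the obstruction class via the Lowen--Toda criterion, split $-\check{u}\boxplus w$ into its pure-$X$ and pure-$M$ pieces, identify these with the Yoneda compositions $\U\circ I_X^*(u)$ and $I_M^*(w)\circ\U$ (the sign/twist bookkeeping being exactly Toda's Lemma 5.8 together with $\tau_*I_X^*(t)=I_X^*(\check{t})$), and conclude from the defining property of $\phi^{HH}$ in (\ref{eq-alt-phi-HH}). The one step you defer --- verifying that the $\check{\phantom{u}}$-twist matches the convention in (\ref{eq-alt-phi-HH}) --- is precisely what the paper resolves by citing Toda, noting in Remark \ref{valid} that the cited lemma does not require $\Phi_\U$ to be an equivalence.
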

\begin{proof}
By Theorem \ref{criterion}, it suffices to show that the degree $2$ piece of 
$(-\check{u}\boxplus w)\cdot\exp a({\U})$ is $0$. We repeat Toda's calculation
of this obstruction in our setting. According to  
\cite[Lemma 5.8]{toda-deformations} (see Remark \ref{valid} below), the following diagrams commute:
$$\xymatrix{
HT^*(X\times M)\;\;\;\ar[r]^{\times\exp a(\U)} & \;\;\Hom^*_{X\times M}(\U,\U)\\
\ar[u]^{\pi_{X}^*} HT^*(X)\ar[r]_{\tau_*I_X^*} & HH^*(X)\ar[u]_{\U \circ}
}$$
$$\xymatrix{
HT^*(X\times M)\;\;\;\ar[r]^{\times\exp a(\U)} & \;\;\Hom^*_{X\times M}(\U,\U)\\
\ar[u]^{\pi_{M}^*} HT^*(M)\ar[r]_{I_M^*} & HH^*(M)\ar[u]_{\circ\U}
}$$
The symbol $\tau_*$ in the first diagram is the involution of $HH^*(X)$ arising from the
interchange of the factors of $X\times X$. One sees that $\tau_*I_X^*(t)=I_X^*(\check{t})$ (see the 
discussion preceding Proposition 6.1 of \cite{toda-deformations}),
so that
\begin{align}
(-\pi_X^*\check{u}+\pi^*_{M}w)\cdot\exp a(\U) 
                    & = -\U\circ \tau_*I_X^*(\check{u})+I^*_{M}(w)\circ\U\nonumber\\
            	    & = -\U\circ I_X^*({u}) + (\phi^{HH}(I_X^*(u)))\circ \U.
\end{align}
The description (\ref{eq-alt-phi-HH}) of the map $\phi^{HH}$  yields  
that $\phi^{HH}(I_X^*(u))\circ\U= \U\circ I_X^*(u)$. The result follows immediately.
\end{proof}

\begin{rem}\label{valid} Although it is assumed everywhere in \cite{toda-deformations}
that the fuctor $\Phi_\U$ is an equivalence, Lemma 5.8 of that article 
holds without this requirement. Indeed, Toda only works with the {\em compositions}
$\exp a(\U)_X$ and $\exp a(\U)_M$ rather than the morphisms $\exp(a)^+_X$ and
$\exp(a)^+_M$ (see p. 212, {\it op. cit.}), 
and while the latter morphisms may not exist without the assumption that  
$\Phi_\U$ is an equivalence, the former always do.
\end{rem}

Let $\ko_{X\times M}^{-\check{u},w}$ be the deformed structure sheaf of the
product $X\times M_H(v)$, and write
$\fV_1$ for $R\H om_{*}(\fU_1, \ko_{X\times M}^{-\check{u},w})[2]$. Note that derived
dualization is defined on the full subcategory of perfect complexes of
$D^b(X\times M,-\check{u}\boxplus w)$, and sends it into the subcategory of perfect complexes of
$D^b(M\times X,-\check{w} \boxplus u)$:
$$R\H om_{*}(\;\_\;,\ko_{X\times M}^{-\check{u},w}): 
D_{\mbox{perf}}(X\times M,-\check{u}\boxplus w) \to
D_{\mbox{perf}}(M\times X,-\check{w} \boxplus u)$$
The functors corresponding to restriction and extension of scalars between 
various categories will be simply
denoted $i_*$ and $i^*$, without reference to the underlying spaces.

Consider the convolution $\fU_1\circ \fV_1 \in D^b(M\times M, -\check{w}\boxplus w)$. Lemma A.5 of 
\cite{mms} implies that $i^*(\fU_1\circ \fV_1)\cong\F$. We conclude that $\F$ deforms along the direction
$-\check{w}\boxplus w\in HT^2(M\times M)$, $w=\phi^{T}(u)$, for any $u\in HT^2(X)$. Denote 
this infinitesimal deformation $\fU_1\circ \fV_1$ by $\fF'_u$.

The object $\F$ is the restriction to a fiber of the family $\fF$ 
constructed in \S\ref{subsec-deformability-of-the-monad} by Assumtion \ref{assumption-in-compatibility-section}, 
that is, there exists a triple
$(M,\eta,\Az)\in U\subset\widetilde{\fM}_\Lambda^0 $ such that  $\F\cong\fF|_{M\times M}$.
Given a class $\xi\in H^1(TM)$, let $\MM_\xi$ denote the first-order infinitesimal defomation of $M$ 
in the direction of $\xi$. Let $\fF_\xi$ be the restriction of $\fF$ to the fiber square $\mathcal{M}_\xi^2$ of
$\mathcal{M}_\xi$ over the length 2 subscheme of $\widetilde{\fM}^0_\Lambda$ detrmined by $\xi$.

\begin{lem}\label{lem-direction-of-deformtion}
\begin{enumerate}
\item\label{item-class} Let $\xi\in H^1(TM)$. The infinitesimal deformation $\fF_\xi$ of $\F$
is an object of the derived category $D^b(M\times M, -\check{w}\boxplus w)$,
where $w=(0,\xi, \theta)$  is a class in the subspace $\mho\subset HT^2(M)$ defined in 
Lemma \ref{lemma-image} (\ref{lemma-item-image-of-phi-HH-in-HT}). 
\item\label{item-rigidity} Write $u$ for the class $(\phi^T)^{-1}(w)\in HT^2(X)$. 
There is an isomorphism $\fF'_u \cong \fF_\xi$.
\end{enumerate}
\end{lem}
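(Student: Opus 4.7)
The lemma has two parts, and I would treat them in order: first pin down the direction in $HT^2(M\times M)$ of the geometric deformation $\fF_\xi$ and verify that it has the form $-\check{w}\boxplus w$ with $w\in \mho$, and then use the rigidity of $\F$ to identify $\fF_\xi$ with $\fF'_u$.

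For part (\ref{item-class}), first use part (\ref{thm-item-restriction-of-Brauer-class-over-contractible-V}) of Theorem \ref{deformability} to pass to a contractible Stein neighborhood $V$ of $u$ in $U$, so that $\Theta|_{\MM^2_V}=\pi_1^*\theta^{-1}\pi_2^*\theta$ for some Brauer class $\theta$ on $\MM_V$. At the fiber over $u$, the Kodaira--Spencer class of $\MM_V\to V$ is $\xi$, while the first-order derivative of $\theta$ along $\xi$ is a class $\dot\theta(\xi)\in H^2(\StructureSheaf{M})$. A direct computation of Kodaira--Spencer classes then shows that the restriction of $(\MM^2_V,\Theta)$ to the length-two neighborhood of $u$ determined by $\xi$ deforms $M\times M$ in the direction $-\check w\boxplus w$ for $w=(0,\xi,\dot\theta(\xi))\in HT^2(M)$, whence $\fF_\xi\in D^b(M\times M,-\check{w}\boxplus w)$. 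It remains to show $w\in \mho$, i.e., $\xi\cdot c_1(\alpha)+(2-2n)\dot\theta(\xi)=0$. I would do this by identifying $\dot\theta(\xi)$ with the $(0,2)$-derivative of a rational $(1,1)$-lift of the underlying topological $\mu_{2n-2}$-class $\widetilde\Theta$: this class is locally constant along $V$ by Step~5 of the proof of Proposition \ref{prop-construction-of-universal-object-F}, so under variation of Hodge structure its image in $H^2(\StructureSheaf{M})$ varies by $\frac{1}{2n-2}$ times the $(0,2)$-part of $\xi\cdot c$, where $c$ is any rational $(1,1)$-lift of $\widetilde\theta|_M$. Proposition \ref{resofE} and the identification of $\E$ along a slice $M\times\{[E]\}$ with (the universal extension of $\StructureSheaf{M}$ by) the twisted sheaf $\Phi_\U(E^\vee[2])$ of Mukai vector $\alpha$ pin down such a lift as $\frac{1}{2n-2}c_1(\alpha)$, yielding the required relation.

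For part (\ref{item-rigidity}), the class $u=(\phi^T)^{-1}(w)\in HT^2(X)$ lies in $\mathrm{ann}(v^\vee)$ by Lemma \ref{lemma-image}, so Lemma \ref{lem-U-deforms} produces $\fU_1$ and hence the deformation $\fF'_u=\fU_1\circ \fV_1\in D^b(M\times M,-\check{w}\boxplus w)$ with $i^*\fF'_u\cong \F$. By part (\ref{item-class}), the object $\fF_\xi$ lies in the same twisted derived category and likewise restricts to $\F$. By the deformation theory underlying Theorem \ref{criterion} (see Remark \ref{rem-obstruction-thry}), the set of isomorphism classes of first-order deformations of a perfect complex in a fixed direction of $HT^2$ is a torsor over $\Hom(\F,\F[1])$; since Lemma \ref{lemma-F-is-simple-and-rigid} gives $\Hom(\F,\F[1])=0$, this torsor is trivial and $\fF_\xi\cong \fF'_u$ follows.

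The principal obstacle is the Hodge-theoretic computation of $\dot\theta(\xi)$ in part (\ref{item-class}). Since $\E$ is reflexive but not locally free along the diagonal, extracting its first Chern class and matching it with the rational lift of $\widetilde\theta$ requires careful bookkeeping of normalizations; the cleanest route goes through the hyperholomorphic deformation theory of \cite{torelli}, where the equation $\xi\cdot c_1(\alpha)+(2-2n)\theta=0$ appears as the infinitesimal compatibility condition for the existence of a deformation of the triple $(M,\eta,\Az)$. Once this is in place, part (\ref{item-rigidity}) is an immediate application of the rigidity of $\F$.
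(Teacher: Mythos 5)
Your overall architecture matches the paper's — establish the membership $w\in\mho$ in part (\ref{item-class}) and deduce part (\ref{item-rigidity}) from rigidity — but both of your key steps are executed differently from the paper, and each leaves a substantive point unproven.

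For part (\ref{item-class}), you propose to compute the derivative $\dot\theta(\xi)$ of the Brauer class directly, by tracking the $(0,2)$-projection of a rational $(1,1)$-lift of the locally constant torsion class $\widetilde\Theta$ and matching that lift with $\frac{1}{2n-2}c_1(\alpha)$. You rightly flag this as the principal obstacle: the lift is only determined modulo $(2n-2)\Lambda$, changing it alters $\frac{1}{2n-2}\,\xi\cdot c$ by the $(0,2)$-derivative of an integral class, and you do not carry the computation out. The paper sidesteps it entirely by running the Toda--Lowen criterion backwards: since $\fF_\xi$ exists by Theorem \ref{deformability}(\ref{deformability-of-monad}), the obstruction class $(-\check{w}\boxplus w)\cdot\exp a(\F)$ must vanish (Theorem \ref{criterion}, Remark \ref{rem-obstruction-thry}); its trace is $(-\check{w}\boxplus w)\cdot ch(\F)\in H^2(\ko_{M\times M})$ by \cite[10.1.6]{HL}; and since $ch(\F)=(2-2n)+\bigl(-\pi_1^*c_1(\alpha)+\pi_2^*c_1(\alpha)\bigr)+\cdots$ (using $\tau^*\F\cong\F^\vee[2]$ from Lemma \ref{lemma-kernel-of-the-adjoint-of-F} and $\F|_{M\times\{[E]\}}\cong\Phi_\U(E^\vee[2])$), the vanishing of this trace is literally the equation $\xi\cdot c_1(\alpha)+(2-2n)\theta=0$ defining $\mho$. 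This converts your delicate Hodge-theoretic bookkeeping into a short consequence of deformability; I would adopt it.

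For part (\ref{item-rigidity}), you invoke the principle that first-order deformations of a perfect complex in a fixed direction of $HT^2$ form a torsor over $\Hom(\F,\F[1])$. This is morally right, but it is precisely the ``known result'' the authors state they could not find a reference for in this generality (twisted sheaves, gerby and noncommutative directions, complexes rather than sheaves); Theorem \ref{criterion} and Remark \ref{rem-obstruction-thry} give only the obstruction to existence, not a classification of deformations, so your citation does not cover the claim. The paper closes this gap by hand: $\fH^0(\fF'_u)$ and $\fH^{-1}(\fF'_u)$ are flat over the dual numbers and restrict to $\ko_{\Delta_M}$ and $\E$, which are infinitesimally rigid (Lemma \ref{lemma-Ext-1-E-E-vanishes}), hence equal $\ko_{\Delta_{\MM_\xi}}$ and $\fE_\xi$; then $\fF'_u$ is an extension of $\ko_{\Delta_{\MM_\xi}}$ by $\fE_\xi[1]$, and such an extension is unique up to scalar by Step 3 of Proposition \ref{prop-construction-of-universal-object-F}. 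Either supply a proof of the torsor statement in this twisted, noncommutative setting, or substitute the cohomology-sheaf argument.
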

\begin{proof}
(\ref{item-class}) Fix a point $[E]\in M$. The restriction $\F|_{M\times\{[E]\}}$ is isomorphic to 
$\Phi_\U(E^\vee[2])$. Making use of the fact proven in Lemma \ref{lemma-kernel-of-the-adjoint-of-F}
that $\tau^*\F\cong \F^\vee[2]$, we see that the Chern character of $\F$ has the form:
$$
ch(\F)=(2-2n)+(-\pi_1^*(c_1(\alpha))+\pi_2^* (c_1(\alpha)))+ch_2(\F)+\cdots \in \oplus H^i(\Omega_{M\times M}^i). 
$$
The product $(-\check{w}\boxplus w)\cdot ch(\F)$ in 
$\pi_1^*(H^2(\ko_M))\oplus \pi_2^*(H^2(\ko_M))\cong H^2(\ko_{M\times M})$ vanishes if and
only if $w\in \mho$. This product is nothing but the trace of the obstruction class
$(-\check{w}\boxplus w)\cdot\exp a(\F)\in \Hom(\F,\F[2])$ \cite[10.1.6]{HL}. As $\F$ deforms,
Theorem \ref{criterion} implies that this class must indeed vanish, which completes the proof.

(\ref{item-rigidity}) This should follow immediately from the infinitesimal rigidity of $\F$ from known 
results\footnote{One possible approach might be by combining \cite[Theorem 3.1.1]{lieblich} and 
\cite[Prop. 2.2.4.9]{lieblich2}. The former studies deformation theory for complexes, the latter
for twisted sheaves; however, only the algebraic situation is considered.}.
However, we were not able to locate a precise reference, so we sketch a short argument specific to our case. Consider
the object $\fF'_u$. Being a deformation of an object with cohomology sheaves in degrees contained in the interval
[-1,0], it is perfect with cohomology in degrees contained in [-1,0].
So, $\fH^0(\fF'_u)$ is perfect, hence flat over the dual numbers; 
using the observation of the previous sentence again,
we get that $\fH^{-1}(\fF'_u)$ is flat over the dual numbers. Therefore, 
$$i^*(\fH^0(\fF'_u)) \cong (\fH^0(i^*\fF'_u))\cong \ko_{\Delta_M}.$$ 
Similarly, $i^*(\fH^{-1}(\fF'_u)) \cong \E$. The 
(twisted) sheaves $\ko_{\Delta_M}$ and  $\E$  are 
infinitesimally rigid (see Lemma \ref{lemma-Ext-1-E-E-vanishes}). This implies that 
$\fH^0(\fF'_u)\cong \ko_{\Delta_{\mathcal{M}_\xi}}$ and 
$\fH^{-1}(\fF'_u)\cong \fE_\xi$, where $\fE_\xi$ is the
restriction of $\fE$ to $\mathcal{M}_\xi^2$  \cite[Lemma 4.11]{torelli}. 
It follows that the object $\fF'_u$ is an extension of $\ko_{\Delta_{\mathcal{M}_\xi}}$
by $\fE_\xi[1]$. Step 3 of Proposition \ref{prop-construction-of-universal-object-F} 
says that, up to scalars, there is a unique such extension. The claimed isomorphism follows
from this.
\end{proof}

\begin{lem} The functor $\Psi_{\fV_1}: D^b(M,w) \to D^b(X,u)$ is right adjoint to 
the functor $\Phi_{\fU_1}: D^b(X,u) \to D^b(M_H(v),w)$. Moreover, the unit
$\eta: \mathbb{1}_{D^b(X,u)} \rightarrow \Psi_{\fV_1}\Phi_{\fU_1}$
of this adjunction is split, that is, there exits a natural transformation
$\zeta: \Psi_{\fV_1}\Phi_{\fU_1} \rightarrow \mathbb{1}_{D^b(X,u)}$ such that
$\zeta\eta\cong \mathbb{1}_{D^b(X,u)}$.
\end{lem}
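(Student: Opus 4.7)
The plan has two parts: first, establishing the adjunction, and then showing that the unit splits.

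For the adjunction, note that $\fV_1 := R\H om_*(\fU_1, \ko_{X\times M}^{-\check u, w})[2]$ is by construction the infinitesimal deformation of the classical right-adjoint kernel $\U^\vee[2]$. Because $\omega_X$ is trivial, Grothendieck--Verdier duality for the deformed structure sheaves on $X\times M$ produces unit and counit natural transformations in the standard way (cf.\ \cite{cal-mukai1}), making $\Psi_{\fV_1}$ a right adjoint of $\Phi_{\fU_1}$. The triangle identities reduce, modulo the maximal ideal of the dual numbers, to the classical adjunction $\Phi_\U \dashv \Psi_\U$; the first-order obstructions to their extension live in odd Hochschild cohomology groups of $X$, which vanish because $HH^{\mathrm{odd}}(X)=0$ for a $K3$ surface.

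For the splitting, the kernel of the composite endofunctor $\Psi_{\fV_1}\Phi_{\fU_1}$ is the convolution $\widetilde{\A} := \fV_1\circ\fU_1 \in D^b(X\times X, -\check u \boxplus u)$, whose restriction to the central fiber is the classical object $\A$ of Equation (\ref{eq-kernel-A}) by \cite[Lemma A.5]{mms}. Assumption \ref{assumption-in-compatibility-section} combined with Theorem \ref{thm-A-is-direct-sum} gives $\A \cong \bigoplus_{i=0}^{n-1}\Delta_*\ko_X[-2i]$, from which one extracts a splitting $\zeta_0 : \A \to \Delta_*\ko_X$ of the morphism $\eta_0 : \Delta_*\ko_X \to \A$ corresponding to the undeformed unit. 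On the target side, the identity functor $\mathbb{1}_{D^b(X,u)}$ is represented by the deformed structure sheaf of the diagonal $\ko_{\ol{\Delta}} \in D^b(X\times X, -\check u \boxplus u)$ of Example \ref{example-diagonal}, which is itself a deformation of $\Delta_*\ko_X$ in the very same direction.

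It therefore suffices to lift $\zeta_0$ to a morphism $\zeta : \widetilde{\A} \to \ko_{\ol{\Delta}}$ of kernels in the deformed category, and then to adjust $\zeta$ so that $\zeta\circ\eta = \id$. Because source and target are deformations of their central fibers in the matching direction $-\check u \boxplus u$, the obstruction to the lift lies in the undeformed group $\Hom_{X\times X}(\A, \Delta_*\ko_X[1])$. Using the decomposition of $\A$,
\[
\Hom_{X\times X}(\A, \Delta_*\ko_X[1]) \;\cong\; \bigoplus_{i=0}^{n-1} HH^{2i+1}(X),
\]
which vanishes since the Hochschild cohomology of a $K3$ surface is concentrated in even degrees. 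Hence the lift $\zeta$ exists and yields a natural transformation $\Psi_{\fV_1}\Phi_{\fU_1} \to \mathbb{1}_{D^b(X,u)}$. Finally, $\zeta\eta$ is a first-order deformation of $\zeta_0\eta_0 = \id_{\Delta_*\ko_X}$, hence has the form $\id_{\ko_{\ol{\Delta}}} + \varepsilon\alpha$ for some scalar $\alpha \in \Hom(\Delta_*\ko_X, \Delta_*\ko_X) = \CC$. Since $1 + \varepsilon\alpha$ is invertible over the dual numbers, replacing $\zeta$ by $(\id - \varepsilon\alpha)\circ\zeta$ produces the desired retraction. The key technical point, which makes the $\Ext^1$-vanishing argument applicable, is the matching of the deformation directions of $\fV_1\circ\fU_1$ and $\ko_{\ol{\Delta}}$; this compatibility is built into the construction of $\fV_1$ from $\fU_1$ via the convolution formulas of \S\ref{subsection-map-on-tangent-spaces}, and is ultimately what motivated the choice of $\phi^{HH}$ in (\ref{eq-phi-HH}).
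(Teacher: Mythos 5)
Your treatment of the splitting is essentially the paper's argument in a mildly different packaging: both rest on the total splitting $\V\circ\U\cong\oplus_{i=0}^{n-1}\ko_{\gd_X}[-2i]$ together with the vanishing of the odd Hochschild cohomology of $X$. The paper splits the deformed triangle $\ko_{\ol{\gd}_X}\to\fV_1\circ\fU_1\to\fH_1$ by showing $\Hom(\fH_1,\ko_{\ol{\gd}_X}[1])=0$; you instead lift the classical retraction $\zeta_0:\A\to\Delta_*\ko_X$ (obstruction in $\oplus_i HH^{2i+1}(X)=0$) and then correct by the invertible endomorphism $1+\varepsilon\alpha$ of $\ko_{\ol{\gd}_X}$. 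These are interchangeable, and your correction step is sound since $\Hom(\ko_{\ol{\gd}_X},\ko_{\ol{\gd}_X})\cong\ComplexNumbers[\varepsilon]/(\varepsilon^2)$.

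The adjunction half is where your argument has a genuine gap. You invoke ``Grothendieck--Verdier duality for the deformed structure sheaves on $X\times M$'' to produce the unit and counit ``in the standard way,'' but no such duality formalism is available for Toda's first-order noncommutative/gerby deformations: the references you cite establish it for (twisted) derived categories of honest varieties, and extending it to $D^b(X\times M,-\check{u}\boxplus w)$ is precisely the kind of foundational work the paper's proof is designed to avoid. Moreover, your claim that the obstructions to the triangle identities lie in $HH^{\mathrm{odd}}(X)$ is unsubstantiated and, as stated, appears to be wrong: the two sides of the identity $(\ee\Phi_{\fU_1})\circ(\Phi_{\fU_1}\eta)=\id$ are elements of $\Hom(\fU_1,\fU_1)$ whose difference restricts to zero on the central fiber, so the natural home of the discrepancy is $\varepsilon\cdot\Hom_{X\times M}(\U,\U)\cong\ComplexNumbers$, which does not vanish (one would have to argue separately that the discrepancy can be normalized away). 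The paper sidesteps all of this: it first lifts the classical unit to a morphism $\ol{\eta}_1:\ko_{\ol{\gd}_X}\to\fV_1\circ\fU_1$ using the exactness of the restriction sequence (which needs only $\Hom(\ko_{\gd_X},\V\circ\U[1])\subset HH^{1}(X)=0$), and then proves \emph{directly} that pre-composition with $\ol{\eta}_1$ induces a bifunctorial isomorphism $\Hom(\Phi_{\fU_1}\fA,\fB)\cong\Hom(\fA,\Psi_{\fV_1}\fB)$, by sandwiching it between two copies of the classical adjunction isomorphism via the triangles coming from $0\to\ComplexNumbers\to\ComplexNumbers[\varepsilon]/(\varepsilon^2)\to\ComplexNumbers\to 0$ and applying the five lemma. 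No counit, no triangle identities, and no duality in the deformed category are ever needed. You should either adopt that route or supply the missing duality and the normalization of the triangle identities explicitly.
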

\begin{proof}
The exact sequence $0 \to \CC \to \CC[\varepsilon]/(\varepsilon^2) \to \CC\to 0$
yields the triangle
\begin{equation}\label{triang-3}
i_* i^*\fV_1\circ\fU_1 \to \fV_1\circ\fU_1 \to i_* i^*\fV_1\circ\fU_1
\end{equation}
where $\fV_1\circ\fU_1 \in D^b(X\times X, -\check{u}\boxplus u)$ is the 
convolution of $\fV_1$ with $\fU_1$. We have the isomorphism 
$i_* i^*\fV_1\circ\fU_1 \cong i_*\V\circ\U$ by base-change \cite[Lemma A.5]{mms}.

Write $\ko_{{\ol{\gd}}_X}\in D^b(X\times X, -\check{u}\boxplus u)$ for 
the structure sheaf of the diagonal. This is defined above
when the gerby part $\theta\in H^2(\ko_X)$ of $u$ is $0$. In general,
set $\ko_{{\ol{\gd}}_X}=\Delta_*\ko_X^u$,
with the $\ko_{X\times X}^{-\check{u}\boxplus u}$-module structure defined
in Example \ref{example-diagonal}. This makes sense 
as a $(-\theta \boxplus \theta)$-twisted sheaf because 
the class  $(-\theta \boxplus \theta)$ restricts to the trivial class along the diagonal in 
$S\times S$.

Consider the following sequence arising from applying the functor 
$\Hom_{D^b(X\times X,-\check{u}\boxplus u)}(\ko_{{\ol{\gd}}_X},\;\_\;)$ to (\ref{triang-3}):
$$
0\to \Hom_{X^2}(\ko_{\gd_X},\V\circ\U) \to 
 \Hom_{D^b(X^2,-\check{u}\boxplus u)}(\ko_{{\ol{\gd}}_X},\fV_1\circ\fU_1) \to
\Hom_{X^2}(\ko_{\gd_X},\V\circ\U) \to 0
$$
As $\V\circ\U\cong \bigoplus_{i=0}^{n-1}\ko_{\gd_X}[-2i]$, we see that
$$\Hom_{X^2}(\ko_{\gd_X},\V\circ\U)\cong\CC, \mbox{\;\;\;\;} 
\Hom_{X^2}(\ko_{\gd_X},\V\circ\U[1])\cong HH^1(X)=(0),$$
from which it follows that the sequence above is exact. Let 
$\ol{\eta}_1: \ko_{{\ol{\gd}}_X}\to \fV_1\circ\fU_1$ be a lift of the unit $\eta$.
We claim that $\ol{\eta}_1$ is the unit of an adjunction $\Phi_{\fU_1}\dashv \Psi_{\fV_1}$.
Indeed, let $\fA, \fB\in D^b(X\times X, -\check{u}\boxplus u)$, and denote their 
derived restrictions to $D^-(X\times X)$ by $\A$,$\B$, respectively. Consider
the commuting diagram:
$$\xymatrix{
\ar[d] \Hom(\Phi_{\fU_1}\fA,i_*\B)\ar[r] & \ar[d]\Hom(\Phi_{\fU_1}\fA,\fB) \ar[r]&
\ar[d] \Hom(\Phi_{\fU_1}\fA,i_*\B)\\
\ar[d]_{\_\;\circ \ol{\eta_1}} \Hom(\Psi_{\fV_1}\Phi_{\fU_1}\fA,\Psi_{\fV_1}i_*\B)\ar[r] & 
\ar[d]_{\_\;\circ \ol{\eta_1}} \Hom(\Psi_{\fV_1}\Phi_{\fU_1}\fA,\Psi_{\fV_1}\fB) \ar[r]&
\ar[d]^{\_\;\circ \ol{\eta_1}} \Hom(\Psi_{\fV_1}\Phi_{\fU_1}\fA,\Psi_{\fV_1}i_*\B)\\
\Hom(\fA,\Psi_{\fV_1}i_*\B)\ar[r] & 
\Hom(\fA,\Psi_{\fV_1}\fB) \ar[r]&
\Hom(\fA,\Psi_{\fV_1}i_*\B)
}$$
The first and third columns can be identified with the composition
$$
\Hom_{X^2}(\Phi_{\U}\A,\B)\to \Hom_{X^2}(\Psi_{\V}\Phi_{\U}\A,\Psi_{\V}\B)
\stackrel{\_\;\circ\eta}{\to} \Hom_{X^2}(\A,\Psi_{\V}\B)
$$
by flat base-change and the adjunction $i^*\dashv i_*$, which is clearly an
isomorphism. It now follows by Proposition 1.1 of \cite{RD} that the composition
of the arrows in the central column is an isomorphism. As this isomorphism is
bi-functorial in $\fA$ and $\fB$, the claim is proved.

Let $\fH_1$ be the cone of the map $\ol{\eta}_1$:
\begin{equation}\label{triang-4}
\ko_{{\ol{\gd}}_X} \stackrel{\ol{\eta}_1}{\to} \fV_1\circ\fU_1\to\fH_1.
\end{equation}
As above, we can compute the extension group
$\Hom_{D^b(X^2, -\check{u}\boxplus u)}(\fH_1,\ko_{{\ol{\gd}}_X}[1])$
by applying the functor $\Hom_{D^b(X^2, -\check{u}\boxplus u)}(\fH_1,\;\_\;)$ to
the triangle $i_*\ko_{\gd_X} \to \ko_{{\ol{\gd}}_X} \to i_*\ko_{\gd_X}$
and chasing the resulting long exact sequence. Using the fact that 
$\Hom_{X^2}(\fH_1,i_*\ko_{\gd_X}[1])\subset HH^{\mbox{\tiny{odd}}}(X)=(0)$, 
the result is that this group is $(0)$. In particular, triangle (\ref{triang-4})
is split, from which the second statement of the lemma follows immediately.
\end{proof}

Fix $u\in (I_X^2)^{-1}(ann(v^\vee))$, and set $w=\phi^T(u) \in HT^2(M)$; the class
$w$ has the form $(0,\xi, \theta)$ by Lemma \ref{lemma-image}.
As above, let $\fF_\xi$ the restriction of $\fF$ to $\mathcal{M}_\xi^2$.
Note that $\fF_\xi$, together with
the structure maps ${\ee}_1$ and ${\dd}_1$, defines a comonad 
$\langle \LL_1, {\ee}_1, {\dd}_1 \rangle$ on $D^b(M,w)$ by Theorem \ref{deformability}. 

\begin{thm}\label{thm-comparison-with-toda} 
(Theorem \ref{thm-comparison} (\ref{abelian-category}))
There is an exact equivalence of triangulated categories
between $D^b(X,u)$ and the category of comodules
$D^b(M,w)^{\LL_1}$. 
\end{thm}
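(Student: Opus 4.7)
The plan is to apply the triangulated Barr--Beck theorem (\cite{elagin, bb, bal2}) to the deformed adjoint pair $\Phi_{\fU_1}\dashv\Psi_{\fV_1}$ constructed in the preceding lemma, precisely paralleling the proof of Proposition \ref{prop-Phi-hat-is-an-equivalence}. Since that lemma also exhibits a splitting of the unit $\bar\eta_1:\mathbb{1}_{D^b(X,u)}\to\Psi_{\fV_1}\Phi_{\fU_1}$, the hypothesis of the triangulated Barr--Beck theorem is satisfied, and it yields an exact equivalence of triangulated categories
\[
\hat{\Phi}_{\fU_1}:D^b(X,u)\stackrel{\cong}{\longrightarrow}D^b(M,w)^{\LL'_1},
\]
where $\LL'_1:=\langle\Phi_{\fU_1}\Psi_{\fV_1},\epsilon'_1,\delta'_1\rangle$ is the comonad tautologically associated to this adjunction (counit the adjunction counit, comultiplication given by $\Phi_{\fU_1}\bar\eta_1\Psi_{\fV_1}$ as in (\ref{eq-comultiplication})).

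What remains is to identify the comonad $\LL'_1$ with the deformed comonad $\LL_1=\langle\fF_\xi,\epsilon_1,\delta_1\rangle$ produced by Theorem \ref{deformability}, so that $D^b(M,w)^{\LL'_1}$ is naturally equivalent to $D^b(M,w)^{\LL_1}$. The endofunctor underlying $\LL'_1$ has Fourier--Mukai kernel the convolution $\fU_1\circ\fV_1=\fF'_u$, and Lemma \ref{lem-direction-of-deformtion}(\ref{item-rigidity}) supplies an isomorphism $\fF'_u\cong\fF_\xi$. This pins down an isomorphism of the underlying endofunctors of the two comonads.

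The main technical step, and the principal obstacle, is to verify that under this identification the structure maps $(\epsilon'_1,\delta'_1)$ coincide with $(\epsilon_1,\delta_1)$. Both pairs are first-order deformations of the same pair $(\epsilon,\delta)$ on the central fiber $\F$: for $(\epsilon_1,\delta_1)$ this is by the construction in Theorem \ref{deformability}(\ref{deformability-of-monad}), while for $(\epsilon'_1,\delta'_1)$ it is because Barr--Beck commutes with extension of scalars $i_*$, so the reduction mod $\varepsilon$ of the tautological comonad associated to $\Phi_{\fU_1}\dashv\Psi_{\fV_1}$ is the tautological comonad associated to $\Phi_{\U}\dashv\Psi_{\U}$. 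I would therefore reprise the rigidity argument from the proof of Theorem \ref{deformability}(\ref{deformability-of-monad}): the extension groups $\Hom(\fF_\xi,\StructureSheaf{\bar\gd_{\MM_\xi}})$ and $\Hom(\fF_\xi,\fF_\xi\circ\fF_\xi)$ are free of rank one over the dual numbers, obtained by restricting the line bundles $\H om_\Pi(\fF,\StructureSheaf{\Delta_\MM})$ and $\H om_\Pi(\fF,\fF\circ\fF)$ of Lemma \ref{ext-line-bundle-2} to the length-two neighborhood of $[M,\eta,\Az]$ in $U$ cut out by $\xi$. In these free rank-one modules, $\epsilon'_1$ and $\epsilon_1$ are both lifts of $\epsilon$ satisfying the counit laws, and $\delta'_1,\delta_1$ are both sections of the pair of morphisms $\fF_\xi\circ\epsilon_1,\epsilon_1\circ\fF_\xi:\fF_\xi\circ\fF_\xi\to\fF_\xi$; exactly as in the proof of Theorem \ref{deformability}(\ref{deformability-of-monad}), these constraints force $\epsilon'_1=\epsilon_1$ and $\delta'_1=\delta_1$. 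This gives an isomorphism of comonads $\LL'_1\cong\LL_1$ and hence the desired equivalence $D^b(X,u)\cong D^b(M,w)^{\LL_1}$, which is exact because $\hat\Phi_{\fU_1}$ is exact by Barr--Beck.
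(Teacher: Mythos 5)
Your proposal is correct and follows essentially the same route as the paper: triangulated Barr--Beck applied to the split adjunction $\Phi_{\fU_1}\dashv\Psi_{\fV_1}$, identification of the kernel $\fF'_u$ with $\fF_\xi$ via Lemma \ref{lem-direction-of-deformtion}, and then matching of the structure maps using the rank-one freeness over the dual numbers of $\Hom(\fF_\xi,\ko_{\ol{\gd}})$ and $\Hom(\fF_\xi,\fF_\xi\circ\fF_\xi)$ together with uniqueness of the section of $\ee_1\circ\id_{\fF_\xi}$. The only cosmetic difference is that the counits are not literally ``forced'' to coincide by being lifts of $\ee$ (lifts in a free rank-one $\CC[\varepsilon]/(\varepsilon^2)$-module differ by $\varepsilon$-multiples); as in the paper, one first rescales the chosen isomorphism $\mu:\fF'_u\to\fF_\xi$ by a unit of the dual numbers so that the counit squares commute, after which the comultiplications agree by your uniqueness-of-sections argument.
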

\begin{proof}
Let $\fU_1$ be the deformation of $\U$ corresponding to the class $-\check{u}\boxplus w$
constructed in Lemma \ref{lem-U-deforms}.
Denote the comonad arising from the adjoint pair $\Phi_{\fU_1}\dashv \Psi_{\fV_1}$
by $\langle L'_1, {\ee'_1}, {\dd'_1} \rangle$. The 
unit $\eta: \mathbb{1}_{D^b(X,u)} \rightarrow \Psi_{\fV_1}\Phi_{\fU_1}$
is split by the previous lemma. Therefore,  the categories $D^b(M,w)^{\LL'_1}$ and $D^b(X,u)$ are equivalent
by the Barr-Beck Theorem for triangulated categories 
\cite{elagin, bb, bal2}, which says that for a split adjunction 
the comparison functor  is an equivalence (see the discussion preceding the 
statement of Proposition \ref{prop-Phi-hat-is-an-equivalence}). 
To prove the result, it only remains to show that there 
is an isomorphism of comonads between $\LL_1$ and $\LL_1'$. 

We have seen above that the kernel $\fF'_u$ of the functor $L_1'$
is isomorphic to $\fF_\xi$ (Lemma \ref{lem-direction-of-deformtion}); 
fix an isomorphism $\mu: \fF'_u \widetilde{\to} \fF_\xi$.
Note that $\Hom(\fF'_u,\fF_\xi)\cong \Hom(\fF_\xi,\fF_\xi) = 
\CC[\varepsilon]/(\varepsilon^2)$ by Lemma \ref{ext-line-bundle-2}.
Let  $\fE$ be the universal twisted sheaf. Step 3 of the proof of  
of Proposition \ref{prop-construction-of-universal-object-F} shows that
$\H om_{\Pi}(\ko_{\gd_\MM}, \fE[1])$ vanishes over $U$. Applying the
functor $R\H om_\Pi(\_, \ko_{\ol{\Delta}})$ to the exact triangle
$$
\fE \to \fF\to \ko_{\ol{\Delta}}
$$
allows one to conclude that
$\Hom(\fF'_u, \ko_{\fgd_1})\cong
\Hom(\fF_\xi, \ko_{\fgd_1})\cong \CC[\varepsilon]/(\varepsilon^2)$. So we 
may modify $\mu$ by a scalar in order that the following diagram commutes:
$$\xymatrix{ 
\ar[d]_{\mu} \fF'_u\ar[r]^{\ee'_1} & 
\ar@{=}[d] \ko_{\fgd_1}\\
\fF_\xi \ar[r]^{\ee_1} & \ko_{\fgd_1}\
}$$
It only remains to check that the isomorphism $\mu$ is compatible with the comultiplication maps 
$\dd'_1$ and $\dd_1$, that is, the left square in the diagram below commutes.
Use the equalities $(\ee'_1\circ\id_{\fF'_u})\circ\dd'_1 = \id_{\fF'_u}$ and
$(\ee_1\circ\id_{\fF_\xi})\circ\dd_1 = \id_{\fF_\xi}$,
and the commutativity of the right square to conclude that the composition
$(\ee_1\circ\id_{\ol{\F}_\xi})\circ(\mu\circ\mu)\circ \dd'_1=\mu$. 
$$\xymatrix{ 
\ar[d]^{\mu} \fF'_u \ar[rr]^{\dd'_1} & & 
\ar[d]_{\mu\circ\mu} \fF'_u\circ\fF'_u\ar[rr]^{\ee'_1\circ\id_{\fF'_u}} & &
\ar[d]^{\mu} \fF'_u\\
\fF_\xi \ar[rr]^{\dd_1} & & \fF_\xi\circ\fF_\xi\ar[rr]^{\ee_1\circ\id_{\fF_\xi}} & & \fF_\xi 
}$$
Thus, $(\mu\circ\mu)\circ\dd'_1\circ\mu^{-1}$ is a section of $\ee_1\circ\id_{\ol{\F}_\xi}$. 
The section $\dd_1$ is unique as composition by  $\ee_1\circ\id_{\ol{\F}_\xi}$
gives an isomorphism $\Hom(\fF_\xi, \fF_\xi\circ\fF_\xi) \widetilde{\to} \Hom(\fF_\xi, \fF_\xi)$
by Assumption \ref{assumption-in-compatibility-section} and Lemma \ref{ext-line-bundle-2}.
We conclude the equality
$\dd_1=(\mu\circ\mu)\circ\dd'_1\circ\mu^{-1}$, proving the commutativity of the left square.
\end{proof}

%
\section{Variations of Hodge structures}

Let $X$ be a $K3$ surface, $M:=M_H(v)$ a moduli space of sheaves on $X$, 
$\Az$ the modular Azumaya algebra over $M\times M$, and assume that 
$(M,\eta,\Az)$ belongs to the open set $U$ of $\widetilde{\fM}^0_\Lambda$ in Theorem
\ref{deformability}, for some marking $\eta$ (see  Remark \ref{rem-modular-Azumaya-algebra-belongs-to-moduli}).
The Hodge structure of the Mukai lattice of $X$ can be deformed, 
as we deform the category $D^b(M,\theta)^\LL$, of comodules for the comonad
$(\LL,\epsilon,\delta)$, 
along deformations of $(M,\LL,\epsilon,\delta)$.
These deformations of the Hodge structure are defined in 
\cite[Theorem 1.10]{markman-constraints}. 
The deformed Mukai lattice should be related to the 
Hochschild homology of the category $D^b(M,\theta)^\LL$,
and the type $(1,1)$ sublattice
should be related to the numerical lattice of the $K$-group of $D^b(M,\theta)^\LL$. 
The Mukai lattice of $M_H(v)$, as defined in \cite[Theorem 1.10]{markman-constraints}, 
is that of $X$, by \cite[Theorem 1.14]{markman-constraints}.
As we deform the Mukai lattice of $M_H(v)$,
the class $v$ remains of Hodge-type $(1,1)$, while its orthogonal complement 
remains isometric to $H^2(M,\Integers)$, by \cite[Theorem 1.10]{markman-constraints}. 
In particular, the class $v$ spans the sublattice of integral classes of Hodge type $(1,1)$, for the Mukai lattice of a generic 
deformation of $M$. This agrees with 
Theorem \ref{thm-comparison} in the current paper, which 
suggests that the family of deformation we get is
the complete family of deformations
of $D^b(X)$, which preserve the Hodge type of the class $v$.
Such deformations include deformations of $D^b(X)$, 
which are equivalent to derived categories of coherent sheaves in
non-commutative and gerby deformations of the $K3$ surface $X$
\cite{toda-deformations,mms}.

%


\end{document}